\numberwithin{equation}{section}
\theoremstyle{plain}
\newtheorem{theorem}{Theorem}[section]
\newaliascnt{lemma}{theorem}
\newtheorem{lemma}[lemma]{Lemma}
\newaliascnt{corollary}{theorem}
\newtheorem{corollary}[corollary]{Corollary}
\newaliascnt{proposition}{theorem}
\newtheorem{proposition}[proposition]{Proposition}
\theoremstyle{definition}
\newaliascnt{definition}{theorem}
\newtheorem{definition}[definition]{Definition}
\newaliascnt{example}{theorem}
\newtheorem{example}[example]{Example}
\newaliascnt{remark}{theorem}
\newtheorem{remark}[remark]{Remark}
\newaliascnt{assumption}{theorem}
\newcommand{\R}{\mathbf{R}}
\newcommand{\C}{\mathbf{C}}
\newcommand{\Z}{\mathbf{Z}}
\newcommand{\abs}[1]{\left\lvert #1 \right\rvert}
\newcommand{\jump}[1]{\bigl[ #1 \bigr]}
\newcommand{\bilinearRthree}[2]{\left\langle #1, #2 \right\rangle}
\DeclareRobustCommand{\jumpmatrix}{\genfrac[]\z@{}}
\newcommand{\order}[1]{\mathcal{O}\left( #1 \right)}
\newenvironment{smallpmatrix}{\left(\begin{smallmatrix}}{\end{smallmatrix}\right)}
\newcommand{\twin}{\widetilde}
\newcommand{\coeffmatrix}{\mathcal{A}}
\newcommand{\J}{J}
\renewcommand{\epsilon}{\varepsilon}
\newcommand{\da}{d\alpha(x)}
\newcommand{\db}{d\beta(y)}
\newcommand{\dA}[1]{d\alpha^{#1}(x)}
\newcommand{\dB}[1]{d\beta^{#1}(y)}
\newcommand{\heineintegral}{\mathcal{J}}
\newcommand{\weirddeterminant}{\mathcal{K}}
\newcommand{\starheineintegral}{(\heineintegral^*)}
\DeclareMathOperator{\sgn}{sgn}
\DeclareMathOperator*{\res}{res}
\DeclareMathOperator{\adj}{adj}
\DeclareMathOperator{\diag}{diag}
\DeclareMathOperator{\id}{id}
\DeclareMathOperator{\tr}{tr}
\newcommand{\lel}{\preccurlyeq}
\newcommand{\lle}{\curlyeqprec}
\begin{document}

\title{An inverse spectral problem related to the Geng--Xue two-component peakon equation}
\author{%
  Hans Lundmark\thanks{Department of Mathematics, Linköping University, SE-581\,83 Linköping, Sweden; hans.lundmark@liu.se}
  \and
  Jacek Szmigielski\thanks{Department of Mathematics and Statistics, University of Saskatchewan, 106 Wiggins Road, Saskatoon, Saskatchewan, S7N\,5E6, Canada; szmigiel@math.usask.ca}
}

\date{March 20, 2013}
%\date{\today}

\maketitle

\begin{abstract}
  We solve a spectral and an inverse spectral problem
  arising in the computation of peakon solutions to the
  two-component PDE derived by Geng and Xue as a generalization of the
  Novikov and Degasperis--Procesi equations.
  Like the spectral problems for those equations, this one is of
  a `discrete cubic string' type -- a nonselfadjoint generalization of
  a classical inhomogeneous string -- but presents some interesting novel
  features: there are two Lax pairs, both of which contribute to the
  correct complete spectral data, and the solution to the inverse
  problem can be expressed using quantities related to Cauchy
  biorthogonal polynomials with two different spectral measures. The
  latter extends the range of previous applications of Cauchy biorthogonal
  polynomials to peakons, which featured either two identical, or
  two closely related, measures. The method used to solve the spectral
  problem hinges on the hidden presence of oscillatory kernels of
  Gantmacher--Krein type implying that the spectrum of the boundary value problem
  is positive and simple. The inverse spectral problem is solved by a
  method which generalizes, to a nonselfadjoint case, M.~G.~Krein's
  solution of the inverse problem for the Stieltjes string.
\end{abstract}

\section{Introduction}
\label{sec:intro}

In this paper, we solve an inverse spectral problem which appears
in the context of computing explicit solutions to a two-component
integrable PDE in $1+1$ dimensions found by Geng and Xue \cite{geng-xue:cubic-nonlinearity}.
Denoting the two unknown functions by $u(x,t)$ and $v(x,t)$,
and introducing the auxiliary quantities
\begin{equation}
  \label{eq:mn}
  m = u - u_{xx}, \qquad n = v - v_{xx},
\end{equation}
we can write the Geng--Xue equation as
\begin{equation}
  \label{eq:GX}
  \begin{gathered}
    % m_t + m_xuv + 3mvu_x = 0, \\ n_t + n_xvu + 3nuv_x = 0.
    m_t + (m_xu + 3mu_x)v = 0, \\ n_t + (n_xv + 3nv_x)u = 0.
  \end{gathered}
\end{equation}
(Subscripts denote partial derivatives, as usual.)
This system arises as the compatibility condition of a Lax pair
with spectral parameter~$z$,
\begin{subequations}
  \label{eq:laxI}
  \begin{equation}
    \label{eq:laxI-x}
    \frac{\partial}{\partial x}
    \begin{pmatrix} \psi_1 \\ \psi_2 \\ \psi_3 \end{pmatrix} =
    \begin{pmatrix}
      0 & zn & 1 \\
      0 & 0 & zm \\
      1 & 0 & 0
    \end{pmatrix}
    \begin{pmatrix} \psi_1 \\ \psi_2 \\ \psi_3 \end{pmatrix},
  \end{equation}
  \begin{equation}
    \label{eq:laxI-t}
    \frac{\partial}{\partial t}
    \begin{pmatrix} \psi_1 \\ \psi_2 \\ \psi_3 \end{pmatrix} =
    \begin{pmatrix}
      -v_xu & v_x z^{-1}-vunz & v_xu_x \\
      u z^{-1} & v_xu-vu_x-z^{-2} & -u_x z^{-1}-vumz \\
      -vu & v z^{-1} & vu_x
    \end{pmatrix}
    \begin{pmatrix} \psi_1 \\ \psi_2 \\ \psi_3 \end{pmatrix},
  \end{equation}
\end{subequations}
but also (because of the symmetry in~\eqref{eq:GX}) as the compatibility condition of a different
Lax pair obtained by interchanging $u$ and~$v$,
\begin{subequations}
  \label{eq:laxII}
  \begin{equation}
    \label{eq:laxII-x}
    \frac{\partial}{\partial x}
    \begin{pmatrix} \psi_1 \\ \psi_2 \\ \psi_3 \end{pmatrix} =
    \begin{pmatrix}
      0 & zm & 1 \\
      0 & 0 & zn \\
      1 & 0 & 0
    \end{pmatrix}
    \begin{pmatrix} \psi_1 \\ \psi_2 \\ \psi_3 \end{pmatrix},
  \end{equation}
  \begin{equation}
    \label{eq:laxII-t}
    \frac{\partial}{\partial t}
    \begin{pmatrix} \psi_1 \\ \psi_2 \\ \psi_3 \end{pmatrix} =
    \begin{pmatrix}
      -u_xv & u_x z^{-1}-uvmz & u_xv_x \\
      v z^{-1} & u_xv-uv_x-z^{-2} & -v_x z^{-1}-uvnz \\
      -uv & u z^{-1} & uv_x
    \end{pmatrix}
    \begin{pmatrix} \psi_1 \\ \psi_2 \\ \psi_3 \end{pmatrix}.
  \end{equation}
\end{subequations}
The subject of our paper is the inverse problem of recovering $m$
and~$n$ from spectral data obtained by imposing suitable boundary
conditions on equations \eqref{eq:laxI-x} and \eqref{eq:laxII-x}, in
the case when $m$ and $n$ are both discrete measures (finite linear
combinations of Dirac deltas) with disjoint supports. To explain
why this is of interest, we will give a short historical background.

When $u=v$ (and consequently also $m=n$), the Lax pairs above reduce to the Lax pair
found by Hone and Wang for V.~Novikov's integrable PDE
\cite{novikov:generalizations-of-CH,hone-wang:cubic-nonlinearity}
\begin{equation}
  \label{eq:Novikov}
  m_t + m_x u^2 + 3m u u_x = 0,
\end{equation}
and it was by generalizing that Lax pair to \eqref{eq:laxI}
that Geng and Xue came up with their new integrable PDE \eqref{eq:GX}.
Novikov's equation, in turn, was found as a cubically nonlinear counterpart
to some previously known integrable PDEs with quadratic nonlinearities,
namely the Camassa--Holm equation \cite{camassa-holm}
\begin{equation}
  \label{eq:CH}
  m_t + m_x u + 2m u_x = 0
\end{equation}
and the Degasperis--Procesi equation \cite{degasperis-procesi,degasperis-holm-hone}
\begin{equation}
  \label{eq:DP}
  m_t + m_x u + 3m u_x = 0.
\end{equation}
The equations \eqref{eq:CH} and \eqref{eq:DP} have been much studied in the literature,
and the references are far too numerous to survey here.
Novikov's equation \eqref{eq:Novikov} is also beginning to attract attention;
see 
\cite{himonas-holliman:novikov-cauchy-problem,
hone-lundmark-szmigielski:novikov,
jiang-ni:novikov-blowup,
lai-li-wu:novikov-global-solutions,
mi-mu:modified-novikov-cauchy-problem,
ni-zhou:novikov,
yan-li-zhang:novikov-cauchy-problem}.
What these equations have in common is that they admit weak
solutions called \emph{peakons} (peaked solitons), taking the form
\begin{equation}
  \label{eq:standard-peakons}
  u(x,t) = \sum_{k=1}^N m_k(t) \, e^{-\abs{x - x_k(t)}},
\end{equation}
where the functions $x_k(t)$ and $m_k(t)$ satisfy an integrable system of
$2N$ ODEs, whose general solution can be written down explicitly in
terms of elementary functions with the help of inverse spectral techniques.
In the Camassa--Holm case, this involves very classical mathematics
surrounding the inverse spectral theory of the vibrating string with mass density~$g(y)$,
whose eigenmodes are determined by the Dirichlet problem
\begin{equation}
  \label{eq:ordinarystring}
  \begin{gathered}
    -\phi''(y) = z \, g(y) \, \phi(y) \quad \text{for $-1 < y < 1$}, \\
    \phi(-1) = 0, \qquad \phi(1) = 0.
  \end{gathered}
\end{equation}
In particular, one considers in this context the \emph{discrete string}
consisting of point masses connected by weightless thread,
so that $g$ is not a function but a linear combination of Dirac delta
distributions.
Then the solution to the inverse spectral problem can be expressed in terms
of orthogonal polynomials and Stieltjes continued fractions
\cite{beals-sattinger-szmigielski:stieltjes,beals-sattinger-szmigielski:moment,beals-sattinger-szmigielski:string-density,moser:three-integrable}.
The reason for the appearance of Dirac deltas here is that
when $u$ has the form \eqref{eq:standard-peakons},
the first derivative $u_x$ has a jump of size $-2m_k$ at each point $x=x_k$,
and this gives deltas in $u_{xx}$
when derivatives are taken in the sense of distributions.
In each interval between these points, $u$ is a linear combination of $e^x$ and $e^{-x}$,
so $u_{xx}=u$ there; thus
$m=u-u_{xx} = 2 \sum_{k=1}^N m_k \, \delta_{x_k}$
is a purely discrete distribution (or a discrete measure if one prefers).
The measure~$g(y)$ in \eqref{eq:ordinarystring} is related to the measure~$m(x)$
through a so-called Liouville transformation, and $g$ will be discrete when $m$
is discrete.

In the case of the Degasperis--Procesi and Novikov equations (and also for
the Geng--Xue equation, as we shall see), the corresponding role is
instead played by variants of a third-order nonselfadjoint spectral problem
called the \emph{cubic string}
\cite{lundmark-szmigielski:DPshort,lundmark-szmigielski:DPlong,kohlenberg-lundmark-szmigielski,lundmark-szmigielski:forced-burgers,hone-lundmark-szmigielski:novikov,bertola-gekhtman-szmigielski:cubicstring};
in its basic form it reads
\begin{equation}
  \label{eq:cubicstring}
  \begin{gathered}
    -\phi'''(y) = z \, g(y) \, \phi(y) \quad \text{for $-1 < y < 1$}, \\
    \phi(-1) = \phi'(-1) = 0, \qquad \phi(1) = 0.
  \end{gathered}
\end{equation}
The study of the discrete cubic string has prompted the development
of a theory of \emph{Cauchy biorthogonal polynomials} by
Bertola, Gekhtman and Szmigielski
\cite{bertola-gekhtman-szmigielski:cauchy,bertola-gekhtman-szmigielski:cubicstring,bertola-gekhtman-szmigielski:twomatrix,bertola-gekhtman-szmigielski:meijerG};
see \autoref{sec:biorth}.
In previous applications to peakon equations, the two measures
$\alpha$ and $\beta$ in the general setup of this theory have
coincided ($\alpha = \beta$), but in this paper we will actually see
two different spectral measures $\alpha$ and $\beta$ entering the picture
in a very natural way.

Like the above-mentioned PDEs,
the Geng--Xue equation also admits peakon solutions, but now with two components,
\begin{equation}
  \label{eq:GXpeakons}
  \begin{split}
    u(x,t) &= \sum_{k=1}^N m_k(t) \, e^{-\abs{x - x_k(t)}}, \\
    v(x,t) &= \sum_{k=1}^N n_k(t) \, e^{-\abs{x - x_k(t)}},
  \end{split}
\end{equation}
where, for each~$k$, at most one of $m_k$ and~$n_k$ is nonzero
(i.e., $m_k n_k = 0$ for all~$k$).
In this case, $m$ and~$n$ will be discrete measures with disjoint support:
\begin{equation}
  \label{eq:mn-discrete}
  m=u-u_{xx} = 2 \sum_{k=1}^N  m_k \, \delta_{x_k},
  \qquad
  n=v-v_{xx} = 2 \sum_{k=1}^N  n_k \, \delta_{x_k}.
\end{equation}
This ansatz satisfies the PDE \eqref{eq:GX} if and only if
the functions $x_k(t)$, $m_k(t)$ and $n_k(t)$ satisfy the following system of ODEs:
\begin{equation}
  \label{eq:GX-peakon-ode}
  \begin{split}
    \dot x_k &= u(x_k) \, v(x_k)
    ,\\
    \dot m_k &= m_k \bigl(u(x_k) \, v_x(x_k) - 2 u_x(x_k) v(x_k) \bigr)
    ,\\
    \dot n_k &= n_k \bigl(u_x(x_k) \, v(x_k) - 2 u(x_k) v_x(x_k) \bigr)
    ,
  \end{split}
\end{equation}
for $k = 1,2,\dots,N$.
(Here we use the shorthand notation
\begin{equation*}
  u(x_k) = \sum_{i=1}^N m_i \, e^{-\abs{x_k - x_i}}
\end{equation*}
and
\begin{equation*}
  u_x(x_k) = \sum_{i=1}^N m_i \, e^{-\abs{x_k - x_i}} \, \sgn(x_k - x_i)
  .
\end{equation*}
If $u(x) = \sum m_i \, e^{-\abs{x-x_i}}$, then 
the derivative $u_x$ is undefined at the points $x_k$
where $m_k \neq 0$,
but here $\sgn 0 = 0$ by definition,
so $u_x(x_k)$ really denotes the average of the one-sided (left and right) derivatives
at those points. Note that the conditions $m_k=0$ and $m_k \neq 0$ both are preserved by the ODEs.
Similar remarks apply to $v$, of course.)

Knowing the solution of the inverse spectral problem
for \eqref{eq:laxI-x}+\eqref{eq:laxII-x} in this discrete case
makes it possible to explicitly determine the solutions to the
peakon ODEs \eqref{eq:GX-peakon-ode}.
Details about these peakon solutions and their dynamics
will be published in a separate paper; here we will focus on the
approximation-theoretical aspects of the inverse spectral problem.
(But see \autoref{rem:peakon-solutions}.)

We will only deal with the special case where the discrete measures
are \emph{interlacing}, meaning that there are $N=2K$ sites
\begin{equation*}
  x_1 <  x_2 < \dots < x_{2K},
\end{equation*}
with the measure~$m$ supported on the odd-numbered sites~$x_{2a-1}$,
and the measure~$n$ supported on the even-numbered sites~$x_{2a}$;
see \autoref{fig:m-n-discrete} and \autoref{rem:non-interlacing}.
The general formulas for recovering the positions $x_k$ and the weights
$m_{2a-1}$ and $n_{2a}$ are given in \autoref{cor:peakon-solution-formulas};
they are written out more explicitly for illustration in
\autoref{ex:solution-K2} (the case $K=2$) and \autoref{ex:solution-K3} (the case $K=3$).
The case $K=1$ is somewhat degenerate, and is treated separately in \autoref{sec:K1}.

\autoref{app:notation} contains an index of the notation used in this article.

\section{Forward spectral problem}
\label{sec:forward}

\subsection{Transformation to a finite interval}
\label{sec:toyota-tranformation}

Let us start by giving a precise definition of the spectral problem
to be studied.
The time dependence in the two Lax pairs for the Geng--Xue equation
will be of no interest to us in this paper, so we consider $t$ as fixed
and omit it in the notation.
The equations which govern the $x$ dependence in the two Lax pairs are
\eqref{eq:laxI-x} and \eqref{eq:laxII-x}, respectively.
Consider the first of these:
\begin{equation*}
  \frac{\partial}{\partial x}
  \begin{pmatrix} \psi_1 \\ \psi_2 \\ \psi_3 \end{pmatrix} =
  \begin{pmatrix}
    0 & zn(x) & 1 \\
    0 & 0 & zm(x) \\
    1 & 0 & 0
  \end{pmatrix}
  \begin{pmatrix} \psi_1 \\ \psi_2 \\ \psi_3 \end{pmatrix},
  \qquad
  \text{for $x \in \R$},
  \tag{\ref{eq:laxI-x}}
\end{equation*}
where $m(x)$ and~$n(x)$ are given.
Our main interest lies in the discrete case,
when $m$ and~$n$ are actually not functions but discrete measures
as in \eqref{eq:mn-discrete}, but we will not
specialize to that case until \autoref{sec:discrete-on-the-interval}.

There is a useful change of variables, similar to the one used for
Novikov's equation \cite{hone-lundmark-szmigielski:novikov}, which produces a slightly simpler
differential equation on a finite interval:
\begin{equation}
  \label{eq:liouville-trf}
  \begin{split}
    y &= \tanh x,\\
    \phi_1(y) &= \psi_1(x) \cosh x - \psi_3(x) \sinh x,\\
    \phi_2(y) &= z \, \psi_2(x),\\
    \phi_3(y) &= z^2 \, \psi_3(x) / \cosh x,\\
    g(y) &= m(x) \, \cosh^3 x,\\
    h(y) &= n(x) \, \cosh^3 x,\\
    \lambda &= -z^2.
  \end{split}
\end{equation}
Under this transformation (with $z\neq 0$),
equation \eqref{eq:laxI-x} is equivalent to
\begin{subequations}
  \label{eq:spectral-problem-y}
  \begin{equation}
    \label{eq:dual-cubic-I}
    \frac{\partial}{\partial y}
    \begin{pmatrix} \phi_1 \\ \phi_2 \\ \phi_3 \end{pmatrix}
    =
    \begin{pmatrix}
      0 & h(y) & 0 \\
      0 & 0 & g(y) \\
      -\lambda & 0 & 0
    \end{pmatrix}
    \begin{pmatrix} \phi_1 \\ \phi_2 \\ \phi_3 \end{pmatrix},
    \qquad
    \text{for $-1<y<1$}.
  \end{equation}
  (Notice that the $1$ in the upper right corner of the matrix has been removed
  by the transformation.
  When $h=g$, equation \eqref{eq:dual-cubic-I} reduces to the \emph{dual cubic string}
  studied in~\cite{hone-lundmark-szmigielski:novikov}.)
  In order to define a spectrum we impose the following boundary conditions
  on the differential equation~\eqref{eq:dual-cubic-I}:
  \begin{equation}
    \label{eq:boundary-conditions-y}
    \phi_2(-1) = \phi_3(-1) = 0,
    \qquad
    \phi_3(1) = 0.
  \end{equation}
  By the \emph{eigenvalues} of the problem~\eqref{eq:spectral-problem-y}
  we then of course mean those
  values of~$\lambda$ for which \eqref{eq:dual-cubic-I} has nontrivial
  solutions satisfying~\eqref{eq:boundary-conditions-y}.
\end{subequations}

The same transformation \eqref{eq:liouville-trf} applied to the twin
Lax equation \eqref{eq:laxII-x} leads to the same equation except that
$g$ and~$h$ are interchanged. The spectrum of this twin equation will
in general be different. To be explicit, the second spectrum is
defined by the differential equation
\begin{subequations}
  \label{eq:spectral-problem-y-twin}
  \begin{equation}
    \label{eq:dual-cubic-II}
    \frac{\partial}{\partial y}
    \begin{pmatrix} \phi_1 \\ \phi_2 \\ \phi_3 \end{pmatrix}
    =
    \begin{pmatrix}
      0 & g(y) & 0 \\
      0 & 0 & h(y) \\
      -\lambda & 0 & 0
    \end{pmatrix}
    \begin{pmatrix} \phi_1 \\ \phi_2 \\ \phi_3 \end{pmatrix},
    \qquad
    \text{for $-1<y<1$},
  \end{equation}
  again with boundary conditions
  \begin{equation}
    \label{eq:boundary-conditions-y-twin}
    \phi_2(-1) = \phi_3(-1) = 0,
    \qquad
    \phi_3(1) = 0.
  \end{equation}
\end{subequations}

\begin{remark}
  \label{rem:transformation}
  Via the transformation \eqref{eq:liouville-trf}, every concept
  pertaining to the original Lax equations \eqref{eq:laxI-x}
  and \eqref{eq:laxII-x} will
  have a counterpart in terms of the transformed equations
  \eqref{eq:dual-cubic-I} and \eqref{eq:dual-cubic-II}, and vice versa.
  In the main text, we will work
  with \eqref{eq:dual-cubic-I} and \eqref{eq:dual-cubic-II}
  on the finite interval.
  However, a few things are more conveniently dealt with directly
  in terms of the original equations
  \eqref{eq:laxI-x} and \eqref{eq:laxII-x} on the real line;
  these are treated in \autoref{app:real-line}.
  More specifically, we prove there that
  the spectra defined above are real and simple, and we also obtain expressions for
  certain quantities that will be constants of motion for the Geng--Xue
  peakon dynamics.
\end{remark}

\begin{remark}
  \label{rem:lambda-vs-z}
  Transforming the boundary conditions \eqref{eq:boundary-conditions-y}
  back to the real line via \eqref{eq:liouville-trf} yields
  \begin{equation}
    \label{eq:boundary-conditions-x}
    \lim_{x \to -\infty} \psi_2(x) =
    \lim_{x \to -\infty} e^x \psi_3(x) = 0,
    \qquad
    \lim_{x \to +\infty} e^{-x} \psi_3(x) = 0.
  \end{equation}
  Each eigenvalue $\lambda \neq 0$ of
  \eqref{eq:spectral-problem-y} corresponds
  to a pair of eigenvalues $z=\pm\sqrt{-\lambda}$ of
  \eqref{eq:laxI-x}+\eqref{eq:boundary-conditions-x}.
  As an exceptional case, $\lambda=0$ is an eigenvalue of
  \eqref{eq:spectral-problem-y},
  but $z=0$ is not an eigenvalue of
  \eqref{eq:laxI-x}+\eqref{eq:boundary-conditions-x};
  this is an artifact caused by the transformation \eqref{eq:liouville-trf}
  being singular for $z=0$.
  When talking about eigenvalues below, we will refer to~$\lambda$
  rather than~$z$.

  In \autoref{sec:Weyl-functions} below we will also encounter the
  condition $\phi_1(-1) = 1$; this translates into
  \begin{equation}
    \label{eq:initial-condition-x}
    \lim_{x \to -\infty} e^{-x} \bigl( \psi_1(x;z) - \psi_3(x;z) \bigr) = 2
    .
  \end{equation}
\end{remark}

\subsection{Transition matrices}
\label{sec:transition}

Let
\begin{equation}
  \label{eq:coeffmatrices}
  \coeffmatrix(y;\lambda) =
  \begin{pmatrix}
    0 & h(y) & 0 \\
    0 & 0 & g(y) \\
    -\lambda & 0 & 0
  \end{pmatrix},
  \quad
  \twin \coeffmatrix(y;\lambda) =
  \begin{pmatrix}
    0 & g(y) & 0 \\
    0 & 0 & h(y) \\
    -\lambda & 0 & 0
  \end{pmatrix}
\end{equation}
denote the coefficient matrices appearing in the spectral problems
\eqref{eq:spectral-problem-y} and \eqref{eq:spectral-problem-y-twin},
respectively.
To improve readability,
we will often omit the dependence on~$y$ in the notation,
and write the differential equations simply as
\begin{equation}
  \label{eq:dual-cubic-abbreviated}
  \frac{\partial \Phi}{\partial y} = \coeffmatrix(\lambda) \Phi,
  \qquad
  \frac{\partial \Phi}{\partial y} = \twin \coeffmatrix(\lambda) \Phi,
\end{equation}
respectively, where $\Phi=(\phi_1,\phi_2,\phi_3)^T$.
Plenty of information about this pair of equations
can be deduced from the following modest observation:
\begin{lemma}
  \label{lem:A-conjugation}
  The matrices $\coeffmatrix$ and~$\twin\coeffmatrix$ satisfy
  \begin{equation}
    \label{eq:A-conjugation}
    \twin \coeffmatrix(\lambda) = -\J \coeffmatrix(-\lambda)^T \J
    , \quad\text{where}\quad
    \J =
    \begin{pmatrix}
      0 & 0 & 1 \\ 0 & -1 & 0 \\ 1 & 0 & 0
    \end{pmatrix}
    = \J^T = \J^{-1}
    .
  \end{equation}
\end{lemma}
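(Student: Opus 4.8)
The plan is to prove the identity by direct computation, since both sides are explicit $3\times 3$ matrices. First I would record the elementary properties of $\J$: reading off the entries shows immediately that $\J$ is symmetric, and a one-line multiplication gives $\J^2 = I$, so that $\J = \J^T = \J^{-1}$ as claimed. In particular, conjugation by $\J$ coincides with multiplication by $\J$ on both sides, which is what makes the right-hand side of \eqref{eq:A-conjugation} well defined as written.

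Next I would form the right-hand side in stages. Replacing $\lambda$ by $-\lambda$ in $\coeffmatrix$ only flips the sign of the $(3,1)$ entry, and transposing then moves the entries $h$, $g$, $-\lambda$ into the positions $(2,1)$, $(3,2)$, $(1,3)$ respectively. The matrix $\J$ implements the reversal of coordinate order $(1,2,3)\mapsto(3,2,1)$ together with a sign change on the middle coordinate; left multiplication by $\J$ permutes and signs the rows accordingly, and right multiplication does the same to the columns. Carrying out these two multiplications and finally negating, one finds that the off-diagonal entries $h$ and $g$ have traded places while all signs have been restored, producing exactly $\twin\coeffmatrix(\lambda)$.

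The computation involves no difficulty beyond sign bookkeeping; the only point requiring care is the interaction between the transpose, the substitution $\lambda\mapsto-\lambda$, and the two sign flips coming from the middle entry of $\J$, which must cancel in just the right way for the overall minus sign in front to land correctly. A clean way to organize the check, and to see why the result is forced rather than accidental, is to track each of the three nonzero entries separately: follow where $h$, $g$, and $-\lambda$ are sent under transposition and conjugation by $\J$, and confirm that the accumulated signs combine with the leading minus to yield precisely the entries $g$, $h$, and $-\lambda$ of $\twin\coeffmatrix(\lambda)$. Since there is no genuine obstacle here, I would expect to present the argument either as this short entrywise verification or simply by exhibiting the intermediate product $\J\coeffmatrix(-\lambda)^T\J$ and reading off the result.
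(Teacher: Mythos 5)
Your proposal is correct and matches the paper's approach exactly: the paper's entire proof is the remark ``A one-line calculation,'' i.e.\ precisely the direct entrywise verification you describe. The computation checks out — $\J\coeffmatrix(-\lambda)^T\J$ comes out as the matrix with entries $-g$, $-h$, $\lambda$ in positions $(1,2)$, $(2,3)$, $(3,1)$, so the leading minus sign yields $\twin\coeffmatrix(\lambda)$ as claimed.
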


\begin{proof}
  A one-line calculation. 
\end{proof}

\begin{definition}[Involution $\sigma$]
  \label{def:involution-sigma}
  Let $\sigma$ denote the following operation on the (loop) group
  of invertible complex $3 \times 3$ matrices $X(\lambda)$ depending
  on the parameter $\lambda \in \C$:
  \begin{equation}
    \label{eq:involution-sigma}
    X(\lambda)^\sigma = \J X(-\lambda)^{-T} \J.
  \end{equation}
  (We use the customary abbreviation $X^{-T} = (X^T)^{-1} = (X^{-1})^T$.)
\end{definition}

\begin{remark}
  It is easily verified that $\sigma$ is a group homomorphism and an involution:
  \begin{equation*}
    \bigl( X(\lambda) Y(\lambda) \bigr)^\sigma
    = X(\lambda)^\sigma Y(\lambda)^\sigma,
    \qquad
    \bigl( X(\lambda)^\sigma \bigr)^\sigma = X(\lambda).
  \end{equation*}
\end{remark}

\begin{definition}[Fundamental matrices and transition matrices]
  Let $U(y;\lambda)$ be the fundamental matrix of \eqref{eq:dual-cubic-I}
  and $\twin U(y;\lambda)$ its counterpart for \eqref{eq:dual-cubic-II};
  i.e., they are the unique solutions of the matrix ODEs
  \begin{equation}
    \label{eq:fundsol}
    \frac{\partial U}{\partial y} = \coeffmatrix(y;\lambda) \, U,
    \qquad
    U(-1;\lambda) = I,
  \end{equation}
  and
  \begin{equation}
    \label{eq:fundsol-twin}
    \frac{\partial \twin U}{\partial y} = \twin \coeffmatrix(y;\lambda) \, \twin U,
    \qquad
    \twin U(-1;\lambda) = I,
  \end{equation}
  respectively, where $I$ is the $3\times 3$ identity matrix.
  The fundamental matrices evaluated at the right endpoint $y=1$
  will be called the \emph{transition matrices} and denoted by
  \begin{equation}
    \label{eq:transition-matrices}
    S(\lambda) = U(1;\lambda),
    \qquad
    \twin S(\lambda) = \twin U(1;\lambda).
  \end{equation}
\end{definition}

\begin{remark}
  The fundamental matrix contains the solution of any initial value problem:
  $\Phi(y) = U(y;\lambda) \Phi(-1)$ is the unique solution to the ODE
  $d\Phi/dy = \coeffmatrix(\lambda) \Phi$
  satisfying given initial data $\Phi(-1)$ at the left endpoint $y=-1$.
  In particular, the value of the solution at the right endpoint $y=1$
  is $\Phi(1) = S(\lambda) \Phi(-1)$.
\end{remark}

\begin{theorem}
  \label{thm:fundmatrix-relation}
  For all $y\in[-1,1]$,
  \begin{equation}
    \label{eq:fundmatrix-determinant-one}
    \det U(y;\lambda) = \det \twin U(y;\lambda) = 1
  \end{equation}
  and
  \begin{equation}
    \label{eq:fundmatrix-relation}
    \twin U(y;\lambda) = U(y;\lambda)^\sigma.
  \end{equation}
  In particular, $\det S(\lambda) = \det \twin S(\lambda) = 1$,
  and $\twin S(\lambda) = S(\lambda)^\sigma$.
\end{theorem}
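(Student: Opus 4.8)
The plan is to prove the two assertions separately, each by appealing to a standard uniqueness result for linear matrix ODEs. For the determinant identity I would invoke Liouville's formula (Jacobi's formula for the derivative of a determinant): since $\det U(-1;\lambda) = \det I = 1$ and $\frac{d}{dy}\det U = \bigl(\tr \coeffmatrix(y;\lambda)\bigr)\det U$, the value of $\det U(y;\lambda)$ is governed entirely by the trace of the coefficient matrix. But inspection of \eqref{eq:coeffmatrices} shows that both $\coeffmatrix$ and $\twin\coeffmatrix$ have vanishing diagonal, hence zero trace; therefore $\det U(y;\lambda) \equiv 1$, and the identical argument applies to $\twin U$. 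Setting $y=1$ then gives $\det S = \det \twin S = 1$.

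For the conjugation identity my plan is to set $V(y;\lambda) := U(y;\lambda)^\sigma = \J\, U(y;-\lambda)^{-T}\, \J$ and show that $V$ solves exactly the initial value problem \eqref{eq:fundsol-twin} characterizing $\twin U$; uniqueness then forces $V = \twin U$. The initial condition is immediate, since $U(-1;-\lambda) = I$ yields $V(-1;\lambda) = \J\, I\, \J = \J^2 = I$. For the differential equation I would write $W(y) = U(y;-\lambda)$, so that $W' = \coeffmatrix(-\lambda)\, W$; differentiating $W^{-1}$ gives $(W^{-1})' = -W^{-1}\coeffmatrix(-\lambda)$, and transposing yields $(W^{-T})' = -\coeffmatrix(-\lambda)^T\, W^{-T}$. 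Conjugating by $\J$ and inserting $\J^2 = I$ produces
\[
V' = \J\,(W^{-T})'\,\J = \bigl(-\J\,\coeffmatrix(-\lambda)^T\,\J\bigr)\,\bigl(\J\, W^{-T}\,\J\bigr) = \bigl(-\J\,\coeffmatrix(-\lambda)^T\,\J\bigr)\, V.
\]
By \autoref{lem:A-conjugation} the prefactor equals $\twin\coeffmatrix(\lambda)$, so $V' = \twin\coeffmatrix(\lambda)\, V$, as required. Evaluating at $y = 1$ then gives $\twin S = S^\sigma$.

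The computation is essentially routine once the correct object $V = U^\sigma$ is identified, so I do not anticipate a genuine obstacle. The only place demanding care is the bookkeeping in the second part: correctly differentiating the inverse transpose and tracking the simultaneous sign flip $\lambda \mapsto -\lambda$ together with the transposition. This is precisely the combination that \autoref{lem:A-conjugation} is designed to absorb, and the structural reason the argument closes cleanly is that the involution $\sigma$ packages together exactly the two operations — inverse-transpose and $\lambda \mapsto -\lambda$ — under which $\coeffmatrix$ transforms into its twin $\twin\coeffmatrix$.
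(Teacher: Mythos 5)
Your proposal is correct and follows essentially the same route as the paper: Liouville's formula with the trace-free coefficient matrices for the determinant, and then showing that $U(y;\lambda)^\sigma$ satisfies the same initial value problem as $\twin U(y;\lambda)$ (via differentiating the inverse, transposing, conjugating by $\J$, and invoking \autoref{lem:A-conjugation}) so that uniqueness forces equality. No gaps.
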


\begin{proof}
  Equation \eqref{eq:fundmatrix-determinant-one} follows from Liouville's formula,
  since $\coeffmatrix$ is trace-free:
  \begin{equation*}
    \det U(y;\lambda)
    = \bigl( \det U(-1;\lambda) \bigr) \exp \int_{-1}^{y} \tr \coeffmatrix(\xi;\lambda) \, d\xi
    = (\det I) \exp 0 = 1,
  \end{equation*}
  and similarly for $\twin U$.
  To prove \eqref{eq:fundmatrix-relation},
  note first that
  \begin{equation*}
    \begin{split}
      \frac{\partial U(\lambda)^{-1}}{\partial y}
      &= - U(\lambda)^{-1} \, \frac{\partial U(\lambda)}{\partial y} \, U(\lambda)^{-1}
      \\
      &= - U(\lambda)^{-1} \, \coeffmatrix(\lambda) \, U(\lambda) \, U(\lambda)^{-1}
      = - U(\lambda)^{-1} \, \coeffmatrix(\lambda),
    \end{split}
  \end{equation*}
  which implies that
  \begin{equation*}
    \begin{split}
      \frac{\partial}{\partial y} U(\lambda)^\sigma
      &= \frac{\partial}{\partial y} \Bigl( \J \, U(-\lambda)^{-T} \, \J \Bigr)
      \\
      &= \J \, \left( \frac{\partial U(-\lambda)^{-1}}{\partial y} \right)^T \J
      \\
      &= \J \, \bigl( - U(-\lambda)^{-1} \, \coeffmatrix(-\lambda) \bigr)^T \J
      \\
      &= - \J \, \coeffmatrix(-\lambda)^T U(-\lambda)^{-T} \J.
    \end{split}
  \end{equation*}
  With the help of \autoref{lem:A-conjugation} this becomes
  \begin{equation*}
    \begin{split}
      \frac{\partial}{\partial y} U(\lambda)^\sigma
      &= \twin\coeffmatrix(\lambda) \, \J \, U(-\lambda)^{-T} \J
      \\
      &= \twin\coeffmatrix(\lambda) \, U(\lambda)^\sigma.
    \end{split}
  \end{equation*}
  Since $U(\lambda)^\sigma = I = \twin U(\lambda)$ when $y=-1$,
  we see that $U(\lambda)^\sigma$ and $\twin U(\lambda)$
  satisfy the same ODE and the same initial condition;
  hence they are equal for all~$y$ by uniqueness.
\end{proof}

\begin{corollary}
  \label{cor:S-relation}
  The transition matrices $S(\lambda)$ and $\twin S(\lambda)$ satisfy
  $\J S(\lambda)^T \J \twin S(-\lambda) = I$
  and
  $\twin S(-\lambda) = \J \big( \adj S(\lambda) \bigr)^T \J$,
  where $\adj$ denotes the adjugate (cofactor) matrix.
  In detail, this means that
  \begin{equation}
    \label{eq:S-relation-elementwise}
    \begin{pmatrix}
       S_{33} & -S_{23} &  S_{13} \\
      -S_{32} &  S_{22} & -S_{12} \\
       S_{31} & -S_{21} &  S_{11} \\
    \end{pmatrix}_{\!\!\lambda}
    \begin{pmatrix}
      \twin S_{11} & \twin S_{12} & \twin S_{31} \\
      \twin S_{21} & \twin S_{22} & \twin S_{32} \\
      \twin S_{31} & \twin S_{32} & \twin S_{33} \\
    \end{pmatrix}_{\!\!-\lambda}
    = \begin{pmatrix} 1 & 0 & 0 \\ 0 & 1 & 0 \\ 0 & 0 & 1 \end{pmatrix}
  \end{equation}
  and
  \begin{equation}
    \label{eq:S-cofactors}
    \twin S(-\lambda) =
    \begin{pmatrix}
      S_{11} S_{22} - S_{12} S_{21} &
      S_{11} S_{23} - S_{21} S_{13} &
      S_{12} S_{23} - S_{13} S_{22} \\
      S_{11} S_{32} - S_{12} S_{31} &
      S_{11} S_{33} - S_{13} S_{31} &
      S_{12} S_{33} - S_{13} S_{32} \\
      S_{21} S_{32} - S_{22} S_{31} &
      S_{21} S_{33} - S_{23} S_{31} &
      S_{22} S_{33} - S_{23} S_{32}
    \end{pmatrix}_{\!\!\lambda}.
  \end{equation}
  (The subscripts $\pm\lambda$ indicate the point where the matrix
  entries are evaluated.)
\end{corollary}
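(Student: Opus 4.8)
The plan is to obtain both displayed identities as immediate algebraic consequences of \autoref{thm:fundmatrix-relation}, which supplies the two facts I will lean on: the relation $\twin S(\lambda) = S(\lambda)^\sigma$ and the normalization $\det S(\lambda) = 1$. First I would unpack the definition of the involution $\sigma$ from \autoref{def:involution-sigma}, writing $S(\lambda)^\sigma = \J \, S(-\lambda)^{-T} \J$, so that the theorem reads $\twin S(\lambda) = \J \, S(-\lambda)^{-T} \J$. Replacing $\lambda$ by $-\lambda$ throughout then yields the convenient form $\twin S(-\lambda) = \J \, S(\lambda)^{-T} \J$, which is the natural starting point for both assertions.

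For the adjugate identity, I would invoke $\det S(\lambda) = 1$: since in general $\adj S = (\det S)\, S^{-1}$, here $S(\lambda)^{-1} = \adj S(\lambda)$, and transposing gives $S(\lambda)^{-T} = \bigl( \adj S(\lambda) \bigr)^T$. Substituting into the form above produces $\twin S(-\lambda) = \J \, \bigl( \adj S(\lambda) \bigr)^T \J$ at once. The element-wise expansion \eqref{eq:S-cofactors} is then just the explicit list of $2\times 2$ cofactors of $S(\lambda)$, arranged according to how conjugation by $\J$ permutes and sign-flips rows and columns.

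For the first identity, I would left-multiply $\twin S(-\lambda) = \J \, S(\lambda)^{-T} \J$ by $\J \, S(\lambda)^T \J$ and use $\J^2 = I$ (from $\J = \J^{-1}$ in \autoref{lem:A-conjugation}): the product telescopes as $\J \, S(\lambda)^T \, (\J \J) \, S(\lambda)^{-T} \J = \J \, S(\lambda)^T S(\lambda)^{-T} \J = \J \J = I$. The element-wise form \eqref{eq:S-relation-elementwise} is the same statement written out, the left factor being recognized as $\J \, S(\lambda)^T \J$ with its characteristic cofactor sign pattern.

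I do not anticipate a genuine obstacle here: the content is entirely formal, resting on the single structural input $\twin S = S^\sigma$ from the previous theorem. The only point demanding care is bookkeeping — keeping straight at which argument ($\lambda$ versus $-\lambda$) each matrix is evaluated when the involution is applied, and transcribing the nine cofactor entries with the correct signs in \eqref{eq:S-relation-elementwise} and \eqref{eq:S-cofactors}.
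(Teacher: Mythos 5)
Your derivation is correct and is exactly the argument the paper intends: the corollary is stated without a separate proof precisely because it follows formally from \autoref{thm:fundmatrix-relation} by negating $\lambda$ in $\twin S(\lambda)=\J S(-\lambda)^{-T}\J$, using $\J^2=I$, and replacing $S^{-1}$ by $\adj S$ via $\det S=1$. Your bookkeeping of the $\pm\lambda$ arguments and the sign pattern of conjugation by $\J$ is right, so there is nothing to add.
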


\subsection{Weyl functions}
\label{sec:Weyl-functions}

Consider next the boundary conditions
$\phi_2(-1) = \phi_3(-1) = 0 = \phi_3(1)$
in the two spectral problems \eqref{eq:spectral-problem-y}
and \eqref{eq:spectral-problem-y-twin}.
Fix some value of $\lambda \in \C$,
and let $\Phi = (\phi_1, \phi_2, \phi_3)^T$
be a solution of $d\Phi/dy = \coeffmatrix(\lambda) \Phi$
satisfying the boundary conditions at the left endpoint:
$\phi_2(-1) = \phi_3(-1) = 0$.
For normalization, we can take $\phi_1(-1)=1$; then the solution $\Phi$ is
unique, and its value at the right endpoint is given by the first column
of the transition matrix:
$\Phi(1) = S(\lambda) \Phi(-1) = S(\lambda) (1,0,0)^T
= (S_{11}(\lambda), S_{21}(\lambda), S_{31}(\lambda))^T$.
This shows that the boundary condition at the right endpoint, $\phi_3(1) = 0$,
is equivalent to $S_{31}(\lambda) = 0$.
In other words: $\lambda$ is an eigenvalue of the first spectral problem
\eqref{eq:spectral-problem-y} if and only if $S_{31}(\lambda) = 0$.

We define the following two \emph{Weyl functions} for the first
spectral problem using the entries from the first column of~$S(\lambda)$:
\begin{equation}
  \label{eq:WZ}
  W(\lambda) = - \frac{S_{21}(\lambda)}{S_{31}(\lambda)}
  ,
  \qquad
  Z(\lambda) = - \frac{S_{11}(\lambda)}{S_{31}(\lambda)}
  .
\end{equation}
The entries of $S(\lambda)$ depend analytically on the parameter~$\lambda$,
so the Weyl functions will be meromorphic, with poles (or possibly
removable singularities) at the eigenvalues.
The signs in \eqref{eq:WZ}
(and also in \eqref{eq:WZ-twin}, \eqref{eq:WZ-adjoint}, \eqref{eq:WZ-adjoint-twin} below)
are chosen so that the residues at these poles will be positive
when $g$ and~$h$ are positive;
see in particular \autoref{thm:weyl-parfrac}.

Similarly, $\lambda$ is an eigenvalue of the twin spectral problem
\eqref{eq:spectral-problem-y-twin} if and only if $\twin S_{31}(\lambda) = 0$,
and we define corresponding Weyl functions
\begin{equation}
  \label{eq:WZ-twin}
  \twin W(\lambda) = - \frac{\twin S_{21}(\lambda)}{\twin S_{31}(\lambda)},
  \qquad
  \twin Z(\lambda) = - \frac{\twin S_{11}(\lambda)}{\twin S_{31}(\lambda)}.
\end{equation}

\begin{theorem}
  \label{thm:Weyl-relation}
  The Weyl functions satisfy the relation
  \begin{equation}
    \label{eq:Weyl-relation}
    Z(\lambda) + W(\lambda) \twin W(-\lambda) + \twin Z(-\lambda) = 0.
  \end{equation}
\end{theorem}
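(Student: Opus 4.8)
The plan is to reduce the claimed identity, which is stated for the meromorphic Weyl functions, to an identity among the entire matrix entries of $S$ and $\twin S$, and then to recognize that latter identity as a single scalar entry of the matrix relation already established in \autoref{cor:S-relation}.

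First I would clear denominators. Writing out the definitions \eqref{eq:WZ} and \eqref{eq:WZ-twin}, and multiplying the claimed relation by $-S_{31}(\lambda)\,\twin S_{31}(-\lambda)$, the equation
\[
Z(\lambda) + W(\lambda)\,\twin W(-\lambda) + \twin Z(-\lambda) = 0
\]
becomes
\[
S_{11}(\lambda)\,\twin S_{31}(-\lambda) - S_{21}(\lambda)\,\twin S_{21}(-\lambda) + S_{31}(\lambda)\,\twin S_{11}(-\lambda) = 0.
\]
Since the entries of $S$ and $\twin S$ are entire in $\lambda$, it suffices to prove this cleared equation as an identity of entire functions; the Weyl relation then follows by dividing through at all points where $S_{31}(\lambda)\,\twin S_{31}(-\lambda)\neq 0$, and extends to the whole $\lambda$-plane by analyticity (the apparent poles at the eigenvalues cause no trouble).

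The key observation is that the left-hand side of the cleared identity is exactly the $(3,1)$ entry of the matrix product in \autoref{cor:S-relation}. Indeed, the third row of the first factor $\J S(\lambda)^T \J$ is $(S_{31},\,-S_{21},\,S_{11})$ evaluated at $\lambda$, while the first column of the second factor $\twin S(-\lambda)$ is $(\twin S_{11},\,\twin S_{21},\,\twin S_{31})^T$ evaluated at $-\lambda$; their inner product is precisely $S_{31}\twin S_{11} - S_{21}\twin S_{21} + S_{11}\twin S_{31}$, which is the expression above. Because \autoref{cor:S-relation} asserts that this product equals the identity matrix, its $(3,1)$ entry vanishes, which is the cleared identity and hence, after dividing back, the theorem.

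I expect the main obstacle to be organizational rather than conceptual: keeping the sign pattern of $\J S^T \J$ straight and matching it correctly against the definitions \eqref{eq:WZ}--\eqref{eq:WZ-twin}, together with the bookkeeping of which factor is evaluated at $\lambda$ and which at $-\lambda$. One should also confirm that it is the $(3,1)$ entry (and not some other vanishing off-diagonal entry) that reproduces exactly the first-column Weyl data, and note that the final division is legitimate because both sides are meromorphic and agree off a discrete set.
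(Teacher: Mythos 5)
Your proposal is correct and is essentially the paper's own proof: the paper likewise extracts the $(3,1)$ entry of the identity $\J S(\lambda)^T \J\,\twin S(-\lambda)=I$ from \autoref{cor:S-relation}, namely $S_{31}(\lambda)\twin S_{11}(-\lambda)-S_{21}(\lambda)\twin S_{21}(-\lambda)+S_{11}(\lambda)\twin S_{31}(-\lambda)=0$, and divides by $-S_{31}(\lambda)\twin S_{31}(-\lambda)$. The only difference is direction of presentation (you clear denominators first and then recognize the matrix entry), which is immaterial.
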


\begin{proof}
  The $(3,1)$ entry in the matrix equality~\eqref{eq:S-relation-elementwise} is
  \begin{equation*}
    S_{31}(\lambda) \twin S_{11}(-\lambda)
    - S_{21}(\lambda) \twin S_{21}(-\lambda)
    + S_{11}(\lambda) \twin S_{31}(-\lambda)
    = 0.
  \end{equation*}
  Division by $-S_{31}(\lambda) \twin S_{31}(-\lambda)$ gives
  the desired result.
\end{proof}

\subsection{Adjoint spectral problems}
\label{sec:adjoint-spectral-problems}

Let us define a bilinear form
on vector-valued functions
\begin{equation*}
  \Phi(y) = \begin{pmatrix} \phi_1(y) \\ \phi_2(y) \\ \phi_3(y) \end{pmatrix}
\end{equation*}
% $\Phi(y)=\bigl(\phi_1(y),\phi_2(y),\phi_3(y)\bigr)^T$
with $\phi_1, \phi_2, \phi_3 \in L^2(-1,1)$:
\begin{equation}
  \label{eq:bilinear-R3}
  \begin{split}
    \bilinearRthree{\Phi}{\Omega}
    &= \int_{-1}^1 \Phi(y)^T \J \, \Omega(y) \, dy
    \\
    &= \int_{-1}^1 \bigl( \phi_1(y) \omega_3(y) - \phi_2(y) \omega_2(y) + \phi_3(y) \omega_1(y) \bigr) \, dy.
  \end{split}
\end{equation}
\autoref{lem:A-conjugation} implies that
\begin{equation*}
  \Bigl( A(\lambda) \Phi \Bigr)^T \J \, \Omega
  = - \Phi^T \J \Bigl( \twin A(-\lambda) \Omega \Bigr),
\end{equation*}
which, together with an integration by parts, leads to
\begin{equation}
  \label{eq:adjoint-operators}
  \bilinearRthree{\left( \tfrac{d}{dy} - A(\lambda) \right) \Phi}{\Omega}
  = \bigl[ \Phi^T \J \, \Omega \bigr]_{y=-1}^{1}
  - \bilinearRthree{\Phi}{\left( \tfrac{d}{dy} - \twin A(-\lambda) \right) \Omega}.
\end{equation}
Now, if $\Phi$ satisfies the boundary conditions
$\phi_2(-1) = \phi_3(-1) = 0 = \phi_3(1)$,
then what remains of the boundary term
$\bigl[ \Phi^T \J \, \Omega \bigr]_{-1}^{1}$
is
\begin{equation*}
  \phi_1(1) \omega_3(1) - \phi_2(1) \omega_2(1) - \phi_1(-1) \omega_3(-1),
\end{equation*}
and this can be killed by imposing the conditions
$\omega_3(-1) = 0 = \omega_2(1) = \omega_3(1)$.
Consequently, when acting on differentiable $L^2$ functions with such
boundary conditions, respectively, the operators
$\tfrac{d}{dy} - A(\lambda)$ and $-\tfrac{d}{dy} + \twin A(-\lambda)$
are adjoint to each other with respect to the bilinear form
$\bilinearRthree{\cdot}{\cdot}$:
\begin{equation*}
  \bilinearRthree{\left( \tfrac{d}{dy} - A(\lambda) \right) \Phi}{\Omega}
  = \bilinearRthree{\Phi}{\left( -\tfrac{d}{dy} + \twin A(-\lambda) \right) \Omega}.
\end{equation*}
This calculation motivates the following definition.
\begin{definition}
  The \emph{adjoint problem} to the spectral problem~\eqref{eq:spectral-problem-y} is
  \begin{subequations}
    \label{eq:adjoint}
    \begin{equation}
      \label{eq:adjoint-ODE}
      \frac{\partial \Omega}{\partial y} = \twin\coeffmatrix(-\lambda) \Omega,
    \end{equation}
    \begin{equation}
      \label{eq:adjoint-BC}
      \omega_3(-1) = 0 = \omega_2(1) = \omega_3(1).
    \end{equation}
  \end{subequations}
\end{definition}

\begin{proposition}
  \label{prop:adjoint-transition-matrix}
  Let $\Omega(y)$ be the unique solution of
  \eqref{eq:adjoint-ODE} which satisfies the boundary conditions
  at the right endpoint, $\omega_2(1) = \omega_3(1) = 0$,
  together with $\omega_1(1) = 1$ (for normalization).
  Then, at the left endpoint $y=-1$, we have
  \begin{equation}
    \label{eq:omega-in-terms-of-S}
    \Omega(-1)
    = \twin S(-\lambda)^{-1}
    \begin{pmatrix} 1 \\ 0 \\ 0 \end{pmatrix}
    = \J S(\lambda)^T \J
    \begin{pmatrix} 1 \\ 0 \\ 0 \end{pmatrix}
    =
    \begin{pmatrix}
      S_{33}(\lambda) \\ -S_{32}(\lambda) \\ S_{31}(\lambda)
    \end{pmatrix}.
  \end{equation}
\end{proposition}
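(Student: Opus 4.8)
The plan is to read off $\Omega(-1)$ from the prescribed right-endpoint data $\Omega(1)$ by propagating the adjoint equation across the interval with its fundamental matrix, and then to convert the resulting expression into the stated closed form using \autoref{cor:S-relation} followed by a short explicit computation.

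First I would identify the fundamental matrix governing the adjoint problem. The adjoint ODE \eqref{eq:adjoint-ODE} has coefficient matrix $\twin\coeffmatrix(-\lambda)$; comparing with the defining equation \eqref{eq:fundsol-twin} for $\twin U$, but with the spectral parameter evaluated at $-\lambda$ rather than $\lambda$, I see that $y \mapsto \twin U(y;-\lambda)$ is the fundamental matrix of \eqref{eq:adjoint-ODE} normalized by $\twin U(-1;-\lambda)=I$. Hence every solution of \eqref{eq:adjoint-ODE} satisfies $\Omega(y) = \twin U(y;-\lambda)\,\Omega(-1)$, and in particular, evaluating at the right endpoint, $\Omega(1) = \twin U(1;-\lambda)\,\Omega(-1) = \twin S(-\lambda)\,\Omega(-1)$.

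Next I would insert the prescribed right-endpoint data. The normalization $\omega_1(1)=1$ together with the boundary conditions $\omega_2(1)=\omega_3(1)=0$ means precisely that $\Omega(1) = (1,0,0)^T$. Since $\det \twin S(-\lambda)=1$ by \autoref{thm:fundmatrix-relation}, the transition matrix is invertible, and solving the relation above yields the first claimed equality $\Omega(-1) = \twin S(-\lambda)^{-1}(1,0,0)^T$. The second equality is then immediate from \autoref{cor:S-relation}, which gives $\J S(\lambda)^T \J \,\twin S(-\lambda) = I$, i.e. $\twin S(-\lambda)^{-1} = \J S(\lambda)^T \J$. Finally, the third equality is a short explicit computation: $\J(1,0,0)^T = (0,0,1)^T$, multiplying on the left by $S(\lambda)^T$ extracts the third row of $S(\lambda)$ as the vector $(S_{31},S_{32},S_{33})^T$, and a final application of $\J$ produces $(S_{33},-S_{32},S_{31})^T$.

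I do not expect a genuine obstacle here; the argument is a direct application of the transition-matrix formalism. The one point that needs care is the bookkeeping of the spectral parameter together with the direction of propagation: because the adjoint problem carries $-\lambda$, the relevant transition matrix is $\twin S(-\lambda)$ rather than $\twin S(\lambda)$, and because the data is prescribed at the right endpoint while the conclusion concerns the left endpoint, one must \emph{invert} the transition matrix rather than apply it directly. Getting these two details right is exactly what makes the $\J$-conjugated form in \autoref{cor:S-relation} align with the entries $S_{33}$, $-S_{32}$, $S_{31}$ appearing in $\Omega(-1)$.
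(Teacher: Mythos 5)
Your proposal is correct and follows essentially the same route as the paper's own proof: identify $\twin S(-\lambda)$ as the transition matrix of the adjoint ODE, write $(1,0,0)^T=\Omega(1)=\twin S(-\lambda)\,\Omega(-1)$, and invoke \autoref{cor:S-relation} to invert. You have merely spelled out the final matrix computation that the paper leaves implicit.
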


\begin{proof}
  Since \eqref{eq:adjoint-ODE} agrees with the twin ODE
  \eqref{eq:dual-cubic-II} except for the sign of $\lambda$,
  the twin transition matrix with $\lambda$ negated,
  $\twin S(-\lambda)$, will relate boundary values of
  \eqref{eq:adjoint-ODE} at $y=-1$ to boundary values at $y=+1$:
  $(1,0,0)^T = \Omega(1) = \twin S(-\lambda) \Omega(-1)$.
  The rest follows from \autoref{cor:S-relation}.
\end{proof}

\begin{corollary}
  The adjoint problem \eqref{eq:adjoint} has the same spectrum as \eqref{eq:spectral-problem-y}.
\end{corollary}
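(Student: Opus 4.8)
The plan is to show that both spectra are cut out by the \emph{same} scalar equation, namely $S_{31}(\lambda) = 0$. For the original problem this has already been established in \autoref{sec:Weyl-functions}: the solution of \eqref{eq:dual-cubic-I} with $\phi_2(-1) = \phi_3(-1) = 0$ and normalization $\phi_1(-1) = 1$ has $\phi_3(1) = S_{31}(\lambda)$, so $\lambda$ is an eigenvalue of \eqref{eq:spectral-problem-y} precisely when $S_{31}(\lambda) = 0$. It therefore suffices to prove that $\lambda$ is an eigenvalue of the adjoint problem \eqref{eq:adjoint} exactly when $S_{31}(\lambda) = 0$ as well.

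First I would note that the adjoint boundary conditions \eqref{eq:adjoint-BC} split naturally into two right-endpoint conditions $\omega_2(1) = \omega_3(1) = 0$ and one left-endpoint condition $\omega_3(-1) = 0$. This is exactly the setup of \autoref{prop:adjoint-transition-matrix}: imposing the two right-endpoint conditions together with the normalization $\omega_1(1) = 1$ determines a unique solution $\Omega$ of \eqref{eq:adjoint-ODE}, whose left-endpoint value is computed there to be $\Omega(-1) = (S_{33}(\lambda), -S_{32}(\lambda), S_{31}(\lambda))^T$. Hence the one remaining condition $\omega_3(-1) = 0$ reads simply $S_{31}(\lambda) = 0$, which is the claimed characterization.

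The only point requiring any care is that the normalization $\omega_1(1) = 1$ does not discard eigenfunctions. But if a nontrivial solution of \eqref{eq:adjoint-ODE} satisfied all three conditions $\omega_1(1) = \omega_2(1) = \omega_3(1) = 0$, then $\Omega(1) = 0$ and hence $\Omega \equiv 0$ by uniqueness for the linear ODE, a contradiction; so every adjoint eigenfunction can be scaled to have $\omega_1(1) = 1$, and the normalized statement loses nothing. I do not expect any genuine obstacle here: all the substantive work — in particular the passage between $\twin S(-\lambda)$ and $S(\lambda)$ via \autoref{cor:S-relation} — has already been carried out inside \autoref{prop:adjoint-transition-matrix}, so the corollary amounts to reading off the third entry of $\Omega(-1)$ and comparing with the eigenvalue condition for \eqref{eq:spectral-problem-y}.
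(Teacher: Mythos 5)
Your argument is correct and is essentially identical to the paper's proof: both read off the third entry $S_{31}(\lambda)$ of $\Omega(-1)$ from \autoref{prop:adjoint-transition-matrix} and note that the remaining condition $\omega_3(-1)=0$ is therefore the same equation $S_{31}(\lambda)=0$ that characterizes the spectrum of \eqref{eq:spectral-problem-y}. Your extra remark that the normalization $\omega_1(1)=1$ loses no eigenfunctions is a sensible (if routine) addition that the paper leaves implicit.
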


\begin{proof}
  By \eqref{eq:omega-in-terms-of-S},
  the remaining boundary condition $\omega_3(-1) = 0$
  for \eqref{eq:adjoint}
  is equivalent to $S_{31}(\lambda) = 0$,
  which, as we saw in the previous section,
  is also the condition for $\lambda$ to be an
  eigenvalue of \eqref{eq:spectral-problem-y}.
\end{proof}

We define Weyl functions for the adjoint problem as follows,
using the entries from the third row of~$S(\lambda)$
appearing in \eqref{eq:omega-in-terms-of-S}:
\begin{equation}
  \label{eq:WZ-adjoint}
  W^*(\lambda) = -\frac{S_{32}(\lambda)}{S_{31}(\lambda)},
  \qquad
  Z^*(\lambda) = -\frac{S_{33}(\lambda)}{S_{31}(\lambda)}.
\end{equation}

To complete the picture, we note that there is of course also an adjoint problem
for the twin spectral problem \eqref{eq:spectral-problem-y-twin}, namely
\begin{subequations}
  \label{eq:adjoint-twin}
  \begin{equation}
    \label{eq:adjoint-twin-ODE}
    \frac{\partial \Omega}{\partial y} = \coeffmatrix(-\lambda) \Omega,
  \end{equation}
  \begin{equation}
    \label{eq:adjoint-twin-BC}
    \omega_3(-1) = 0 = \omega_2(1) = \omega_3(1).
  \end{equation}
\end{subequations}
A similar calculation as above shows that
the eigenvalues are given by the zeros of $\twin S_{31}(\lambda)$,
and hence they are the same as for \eqref{eq:spectral-problem-y-twin}.
We define the twin adjoint Weyl functions as
\begin{equation}
  \label{eq:WZ-adjoint-twin}
  \twin W^*(\lambda)
  = -\frac{\twin S_{32}(\lambda)}{\twin S_{31}(\lambda)},
  \qquad
  \twin Z^*(\lambda)
  = -\frac{\twin S_{33}(\lambda)}{\twin S_{31}(\lambda)}.
\end{equation}

\begin{theorem}
  \label{thm:adjoint-Weyl-relation}
  The adjoint Weyl functions satisfy the relation
  \begin{equation}
    \label{eq:adjoint-Weyl-relation}
    Z^*(\lambda) + W^*(\lambda) \twin W^*(-\lambda) + \twin Z^*(-\lambda) = 0.
  \end{equation}
\end{theorem}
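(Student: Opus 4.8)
The plan is to mimic the proof of \autoref{thm:Weyl-relation} almost verbatim, replacing the first-column Weyl data by the third-row Weyl data. As in that case, the whole identity should fall out of a single scalar entry of the matrix relation in \autoref{cor:S-relation}, divided by the product of the two $(3,1)$-entries $S_{31}(\lambda)$ and $\twin S_{31}(-\lambda)$.

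First I would note that the adjoint Weyl functions in \eqref{eq:WZ-adjoint} and \eqref{eq:WZ-adjoint-twin} are built from the \emph{third rows} of $S$ and $\twin S$, namely $(S_{31},S_{32},S_{33})$ and $(\twin S_{31},\twin S_{32},\twin S_{33})$, whereas the ordinary Weyl functions used the first columns. This dictates which scalar relation to extract. The key observation is that the third-row entries of $S(\lambda)$ appear, up to signs, as the \emph{first column} of $\J S(\lambda)^T \J$, namely $(S_{33}(\lambda),-S_{32}(\lambda),S_{31}(\lambda))^T$ — precisely the vector $\Omega(-1)$ from \autoref{prop:adjoint-transition-matrix}. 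Since $\J S(\lambda)^T\J$ and $\twin S(-\lambda)$ are invertible $3\times 3$ matrices whose product is $I$ (\autoref{cor:S-relation}), they also multiply to $I$ in the opposite order,
\[
  \twin S(-\lambda)\,\J S(\lambda)^T\J = I,
\]
and I would read off the $(3,1)$ entry of this identity. Pairing the third row of $\twin S(-\lambda)$, namely $(\twin S_{31},\twin S_{32},\twin S_{33})$ at $-\lambda$, with the first column $(S_{33},-S_{32},S_{31})^T$ at $\lambda$ gives
\[
  \twin S_{31}(-\lambda)\,S_{33}(\lambda) - \twin S_{32}(-\lambda)\,S_{32}(\lambda) + \twin S_{33}(-\lambda)\,S_{31}(\lambda) = 0.
\]
Dividing through by $-S_{31}(\lambda)\,\twin S_{31}(-\lambda)$ and substituting the definitions \eqref{eq:WZ-adjoint} and \eqref{eq:WZ-adjoint-twin} then yields \eqref{eq:adjoint-Weyl-relation} directly.

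The only point requiring a moment's thought — the ``hard part,'' such as it is — is recognizing that one must pass to the opposite-order product $\twin S(-\lambda)\,\J S(\lambda)^T\J = I$ rather than using \eqref{eq:S-relation-elementwise} as written: the third-row data of $S$ sit in a \emph{column} of $\J S^T\J$, so in order to contract them against the third row of $\twin S(-\lambda)$ that matrix must stand on the left. Once the correct entry is identified, the computation is routine and parallels \autoref{thm:Weyl-relation} exactly. As a sanity check I would track the signs carefully, confirming that the three resulting terms collapse to $Z^*(\lambda)$, $W^*(\lambda)\,\twin W^*(-\lambda)$, and $\twin Z^*(-\lambda)$ with the plus signs claimed in \eqref{eq:adjoint-Weyl-relation}.
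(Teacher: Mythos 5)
Your proposal is correct and is essentially identical to the paper's own proof: both pass to the opposite-order product $\twin S(-\lambda)\,\J S(\lambda)^T \J = I$ and divide its $(3,1)$ entry by $-S_{31}(\lambda)\,\twin S_{31}(-\lambda)$. The sign bookkeeping in your extracted scalar identity is also right, so nothing further is needed.
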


\begin{proof}
  Since a matrix commutes with its inverse, we can equally well multiply the factors in \eqref{eq:S-relation-elementwise}
  in the opposite order: $\twin S(-\lambda) \cdot \J S(\lambda)^T \J = I$.
  Division of the $(3,1)$ entry in this identity by $-S_{31}(\lambda) \twin S_{31}(-\lambda)$ gives
  the result.
\end{proof}

\section{The discrete case}
\label{sec:discrete-on-the-interval}

We now turn to the discrete case \eqref{eq:mn-discrete},
when $m(x)$ and $n(x)$ are discrete measures (linear combinations
of Dirac deltas) with disjoint supports.
More specifically, we will study the \emph{interlacing} discrete case where
there are $N=2K$ sites numbered in ascending order,
\begin{equation*}
  x_1 <  x_2 < \dots < x_{2K},
\end{equation*}
with the measure~$m$ supported on the odd-numbered sites~$x_{2a-1}$,
and the measure~$n$ supported on the even-numbered sites~$x_{2a}$.
That is, we take $m_2=m_4=\dots=0$ and $n_1=n_3=\dots=0$,
so that
\begin{equation}
  \label{eq:mn-interlacing}
  \begin{split}
    m &= 2 \sum_{k=1}^N  m_k \, \delta_{x_k}
    = 2 \sum_{a=1}^K  m_{2a-1} \, \delta_{x_{2a-1}},
    \\
    n &= 2 \sum_{k=1}^N  n_k \, \delta_{x_k}
    = 2 \sum_{a=1}^K  n_{2a} \, \delta_{x_{2a}}.
  \end{split}
\end{equation}
We will also assume that the nonzero $m_k$ and $n_k$ are
\emph{positive}; this will be needed in order to prove that the
eigenvalues $\lambda=-z^2$ are positive.
The setup is illustrated in \autoref{fig:m-n-discrete}.

Given such a configuration, consisting of the $4K$ numbers $\{ x_k, m_{2a-1}, n_{2a} \}$,
we are going to define a set of $4K$ \emph{spectral variables}, consisting of
$2K-1$ \emph{eigenvalues} $\lambda_1,\dots,\lambda_K$ and $\mu_1,\dots,\mu_{K-1}$,
together with $2K+1$ \emph{residues} $a_1,\dots,a_K$, $b_1,\dots,b_{K-1}$, $b_\infty$ and~$b^*_\infty$.
In \autoref{sec:inverse-spectral} we will show that this correspondence
is a bijection onto the set of spectral variables with simple positive ordered eigenvalues
and positive residues (\autoref{thm:spectral-map-bijection}),
and give explicit formulas for the inverse map
(\autoref{cor:peakon-solution-formulas}).

\begin{remark}
  \label{rem:non-interlacing}
  Non-interlacing cases can be reduced to the interlacing case by
  introducing auxiliary weights at additional sites so that the
  problem becomes interlacing, and then letting these weights tend to
  zero in the solution of the interlacing inverse problem; the details will
  be published in another paper.
\end{remark}

\begin{remark}
  \label{rem:K1}
  The case $K=1$ is somewhat degenerate, and also rather trivial.
  It is dealt with separately in \autoref{sec:K1}.
  In what follows, we will (mostly without comment)
  assume that $K \ge 2$ whenever that is needed
  in order to make sense of the formulas.
\end{remark}

\begin{figure}
  \centering
  \ifthenelse{\isundefined{\draft}}{%
  \begin{tikzpicture}[scale=1]
    \draw[->,thick] (0,0) -- (10,0) node[below] {\small $x$};
    \begin{scope}[->, very thick]
      \draw (1,0) node[below] {\small $x_1$} -- +(0,3) node[above] {\small $2m_1$};
      \draw (3,0) node[below] {\small $x_3$} -- +(0,2) node[above] {\small $2m_3$};
      \draw (7.5,0) node[below] {\small $x_{2K-1}$} -- +(0,1) node[above] {\small $2m_{2K-1}$};
    \end{scope}
    \begin{scope}[->, very thick, dashed]
      \draw (1.7,0) node[below] {\small $x_2$} -- +(0,1) node[above] {\small $2n_2$};
      \draw (4.2,0) node[below] {\small $x_4$} -- +(0,1.5) node[above] {\small $2n_4$};
      \draw (9,0) node[below] {\small $x_{2K}$} -- +(0,3) node[above] {\small $2n_{2K}$};
    \end{scope}
    %\draw (5.7,0.5) node {$\cdots$};
    \draw (5.7,0) node[below] {$\cdots$};
  \end{tikzpicture}
  }{\textbf{[Draft mode; no picture]}}
  \caption{Notation for the measures $m$ and $n$ on the real line $\R$ in the interlacing discrete case \eqref{eq:mn-interlacing}.}
  \label{fig:m-n-discrete}
\end{figure}
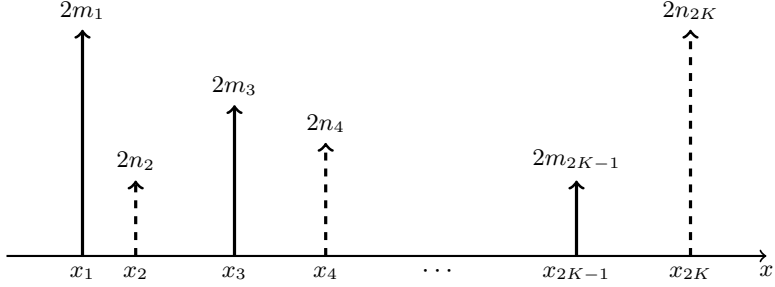

Under the transformation \eqref{eq:liouville-trf},
the discrete measures $m$ and~$n$ on~$\R$ are
mapped into discrete measures $g$ and~$h$, respectively,
supported at the points
\begin{equation}
  \label{eq:yk}
  y_k = \tanh x_k
\end{equation}
in the finite interval $(-1,1)$.
The formulas $g(y) = m(x) \cosh^3 x$ and $h(y) = n(x) \cosh^3 x$
from \eqref{eq:liouville-trf} should be interpreted using the relation
$\delta(x-x_k) dx = \delta(y-y_k) dy$, leading to
$\delta_{x_k}(x) = \delta_{y_k}(y) \frac{dy}{dx}(x_k) = \delta_{y_k}(y) / \cosh^{2} x_k$.
Since we will be working a lot with these measures,
it will be convenient to change the numbering a little,
and call the weights
$g_1,g_2,\dots,g_K$
and
$h_1,h_2,\dots,h_K$
rather than
$g_1,g_3,\dots,g_{2K-1}$
and
$h_2,h_4,\dots,h_{2K}$;
see \autoref{fig:g-h-discrete}.
With this numbering, we get
\begin{equation}
  \label{eq:gh}
  g = \sum_{a=1}^{K} g_{a} \, \delta_{y_{2a-1}}
  , \qquad
  h = \sum_{a=1}^{K} h_{a} \, \delta_{y_{2a}}
  ,
\end{equation}
where
\begin{equation}
  \label{eq:trf-discrete-measures}
  g_a = 2 m_{2a-1} \cosh x_{2a-1}
  , \qquad
  h_a = 2 n_{2a} \cosh x_{2a}
  .
\end{equation}

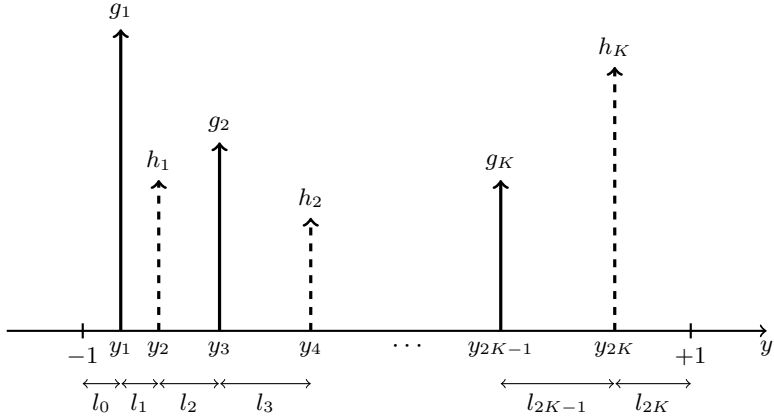
\begin{figure}
  \centering
  \ifthenelse{\isundefined{\draft}}{%
  \begin{tikzpicture}[scale=1]
    \draw[->,thick] (0,0) -- (10,0) node[below] {\small $y$};
    \begin{scope}[thick]
      \draw (1,0.1) -- +(0,-0.2) node[below] {\small $-1$};
      \draw (9,0.1) -- +(0,-0.2) node[below] {\small $+1$};
    \end{scope}
    \begin{scope}[->, very thick]
      \draw (1.5,0) node[below] {\small $y_1$} -- +(0,4) node[above] {\small $g_1$};
      \draw (2.8,0) node[below] {\small $y_3$} -- +(0,2.5) node[above] {\small $g_2$};
      \draw (6.5,0) node[below] {\small $y_{2K-1}$} -- +(0,2) node[above] {\small $g_{K}$};
    \end{scope}
    \begin{scope}[->, very thick, dashed]
      \draw (2,0) node[below] {\small $y_2$} -- +(0,2) node[above] {\small $h_1$};
      \draw (4,0) node[below] {\small $y_4$} -- +(0,1.5) node[above] {\small $h_2$};
      \draw (8,0) node[below] {\small $y_{2K}$} -- +(0,3.5) node[above] {\small $h_{K}$};
    \end{scope}
    %\draw (5.3,0.5) node {$\cdots$};
    \draw (5.3,0) node[below] {$\cdots$};
    \begin{scope}[<->,yshift=-0.7cm]
      \draw (1,0) -- node[below] {\small $l_0$} (1.49,0);
      \draw (1.51,0) -- node[below] {\small $l_1$} (1.99,0);
      \draw (2.01,0) -- node[below] {\small $l_2$} (2.79,0);
      \draw (2.81,0) -- node[below] {\small $l_3$} (4,0);
      \draw (6.5,0) -- node[below] {\small $l_{2K-1}$} (7.99,0);
      \draw (8.01,0) -- node[below] {\small $l_{2K}$} (9,0);
    \end{scope}
  \end{tikzpicture}
  }{\textbf{[Draft mode; no picture]}}
  \caption{Notation for the measures $g$ and $h$ on the finite interval $(-1,1)$ in the interlacing discrete case \eqref{eq:gh}.}
  \label{fig:g-h-discrete}
\end{figure}

\subsection{The first spectral problem}

The ODE \eqref{eq:dual-cubic-I},
$\partial_y \Phi = \coeffmatrix(y;\lambda) \Phi$,
is easily solved explicitly in the discrete case.
Since $g$ and $h$ are zero between the points~$y_k$,
the ODE reduces to
\begin{equation*}
  \frac{\partial}{\partial y}
  \begin{pmatrix}
    \phi_1 \\ \phi_2 \\ \phi_3
  \end{pmatrix}
  =
  \begin{pmatrix}
    0 & 0 & 0 \\
    0 & 0 & 0 \\
    -\lambda & 0 & 0
  \end{pmatrix}
  \begin{pmatrix}
    \phi_1 \\ \phi_2 \\ \phi_3
  \end{pmatrix}
\end{equation*}
in those intervals;
that is, $\phi_1$ and $\phi_2$ are constant in each interval
$y_k < y < y_{k+1}$ (for $0 \le k \le 2K$, where we let $y_0 = -1$ and $y_{2K+1} = +1$),
while $\phi_3$ is piecewise a polynomial in~$y$ of degree one.
The total change in the value of $\phi_3$ over the interval is given
by the product of the length of the interval, denoted
\begin{equation}
  \label{eq:lk}
  l_k = y_{k+1} - y_k,
\end{equation}
and the slope of the graph of~$\phi_3$;
this slope is $-\lambda$ times the constant value of $\phi_1$ in
the interval. In other words:
\begin{equation}
  \label{eq:propagation-L}
  \Phi(y_{k+1}^-) = L_k(\lambda) \Phi(y_k^+),
\end{equation}
where the propagation matrix~$L_k$ is defined by
\begin{equation}
  \label{eq:jump-matrix-L}
  L_k(\lambda) =
  \begin{pmatrix}
    1 & 0 & 0 \\
    0 & 1 & 0 \\
    -\lambda l_k & 0 & 1
  \end{pmatrix}.
\end{equation}
At the points~$y_k$, the ODE
forces the derivative $\partial_y \Phi$ to contain
a Dirac delta, and this imposes jump conditions on~$\Phi$.
These jump conditions will be of different type depending on whether $k$
is even or odd, since that affects whether
the Dirac delta is encountered in entry $(1,2)$ or $(2,3)$
in the coefficient matrix
\begin{equation*}
  \coeffmatrix(y;\lambda) =
  \begin{pmatrix}
    0 & h(y) & 0 \\
    0 & 0 & g(y) \\
    -\lambda & 0 & 0
  \end{pmatrix}.
\end{equation*}
More precisely, when $k=2a$ is even, we get a jump condition of the form
\begin{equation*}
  \Phi(y_k^+) - \Phi(y_k^-) =
  \begin{pmatrix}
    0 & h_a & 0 \\
    0 & 0 & 0 \\
    0 & 0 & 0
  \end{pmatrix}
  \Phi(y_k).
\end{equation*}
This implies that $\phi_2$ and $\phi_3$ don't jump at even-numbered~$y_k$,
and the continuity of $\phi_2$ in particular
implies that the jump in $\phi_1$ has a well-defined value $h_a \phi_2(y_{2a})$.

When $k=2a-1$ is odd, the condition is
\begin{equation*}
  \Phi(y_k^+) - \Phi(y_k^-) =
  \begin{pmatrix}
    0 & 0 & 0 \\
    0 & 0 & g_a \\
    0 & 0 & 0
  \end{pmatrix}
  \Phi(y_k).
\end{equation*}
Thus, $\phi_1$ and $\phi_3$ are continuous at odd-numbered~$y_k$,
and the jump in $\phi_2$ has a well-defined value $g_a \phi_3(y_{2a-1})$.

This step-by-step construction of $\Phi(y)$ is illustrated in
\autoref{fig:Phi} when
$\Phi(-1) = (1,0,0)^T$; as we have already seen,
this particular case is of interest in connection with
the spectral problem \eqref{eq:spectral-problem-y}
where we have boundary conditions
$\phi_2(-1) = \phi_3(-1) = 0 = \phi_3(1)$.

\begin{figure}
  \centering
  \ifthenelse{\isundefined{\draft}}{%
  \begin{tikzpicture}[scale=1]
    \draw[->,thick] (0,0) -- (10,0) node[below] {\small $y$};
    \begin{scope}[thick]
      \draw (1,0.1) -- +(0,-0.2) node[below] {\small $-1$};
      % \draw (9,0.1) -- +(0,-0.2) node[below] {\small $+1$};
    \end{scope}
    \draw (2,0) node[below] {\small $y_1$} (3,0) node[below] {\small $y_2$} (4,0) node[below] {\small $y_3$} (5,0) node[below] {\small $y_4$} (6,0) node[below] {\small $y_5$} (7,0) node[below] {\small $y_6$} (8,0) node[below] {\small $\cdots$};
    \begin{scope}[thick] % phi_1
      \draw (1,1) -- +(2,0);
      \draw (3,0.5) -- +(2,0);
      \draw (5,-0.5) -- +(2,0);
    \end{scope}
    \begin{scope}[thick, dashed] % phi_2
      \draw (1,0.05) -- +(1,0);
      \draw (2,-0.7) -- +(2,0);
      \draw (4,-1.3) -- +(2,0);
      \draw (6,-1.6) -- +(2,0);
    \end{scope}
    \begin{scope}[thick, dotted] % phi_3
      \draw (1,0) -- (3,-2) -- (5,-3) -- (7,-2);
    \end{scope}
    \draw (4.4,0.8) node[right] {\small $\phi_1(y;\lambda)$ (solid)};
    \draw (7.4,-1.3) node[right] {\small $\phi_2(y;\lambda)$ (dashed)};
    \draw (6,-2.7) node[right] {\small $\phi_3(y;\lambda)$ (dotted)};
  \end{tikzpicture}
  }{\textbf{[Draft mode; no picture]}}
  \caption{Structure of the solution to the initial value problem 
    $\partial_y \Phi = \coeffmatrix(y;\lambda) \Phi$
    with $\Phi(-1;\lambda) = (1,0,0)^T$,
    in the discrete interlacing case.
    The components $\phi_1$ and $\phi_2$ are piecewise constant,
    while $\phi_3$ is continuous and piecewise linear,
    with slope equal to $-\lambda$ times the value of~$\phi_1$.
    At the odd-numbered sites $y_{2a-1}$, the value of $\phi_2$ jumps by $g_a \phi_3(y_{2a-1})$.
    At the even-numbered sites $y_{2a}$, the value of $\phi_1$ jumps by $h_a \phi_2(y_{2a})$.
    The parameter~$\lambda$ is an eigenvalue of the spectral problem \eqref{eq:spectral-problem-y}
    iff it is a zero of $\phi_3(1;\lambda)$,
    which is a polynomial in $\lambda$ of degree $K+1$, with constant term zero.
    This picture illustrates a case where $\lambda$ and the weights $g_a$ and $h_a$ are
    all positive.
  }
  \label{fig:Phi}
\end{figure}
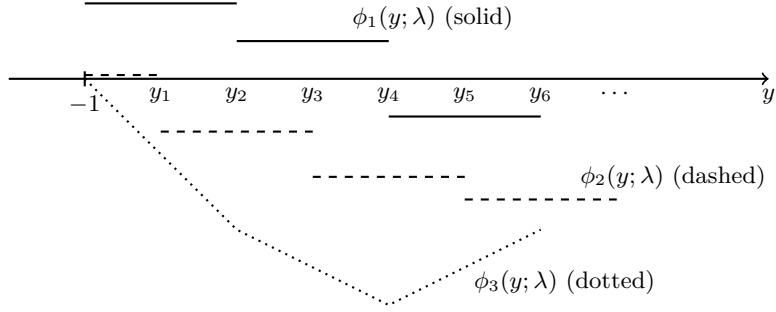

With the notation
\begin{equation}
  \label{eq:jump-matrix}
  \jumpmatrix{x}{y} =
  \begin{pmatrix}
    1 & x & \frac12 xy \\ 0 & 1 & y \\ 0 & 0 & 1
  \end{pmatrix}
  ,
\end{equation}
the jump conditions take the form
\begin{equation*}
  \Phi(y_{2a}^+) = \jumpmatrix{h_a}{0} \Phi(y_{2a}^-),
  \qquad
  \Phi(y_{2a-1}^+) = \jumpmatrix{0}{g_a} \Phi(y_{2a-1}^-).
\end{equation*}
(For the purposes of this paper, the top right entry of $\jumpmatrix{x}{y}$
might as well have been set equal to zero;
we have defined it as $\frac12 xy$ only to make $\jumpmatrix{x}{x}$
agree with a jump matrix appearing in our earlier work
\cite{lundmark-szmigielski:DPlong,hone-lundmark-szmigielski:novikov}.)
We can thus write the transition matrix $S(\lambda)$ as a product of $1+4K$ factors,
\begin{equation}
  \label{eq:S-discrete}
  S(\lambda) =
  L_{2K}(\lambda)
  \jumpmatrix{h_K}{0} L_{2K-1}(\lambda) \jumpmatrix{0}{g_K}  L_{2K-2}(\lambda) 
  \dotsm
  \jumpmatrix{h_1}{0} L_{1}(\lambda) \jumpmatrix{0}{g_1} L_{0}(\lambda).
\end{equation}
For later use in connection with the inverse problem,
we also consider the partial products $T_j(\lambda)$
containing the leftmost $1+4j$ factors (for $j=0,\dots,K$);
put differently, $T_{K-j}(\lambda)$ is
obtained by omitting all factors after $L_{2j}(\lambda)$
in the product for $S(\lambda)$:
\begin{equation}
  \label{eq:Tj}
  T_{K-j}(\lambda) =
  L_{2K}(\lambda)
  \dotsm
  \jumpmatrix{h_{j+1}}{0} L_{2j+1}(\lambda) \jumpmatrix{0}{g_{j+1}} L_{2j}(\lambda)
  .
\end{equation}
Thus $S(\lambda)=T_K(\lambda)$,
and $T_{K-j}(\lambda)$ depends on
$(g_{j+1},\dots,g_K)$,
$(h_{j+1},\dots,h_K)$,
$(l_{2j},\dots,l_{2K})$.

\begin{proposition}
  \label{prop:degrees-Tj}
  The entries of $T_j(\lambda)$ are polynomials in $\lambda$,
  with degrees as follows:
  \begin{equation}
    \label{eq:degrees-Tj}
    \deg T_j(\lambda) =
    \begin{pmatrix}
      j & j-1 & j-1 \\
      j & j-1 & j-1 \\
      j+1 & j & j
    \end{pmatrix}
    \qquad
    (j \ge 1).
  \end{equation}
  The constant term in each entry is given by
  \begin{equation}
    \label{eq:constant-term-Tj}
    T_{K-j}(0) = 
    \jumpmatrix{h_K}{0}
    \jumpmatrix{0}{g_K}
    \dotsm
    \jumpmatrix{h_{j+1}}{0}
    \jumpmatrix{0}{g_{j+1}}
    =
    \begin{pmatrix}
      1 & \displaystyle \sum_{a>j} h_a & \displaystyle \sum_{a \ge b > j} h_a g_b \\[1.2em]
      0 & 1 & \displaystyle \sum_{a>j} g_a \\[1.2em]
      0 & 0 & 1
    \end{pmatrix}.
  \end{equation}
  For those entries whose constant term is zero,
  the coefficient of $\lambda^1$ is given by
  \begin{equation}
    \label{eq:linear-term-Tj}
    \frac{d T_{K-j}}{d\lambda}(0) =
    \begin{pmatrix}
      * & * & * \\[1ex]
      \displaystyle -\sum_{a>j} \sum_{k=2j}^{2a-2} g_a l_k & *  & *\\[1.2em]
      \displaystyle -\sum_{k=2j}^{2K} l_k & \displaystyle -\sum_{a>j} \sum_{k=2a}^{2K} h_a l_k & *
    \end{pmatrix}.
  \end{equation}
  The highest coefficient in the $(3,1)$ entry is given by
  \begin{equation}
    \label{eq:highest-term-Tj31}
    (T_j)_{31}(\lambda) = (-\lambda)^{j+1} \left( \prod_{m=K-j}^K l_{2m} \right) \left( \prod_{a=K+1-j}^K g_a h_a \right)
    + \dotsb
    .
  \end{equation}
\end{proposition}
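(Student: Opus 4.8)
The plan is to exploit the factored form \eqref{eq:S-discrete}--\eqref{eq:Tj} and argue by induction on the number of blocks. Grouping the $4K$ factors after the leading $L_{2K}(\lambda)$ into $K$ units $M_a(\lambda)=\jumpmatrix{h_a}{0}\,L_{2a-1}(\lambda)\,\jumpmatrix{0}{g_a}\,L_{2a-2}(\lambda)$, one has $T_j(\lambda)=L_{2K}(\lambda)\,M_K(\lambda)\cdots M_{K-j+1}(\lambda)$ and hence the recursion $T_j=T_{j-1}\,M_{K-j+1}$, with base case $T_1=L_{2K}M_K$ (whose entries I would write out directly to check \eqref{eq:degrees-Tj} and \eqref{eq:highest-term-Tj31} for $j=1$). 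First I would multiply out $M_a$: its entries are affine in $\lambda$ (the $(3,2)$ entry vanishing identically), and---this is the structural point---its degree-one part is supported entirely in the first column, with coefficients $-l_{2a-2}g_ah_a$, $-l_{2a-2}g_a$, $-(l_{2a-1}+l_{2a-2})$ in rows $1,2,3$.

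Given this, the degree bound \eqref{eq:degrees-Tj} follows by induction: writing $M_a=M_a^{(0)}+\lambda M_a^{(1)}$ with $M_a^{(1)}$ concentrated in column one, the product $T_{j-1}M_a$ can raise degrees only through $\lambda\,T_{j-1}M_a^{(1)}$, and a one-line inspection of the nine entrywise maxima $\max_k[\deg(T_{j-1})_{ik}+\deg(M_a)_{kj}]$ reproduces the claimed pattern. The subtle point is upgrading these \emph{bounds} to \emph{equalities}, i.e. checking that the top coefficients do not vanish. Here the column-one concentration pays off again: for each of the nine entries exactly one product term attains the maximal degree, so no cancellation can occur, and tracing that term back shows the leading coefficient of every entry to be a nonzero multiple of the leading coefficient of a first-column entry of $T_{j-1}$. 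Since $g_a,h_a>0$ and all $l_k>0$ in the interlacing discrete case, these are nonzero, which gives the equalities. Specializing to the $(3,1)$ entry, whose top-degree term comes solely from $(T_{j-1})_{31}\cdot(M_a)_{11}$ with $a=K-j+1$, the coefficient of $\lambda^{j+1}$ acquires a factor $-l_{2(K-j)}\,g_{K-j+1}h_{K-j+1}$ at each step; unwinding the recursion yields exactly \eqref{eq:highest-term-Tj31}.

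The constant and linear terms I would obtain by expanding near $\lambda=0$ instead. Since $L_k(0)=I$, setting $\lambda=0$ in \eqref{eq:Tj} collapses $T_{K-j}(0)$ to the ordered product of jump matrices $\jumpmatrix{h_K}{0}\jumpmatrix{0}{g_K}\cdots\jumpmatrix{h_{j+1}}{0}\jumpmatrix{0}{g_{j+1}}$; as these are upper unitriangular, the product is read off directly---the $(1,2)$ and $(2,3)$ entries accumulate $\sum_{a>j}h_a$ and $\sum_{a>j}g_a$, while the $(1,3)$ entry accumulates $\sum_{a\ge b>j}h_ag_b$---giving \eqref{eq:constant-term-Tj}. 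For the linear term I would differentiate the product at $\lambda=0$: by the product rule only one propagation factor is differentiated at a time, and $L_k'(0)$ is the constant matrix with $-l_k$ in position $(3,1)$ and zeros elsewhere, so $\frac{dT_{K-j}}{d\lambda}(0)=\sum_{k=2j}^{2K} J_{<k}\,L_k'(0)\,J_{>k}$, where $J_{<k}$ and $J_{>k}$ are the constant unitriangular products of the jump matrices lying to the left and right of $L_k$. Because $L_k'(0)$ has rank one, each summand is the outer product of column $3$ of $J_{<k}$ with row $1$ of $J_{>k}$, scaled by $-l_k$; hence the $(3,1)$, $(2,1)$ and $(3,2)$ entries reduce to $-l_k$, $-l_k(J_{<k})_{23}$ and $-l_k(J_{>k})_{12}$ respectively.

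The main obstacle I anticipate is purely combinatorial: identifying, for each $k$, which jump factors sit on each side of $L_k$. Since $\jumpmatrix{0}{g_a}$ sits just left of $L_{2a-2}$ and $\jumpmatrix{h_a}{0}$ just left of $L_{2a-1}$, one finds $(J_{<k})_{23}=\sum_{a>j,\ 2a-2\ge k}g_a$ and $(J_{>k})_{12}=\sum_{a>j,\ 2a\le k}h_a$; interchanging the order of summation then turns the $k$-sum into the inner ranges $\sum_{k=2j}^{2a-2}$ and $\sum_{k=2a}^{2K}$ of \eqref{eq:linear-term-Tj}, while the $(3,1)$ entry sums to $-\sum_{k=2j}^{2K}l_k$. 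Getting these index ranges and parity conventions right is the one place demanding care; everything else is a mechanical consequence of the recursion and the rank-one structure of the $\lambda$-dependent factors.
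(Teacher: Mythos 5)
Your proposal is correct and follows essentially the same route as the paper: your blocks $M_a(\lambda)$ are exactly the paper's $t_a(\lambda)$, and the degree count, the no-cancellation argument for the leading coefficients, the unwinding that gives \eqref{eq:highest-term-Tj31}, and the evaluation at $\lambda=0$ for \eqref{eq:constant-term-Tj} all coincide with the paper's proof. The only (minor) difference is in the linear term, where you differentiate the full product directly using the rank-one structure of $L_k'(0)$ and then interchange the order of summation, while the paper derives entrywise recurrences in $j$ from $T_{K-j+1}(\lambda) = T_{K-j}(\lambda)\,t_j(\lambda)$ and solves them; both computations are routine and your index bookkeeping for \eqref{eq:linear-term-Tj} checks out.
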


\begin{proof}
  Equation \eqref{eq:constant-term-Tj} follows at once from setting $\lambda = 0$
  in \eqref{eq:Tj}.
  Next, group the factors in fours (except for the lone first factor $L_{2K}(\lambda)$)
  so that \eqref{eq:Tj} takes the form
  $T_{K-j} = L_{2K} \, t_{K} \, t_{K-1} \dotsm t_{j+1}$,
  where
  \begin{equation*}
    t_a(\lambda)
    = \jumpmatrix{h_a}{0} L_{2a-1}(\lambda) \jumpmatrix{0}{g_a} L_{2a-2}(\lambda)
    =
    \begin{smallpmatrix}
      1 & h_a & h_a g_a \\
      0 & 1 & g_a \\
      0 & 0 & 1
    \end{smallpmatrix}
    - \lambda
    \begin{smallpmatrix}
      l_{2a-2} h_a g_a \\
      l_{2a-2} g_a \\
      l_{2a-1} + l_{2a-2}
    \end{smallpmatrix}
    (1,0,0).
  \end{equation*}
  The degree count \eqref{eq:degrees-Tj} follows easily by considering
  the highest power of $\lambda$ arising from multiplying these factors,
  and \eqref{eq:highest-term-Tj31} also falls out of this.
  Differentiating $T_{K-j+1}(\lambda) = T_{K-j}(\lambda) \, t_j(\lambda)$
  and letting $\lambda = 0$ gives
  \begin{equation}
    \label{eq:Tj-prime-recursion}
    \frac{d T_{K-j+1}}{d\lambda}(0) =
    \frac{d T_{K-j}}{d\lambda}(0)
    \begin{smallpmatrix}
      1 & h_j & h_j g_j \\
      0 & 1 & g_j \\
      0 & 0 & 1
    \end{smallpmatrix}
    - T_{K-j}(0)
    \begin{smallpmatrix}
      l_{2j-2} h_j g_j & 0 & 0 \\
      l_{2j-2} g_j & 0 & 0 \\
      l_{2j-1} + l_{2j-2} & 0 & 0
    \end{smallpmatrix}.
  \end{equation}
  With the help of \eqref{eq:constant-term-Tj} one sees that
  the $(3,1)$ entry of this equality reads
  \begin{equation}
    (T'_{K-j+1})_{31}(0) = (T'_{K-j})_{31}(0) - (l_{2j-1} + l_{2j-2}),
  \end{equation}
  the $(3,2)$ entry is
  \begin{equation}
    (T'_{K-j+1})_{32}(0) =
    (T'_{K-j})_{32}(0) + h_j (T_{K-j})'_{31}(0),
  \end{equation}
  and the $(2,1)$ entry is
  \begin{equation}
    (T'_{K-j+1})_{21}(0) =
    (T'_{K-j})_{21}(0) - l_{2j-2} g_j - \biggl( \sum_{a>j} g_a \biggr) (l_{2j-1} + l_{2j-2}).
  \end{equation}
  Solving these recurrences, with the inital conditions coming from
  $T_0'(0) = L_{2K}'(0)$ (i.e., $-l_{2K}$ in the $(3,1)$ position, zero elsewhere),
  gives equation \eqref{eq:linear-term-Tj}.
\end{proof}

We also state the result for the important special case $S(\lambda) = T_K(\lambda)$.
(In the $(3,1)$ entry, $\sum_{k=0}^{2K} l_k = 2$
is the length of the whole interval $[-1,1]$.)

\begin{corollary}
  \label{prop:degrees-S}
  The entries of $S(\lambda)$ are polynomials in $\lambda$,
  with
  \begin{equation}
    \label{eq:degrees-S}
    \deg S(\lambda) =
    \begin{pmatrix}
      K & K-1 & K-1 \\
      K & K-1 & K-1 \\
      K+1 & K & K
    \end{pmatrix},
  \end{equation}
  \begin{equation}
    \label{eq:constant-term-S}
    S(0) = 
    \begin{pmatrix}
      1 & \displaystyle \sum_{a} h_a & \displaystyle \sum_{a \ge b} h_a g_b \\[1.2em]
      0 & 1 & \displaystyle \sum_{a} g_a \\[1.2em]
      0 & 0 & 1
    \end{pmatrix},
  \end{equation}
  \begin{equation}
    \label{eq:linear-term-S}
    S'(0) =
    \begin{pmatrix}
      * & * & * \\[1ex]
      \displaystyle -\sum_{a=1}^K \sum_{k=0}^{2a-2} g_a l_k & *  & * \\[1.2em]
      \displaystyle -\sum_{k=0}^{2K} l_k & \displaystyle -\sum_{a=1}^K \sum_{k=2a}^{2K} h_a l_k & *
    \end{pmatrix},
  \end{equation}
  \begin{equation}
    \label{eq:highest-term-S31}
    S_{31}(\lambda) = (-\lambda)^{K+1} \left( \prod_{m=0}^K l_{2m} \right) \left( \prod_{a=1}^K g_a h_a \right)
    + \dotsb
    .
  \end{equation}
\end{corollary}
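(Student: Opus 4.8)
The plan is to obtain every assertion as the special case of \autoref{prop:degrees-Tj} corresponding to the full product $S(\lambda) = T_K(\lambda)$, so that essentially no fresh computation is required; the only subtlety is to keep track of the two indexing conventions ($T_j$ versus $T_{K-j}$) used in the proposition.

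First, for the degree matrix \eqref{eq:degrees-S} I would simply set $j = K$ in \eqref{eq:degrees-Tj}, which replaces each $j$ by $K$ and reproduces the claimed degrees. For the same reason, the highest coefficient of the $(3,1)$ entry \eqref{eq:highest-term-S31} is obtained by putting $j = K$ in \eqref{eq:highest-term-Tj31}: the product $\prod_{m=K-j}^{K} l_{2m}$ becomes $\prod_{m=0}^{K} l_{2m}$, and $\prod_{a=K+1-j}^{K} g_a h_a$ becomes $\prod_{a=1}^{K} g_a h_a$.

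Next, since $S(\lambda) = T_K(\lambda) = T_{K-0}(\lambda)$, the constant and linear terms follow by setting $j = 0$ in \eqref{eq:constant-term-Tj} and \eqref{eq:linear-term-Tj}. With $j = 0$ the summation conditions $a > j$ and $a \ge b > j$ become $a \ge 1$ and $a \ge b \ge 1$ (i.e.\ sums over all sites), and the ranges $k = 2j, \dots$ start at $k = 0$; this turns \eqref{eq:constant-term-Tj} into \eqref{eq:constant-term-S} and \eqref{eq:linear-term-Tj} into \eqref{eq:linear-term-S}.

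Finally, I would record the geometric identity $\sum_{k=0}^{2K} l_k = 2$, which is immediate from \eqref{eq:lk} because the intervals of lengths $l_0, \dots, l_{2K}$ partition $[-1,1]$, whose total length is $2$; this identifies the linear coefficient of $S_{31}$ as $-2$. Since the whole statement is a direct specialization, there is no genuine obstacle here — the one thing to watch is the index translation between $T_j$ and $T_{K-j}$, which must be applied consistently ($j = K$ for the degree and leading-term formulas, $j = 0$ for the constant- and linear-term formulas).
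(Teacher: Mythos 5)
Your proposal is correct and is exactly the paper's (implicit) argument: the corollary is stated as the special case $S(\lambda)=T_K(\lambda)$ of \autoref{prop:degrees-Tj}, with $j=K$ in \eqref{eq:degrees-Tj} and \eqref{eq:highest-term-Tj31} and $j=0$ in \eqref{eq:constant-term-Tj} and \eqref{eq:linear-term-Tj}, plus the observation that $\sum_{k=0}^{2K} l_k = 2$. Your index bookkeeping between $T_j$ and $T_{K-j}$ checks out in all four formulas.
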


\begin{remark}
  The ideas that we use go back to Stieltjes's memoir on
  continued fractions \cite{stieltjes} and its relation to an
  inhomogeneous string problem, especially its inverse problem,
  discovered by Krein in the 1950s.
  A comprehensive account of the inverse string problem can be found in
  \cite{dym-mckean:gaussian}, especially Section~5.9.
  The connection to Stieltjes continued fractions
  is explained in \cite[Supplement~II]{gantmacher-krein}
  and in \cite{beals-sattinger-szmigielski:stieltjes}. 
  Briefly stated, if $\phi(y;\lambda)$ satisfies the string equation
  \begin{equation*}
    -\phi_{yy} = \lambda g(y) \phi
    ,\qquad
    -1 < y < 1
    ,\qquad
    \phi(-1;\lambda) = 0,
  \end{equation*}
  with a discrete mass distribution $g(y)=\sum_{j=1}^n g_j \delta_{y_j}$,
  then the Weyl function
  $W(\lambda) = \frac{\phi_y(1;\lambda)}{\phi(1;\lambda)}$
  admits the continued fraction expansion
  \begin{equation*}
    \label{eq:W-contfrac}
    W(z)
    = \cfrac{1}{l_n +
      \cfrac{1}{-z g_n +
        \cfrac{1}{l_{n-1} +
          \cfrac{1}{\raisebox{1.5ex}{$\ddots$} +
            \cfrac{1}{-z g_2 +
              \cfrac{1}{l_1+
                \cfrac{1}{-z g_1 +
                  \cfrac{1}{l_0}
                }}}}}}}
  \end{equation*}
  (where $l_j = y_{j+1}-y_j$),
  whose convergents (Pad\'{e} approximants)
  $T_{2j}(\lambda) = \frac{P_{2j}(\lambda)}{Q_{2j}(\lambda)}$ satisfy
  \begin{equation*}
    \begin{aligned}
      P_{2j}(\lambda) &= (-1)^j g_n \biggl( \prod_{k=n-j+1}^{n-1} l_k g_k \biggr) \lambda^{j} + \dotsb
      ,\\
      Q_{2j}(\lambda) &= (-1)^j \biggl( \prod_{k=n-j+1}^n l_k g_k \biggr) \lambda ^j + \dotsb
      .
    \end{aligned}
  \end{equation*}
  % Check with Mathematica:
  % Collect[Numerator[FromContinuedFraction[{0,l5,-z g5,l4,-z g4,l3,-z g3}]],z]
  % Collect[Denominator[FromContinuedFraction[{0,l5,-z g5,l4,-z g4,l3,-z g3}]],z]
\end{remark}

\subsection{The second spectral problem}

For the twin ODE \eqref{eq:dual-cubic-II},
$\partial_y \Phi = \twin\coeffmatrix(y;\lambda) \Phi$,
where the measures $g$ and $h$ are swapped,
the construction is similar.
The only difference is that the weights $g_a$
at the odd-numbered sites will occur in the type of jump condition
that we previously had for the weights $h_a$ at the even-numbered sites
(and vice versa).
Thus, the transition matrix is in this case
\begin{equation}
  \label{eq:S-twin-discrete}
  \twin S(\lambda) =
  L_{2K}(\lambda) \jumpmatrix{0}{h_K}
  L_{2K-1}(\lambda) \jumpmatrix{g_K}{0}
  L_{2K-2}(\lambda) 
  \dotsm
  \jumpmatrix{0}{h_1}
  L_{1}(\lambda) \jumpmatrix{g_1}{0}
  L_{0}(\lambda).
\end{equation}
This solution is illustrated in \autoref{fig:Phi-twin}
for the initial condition $\Phi(-1) = (1,0,0)^T$.
It is clear that it behaves a bit differently,
since the first weight $g_1$ has no influence on this
solution $\Phi$, and therefore not on the second spectrum either.
(The first column in $\jumpmatrix{g_1}{0} L_{0}(\lambda)$
does not depend on~$g_1$.)

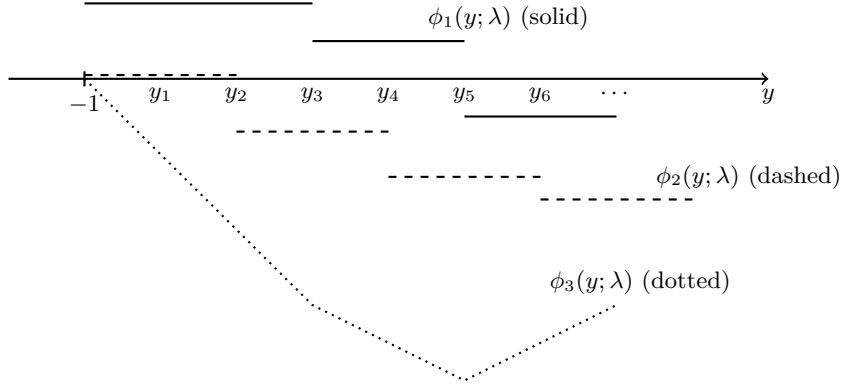
\begin{figure}
  \centering
  \ifthenelse{\isundefined{\draft}}{%
  \begin{tikzpicture}[scale=1]
    \draw[->,thick] (0,0) -- (10,0) node[below] {\small $y$};
    \begin{scope}[thick]
      \draw (1,0.1) -- +(0,-0.2) node[below] {\small $-1$};
      % \draw (9,0.1) -- +(0,-0.2) node[below] {\small $+1$};
    \end{scope}
    \draw (2,0) node[below] {\small $y_1$} (3,0) node[below] {\small $y_2$} (4,0) node[below] {\small $y_3$} (5,0) node[below] {\small $y_4$} (6,0) node[below] {\small $y_5$} (7,0) node[below] {\small $y_6$} (8,0) node[below] {\small $\cdots$};
    \begin{scope}[thick] % phi_1
      \draw (1,1) -- +(3,0);
      \draw (4,0.5) -- +(2,0);
      \draw (6,-0.5) -- +(2,0);
    \end{scope}
    \begin{scope}[thick, dashed] % phi_2
      \draw (1,0.05) -- +(2,0);
      \draw (3,-0.7) -- +(2,0);
      \draw (5,-1.3) -- +(2,0);
      \draw (7,-1.6) -- +(2,0);
    \end{scope}
    \begin{scope}[thick, dotted] % phi_3
      \draw (1,0) -- (4,-3) -- (6,-4) -- (8,-3);
    \end{scope}
    \draw (5.4,0.8) node[right] {\small $\phi_1(y;\lambda)$ (solid)};
    \draw (8.4,-1.3) node[right] {\small $\phi_2(y;\lambda)$ (dashed)};
    \draw (7,-2.7) node[right] {\small $\phi_3(y;\lambda)$ (dotted)};
  \end{tikzpicture}
  }{\textbf{[Draft mode; no picture]}}
  \caption{Structure of the solution to the twin problem 
    $\partial_y \Phi = \twin\coeffmatrix(y;\lambda) \Phi$
    with $\Phi(-1;\lambda) = (1,0,0)^T$,
    in the discrete interlacing case.
    The differences compared to \autoref{fig:Phi} are the following:
    At the odd-numbered sites $y_{2a-1}$, the value of~$\phi_1$ (not~$\phi_2$) jumps by $g_a \phi_2(y_{2a-1})$.
    At the even-numbered sites $y_{2a}$, the value of~$\phi_2$ (not~$\phi_1$) jumps by $h_a \phi_3(y_{2a})$.
    The parameter~$\lambda$ is an eigenvalue of the twin spectral problem \eqref{eq:spectral-problem-y-twin}
    iff it is a zero of $\phi_3(1;\lambda)$,
    which is a polynomial in $\lambda$ of degree $K$ (not $K+1$), with constant term zero.
    Note that the first mass $g_1$ has no influence here.
    (Indeed, since $\phi_2(y_1;\lambda) = 0$, there is no jump in $\phi_1$ at~$y=y_1$,
    regardless of the value of~$g_1$.)
  }
  \label{fig:Phi-twin}
\end{figure}

Let $\twin T_j(\lambda)$ be the partial product
containing the first $1+4j$ factors in the product for $\twin S(\lambda)$;
in other words,
\begin{equation}
  \label{eq:Tj-twin}
  \twin T_{K-j}(\lambda) =
  L_{2K}(\lambda)
  \dotsm
  \jumpmatrix{0}{h_{j+1}} L_{2j+1}(\lambda) \jumpmatrix{g_{j+1}}{0} L_{2j}(\lambda).
\end{equation}

\begin{proposition}
  \label{prop:degrees-Tj-twin}
  The entries of $\twin T_j(\lambda)$ are polynomials in $\lambda$,
  satisfying
  \begin{equation}
    \label{eq:T1-twin}
    \twin T_1(\lambda) =
    \begin{pmatrix}
      1 & g_K & 0 \\
      -\lambda h_K (l_{2K-1}+l_{2K-2}) & 1- \lambda h_K g_K l_{2K-1} & h_K \\
      -\lambda (l_{2K}+l_{2K-1}+l_{2K-2}) & -\lambda g_K (l_{2K}+l_{2K-1}) & 1
    \end{pmatrix},
  \end{equation}
  \begin{equation}
    \label{eq:degrees-Tj-twin}
    \deg \twin T_j(\lambda) =
    \begin{pmatrix}
      j-1 & j-1 & j-2 \\
      j & j & j-1 \\
      j & j & j-1
    \end{pmatrix}
    \qquad
    (j \ge 2),
  \end{equation}
  \begin{equation}
    \label{eq:constant-term-Tj-twin}
    \twin T_{K-j}(0) = 
    \jumpmatrix{0}{h_K}
    \jumpmatrix{g_K}{0}
    \dotsm
    \jumpmatrix{0}{h_{j+1}}
    \jumpmatrix{g_{j+1}}{0}
    =
    \begin{pmatrix}
      1 & \displaystyle\sum_{a>j} g_a & \displaystyle\sum_{a > b > j} g_a h_b \\[1.2em]
      0 & 1 & \displaystyle\sum_{a>j} h_a \\[1.2em]
      0 & 0 & 1
    \end{pmatrix},
  \end{equation}
  \begin{equation}
    \label{eq:linear-term-Tj-twin}
    \frac{d \twin T_{K-j}}{d\lambda}(0) =
    \begin{pmatrix}
      * & * & * \\[1ex]
      \displaystyle -\sum_{a>j} \sum_{k=2j}^{2a-1} h_a l_k & *  & *\\[1.2em]
      \displaystyle -\sum_{k=2j}^{2K} l_k & \displaystyle -\sum_{a>j} \sum_{k=2a-1}^{2K} g_a l_k & *
    \end{pmatrix}
    .
  \end{equation}
  For $0 \le j \le K-2$,
  the highest coefficients in the $(2,1)$ and $(3,1)$ entries are given by
  \begin{equation}
    \label{eq:highest-term-Tj21-twin}
    (\twin T_{K-j})_{21}(\lambda) = (-\lambda)^{K-j} \left( \prod_{m=j+2}^{K} l_{2m-1} \right) (l_{2j+1}+l_{2j}) \, h_K \left( \prod_{a=j}^{K-1} g_{a+1} h_a \right)
    + \dotsb
    ,
  \end{equation}
  \begin{multline}
    \label{eq:highest-term-Tj31-twin}
    (\twin T_{K-j})_{31}(\lambda) =
    \\
    (-\lambda)^{K-j} (l_{2K}+l_{2K-1}) \left( \prod_{m=j+2}^{K-1} l_{2m-1} \right) (l_{2j+1}+l_{2j}) \left( \prod_{a=1}^{K-1} g_{a+1} h_a \right)
    + \dotsb
    ,
  \end{multline}
  where $\prod_{m=K}^{K-1} = 1$.
  Moreover, $\twin T_j$ and $T_j$ are related by the involution $\sigma$
  (see \autoref{def:involution-sigma}):
  \begin{equation}
    \label{eq:Tj-involution}
    \twin T_j(\lambda) = T_j(\lambda)^\sigma.
  \end{equation}
\end{proposition}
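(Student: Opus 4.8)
The statement bundles together several distinct claims, and I would attack them along two independent lines. The identity \eqref{eq:Tj-involution}, which relates $\twin T_j$ to $T_j$ through the involution $\sigma$ of \autoref{def:involution-sigma}, is the conceptual heart and is cheap to prove directly. The remaining assertions about degrees, constant terms, linear terms and leading coefficients I would establish by a direct recursion running exactly parallel to the proof of \autoref{prop:degrees-Tj}. It is worth stressing at the outset that the involution \emph{cannot} be used to shortcut the coefficient data: since $X(\lambda)^\sigma = \J X(-\lambda)^{-T}\J$ and $\det T_j = 1$, the relation \eqref{eq:Tj-involution} expresses each entry of $\twin T_j$ as a $2\times 2$ minor of $T_j$ (exactly as in \autoref{cor:S-relation}); from \eqref{eq:degrees-Tj} these minors have apparent degree up to $2j$, whereas the true degrees in \eqref{eq:degrees-Tj-twin} are at most $j$. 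The heavy cancellation this entails is what makes the minor route unpleasant, so I keep the two threads separate.

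To prove \eqref{eq:Tj-involution} I would first record how $\sigma$ acts on the elementary factors. A one-line computation from the definition, using that all factors have determinant one, shows that $\sigma$ fixes each propagation matrix $L_k(\lambda)$ and that it sends the jump matrix $\jumpmatrix{x}{y}$ to $\jumpmatrix{y}{x}$ (the two scalar arguments are interchanged). Since $\sigma$ is an order-preserving group homomorphism (the remark following \autoref{def:involution-sigma}), applying it to the defining product \eqref{eq:Tj} turns every factor $\jumpmatrix{h_a}{0}$ into $\jumpmatrix{0}{h_a}$ and every factor $\jumpmatrix{0}{g_a}$ into $\jumpmatrix{g_a}{0}$, while leaving the $L_k(\lambda)$ untouched. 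The result is precisely the product \eqref{eq:Tj-twin} defining $\twin T_{K-j}$, which is \eqref{eq:Tj-involution}.

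For the polynomial data I would set up the recursion. Grouping the factors of \eqref{eq:Tj-twin} in fours gives $\twin T_{K-j} = L_{2K}\,\twin t_K\,\twin t_{K-1}\dotsm\twin t_{j+1}$, where $\twin t_a = \jumpmatrix{0}{h_a} L_{2a-1}(\lambda)\jumpmatrix{g_a}{0} L_{2a-2}(\lambda)$ is the $\sigma$-image of the block $t_a$ from the proof of \autoref{prop:degrees-Tj}. A direct multiplication shows that $\twin t_a(\lambda)$ equals its constant (jump-matrix) part minus $\lambda$ times a rank-one term,
\[ \twin t_a(\lambda) = \begin{pmatrix} 1 & g_a & 0 \\ 0 & 1 & h_a \\ 0 & 0 & 1 \end{pmatrix} - \lambda \begin{pmatrix} 0 \\ h_a \\ 1 \end{pmatrix}\bigl( l_{2a-1}+l_{2a-2},\ g_a l_{2a-1},\ 0 \bigr), \]
and the base case \eqref{eq:T1-twin} is then just the single product $\twin T_1 = L_{2K}\,\twin t_K$. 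From the recursion $\twin T_{K-j+1}(\lambda) = \twin T_{K-j}(\lambda)\,\twin t_j(\lambda)$, with start $\twin T_0 = L_{2K}$, I would read off the remaining formulas in turn: setting $\lambda = 0$ makes the product telescope into the upper-triangular constant matrix \eqref{eq:constant-term-Tj-twin}; the degree table \eqref{eq:degrees-Tj-twin} follows by induction, tracking how multiplication by each $\twin t_j$ raises the degree of each entry; differentiating the recursion at $\lambda = 0$ yields first-order recurrences for the $(2,1)$, $(3,1)$ and $(3,2)$ entries of the linear-term matrix, whose solution (with initial data $\twin T_0'(0) = L_{2K}'(0)$) is \eqref{eq:linear-term-Tj-twin}.

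The main obstacle, as in the untwinned case, is the pair of leading-coefficient formulas \eqref{eq:highest-term-Tj21-twin} and \eqref{eq:highest-term-Tj31-twin} for the $(2,1)$ and $(3,1)$ entries. Because those entries have degree only $K-j$ — far below the count one would naively expect from the number of $\lambda$-carrying factors — one must pin down exactly which combination of sub-leading contributions survives at the top order and verify that it does not vanish. The rank-one structure of the $\lambda$-part of $\twin t_a$ is what keeps this bookkeeping manageable: each extra block can inject its $\lambda$-term only through the fixed column $(0,h_a,1)^T$, which forces the surviving product to be the single chain displayed in \eqref{eq:highest-term-Tj21-twin}--\eqref{eq:highest-term-Tj31-twin}. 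I would handle the boundary behaviour (the range $0 \le j \le K-2$ and the empty-product convention $\prod_{m=K}^{K-1} = 1$) by checking the first step of the recursion directly against \eqref{eq:T1-twin}. This coefficient accounting is the only genuinely delicate part; everything else is a mechanical transcription of the argument already used for \autoref{prop:degrees-Tj}.
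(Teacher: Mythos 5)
Your proposal is correct and follows essentially the same route as the paper: the relation \eqref{eq:Tj-involution} comes from the factor-level identities $L_k(\lambda)^\sigma = L_k(\lambda)$ and $\jumpmatrix{x}{y}^\sigma = \jumpmatrix{y}{x}$ together with the homomorphism property of $\sigma$, and the degree/coefficient data come from a recursion over $\lambda$-linear blocks with rank-one $\lambda$-part. The only (cosmetic) difference is the choice of blocks: the paper groups the product as $L_{2K}$ times a leading pair $\jumpmatrix{0}{h_K}L_{2K-1}$, times quadruples $\jumpmatrix{g_{a+1}}{0}L_{2a}\jumpmatrix{0}{h_a}L_{2a-1}$ of the same shape as the $t_a$ from \autoref{prop:degrees-Tj}, times a trailing pair $\jumpmatrix{g_{j+1}}{0}L_{2j}$, whereas you use the $\sigma$-images $\twin t_a$ directly; both lead to the same bookkeeping. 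One small caution on your last paragraph: for the $(3,1)$ entry the top coefficient is \emph{not} a single chain --- the factor $L_{2K}$ also contributes at top order (its $-\lambda l_{2K}\,e_3^{}e_1^T$ part acting on the degree-$(K-j-1)$ part of the first row of the block product), and it is the sum of this with the all-blocks chain that produces the composite factor $(l_{2K}+l_{2K-1})$ in \eqref{eq:highest-term-Tj31-twin}; your recursion handles this automatically, but the ``single chain'' picture literally applies only to the $(2,1)$ entry.
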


\begin{proof}
  The degree count and the coefficients are obtained like in the proof of
  \autoref{prop:degrees-Tj}, although the details are a bit more
  involved in this case.
  (Group the factors in $\twin T_j$ as follows:
  $L_{2K}(\lambda)$ times a pair of factors,
  times a number a quadruples of the same form as $t_a(\lambda)$ in
  the proof of \autoref{prop:degrees-Tj} but with $h_a$ and~$g_a$
  replaced by $g_{a+1}$ and~$h_a$ respectively,
  times a final pair at the end.)

  The $\sigma$-relation \eqref{eq:Tj-involution} can be seen as yet
  another manifestation of \autoref{thm:fundmatrix-relation}, and
  (since $\sigma$ is a group homomorphism) it also follows directly
  from the easily verified formulas
  $L_k(\lambda)^\sigma = L_k(\lambda)$ and $\jumpmatrix{x}{y}^\sigma = \jumpmatrix{y}{x}$.
\end{proof}

We record the results in particular for the case $\twin S(\lambda) = \twin T_K(\lambda)$:
\begin{corollary}
  \label{prop:degrees-S-twin}
  The entries of $\twin S(\lambda)$ are polynomials in $\lambda$,
  satisfying
  \begin{equation}
    \label{eq:degrees-S-twin}
    \deg \twin S(\lambda) =
    \begin{pmatrix}
      K-1 & K-1 & K-2 \\
      K & K & K-1 \\
      K & K & K-1
    \end{pmatrix},
  \end{equation}
  \begin{equation}
    \label{eq:constant-term-S-twin}
    \twin S(0) =
    \begin{pmatrix}
      1 & \displaystyle \sum_{a} g_a & \displaystyle \sum_{a > b} g_a h_b \\[1.2em]
      0 & 1 & \displaystyle \sum_{a} h_a \\[1.2em]
      0 & 0 & 1
    \end{pmatrix},
  \end{equation}
  \begin{equation}
    \label{eq:linear-term-S-twin}
    \twin S'(0) =
    \begin{pmatrix}
      * & * & * \\[1ex]
      \displaystyle -\sum_{a=1}^K \sum_{k=0}^{2a-1} h_a l_k & *  & * \\[1.2em]
      -2 & \displaystyle -\sum_{a=1}^K \sum_{k=2a-1}^{2K} g_a l_k & *
    \end{pmatrix}.
  \end{equation}
  (The interpretation of \eqref{eq:degrees-S-twin} when $K=1$ is that
  the $(1,3)$ entry is the zero polynomial.)
  The leading terms of $\twin S_{21}(\lambda)$
  and~$\twin S_{31}(\lambda)$ are given by
  \begin{equation}
    \label{eq:highest-term-S21-twin}
    \twin S_{21}(\lambda) = (-\lambda)^{K} \left( \prod_{m=2}^{K} l_{2m-1} \right) (l_{0}+l_{1}) \, h_K \left( \prod_{a=1}^{K-1} g_{a+1} h_a \right)
    + \dotsb
    ,
  \end{equation}
  \begin{equation}
    \label{eq:highest-term-S31-twin}
    \twin S_{31}(\lambda) = (-\lambda)^{K} (l_{2K}+l_{2K-1}) \left( \prod_{m=2}^{K-1} l_{2m-1} \right) (l_{0}+l_{1}) \left( \prod_{a=1}^{K-1} g_{a+1} h_a \right)
    + \dotsb
    ,
  \end{equation}
  with the exception of the case $K=1$ where we simply have $\twin S_{31}(\lambda) = -2\lambda$.
  (The empty product $\prod_{m=2}^{1} l_{2m-1}$ is omitted from $\twin S_{31}$ in the case $K=2$,
  and from $\twin S_{21}$ in the case $K=1$.)
\end{corollary}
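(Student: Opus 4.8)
The plan is to obtain the entire corollary by specializing \autoref{prop:degrees-Tj-twin} to the case $\twin S(\lambda) = \twin T_K(\lambda)$, that is, by setting $K-j=0$ (equivalently $j=K$) throughout. First I would read off the degree matrix \eqref{eq:degrees-S-twin} by putting $j=K$ in \eqref{eq:degrees-Tj-twin}; since that degree count was established only for $j\ge 2$, this argument covers all $K\ge 2$. The constant term \eqref{eq:constant-term-S-twin} then comes from \eqref{eq:constant-term-Tj-twin} at $j=0$, where the restricted sums $\sum_{a>0}$ and $\sum_{a>b>0}$ become the unrestricted sums $\sum_a$ and $\sum_{a>b}$ over all admissible indices.

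For the linear term \eqref{eq:linear-term-S-twin} I would again set $j=0$ in \eqref{eq:linear-term-Tj-twin}. The $(2,1)$ and $(3,2)$ entries are then immediate, while the $(3,1)$ entry is the one place needing a genuinely new input: the general formula yields $-\sum_{k=0}^{2K} l_k$, and this equals $-2$ because the lengths $l_0,\dots,l_{2K}$ partition the full interval $[-1,1]$, exactly as recorded for the non-twin statement \autoref{prop:degrees-S}. The leading coefficients \eqref{eq:highest-term-S21-twin} and \eqref{eq:highest-term-S31-twin} likewise follow from \eqref{eq:highest-term-Tj21-twin} and \eqref{eq:highest-term-Tj31-twin} at $j=0$; at this point I would verify that the surviving weight factors are precisely $g_2,\dots,g_K$ and $h_1,\dots,h_K$, so that $g_1$ drops out in accordance with the earlier observation that the first mass has no influence on the twin solution.

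What remains is purely the bookkeeping of degenerate indices. The case $K=1$ lies outside the range $j\ge 2$ of \eqref{eq:degrees-Tj-twin}, so I would dispatch it by hand from the explicit matrix \eqref{eq:T1-twin}: its entries directly exhibit the claimed degree pattern with the $(1,3)$ entry identically zero (matching the parenthetical remark), and its $(3,1)$ entry is $-\lambda(l_0+l_1+l_2)=-2\lambda$. For $K=2$ one must interpret the empty product $\prod_{m=2}^{1} l_{2m-1}$ in \eqref{eq:highest-term-S31-twin} (and, for $K=1$, in \eqref{eq:highest-term-S21-twin}) as $1$ and check agreement with a direct computation. I do not anticipate any real obstacle here, since the content is just the substitution $j=0$ (or $j=K$) into \autoref{prop:degrees-Tj-twin} together with the identity $\sum_{k=0}^{2K} l_k = 2$; the only step demanding genuine care is reconciling the empty-product conventions in the small cases $K=1,2$ so as to confirm that no spurious weight (such as $g_1$, or a nonexistent $h_0$) creeps into the leading terms.
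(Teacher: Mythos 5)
Your proposal is correct and coincides with the paper's own (implicit) argument: the corollary is obtained exactly by specializing \autoref{prop:degrees-Tj-twin} to $\twin S=\twin T_K$ (i.e.\ $j=K$ in \eqref{eq:degrees-Tj-twin} and $j=0$ in the $\twin T_{K-j}$ formulas), using $\sum_{k=0}^{2K}l_k=2$ for the $(3,1)$ entry and the explicit matrix \eqref{eq:T1-twin} for $K=1$. Your extra care with the empty-product conventions and with checking that no spurious weight such as $h_0$ or $g_1$ survives in the leading terms is exactly the right bookkeeping to do here.
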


\subsection{Weyl functions and spectral measures}

Since the entries of $S(\lambda)$ are polynomials,
the Weyl functions $W=-S_{21}/S_{31}$ and $Z=-S_{11}/S_{31}$
are rational functions in the discrete case.
They have poles at the eigenvalues of the spectral problem \eqref{eq:spectral-problem-y}.
Likewise, the twin Weyl functions
$\twin W=-\twin S_{21}/\twin S_{31}$, $\twin Z=-\twin S_{11}/\twin S_{31}$
are rational functions, with poles 
at the eigenvalues of the twin spectral problem \eqref{eq:spectral-problem-y-twin}.

\begin{theorem}
  \label{thm:simple-spectra}
  If all $g_k$ and $h_k$ are positive,
  then both spectra are nonnegative and simple.
  The eigenvalues of \eqref{eq:spectral-problem-y} and
  \eqref{eq:spectral-problem-y-twin} will be denoted by
  \begin{equation}
    0 = \lambda_0 < \lambda_1 < \dots < \lambda_K
    \qquad\text{(zeros of $S_{31}$)},
  \end{equation}
  \begin{equation}
    0 = \mu_0 < \mu_1 < \dots < \mu_{K-1}
    \qquad\text{(zeros of $\twin S_{31}$)}.
  \end{equation}
\end{theorem}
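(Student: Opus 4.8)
The plan is to reduce each boundary value problem in the discrete case to a finite matrix eigenvalue problem and then to invoke the Gantmacher--Krein theory of oscillatory matrices \cite{gantmacher-krein}. First note that the counting and the values $\lambda_0=\mu_0=0$ are already forced by \autoref{prop:degrees-S} and \autoref{prop:degrees-S-twin}. Indeed, $S_{31}$ has degree $K+1$ by \eqref{eq:degrees-S}, vanishing constant term by \eqref{eq:constant-term-S}, and nonzero leading coefficient by \eqref{eq:highest-term-S31}, so it factors as $S_{31}(\lambda)=\lambda\,p(\lambda)$ with $\deg p=K$; likewise $\twin S_{31}(\lambda)=\lambda\,\twin p(\lambda)$ with $\deg\twin p=K-1$. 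Hence $\lambda=0$ is always an eigenvalue, and it remains only to prove that the $K$ zeros of $p$ and the $K-1$ zeros of $\twin p$ are real, strictly positive, and simple.

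Next I would recast \eqref{eq:spectral-problem-y} as an integral equation. Solving $\phi_3'=-\lambda\phi_1$, $\phi_1'=h\phi_2$, $\phi_2'=g\phi_3$ subject to $\phi_2(-1)=\phi_3(-1)=0$, and then using the remaining condition $\phi_3(1)=0$ to eliminate the free constant $\phi_1(-1)$, expresses every eigenfunction through the sampled values $u_a=\phi_3(y_{2a-1})$ at the $g$-sites. This yields a relation of the form $u=\lambda\,\Gamma D_g\,u$, where $D_g=\diag(g_1,\dots,g_K)$ and the entries of the $K\times K$ matrix $\Gamma$ are built from the interlacing geometry $\{y_k\}$ integrated against the masses $h_b$; that is, $\Gamma$ is the Green's function of the problem sampled at the support points. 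Equivalently $\tfrac1\lambda u=\Gamma D_g u$, so the nonzero eigenvalues $\lambda_a$ are exactly the reciprocals of the eigenvalues of $\Gamma D_g$.

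The crucial — and hardest — step is to show that $\Gamma D_g$ is an \emph{oscillatory} matrix in the sense of Gantmacher and Krein: a nonsingular, totally nonnegative matrix a suitable power of which is strictly totally positive. Since $D_g$ is a positive diagonal matrix (all $g_a>0$), $\Gamma D_g$ inherits this property from $\Gamma$, so it suffices to establish that $\Gamma$ is oscillatory, i.e. that all its minors are nonnegative and the relevant ones strictly positive. I would prove this by treating $\Gamma$ as a Green's-function kernel evaluated at the increasing nodes $y_{2a-1}$ and verifying the minor positivity directly from the ordering of the $y_k$ and the positivity of the $h_b$; one expects the total positivity ultimately to be traceable, after a change of variables, to the canonical oscillatory Cauchy kernel $1/(x+y)$ that underlies the biorthogonal-polynomial theory referenced in the introduction. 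In the earlier single-measure applications this kernel symmetrizes and reduces to a classical oscillatory kernel, whereas here the two interlacing measures $g$ and $h$ make it genuinely nonsymmetric, which is precisely the ``hidden'' oscillatory structure that must be uncovered. Granting it, the Gantmacher--Krein theorem gives $K$ real, strictly positive, simple eigenvalues of $\Gamma D_g$, whence the $\lambda_a$ are real, positive, and simple.

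Finally, for the twin spectrum I would run the identical argument with $g$ and $h$ interchanged, observing that the first weight $g_1$ drops out entirely (as illustrated in \autoref{fig:Phi-twin}), which reduces the matrix to size $K-1$ and matches $\deg\twin p=K-1$; alternatively, the result can be transported through the involution $\twin S(\lambda)=S(\lambda)^\sigma$ of \autoref{thm:fundmatrix-relation} together with \autoref{cor:S-relation}. Reality of both spectra can also be obtained independently by working directly with the original Lax equations on the line (\autoref{app:real-line}). I expect verifying the oscillatory (total positivity) property of $\Gamma$ to be the principal obstacle; once it is in hand, nonnegativity, simplicity, and the stated labeling of the eigenvalues follow immediately from the Gantmacher--Krein theory.
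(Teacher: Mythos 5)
Your overall strategy coincides with the paper's: the result is proved in \autoref{thm:positive-simple-spectrum} precisely by reducing each boundary value problem to a finite matrix eigenvalue problem and invoking the Gantmacher--Krein theory of oscillatory matrices. However, the step you yourself flag as ``the principal obstacle'' --- establishing that the sampled Green's-function matrix is oscillatory --- is exactly the content of the proof, and your proposal does not supply it; you only express the expectation that the total positivity is ``traceable to the Cauchy kernel $1/(x+y)$.'' That is not the mechanism. The paper works with the original equations on the real line, where elimination of $\psi_1$ produces the kernel $\tfrac12 e^{-\abs{x-y}}$ (the Green's function of $1-\partial_x^2$) together with the one-sided integration $\int_{-\infty}^x$, and the relevant matrix then \emph{factors} explicitly as $(I+\mathcal{L})\mathcal{M}\mathcal{E}\mathcal{N}$: here $I+\mathcal{L}$ is totally nonnegative and nonsingular because it is the path matrix of a planar network, and $\mathcal{E}=(e^{-\abs{x_{2i-1}-x_{2j}}})$ is totally positive as a submatrix of the totally positive matrix $(e^{-\abs{x_i-x_j}})_{i,j=1}^{2K}$. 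Oscillatority of the product is then immediate. Without this factorization (or an equivalent verification of the minor inequalities), your argument does not close; a direct check of ``minor positivity from the ordering of the $y_k$'' for your matrix $\Gamma$ is not obviously simpler than what the paper does, and nothing in your write-up carries it out.

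Two further points. First, your fallback of ``transporting the result through the involution $\twin S(\lambda)=S(\lambda)^\sigma$'' cannot work: by \autoref{cor:S-relation}, $\twin S_{31}(-\lambda)=S_{21}(\lambda)S_{32}(\lambda)-S_{22}(\lambda)S_{31}(\lambda)$, which relates $\twin S_{31}$ to several entries of $S$ at once and gives no control over the location or simplicity of its zeros from those of $S_{31}$ alone; the two spectra are genuinely different. Second, for the twin problem there is a real subtlety that your sketch glosses over: the naive analogue of the matrix above is $\mathcal{L}\mathcal{N}\mathcal{E}^T\mathcal{M}$ with $\mathcal{L}$ \emph{strictly} lower triangular, hence singular, so it is totally nonnegative but not oscillatory, and simplicity does not follow. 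The paper must first delete the first and last rows and columns (corresponding to the irrelevant weights $m_1$ and $n_{2K}$) to obtain an oscillatory $(K-1)\times(K-1)$ problem. You correctly observe that $g_1$ drops out and that the size becomes $K-1$, but the argument needs this truncation to be performed before oscillatority can be invoked, not merely the observation that the degree of $\twin S_{31}/\lambda$ is $K-1$.
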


\begin{proof}
  This is proved in the appendix; see \autoref{thm:positive-simple-spectrum}.
  (It is clear that if the zeros of the polynomials $S_{31}(\lambda)$ and $\twin S_{31}(\lambda)$
  are real, then they can't be negative, since the coefficients in the polynomials
  have alternating signs and all terms therefore have the same sign if $\lambda<0$.
  However, it's far from obvious that the zeros are real, much less simple.
  These facts follow from properties of oscillatory matrices,
  belonging to the beautiful theory of oscillatory kernels
  due to Gantmacher and Krein; see~\cite[Ch.~II]{gantmacher-krein}.)
\end{proof}

\begin{remark}
  Taking \hyperref[prop:degrees-S]{Propositions \ref*{prop:degrees-S}} and~\ref{prop:degrees-S-twin} into account,
  we can thus write
  \begin{equation}
    \label{eq:S31-factorized}
    S_{31}(\lambda) = -2 \lambda \prod_{i=1}^{K} \left( 1 - \frac{\lambda}{\lambda_i} \right),
    \qquad
    \twin S_{31}(\lambda) = -2 \lambda \prod_{j=1}^{K-1} \left( 1 - \frac{\lambda}{\mu_j} \right).
  \end{equation}
\end{remark}

\begin{theorem}
  \label{thm:weyl-parfrac}
  If all $g_k$ and $h_k$ are positive,
  then the Weyl functions have partial fraction decompositions
  \begin{subequations}
    \label{eq:weyl-parfrac}
    \begin{align}
      \label{eq:W-parfrac}
      W(\lambda) &= \sum_{i=1}^K \frac{a_i}{\lambda - \lambda_i},
      \\
      \label{eq:W-twin-parfrac}
      \twin W(\lambda) &= -b_{\infty} + \sum_{j=1}^{K-1} \frac{b_j}{\lambda - \mu_j},
      \\
      \label{eq:Z-parfrac}
      Z(\lambda) &= \frac{1}{2\lambda} + \sum_{i=1}^K \frac{c_i}{\lambda - \lambda_i},
      \\
      \label{eq:Z-twin-parfrac}
      \twin Z(\lambda) &= \frac{1}{2\lambda} + \sum_{j=1}^{K-1} \frac{d_j}{\lambda - \mu_j},
    \end{align}
  \end{subequations}
  where $a_i$, $b_j$, $b_{\infty}$, $c_i$, $d_j$ are positive,
  and where $W$ and~$\twin W$ determine
  $Z$ and~$\twin Z$ through the relations
  \begin{equation}
    \label{eq:residue-relation}
    c_i = a_i b_{\infty} + \sum_{j=1}^{K-1} \frac{a_i b_j}{\lambda_i + \mu_j},
    \qquad
    d_j = \sum_{i=1}^K \frac{a_i b_j}{\lambda_i + \mu_j}.
  \end{equation}
\end{theorem}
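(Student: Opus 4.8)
The plan is to read off the four expansions from the analytic structure of the entries of $S$ and $\twin S$, to obtain the relations \eqref{eq:residue-relation} by taking residues in the already-established Weyl relation \eqref{eq:Weyl-relation}, and then to reduce the positivity assertions to a sign-alternation supplied by the oscillation theory of the appendix. First I would fix the \emph{shape} of each expansion from degree counts together with the values at $\lambda=0$ and $\lambda=\infty$. By \autoref{prop:degrees-S}, $S_{31}$ has degree $K+1$ with $S_{31}(0)=0$, $S_{31}'(0)=-\sum_{k=0}^{2K} l_k=-2$, $S_{11}(0)=1$ and $S_{21}(0)=0$; together with \autoref{thm:simple-spectra} and the factorization \eqref{eq:S31-factorized} this shows that $S_{31}$ has a simple zero at each of $0,\lambda_1,\dots,\lambda_K$. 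Since $\deg S_{21}=K-1<K+1$ and $S_{21}(0)=0$, the function $W=-S_{21}/S_{31}$ vanishes at infinity and is regular at $0$, leaving only the simple poles at the $\lambda_i$; this is \eqref{eq:W-parfrac}. For $Z=-S_{11}/S_{31}$ the numerator again has degree below $K+1$, but $S_{11}(0)=1\neq0$ leaves a simple pole at $0$ with residue $-S_{11}(0)/S_{31}'(0)=\tfrac12$, which is the $\tfrac1{2\lambda}$ term of \eqref{eq:Z-parfrac}. The twin expansions follow the same way from \autoref{prop:degrees-S-twin}; the one new feature is that $\deg\twin S_{21}=\deg\twin S_{31}=K$, so $\twin W$ approaches a nonzero constant $-b_\infty$ at infinity, and I would identify $b_\infty=l_{2K-1}h_K/(l_{2K}+l_{2K-1})>0$ as the ratio of the leading coefficients in \eqref{eq:highest-term-S21-twin} and \eqref{eq:highest-term-S31-twin}.

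Next I would derive \eqref{eq:residue-relation} by taking residues in \eqref{eq:Weyl-relation}. At $\lambda=\lambda_i$ only $Z$ and $W$ are singular, so the relation gives $c_i+a_i\,\twin W(-\lambda_i)=0$; inserting \eqref{eq:W-twin-parfrac} yields $c_i=a_ib_\infty+\sum_j a_ib_j/(\lambda_i+\mu_j)$. Replacing $\lambda$ by $-\lambda$ in \eqref{eq:Weyl-relation} and taking the residue at $\lambda=\mu_j$ gives $d_j+b_j\,W(-\mu_j)=0$, and \eqref{eq:W-parfrac} converts this into $d_j=\sum_i a_ib_j/(\lambda_i+\mu_j)$. (As a consistency check, the residues at $\lambda=0$ cancel, since $Z$ contributes $+\tfrac12$ and $\twin Z(-\lambda)$ contributes $-\tfrac12$.) The upshot is that \eqref{eq:residue-relation} exhibits $c_i$ and $d_j$ as positive combinations of the $a_i$, $b_j$ and $b_\infty$, so their positivity is automatic once that of $a_i$ and $b_j$ is known.

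This last point is the real obstacle. Writing $a_i=-S_{21}(\lambda_i)/S_{31}'(\lambda_i)$ and reading off $\sgn S_{31}'(\lambda_i)=(-1)^{i-1}$ from \eqref{eq:S31-factorized}, the positivity of $a_i$ is equivalent to the strict sign alternation $\sgn S_{21}(\lambda_i)=(-1)^i$, and likewise $b_j>0$ is equivalent to $\sgn\twin S_{21}(\mu_j)=(-1)^j$. This is exactly where the non-selfadjointness blocks an elementary Herglotz/norm argument, and where the Gantmacher--Krein theory of oscillatory kernels is needed: after the appropriate normalization the pertinent minors of the discrete transition matrix are sign-regular, which simultaneously forces the reality and simplicity of the spectrum (quoted as \autoref{thm:simple-spectra}) and the sign alternation above. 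I would therefore establish the positivity of $a_i$ and $b_j$ in tandem with the appendix result \autoref{thm:positive-simple-spectrum}; the remaining claims of the theorem then follow from the two preceding paragraphs.
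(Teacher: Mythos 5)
Your first two paragraphs follow the paper's proof essentially verbatim: the shape of the expansions from the degree counts of \autoref{prop:degrees-S} and \autoref{prop:degrees-S-twin} plus simplicity of the poles, the identification $b_\infty = h_K l_{2K-1}/(l_{2K}+l_{2K-1})>0$ from \eqref{eq:highest-term-S21-twin}--\eqref{eq:highest-term-S31-twin}, and the derivation of \eqref{eq:residue-relation} by taking residues of \eqref{eq:Weyl-relation} at $\lambda_i$ and (after $\lambda\mapsto-\lambda$) at $\mu_j$. That part is correct and matches the paper. Your reduction of $a_i>0$ to the sign alternation $\sgn S_{21}(\lambda_i)=(-1)^i$ is also correct.

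The gap is in the last step. You assert that this sign alternation comes out of the Gantmacher--Krein machinery ``in tandem with'' \autoref{thm:positive-simple-spectrum}, but that theorem only says that the zeros of $S_{31}$ and $\twin S_{31}$ are positive and simple; it says nothing about the values of $S_{21}$ or $\twin S_{21}$ at those zeros. Oscillation theory applied to the matrix $(I+\mathcal{L})\mathcal{M}\mathcal{E}\mathcal{N}$ controls the spectrum and the sign-change pattern of eigenvectors, but the residue $a_i=-S_{21}(\lambda_i)/S_{31}'(\lambda_i)$ involves the eigenfunction normalized by the left-hand boundary data, and in this non-selfadjoint setting there is no quadratic form identifying $a_i$ with a norm; positivity and simplicity of the spectrum alone are compatible with residues of either sign. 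The claim that ``the pertinent minors are sign-regular, which forces the sign alternation'' is precisely what would need to be proved, and it is the hard part of the theorem. The paper handles it by an entirely different argument (end of \autoref{sec:adjoint-Weyl-discrete}): induction on $K$, introducing a small final pair of masses $g_m=h_m=\epsilon$, using the product formula \eqref{eq:residue-relation-W-Wstar} to guarantee no residue can vanish (so signs are preserved under continuous deformation of the masses), tracking the $m-1$ analytic eigenvalue branches by the Implicit Function Theorem, and separately extracting the sign of the residue at the largest eigenvalue $\lambda_m(\epsilon)\sim C_1/(C_2\epsilon^2)$ from the leading asymptotics of $S_{21}$ and $S_{31}'$. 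Unless you supply a genuine total-positivity argument for the sign of $S_{21}(\lambda_i)$ (which would be a real alternative proof, but is not in the appendix you cite), the positivity of $a_i$ and $b_j$ --- and hence of $c_i$ and $d_j$ --- remains unproved in your proposal.
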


\begin{proof}
  The form of the decompositions follows from
  \hyperref[prop:degrees-S]{Propositions \ref*{prop:degrees-S}} and~\ref{prop:degrees-S-twin}
  (polynomial degrees),
  together with \autoref{thm:simple-spectra} (all poles are simple).
  In $W = -S_{21} / S_{31}$ the factor $\lambda$ cancels, so there is no residue at $\lambda = 0$,
  and similarly for $\twin W = -\twin S_{21} / \twin S_{31}$
  (which however is different from $W$ in that the degree
  of the numerator equals the degree of the denominator; hence the constant term~$-b_{\infty}$).
  The residue
  of $Z = -S_{11} / S_{31}$
  at $\lambda = 0$
  is $-S_{11}(0)/S'_{31}(0) = 1/2$ by \autoref{prop:degrees-S},
  and similarly for $\twin Z(\lambda)$.

  From the expressions \eqref{eq:highest-term-S21-twin} and
  \eqref{eq:highest-term-S31-twin} for the highest coefficient of
  $\twin S_{21}$ and~$\twin S_{31}$ we obtain (for $K \ge 2$)
  \begin{equation}
    \label{eq:formula-for-b-infty}
    b_{\infty} = -\lim_{\lambda \to \infty} \twin W(\lambda)
    = \lim_{\lambda \to \infty} \frac{\twin S_{21}(\lambda)}{\twin S_{31}(\lambda)}
    = \frac{h_K l_{2K-1}}{l_{2K}+l_{2K-1}}
    ,
  \end{equation}
  which shows that $b_\infty > 0$.
  (In the exceptional case $K=1$ we have instead
  $-\twin W(\lambda) = \frac12 h_1 (l_2 + l_1) = b_\infty > 0$.)

  The proof that $a_i$ and~$b_j$ are positive will be given at the end
  of \autoref{sec:adjoint-Weyl-discrete}.
  It will then follow from \eqref{eq:residue-relation} that
  $c_i$ and~$d_j$ are positive as well.

  To prove \eqref{eq:residue-relation},
  recall the relation
  $Z(\lambda) + W(\lambda) \twin W(-\lambda) + \twin Z(-\lambda) = 0$
  from \autoref{thm:Weyl-relation}.
  Taking the residue at $\lambda = \lambda_i$ on both sides yields
  \begin{equation*}
    c_i + a_i \twin W(-\lambda_i) + 0 = 0.
  \end{equation*}
  Taking instead the residue at $\lambda = \mu_j$ in
  $Z(-\lambda) + W(-\lambda) \twin W(\lambda) + \twin Z(\lambda) = 0$,
  we obtain
  \begin{equation*}
    0 + W(-\mu_j) b_j + d_j = 0.
  \end{equation*}
\end{proof}

\begin{definition}[Spectral measures]
  Let $\alpha$ and $\beta$ denote the discrete measures
  \begin{equation}
    \label{eq:spectral-measures}
    \alpha = \sum_{i=1}^K a_i \delta_{\lambda_i},
    \qquad
    \beta = \sum_{j=1}^{K-1} b_j \delta_{\mu_j},
  \end{equation}
  where $a_i$ and $b_j$ are the residues in $W(\lambda)$ and $\twin W(\lambda)$
  from \eqref{eq:W-parfrac} and \eqref{eq:W-twin-parfrac}.
\end{definition}

We can write $W$ and $\twin W$
in terms of these spectral measures $\alpha$ and $\beta$,
and likewise for $Z$ and $\twin Z$ if we use \eqref{eq:residue-relation}:
\begin{subequations}
  \label{eq:WZ-as-integrals}
  \begin{align}
    \label{eq:W-as-integral}
    W(\lambda) &= \int \frac{\da}{\lambda - x},
    \\
    \label{eq:W-twin-as-integral}
    \twin W(\lambda) &= \int \frac{\db}{\lambda - y} - b_{\infty},
    \\
    \label{eq:Z-as-integral}
    Z(\lambda) &= \frac{1}{2\lambda} + \iint \frac{\da\db}{(\lambda - x)(x+y)} + b_{\infty} W(\lambda),
    \\
    \label{eq:Z-twin-as-integral}
    \twin Z(\lambda) &= \frac{1}{2\lambda} + \iint \frac{\da\db}{(x+y)(\lambda - y)}.
  \end{align}
\end{subequations}
(Note the appearance here of the Cauchy kernel $1/(x+y)$.)

We have now completed the spectral characterization of the boundary
value problems \eqref{eq:dual-cubic-I} and \eqref{eq:dual-cubic-II}.
The remainder of \autoref{sec:discrete-on-the-interval}
is devoted to establishing some basic facts
which will be needed for formulating and solving the inverse problem
in \autoref{sec:inverse-spectral}.

\subsection{Rational approximations to the Weyl functions}
\label{sec:approx}

The Weyl functions $W(\lambda)$ and $Z(\lambda)$ are defined using entries of
the transition matrix $S(\lambda)$.
Next, we will see how entries of the matrices $T_j(\lambda)$
(partial products of $S(\lambda)$; see \eqref{eq:Tj})
produce rational approximations to the Weyl functions.
We have chosen here to work with the second column of $T_j(\lambda)$, since it seems to
be the most convenient for the inverse problem, but this choice is by no
means unique; many other similar approximation results could be derived.

\begin{theorem}
  \label{thm:QPR-approximation}
  Fix some $j$ with $1 \le j \le K$, write $T(\lambda) = T_j(\lambda)$ for simplicity,
  and consider the polynomials
  \begin{equation}
    \label{eq:QPR}
    Q(\lambda) = -T_{32}(\lambda),
    \quad
    P(\lambda) = T_{22}(\lambda),
    \quad
    R(\lambda) = T_{12}(\lambda).
  \end{equation}
  Then the following properties hold:
  \begin{equation}
    \label{eq:QPR-degrees}
    \deg Q = j,
    \qquad
    \deg P = j-1,
    \qquad
    \deg R = j-1,
  \end{equation}
  \begin{equation}
    \label{eq:QP-normalization}
    Q(0) = 0,
    \qquad
    P(0) = 1,
  \end{equation}
  and, as $\lambda \to \infty$,
  \begin{subequations}
    \label{eq:QPR-approximation}
    \begin{gather}
      W(\lambda) Q(\lambda) - P(\lambda) = \order{\frac{1}{\lambda}},
      \label{eq:W-approx}
      \\
      Z(\lambda) Q(\lambda) - R(\lambda) = \order{\frac{1}{\lambda}},
      \label{eq:Z-approx}
      \\
      R(\lambda) + P(\lambda) \twin W(-\lambda) + Q(\lambda) \twin Z(-\lambda) = \order{\frac{1}{\lambda^{j}}}.
      \label{eq:ZW-relation-approx}
    \end{gather}
  \end{subequations}
  (For $j=K$, the right-hand side of \eqref{eq:ZW-relation-approx} can be replaced by zero.)
\end{theorem}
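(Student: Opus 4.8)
The plan is to obtain the algebraic data \eqref{eq:QPR-degrees}--\eqref{eq:QP-normalization} directly from \autoref{prop:degrees-Tj}, and to prove the three asymptotic relations \eqref{eq:QPR-approximation} by factoring the transition matrix and tracking degrees with the involution~$\sigma$. First, \eqref{eq:degrees-Tj} gives $\deg T_{32}=j$ and $\deg T_{22}=\deg T_{12}=j-1$, which is \eqref{eq:QPR-degrees}, and the constant-term formula \eqref{eq:constant-term-Tj} gives $T_{32}(0)=0$ and $T_{22}(0)=1$, i.e.\ \eqref{eq:QP-normalization}. For the rest I would use the factorization $S(\lambda)=T_j(\lambda)\,V_j(\lambda)$, where $V_j$ is the product of the rightmost factors of \eqref{eq:S-discrete} not contained in $T_j$. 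Grouping these in fours as in the proof of \autoref{prop:degrees-Tj} exhibits $V_j$ as a product of $K-j$ blocks, each of degree one in~$\lambda$; hence every entry of $V_j$ has degree $\le K-j$, and $\det V_j=1$.

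\emph{First two relations.} Since the first column of $S$ equals $T_j$ times the first column of $V_j$, substituting into $W=-S_{21}/S_{31}$ and $Z=-S_{11}/S_{31}$ and collecting the terms that contain the second column $(R,P,-Q)^{T}$ of $T_j$, the products of $P$ and $Q$ cancel, and $\det T_j=1$ lets one recognize the surviving $2\times2$ minors as entries of $T_j^{-1}=\adj T_j$. This yields
\[
  W Q - P = \frac{(V_j)_{11}\,(T_j^{-1})_{31} - (V_j)_{31}\,(T_j^{-1})_{11}}{S_{31}},
\]
and a similar computation expresses $ZQ-R$ as a quotient over $S_{31}$ whose numerator is (up to sign) $(V_j)_{11}\,(T_j^{-1})_{32}-(V_j)_{31}\,(T_j^{-1})_{12}$. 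The degrees of the entries of $T_j^{-1}$ are supplied by the involution: from $T_j(\lambda)^{-1}=\J\,\twin T_j(-\lambda)^{T}\J$ (a restatement of \eqref{eq:Tj-involution}) each such entry is, up to sign, an entry of $\twin T_j$, so \autoref{prop:degrees-Tj-twin} gives $\deg(T_j^{-1})_{31}=\deg(T_j^{-1})_{32}=j$ and $\deg(T_j^{-1})_{11}=\deg(T_j^{-1})_{12}=j-1$. Each numerator therefore has degree $\le K$, while $\deg S_{31}=K+1$ by \autoref{prop:degrees-S}, giving \eqref{eq:W-approx} and \eqref{eq:Z-approx}.

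\emph{Third relation.} Writing $\twin W(-\lambda)$ and $\twin Z(-\lambda)$ over the common denominator $\twin S_{31}(-\lambda)$, the left side of \eqref{eq:ZW-relation-approx} becomes a single quotient whose numerator is the $\J$-weighted pairing $e_2^{T}\,T_j(\lambda)^{T}\,\J\,\twin S(-\lambda)\,e_1$ of the second column of $T_j$ with the first column of $\twin S(-\lambda)$. The decisive step is to collapse this numerator: combining $\J S(\lambda)^{T}\J\,\twin S(-\lambda)=I$ from \autoref{cor:S-relation} with $S=T_jV_j$ gives $T_j(\lambda)^{T}\J\,\twin S(-\lambda)=V_j(\lambda)^{-T}\J$, so the numerator equals $(V_j(\lambda)^{-1})_{32}$. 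Using the involution once more, $(V_j^{-1})_{32}=-(\twin V_j(-\lambda))_{21}$, and since $\twin V_j$ is again a product of $K-j$ degree-one blocks the numerator has degree $\le K-j$, whereas $\deg\twin S_{31}=K$ by \autoref{prop:degrees-S-twin}; this is exactly the order $\order{1/\lambda^{j}}$. When $j=K$ the factor $V_j$ is empty, $V_K=I$, so the numerator vanishes identically and the right-hand side may be replaced by~$0$.

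The hard part is the middle bookkeeping---arranging the cofactor cancellations so that only the clean combinations $(V_j)_{11}(T_j^{-1})_{i1}-(V_j)_{31}(T_j^{-1})_{i'1}$ remain---and, above all, spotting that the unwieldy numerator of the third relation collapses to the single entry $(V_j^{-1})_{32}$. Everything else is degree bookkeeping, made routine by using $\sigma$ systematically to convert each matrix inverse into a genuine partial product whose degrees are already tabulated in Propositions~\ref{prop:degrees-Tj} and~\ref{prop:degrees-Tj-twin}.
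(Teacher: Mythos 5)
Your proof is correct and follows essentially the same route as the paper: the degrees and normalizations come from \autoref{prop:degrees-Tj}, the first two relations come from factoring $S=T_jV_j$, cancelling via $\det T_j=1$, and converting the resulting cofactors into entries of $\twin T_j$ through $\sigma$ so that \autoref{prop:degrees-Tj-twin} supplies the degree count, and the third relation collapses to a single entry of degree $\le K-j$ over $\twin S_{31}$. The paper phrases this in terms of the first column $(a_1,a_2,a_3)^T$ of $V_j$ and the coefficient vector $(b_1,b_2,b_3)^T$ with numerator $-b_2(-\lambda)$, which is exactly your $(V_j^{-1})_{32}(\lambda)=-(V_j^\sigma)_{21}(-\lambda)$, so the two write-ups are the same argument in different packaging.
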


\begin{proof}
  Equations \eqref{eq:QPR-degrees} and \eqref{eq:QP-normalization}
  were already proved in \autoref{prop:degrees-Tj}.
  With the notation used in that proof,
  the first column of the transition matrix $S(\lambda)$ is given by
  \begin{equation*}
    \begin{pmatrix}
      S_{11}(\lambda) \\ S_{21}(\lambda) \\ S_{31}(\lambda)
    \end{pmatrix}
    =
    \underbrace{L_{2K}(\lambda) \, t_{K}(\lambda) \dotsm t_{K+1-j}(\lambda)}_{= T(\lambda)}
    \,
    \underbrace{t_{K-j}(\lambda) \dotsm t_1(\lambda)
      \begin{smallpmatrix}
        1 \\ 0 \\ 0
      \end{smallpmatrix}
    }_{=
      \begin{smallpmatrix}
        a_1(\lambda) \\ a_2(\lambda) \\ a_3(\lambda)
      \end{smallpmatrix}
    }
    ,
  \end{equation*}
  where $a_1$, $a_2$, $a_3$ have degree at most $K-j$ in~$\lambda$.
  Hence,
  \begin{equation*}
    \begin{split}
      WQ - P
      &= -\frac{S_{21}}{S_{31}} \, (-T_{32}) - T_{22}
      = \frac{T_{32} S_{21} - T_{22} S_{31}}{S_{31}}
      \\
      &= \frac{T_{32} (T_{21}, T_{22}, T_{23})
        \begin{smallpmatrix}
          a_1 \\ a_2 \\ a_3
        \end{smallpmatrix}
        - T_{22} (T_{31}, T_{32}, T_{33})
        \begin{smallpmatrix}
          a_1 \\ a_2 \\ a_3
        \end{smallpmatrix}
      }{S_{31}} 
      \\
      &= \frac{-a_1
        \begin{vmatrix}
          T_{21} & T_{22} \\ T_{31} & T_{32}
        \end{vmatrix}
        + a_3
        \begin{vmatrix}
          T_{22} & T_{23} \\ T_{32} & T_{33}
        \end{vmatrix}
      }{S_{31}}
      = \frac{-a_1 (T^{-1})_{31} + a_3 (T^{-1})_{11}}{S_{31}},
    \end{split}
  \end{equation*}
  where the last step uses that $\det T(\lambda) = 1$
  (since each factor in $T$ has determinant one).
  By \eqref{eq:Tj-involution}, $T^{-1}(\lambda) = \J \twin T(-\lambda)^T \J$,
  where $\twin T(\lambda)$ is shorthand for $\twin T_j(\lambda)$ (defined by \eqref{eq:Tj-twin}).
  In particular, $(T^{-1})_{31}(\lambda) = \twin T_{31}(-\lambda)$ and $(T^{-1})_{11}(\lambda) = \twin T_{33}(-\lambda)$,
  so
  \begin{equation*}
    W(\lambda) Q(\lambda) - P(\lambda)
    = \frac{-a_1(\lambda) \, \twin T_{31}(-\lambda) + a_3(\lambda) \, \twin T_{33}(-\lambda)}{S_{31}(\lambda)}.
  \end{equation*}
  By \eqref{eq:degrees-S} and \eqref{eq:degrees-Tj-twin} we have
  \begin{equation*}
    \deg S_{31} = K+1,
    \qquad
    \deg \twin T_{31} = j,
    \qquad
    \deg \twin T_{33} = j-1,
  \end{equation*}
  which shows that $WQ - P = \order{\lambda^{(K-j)+j-(K+1)}} = \order{\lambda^{-1}}$ as $\lambda \to \infty$.

  The proof that $ZQ - R = \order{\lambda^{-1}}$ is entirely similar.

  To prove \eqref{eq:ZW-relation-approx}, we start from
  \begin{equation*}
    \begin{pmatrix}
      \twin S_{11}(\lambda) \\ \twin S_{21}(\lambda) \\ \twin S_{31}(\lambda)
    \end{pmatrix}
    = \twin T(\lambda)
    \begin{pmatrix}
      b_1(\lambda) \\ b_2(\lambda) \\ b_3(\lambda)
    \end{pmatrix}
    ,
  \end{equation*}
  where $b_1$, $b_2$, $b_3$ have degree at most $K-j$.
  Using again $\twin T(\lambda) = T(\lambda)^\sigma = \J T(-\lambda)^{-T} \J$, we obtain
  \begin{equation*}
    \begin{split}
      -b_2(-\lambda)
      &= -(0,1,0) \J T(\lambda)^T \J
      \begin{smallpmatrix}
        \twin S_{11}(-\lambda) \\ \twin S_{21}(-\lambda) \\ \twin S_{31}(-\lambda)
      \end{smallpmatrix}
      \\
      &= (0,1,0) T(\lambda)^T
      \begin{smallpmatrix}
        \twin S_{31}(\lambda) \\ -\twin S_{21}(\lambda) \\ \twin S_{11}(-\lambda)
      \end{smallpmatrix}
      \\
      &= \bigl( R(\lambda), P(\lambda), -Q(\lambda) \bigr)
      \begin{smallpmatrix}
        1 \\ \twin W(\lambda) \\ -\twin Z(-\lambda)
      \end{smallpmatrix}
      \twin S_{31}(\lambda).
    \end{split}
  \end{equation*}
  Since $\twin S_{31}$ has degree $K$ by \eqref{eq:degrees-S-twin},
  we find that $R(\lambda) + P(\lambda) \twin W(-\lambda) + Q(\lambda) \twin Z(-\lambda)
  = -b_2(-\lambda) / \twin S_{31}(\lambda) = \order{\lambda^{(K-j)-K}} = \order{\lambda^{-j}}$.
  (When $j=K$ we have $b_2(\lambda)=0$.)
\end{proof}

\begin{remark}
  Loosely speaking, the approximation conditions \eqref{eq:QPR-approximation} say that
  \begin{equation*}
    \frac{P(\lambda)}{Q(\lambda)} \approx W(\lambda),
    \quad
    \frac{R(\lambda)}{Q(\lambda)} \approx Z(\lambda),
  \end{equation*}
  and moreover these approximate Weyl functions satisfy
  \begin{equation*}
    \tfrac{R}{Q}(\lambda) + \tfrac{P}{Q}(\lambda) \twin W(-\lambda) + \twin Z(-\lambda) \approx 0
  \end{equation*}
  in place of the exact relation
  \begin{equation*}
    Z(\lambda) + W(\lambda) \twin W(-\lambda) + \twin Z(-\lambda) = 0
  \end{equation*}
  from \autoref{thm:Weyl-relation}.
  We say that the triple $(Q, P, R)$ provides a Type I Hermite--Pad{\'e} approximation of the
  functions $W$ and $Z$,
  and simultaneously a Type II Hermite--Pad{\'e} approximation of the
  functions $\twin W$ and $\twin Z$;
  see Section~5 in \cite{bertola-gekhtman-szmigielski:cauchy}.
\end{remark}

We will see in \autoref{sec:inverse-spectral} that for given Weyl functions
and a given order of approximation~$j$,
the properties in \autoref{thm:QPR-approximation}
are enough to determine the polynomials $Q$, $P$, $R$ uniquely.
This is the key to the inverse problem,
together with the following simple proposition.
We will need to consider $Q$, $P$, $R$
for different values of~$j$,
and we will write $Q_j$, $P_j$, $R_j$ to indicate this.
As a somewhat degenerate case not covered by \autoref{thm:QPR-approximation}
(the degree count \eqref{eq:QPR-degrees} fails),
we have
\begin{equation}
  \label{eq:Q0P0R0}
  Q_0(\lambda) = 0,
  \qquad
  P_0(\lambda) = 1,
  \qquad
  R_0(\lambda) = 0,
\end{equation}
coming from the second column of $T_0(\lambda) = L_{2K}(\lambda)$.

\begin{proposition}
  \label{prop:recover-even}
  If all $Q_j(\lambda)$ and $R_j(\lambda)$ are known,
  then the weights $h_j$ and their positions $y_{2j}$ can be determined:
  \begin{align}
    h_{j} &= R_{K-j+1}(0) - R_{K-j}(0),
    \label{eq:recover-evenmass}
    \\
    (1 - y_{2j}) h_j &= Q_{K-j+1}'(0) - Q_{K-j}'(0),
    \label{eq:recover-evenposition}
  \end{align}
  for $j=1,\dots,K$.
\end{proposition}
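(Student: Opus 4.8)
The plan is to read both identities directly off the explicit expansions of $T_{K-j}(\lambda)$ near $\lambda=0$ that were already established in \autoref{prop:degrees-Tj}. Recalling from definition \eqref{eq:QPR} that $R_i(\lambda) = (T_i)_{12}(\lambda)$ and $Q_i(\lambda) = -(T_i)_{32}(\lambda)$, everything reduces to extracting the $(1,2)$ entry of the constant term and the $(3,2)$ entry of the linear term, and then telescoping over successive values of the index.

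First, for the weight formula \eqref{eq:recover-evenmass}, I would simply evaluate at $\lambda=0$. The constant-term formula \eqref{eq:constant-term-Tj} gives $R_{K-j}(0) = (T_{K-j})_{12}(0) = \sum_{a>j} h_a$, and hence $R_{K-j+1}(0) = \sum_{a \ge j} h_a$. Subtracting telescopes to the single surviving term $h_j$, for $j=1,\dots,K$; at the endpoint $j=K$ one uses $R_0 = 0$ from \eqref{eq:Q0P0R0}, which is consistent with the empty sum $\sum_{a>K} h_a = 0$.

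Next, for the position formula \eqref{eq:recover-evenposition}, I would differentiate and set $\lambda=0$, so that $Q'_{K-j}(0) = -(T_{K-j})'_{32}(0)$. The $(3,2)$ entry of the linear-term formula \eqref{eq:linear-term-Tj} then yields $Q'_{K-j}(0) = \sum_{a>j}\sum_{k=2a}^{2K} h_a l_k$, and likewise $Q'_{K-j+1}(0) = \sum_{a \ge j}\sum_{k=2a}^{2K} h_a l_k$. The difference telescopes to the $a=j$ contribution $h_j \sum_{k=2j}^{2K} l_k$. The final step is to recognize the length sum as itself telescoping: since $l_k = y_{k+1}-y_k$ and $y_{2K+1} = +1$, we have $\sum_{k=2j}^{2K} l_k = y_{2K+1}-y_{2j} = 1-y_{2j}$, giving exactly $(1-y_{2j})\,h_j$.

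There is no genuine obstacle here: both formulas are immediate telescoping consequences of results already in hand, and the only things to watch are the bookkeeping of the summation ranges and the degenerate $j=K$ endpoint (handled by \eqref{eq:Q0P0R0}). The conceptual content worth emphasizing is that the even-site data $(h_j, y_{2j})$ are encoded precisely in the lowest-order Taylor coefficients at $\lambda=0$ of the successive approximants $R_i$ and $Q_i$, which is what makes this proposition the engine of the inverse construction carried out in \autoref{sec:inverse-spectral}.
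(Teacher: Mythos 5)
Your proof is correct and follows exactly the paper's own argument: both read $R_{K-j}(0)=\sum_{a>j}h_a$ and $Q'_{K-j}(0)=\sum_{a>j}\sum_{k=2a}^{2K}h_a l_k$ off Proposition~\ref{prop:degrees-Tj} (equations \eqref{eq:constant-term-Tj} and \eqref{eq:linear-term-Tj}), telescope, and use $\sum_{k=2j}^{2K}l_k=1-y_{2j}$. The only difference is that you spell out the endpoint case $j=K$ via \eqref{eq:Q0P0R0}, which the paper leaves implicit.
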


\begin{proof}
  By definition, $Q_j = -(T_j)_{32}$ and $R_j = (T_j)_{12}$,
  and \autoref{prop:degrees-Tj} says that
  $R_{K-j}(0) = \sum_{a > j} h_a$
  and
  $Q_{K-j}'(0) = \sum_{a>j} \sum_{k=2a}^{2K} h_a l_k$,
  for $0 \le j \le K-1$.
  The statement follows. (Note that $\sum_{k=2j}^{2K} l_k = 1 - y_{2j}$.)
\end{proof}

In order to access the weights $g_j$ and their positions $y_{2j-1}$
we will exploit the symmetry of the setup, via the adjoint problem;
see \autoref{sec:symmetry}.

\subsection{Adjoint Weyl functions}
\label{sec:adjoint-Weyl-discrete}

Recall the adjoint Weyl functions defined by \eqref{eq:WZ-adjoint}
and \eqref{eq:WZ-adjoint-twin},
\begin{equation*}
  W^* = -S_{32}/S_{31},
  \quad
  Z^* = -S_{33}/S_{31},
  \quad
  \twin W^* = -\twin S_{32}/\twin S_{31},
  \quad
  \twin Z^* = -\twin S_{33}/\twin S_{31},
\end{equation*}
which have the same denominators as the ordinary Weyl functions
\begin{equation*}
  W = -S_{21}/S_{31},
  \quad
  Z = -S_{11}/S_{31},
  \quad
  \twin W = -\twin S_{21}/\twin S_{31},
  \quad
  Z = -\twin S_{11}/\twin S_{31},
\end{equation*}
but different numerators.
Since the transition matrices $S(\lambda)$ and $\twin S(\lambda)$ both have the property that
the $(2,1)$ and $(3,2)$ entries have the same degree,
and the $(1,1)$ and $(3,3)$ entries have the same degree
(see \hyperref[prop:degrees-S]{Propositions \ref*{prop:degrees-S}} and~\ref{prop:degrees-S-twin}),
the adjoint Weyl functions will have partial fraction decompositions of exactly the same form as
their non-starred counterparts (cf. \autoref{thm:weyl-parfrac}),
with the same poles but different residues:
\begin{subequations}
  \label{eq:weyl-star-parfrac}
  \begin{align}
    \label{eq:W-star-parfrac}
    W^*(\lambda) &= \sum_{i=1}^K \frac{a_i^*}{\lambda - \lambda_i},
    \\
    \label{eq:W-star-twin-parfrac}
    \twin W^*(\lambda) &= -b_{\infty}^* + \sum_{j=1}^{K-1} \frac{b_j^*}{\lambda - \mu_j},
    \\
    \label{eq:Z-star-parfrac}
    Z^*(\lambda) &= \frac{1}{2\lambda} + \sum_{i=1}^K \frac{c_i^*}{\lambda - \lambda_i},
    \\
    \label{eq:Z-star-twin-parfrac}
    \twin Z^*(\lambda) &= \frac{1}{2\lambda} + \sum_{j=1}^{K-1} \frac{d_j^*}{\lambda - \mu_j}.
  \end{align}
\end{subequations}
Just like in the proof of \autoref{thm:weyl-parfrac},
it follows from \autoref{thm:adjoint-Weyl-relation} that
\begin{equation}
  \label{eq:residue-relation-star}
  c_i^* = a_i^* b_{\infty}^* + \sum_{j=1}^{K-1} \frac{a_i^* b_j^*}{\lambda_i + \mu_j},
  \qquad
  d_j^* = \sum_{i=1}^K \frac{a_i^* b_j^*}{\lambda_i + \mu_j},
\end{equation}
so that $Z^*$ and $\twin Z^*$ are determined by $W^*$ and~$\twin W^*$.
Moreover, there is the following connection between the ordinary Weyl functions and their adjoints.

\begin{theorem}
  \label{thm:residue-relation-W-Wstar}
  Assume that $K \ge 2$. The residues of $W$ and $W^*$ satisfy
  \begin{equation}
    \label{eq:residue-relation-W-Wstar}
    a_k a_k^*
    = \frac{\displaystyle \lambda_k \prod_{j=1}^{K-1} \left( 1 + \frac{\lambda_k}{\mu_j} \right) }{\displaystyle 2 \prod_{\substack{i=1 \\ i \neq k}}^{K} \left( 1 - \frac{\lambda_k}{\lambda_i} \right)^2}
    , \qquad
    k = 1, \dots, K
    .
  \end{equation}
  Likewise, the residues of $\twin W$ and $\twin W^*$ satisfy
  \begin{equation}
    \label{eq:residue-relation-W-Wstar-twin}
    b_k b_k^*
    = \frac{\displaystyle \mu_k \prod_{i=1}^{K} \left( 1 + \frac{\mu_k}{\lambda_i} \right) }{\displaystyle 2 \prod_{\substack{j=1 \\ j \neq k}}^{K-1} \left( 1 - \frac{\mu_k}{\mu_j} \right)^2}
    , \qquad
    k = 1, \dots, K-1
    .
  \end{equation}
  (The empty product appearing when $K=2$ should be omitted;
  thus, $b_1 b_1^* = \frac12 \mu_1 \prod_{i=1}^2 (1+\mu_1/\lambda_i)$ in this case.)
  Moreover,
  \begin{equation}
    \label{eq:b-infty-b-infty-star-product}
    b_{\infty} b^*_{\infty}
    = \frac{l_1 l_3 \dotsm l_{2K-1}}{l_0 l_2 l_4 \dotsm l_{2K}}
    \times \biggl( \prod_{j=1}^{K-1} \mu_j \biggr) \biggm/ \biggl( \prod_{i=1}^{K} \lambda_i \biggr)
    .
  \end{equation}
\end{theorem}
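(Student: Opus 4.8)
The plan is to express each residue as the value of a transition-matrix entry divided by the derivative of the relevant $(3,1)$ entry, and then to convert the \emph{product} of two such residues into a single entry of the twin matrix using the cofactor identity of \autoref{cor:S-relation}. Since $W=-S_{21}/S_{31}$ and $W^*=-S_{32}/S_{31}$ have simple poles exactly at the zeros $\lambda_k$ of $S_{31}$, the elementary residue formula gives
\begin{equation*}
  a_k a_k^* = \frac{S_{21}(\lambda_k)\,S_{32}(\lambda_k)}{S_{31}'(\lambda_k)^2}
\end{equation*}
(the two minus signs cancelling). The key observation is that the numerator is itself one entry of $\twin S$: the $(3,1)$ entry of \eqref{eq:S-cofactors} reads $\twin S_{31}(-\lambda)=S_{21}(\lambda)S_{32}(\lambda)-S_{22}(\lambda)S_{31}(\lambda)$, so the term containing $S_{31}(\lambda_k)=0$ drops and $S_{21}(\lambda_k)S_{32}(\lambda_k)=\twin S_{31}(-\lambda_k)$.

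Next I would evaluate both surviving ingredients from the factorizations \eqref{eq:S31-factorized}. Differentiating $S_{31}(\lambda)=-2\lambda\prod_i(1-\lambda/\lambda_i)$ and using that only the factor $(1-\lambda/\lambda_k)$ vanishes at $\lambda_k$ yields $S_{31}'(\lambda_k)=2\prod_{i\neq k}(1-\lambda_k/\lambda_i)$, while directly from \eqref{eq:S31-factorized} one has $\twin S_{31}(-\lambda_k)=2\lambda_k\prod_{j=1}^{K-1}(1+\lambda_k/\mu_j)$. Substituting and reducing the constant $4$ to $2$ gives \eqref{eq:residue-relation-W-Wstar}. For \eqref{eq:residue-relation-W-Wstar-twin} I run the identical argument with the roles of $S$ and $\twin S$ interchanged: since $\sigma$ is an involution (\autoref{thm:fundmatrix-relation}), the relation in \autoref{cor:S-relation} is symmetric under $S\leftrightarrow\twin S$, giving $S(-\lambda)=\J(\adj\twin S(\lambda))^T\J$, whose $(3,1)$ entry is $S_{31}(-\lambda)=\twin S_{21}(\lambda)\twin S_{32}(\lambda)-\twin S_{22}(\lambda)\twin S_{31}(\lambda)$. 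Evaluating at a zero $\mu_k$ of $\twin S_{31}$ gives $\twin S_{21}(\mu_k)\twin S_{32}(\mu_k)=S_{31}(-\mu_k)=2\mu_k\prod_{i=1}^K(1+\mu_k/\lambda_i)$, and combining with $\twin S_{31}'(\mu_k)=2\prod_{j\neq k}(1-\mu_k/\mu_j)$ produces \eqref{eq:residue-relation-W-Wstar-twin}.

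The last identity \eqref{eq:b-infty-b-infty-star-product} is the main obstacle, because it lives at $\lambda=\infty$ rather than at a finite pole. Here $b_\infty b_\infty^* = \lim_{\lambda\to\infty}\twin W(\lambda)\twin W^*(\lambda)=\lim_{\lambda\to\infty}\twin S_{21}\twin S_{32}/\twin S_{31}^2$. Using the same cofactor identity to write $\twin S_{21}\twin S_{32}=S_{31}(-\lambda)+\twin S_{22}\twin S_{31}$, and noting that for $K\ge 2$ we have $\deg S_{31}=K+1<2K=\deg\twin S_{31}^2$, the first term is negligible and $b_\infty b_\infty^*=\lim_{\lambda\to\infty}\twin S_{22}/\twin S_{31}$, a ratio of leading coefficients of two polynomials of equal degree $K$. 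I would obtain the one missing leading coefficient (that of $\twin S_{22}$) by the same factor-grouping used to prove \autoref{prop:degrees-S-twin}; alternatively one computes $b_\infty^*$ directly as the leading ratio of $\twin S_{32}$ over $\twin S_{31}$, which by the reflection $y\mapsto -y$ is the mirror image $g_1 l_1/(l_0+l_1)$ of the formula \eqref{eq:formula-for-b-infty} for $b_\infty=h_K l_{2K-1}/(l_{2K}+l_{2K-1})$.

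Finally I would translate the answer into eigenvalue products. Equating the top coefficient of $S_{31}$ from \eqref{eq:S31-factorized} with that of \eqref{eq:highest-term-S31} expresses $\prod_i\lambda_i$ through the lengths $l_{2m}$ and the products $g_a h_a$, and doing the same with \eqref{eq:highest-term-S31-twin} expresses $\prod_j\mu_j$. Forming the quotient, the weights $g_a,h_a$ telescope to the single factor $g_1 h_K$ (since $\prod_{a=1}^K g_a h_a\big/\prod_{a=1}^{K-1}g_{a+1}h_a=g_1 h_K$) and the even-indexed lengths cancel, leaving precisely the combination $l_1 l_3\cdots l_{2K-1}/(l_0 l_2\cdots l_{2K})$ multiplying $\prod_j\mu_j/\prod_i\lambda_i$, which is \eqref{eq:b-infty-b-infty-star-product}. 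The real work is concentrated in this last telescoping and in pinning down the single leading coefficient not recorded in \autoref{prop:degrees-S-twin}; the first two identities follow almost immediately from the cofactor relation together with the factorizations \eqref{eq:S31-factorized}.
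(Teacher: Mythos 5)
Your proposal is correct and follows essentially the same route as the paper: the first two identities via the cofactor relation $\twin S_{31}(-\lambda)=S_{21}(\lambda)S_{32}(\lambda)-S_{22}(\lambda)S_{31}(\lambda)$ (and its $S\leftrightarrow\twin S$ mirror) evaluated at the zeros of $S_{31}$, $\twin S_{31}$ together with the factorizations \eqref{eq:S31-factorized}, and the third by computing $b_\infty=h_Kl_{2K-1}/(l_{2K}+l_{2K-1})$, $b_\infty^*=g_1l_1/(l_0+l_1)$ and eliminating $g_1h_K$ through the leading-coefficient expressions for $\prod\lambda_i$ and $\prod\mu_j$. (Your alternative limit argument via $\lim\twin S_{22}/\twin S_{31}$ would also work but needs the leading coefficient of $\twin S_{22}$, which is why falling back on the symmetry route, as you and the paper both do, is the cleaner finish.)
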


\begin{proof}
  We first prove \eqref{eq:residue-relation-W-Wstar}.
  From \eqref{eq:S-cofactors} we have $\twin S_{31}(-\lambda) = S_{21}(\lambda) S_{32}(\lambda) - S_{22}(\lambda) S_{31}(\lambda)$.
  Evaluation at $\lambda = \lambda_k$ kills $S_{31}$, so
  \begin{equation*}
    \twin S_{31}(-\lambda_k) = S_{21}(\lambda_k) S_{32}(\lambda_k).
  \end{equation*}
  Since the poles of $W$ and $W^*$ are simple, the residues are given by
  $a_k = -S_{21}(\lambda_k)/S_{31}'(\lambda_k)$ and
  $a_k^* = -S_{32}(\lambda_k)/S_{31}'(\lambda_k)$.
  Multiplication yields
  \begin{equation*}
    a_k a_k^* = \frac{S_{21}(\lambda_k) S_{32}(\lambda_k)}{S_{31}'(\lambda_k)^2}
    = \frac{\twin S_{31}(-\lambda_k)}{S_{31}'(\lambda_k)^2},
  \end{equation*}
  and insertion of the expressions for $S_{31}$ and $\twin S_{31}$ from 
  \eqref{eq:S31-factorized} finishes the job.

  The proof of equation \eqref{eq:residue-relation-W-Wstar-twin} is similar.

  As for \eqref{eq:b-infty-b-infty-star-product},
  we saw in \eqref{eq:formula-for-b-infty} that
  \begin{equation*}
    b_{\infty} = \frac{h_K l_{2K-1}}{l_{2K}+l_{2K-1}}
    .
  \end{equation*}
  In the same way, or by using the symmetry transformation
  \eqref{eq:symmetry-substitution} described in the next section,
  one shows that
  \begin{equation*}
    b^*_{\infty} = \frac{g_1 l_1}{l_0 + l_1}
    .
  \end{equation*}
  Combining $S_{31}(\lambda) = -2\lambda \prod_{i=1}^K (1 - \lambda/\lambda_i)$ with the expression
  \eqref{eq:highest-term-S31} for the highest coefficient of $S_{31}$ yields
  \begin{equation*}
    \prod_{i=1}^{K} \lambda_i = \frac12 \left( \prod_{m=0}^K l_{2m} \right) \left( \prod_{a=1}^K g_a h_a \right)
    ,
  \end{equation*}
  and similarly we find by comparing
  $\twin S_{31}(\lambda) = -2\lambda \prod_{j=1}^{K-1} (1 - \lambda/\mu_j)$
  to \eqref{eq:highest-term-S31-twin} that
  \begin{equation*}
    \prod_{j=1}^{K-1} \mu_j = \frac12 (l_{2K}+l_{2K-1}) \left( \prod_{m=2}^{K-1} l_{2m-1} \right) (l_{0}+l_{1}) \left( \prod_{a=1}^{K-1} g_{a+1} h_a \right)
    .
  \end{equation*}
  Equation \eqref{eq:b-infty-b-infty-star-product} follows.
\end{proof}

\begin{remark}
  When $K=1$, we have
  \begin{equation}
    a_1 a_1^* = \frac{2}{\lambda_1}
  \end{equation}
  as shown in \eqref{eq:residue-relation-W-Wstar-K1},
  while \eqref{eq:b-infty-b-infty-star-product} breaks down for the same reason that
  \eqref{eq:formula-for-b-infty} did;
  by
  \eqref{eq:lambda1-a1-K1},
  \eqref{eq:binfty-K1}
  and
  \eqref{eq:binftystar-K1},
  we have instead
  \begin{equation}
    \label{eq:b-infty-b-infty-star-product-K1}
    b_{\infty} b_{\infty}^* = \frac{(l_0+l_1)(l_1+l_2)}{2 l_0 l_2 \lambda_1}
  \end{equation}
  in this case.
\end{remark}

\begin{remark}
  \autoref{thm:residue-relation-W-Wstar} shows that $W$ and $\twin W$ together determine $W^*$,
  since $a_1^*, \dots, a_K^*$ can be computed from \eqref{eq:residue-relation-W-Wstar} if one knows
  $\{ a_k, b_k, b_{\infty}, \lambda_i, \mu_j \}$.
  But they only \emph{almost} determine $\twin W^*$;
  the residues $b_1^*, \dots, b_{K-1}^*$ can be computed from \eqref{eq:residue-relation-W-Wstar-twin},
  but the constant $b_{\infty}^*$ is not determined!
  This turns out to be highly significant for the inverse spectral problem:
  the Weyl functions $W$ and $\twin W$ don't contain enough information to
  recover the first weight $g_1$ and its position $y_1$;
  for this we need to know the value of $b_{\infty}^*$ as well.
\end{remark}

We can now prove the positivity of the residues $a_i$ and $b_j$
in \autoref{thm:weyl-parfrac}.
(The notation introduced in this proof will not be used elsewhere,
and is omitted from the index of notation in \autoref{app:notation}.)

\begin{proof}[Proof of \autoref{thm:weyl-parfrac}, continued]
  We consider the residues $\{ a_i \}_{i=1}^K$ first.
  For $K=1$ we have $S_{21}(\lambda) = -g_1 l_0 \lambda$ and
  $S_{31}(\lambda) = -2\lambda + g_1 h_1 l_0 l_2 \lambda^2$,
  so that
  \begin{equation*}
    W(\lambda) = -\frac{S_{21}(\lambda)}{S_{31}(\lambda)}
    = \frac{\frac{1}{h_1 l_2}}{\lambda - \frac{2}{g_1 h_1 l_0 l_2}}
    ;
  \end{equation*}
  hence $a_1 = \frac{1}{h_1 l_2} > 0$.
  We now proceed by induction on~$K$.
  Suppose that the residues~$a_i$ are positive when $K=m-1$,
  and consider the case $K=m \ge 2$.
  Because of \eqref{eq:residue-relation-W-Wstar}, no $a_i$ can ever be zero
  as long as all masses are positive,
  and therefore it is sufficient to verify that all $a_i$ are positive
  when the last pair of masses are given by $g_m = h_m = \epsilon$
  with $\epsilon > 0$ small;
  since the residues depend continuously on the masses, 
  they will keep their signs as $g_m$ and~$h_m$ are allowed to vary arbitrarily
  over all positive values.
  From \eqref{eq:S-discrete} we get
  \begin{equation*}
    \begin{pmatrix}
      S_{11}(\lambda,\epsilon) \\ S_{21}(\lambda,\epsilon) \\ S_{31}(\lambda,\epsilon)
    \end{pmatrix}
    =
    L_{2m}(\lambda) \jumpmatrix{\epsilon}{0} L_{2m-1}(\lambda) \jumpmatrix{0}{\epsilon}  L_{2m-2}(\lambda) 
    \dotsm
    \jumpmatrix{h_1}{0} L_{1}(\lambda) \jumpmatrix{0}{g_1} L_{0}(\lambda)
    \begin{pmatrix}
      1 \\ 0 \\ 0
    \end{pmatrix}
    ,
  \end{equation*}
  where we consider all positions and all masses except $g_m=h_m=\epsilon$ as fixed,
  and treat the $S_{ij}(\lambda,\epsilon)$ as polynomials in two variables.
  The spectral data defined by these polynomials will then of course also be
  considered as functions of~$\epsilon$: $\{ \lambda_i(\epsilon), a_i(\epsilon) \}_{i=1}^m$.
  (As we will soon see, the largest eigenvalue $\lambda_m(\epsilon)$ has a pole
  of order~$2$ at $\epsilon=0$, while the other eigenvalues are analytic functions of~$\epsilon$.)
  The first four factors in the product above are
  \begin{equation*}
    L_{2m}(\lambda) \jumpmatrix{\epsilon}{0} L_{2m-1}(\lambda) \jumpmatrix{0}{\epsilon}
    =
    \begin{pmatrix}
      1 & \epsilon & \epsilon^2 \\
      0 & 1 & \epsilon \\
      -(l_{2m} + l_{2m-1}) \lambda & - \epsilon l_{2m} \lambda & 1 - \epsilon^2 l_{2m} \lambda
    \end{pmatrix}
    .
  \end{equation*}
  We denote the product of the remaining factors by
  $(s_{11}(\lambda),s_{21}(\lambda),s_{31}(\lambda))^T$;
  these polynomials have the same form as $S_{11}$, $S_{21}$ and $S_{23}$
  (see \autoref{prop:degrees-S}),
  but with $m-1$ instead of~$m$,
  so their degrees are one step lower, and they only depend on
  $\{ g_k, h_k \}_{k=1}^{m-1}$ and $\{ l_k \}_{k=0}^{2m-2}$,
  not on $l_{2m-1}$, $l_{2m}$ and $g_m=h_m=\epsilon$.
  We thus have
  \begin{equation}
    \label{eq:Sij-split-off-epsilon}
    \begin{split}
      \begin{pmatrix}
        S_{11}(\lambda,\epsilon) \\ S_{21}(\lambda,\epsilon) \\ S_{31}(\lambda,\epsilon)
      \end{pmatrix}
      &=
      \begin{pmatrix}
        1 & \epsilon & \epsilon^2 \\
        0 & 1 & \epsilon \\
        -(l_{2m} + l_{2m-1}) \lambda & - \epsilon l_{2m} \lambda & 1 - \epsilon^2 l_{2m} \lambda
      \end{pmatrix}
      \begin{pmatrix}
        s_{11}(\lambda) \\ s_{21}(\lambda) \\ s_{31}(\lambda)
      \end{pmatrix}
      \\ &=
      \begin{pmatrix}
        S_{11}(\lambda,0) \\ S_{21}(\lambda,0) \\ S_{31}(\lambda,0)
      \end{pmatrix}
      +
      \begin{pmatrix}
        0 & \epsilon & \epsilon^2 \\
        0 & 0 & \epsilon \\
        0 & - \epsilon l_{2m} \lambda & - \epsilon^2 l_{2m} \lambda
      \end{pmatrix}
      \begin{pmatrix}
        s_{11}(\lambda) \\ s_{21}(\lambda) \\ s_{31}(\lambda)
      \end{pmatrix}
      .
    \end{split}
  \end{equation}
  The polynomials $S_{ij}(\lambda,0)$ define the spectral data for the case $K=m-1$
  (since the final pair of masses is absent when $\epsilon=0$); in particular we know from
  \autoref{thm:simple-spectra} that
  $S_{31}(\lambda,0)$ has a zero at $\lambda=0$, and that the other $m-1$
  zeros are positive and simple.
  If $\lambda = \lambda_i \neq 0$ is one of these other zeros, then at the point
  $(\lambda,\epsilon) = (\lambda_i,0)$ we therefore have $S_{31}=0$ and
  $\partial S_{31}/\partial \lambda \neq 0$,
  so by the Implicit Function Theorem there is an analytic function $\lambda_i(\epsilon)$,
  defined around $\epsilon=0$,
  such that $\lambda_i(0)=\lambda_i$ and $S_{31}(\lambda_i(\epsilon),\epsilon) = 0$.
  It follows that for $i=1,\dots,m-1$, the residue
  \begin{equation*}
    a_i(\epsilon) = \res_{\lambda=\lambda_i(\epsilon)} W(\lambda,\epsilon)
    = - \frac{S_{21}(\lambda_i(\epsilon),\epsilon)}{\dfrac{\partial S_{31}}{\partial \lambda}(\lambda_i(\epsilon),\epsilon)}
  \end{equation*}
  depends analytically on $\epsilon$ too, and it is therefore positive for small $\epsilon > 0$,
  since it is positive for $\epsilon = 0$ by the induction hypothesis.
  This settles part of our claim.

  It remains to show that the last residue $a_m(\epsilon)$ is positive.
  As a first step, we show that $\lambda_m(\epsilon)$ has a pole of order~$2$ at $\epsilon=0$.
  For convenience, let
  \begin{equation*}
    f(\lambda,\epsilon) = \frac{S_{31}(\lambda,\epsilon)}{\lambda}
    ;
  \end{equation*}
  this is a polynomial of degree~$m$ in~$\lambda$,
  and $\lambda_m(\epsilon)$ is the largest root of the equation $f(\lambda,\epsilon)=0$.
  From \eqref{eq:Sij-split-off-epsilon} we have
  \begin{equation*}
    f(\lambda,\epsilon) =
    f(\lambda,0) - l_{2m} \Bigl( \epsilon s_{21}(\lambda) + \epsilon^2 s_{31}(\lambda) \Bigr)
    .
  \end{equation*}
  Using \autoref{prop:degrees-S}, we see that the leading terms of
  $f(\lambda,0) = S_{31}(\lambda,0)/\lambda$ and $l_{2m} s_{31}(\lambda)$ are
  $(-1)^m C_1 \lambda^{m-1}$ and $(-1)^m C_2 \lambda^m$, respectively, with
  \begin{gather*}
    C_1 = \left( \prod_{r=0}^{m-2} l_{2r} \right) (l_{2m-2} + l_{2m-1} + l_{2m}) \left( \prod_{a=1}^{m-1} g_a h_a \right) > 0
    ,\\
    C_2 = \left( \prod_{r=0}^{m} l_{2r} \right) \left( \prod_{a=1}^{m-1} g_a h_a \right) > 0
    .
  \end{gather*}
  (The precise form of these constants is not very important, only their positivity.)
  Moreover, $s_{21}(\lambda)$ has degree~$m-1$.
  Thus
  \begin{equation*}
    \begin{split}
      f(\lambda,\epsilon)
      &=
      f(\lambda,0)
      - l_{2m} \Bigl( \epsilon s_{21}(\lambda) + \epsilon^2 s_{31}(\lambda) \Bigr)
      \\ &=
      (-1)^{m+1} C_2 \epsilon^2 \lambda^m + p(\lambda,\epsilon)
      ,
    \end{split}
  \end{equation*}
  with a polynomial $p(\lambda,\epsilon)$ of degree $m-1$ in $\lambda$.
  Since $p(\lambda,0) = f(\lambda,0)$ has leading term
  $(-1)^{m} C_1 \lambda^{m-1}$, we see that
  \begin{equation*}
    \epsilon^{2m-2} p(\kappa \epsilon^{-2},\epsilon)
    = (-1)^{m} C_1 \kappa^{m-1} + (\text{terms containing $\epsilon$}).
  \end{equation*}
  Hence, the equation $f(\lambda,\epsilon)=0$,
  of which $\lambda_m(\epsilon)$ is the largest root,
  can be written in terms of the new variable
  $\kappa = \lambda \, \epsilon^2$
  as
  \begin{equation*}
    \begin{split}
      0 &=
      (-1)^{m+1} \epsilon^{2m-2} f(\lambda,\epsilon)
      \\ &=
      C_2 \epsilon^{2m} \lambda^m + \epsilon^{2m-2} (-1)^{m+1} p(\lambda,\epsilon)
      \\ &=
      C_2 \kappa^m +  \epsilon^{2m-2} (-1)^{m+1} p(\kappa \epsilon^{-2},\epsilon)
      \\ &=
      C_2 \kappa^m - C_1 \kappa^{m-1} + \epsilon \, q(\kappa,\epsilon)
      ,
    \end{split}
  \end{equation*}
  for some two-variable polynomial $q(\kappa,\epsilon)$.
  As before, the Implicit Function Theorem shows that this equation has an analytic solution
  $\kappa(\epsilon)$ with $\kappa(0)=C_1/C_2$,
  which corresponds to a meromorphic zero of $f(\lambda,\epsilon)$
  with a pole of order~$2$, as claimed:
  \begin{equation*}
    \lambda_m(\epsilon) = \frac{\kappa(\epsilon)}{\epsilon^2}
    = \frac{C_1/C_2 + \order{\epsilon}}{\epsilon^2}
    .
  \end{equation*}
  Finally, the corresponding residue is
  \begin{equation*}
    a_m(\epsilon) = \res_{\lambda=\lambda_m(\epsilon)} W(\lambda,\epsilon)
    = - \frac{S_{21}(\lambda_m(\epsilon),\epsilon)}{\dfrac{\partial S_{31}}{\partial \lambda}(\lambda_m(\epsilon),\epsilon)}
    .
  \end{equation*}
  The derivative of the polynomial $S_{31}$ at its largest zero has the same sign as
  the leading term of $S_{31}$, namely $(-1)^{m+1}$.
  As for the sign of $S_{21}$,
  we have from \eqref{eq:Sij-split-off-epsilon} that
  \begin{equation*}
    S_{21}(\lambda,\epsilon) = S_{21}(\lambda,0) + \epsilon s_{31}(\lambda)
    ,
  \end{equation*}
  where $S_{21}(\lambda,0)$ and $s_{31}(\lambda)$ have degrees $m-1$ and~$m$,
  respectively.
  When this is evaluated at $\lambda = \lambda_m(\epsilon) \sim \frac{C_1}{C_2}\epsilon^{-2}$,
  the two terms on the right-hand side are of order $\epsilon^{2m-2}$ and $\epsilon^{2m-1}$,
  respectively, so the dominant behavior as $\epsilon \to 0^+$
  comes from the leading term of $s_{31}(\lambda)$:
  \begin{equation*}
    S_{21}(\lambda_m(\epsilon),\epsilon) \sim \epsilon (-1)^m \frac{C_2}{l_{2m}} \left( \frac{C_1/C_2}{\epsilon^2} \right)^m.
  \end{equation*}
  In particular, the sign of $S_{21}(\lambda_m(\epsilon),\epsilon)$ is $(-1)^m$,
  and it follows that $a_m(\epsilon) > 0$, which is what we wanted to show.
  This concludes the proof of positivity for the residues~$a_i$.

  The proof for the residues $\{ b_j \}_{j=1}^{K-1}$ is similar.
  In the base case $K=1$ there is nothing to show.
  Assume that they are positive for $K=m-1$, and consider the case $K=m \ge 2$.
  We have from \eqref{eq:S-twin-discrete}
  \begin{equation*}
    \begin{pmatrix}
      \twin S_{11}(\lambda,\epsilon) \\ \twin S_{21}(\lambda,\epsilon) \\ \twin S_{31}(\lambda,\epsilon)
    \end{pmatrix}
    =
    L_{2m}(\lambda) \jumpmatrix{0}{\epsilon} L_{2m-1}(\lambda) \jumpmatrix{\epsilon}{0}  L_{2m-2}(\lambda) 
    \dotsm
    \jumpmatrix{0}{h_1} L_{1}(\lambda) \jumpmatrix{g_1}{0} L_{0}(\lambda)
    \begin{pmatrix}
      1 \\ 0 \\ 0
    \end{pmatrix}
    .
  \end{equation*}
  Splitting off the first four factors
  \begin{equation*}
    L_{2m}(\lambda) \jumpmatrix{0}{\epsilon} L_{2m-1}(\lambda) \jumpmatrix{\epsilon}{0}
    =
    \begin{pmatrix}
      1 & \epsilon & 0 \\
      - \epsilon l_{2m-1} \lambda & 1 - \epsilon^2 l_{2m-1} \lambda & \epsilon \\
      -(l_{2m} + l_{2m-1}) \lambda & - \epsilon (l_{2m} + l_{2m-1}) \lambda & 1
    \end{pmatrix}
    ,
  \end{equation*}
  we obtain
  \begin{equation}
    \label{eq:Sij-twin-split-off-epsilon}
    \begin{pmatrix}
      \twin S_{11}(\lambda,\epsilon) \\ \twin S_{21}(\lambda,\epsilon) \\ \twin S_{31}(\lambda,\epsilon)
    \end{pmatrix}
    =
    \begin{pmatrix}
      \twin S_{11}(\lambda,0) \\ \twin S_{21}(\lambda,0) \\ \twin S_{31}(\lambda,0)
    \end{pmatrix}
    +
    \begin{pmatrix}
      0 & \epsilon & 0 \\
      - \epsilon l_{2m-1} \lambda & - \epsilon^2 l_{2m-1} \lambda & \epsilon \\
      0 & - \epsilon (l_{2m} + l_{2m-1}) \lambda & 0
    \end{pmatrix}
    \begin{pmatrix}
      \twin s_{11}(\lambda) \\ \twin s_{21}(\lambda) \\ \twin s_{31}(\lambda)
    \end{pmatrix}
    ,
  \end{equation}
  where the degrees on the left-hand side are $(m-1,m,m)$,
  while both $3 \times 1$ matrices appearing on the right-hand side
  have degrees $(m-2,m-1,m-1)$ (cf. \autoref{prop:degrees-S-twin}).
  The eigenvalues $\{ \mu_j(\epsilon) \}_{j=1}^{m-1}$ are the zeros of the polynomial
  \begin{equation*}
    \twin f(\lambda,\epsilon) = \frac{\twin S_{31}(\lambda,\epsilon)}{\lambda}
    = \frac{\twin S_{31}(\lambda,0)}{\lambda} - \epsilon (l_{2m} + l_{2m-1}) \twin s_{21}(\lambda)
    .
  \end{equation*}
  As above,
  it follows easily that $\{ \mu_j(\epsilon) \}_{j=1}^{m-2}$ are analytic,
  and that the corresponding residues $\{ b_j(\epsilon) \}_{j=1}^{m-2}$ are positive.
  The largest zero $\mu_{m-1}(\epsilon)$ has a pole of order~$1$ at $\epsilon=0$,
  as we now show.
  By \autoref{prop:degrees-S-twin}, the leading terms of
  $\twin S_{31}(\lambda,0)$ and $(l_{2m} + l_{2m-1}) \twin s_{21}(\lambda)$ are
  $(-1)^{m-1} \twin C_1 \lambda^{m-1}$ and $(-1)^{m-1} \twin C_2 \lambda^{m-1}$, respectively,
  with some positive constants $\twin C_1$ and~$\twin C_2$.
  (For the record, these constants are
  \begin{gather*}
    \twin C_1 = \left( \sum_{a = 2m-3}^{2m} l_a \right) \left( \prod_{r=2}^{m-2} l_{2r-1} \right) (l_{0}+l_{1}) \left( \prod_{a=1}^{m-2} g_{a+1} h_a \right) > 0
    ,\\
    \twin C_2 = (l_{2m} + l_{2m-1}) \left( \prod_{r=2}^{m-1} l_{2r-1} \right) (l_{0}+l_{1}) \, h_{m-1} \left( \prod_{a=1}^{m-2} g_{a+1} h_a \right) > 0
    .
  \end{gather*}
  Special case: $\twin C_1 = 2$ if $m=2$. The empty product $\prod_{r=2}^{1} l_{2r-1}$
  is omitted in $\twin C_1$ when $m=3$ and in $\twin C_2$ when $m=2$.)
  Hence,
  \begin{equation*}
    \begin{split}
      \twin f(\lambda,\epsilon)
      &=
      \twin f(\lambda,0)
      - \epsilon (l_{2m} + l_{2m-1}) \twin s_{21}(\lambda)
      \\ &=
      (-1)^{m} \twin C_2 \epsilon \lambda^{m-1} + \twin p(\lambda,\epsilon)
      ,
    \end{split}
  \end{equation*}
  with a polynomial $\twin p(\lambda,\epsilon)$ of degree $m-2$ in $\lambda$,
  such that $\twin p(\lambda,0) = \twin f(\lambda,0)$ has leading term
  $(-1)^{m-1} \twin C_1 \lambda^{m-2}$,
  so that
  \begin{equation*}
    \epsilon^{m-2} \twin p(\twin\kappa \epsilon^{-1},\epsilon)
    = (-1)^{m-1} \twin C_1 \twin\kappa^{m-1} + (\text{terms containing $\epsilon$}).
  \end{equation*}
  The equation $\twin f(\lambda,\epsilon)=0$,
  of which $\mu_{m-1}(\epsilon)$ is the largest root,
  can therefore be written in terms of the new variable
  $\twin\kappa = \lambda \, \epsilon$
  as
  \begin{equation*}
    \begin{split}
      0 &=
      (-1)^{m} \epsilon^{m-2} \twin f(\lambda,\epsilon)
      \\ &=
      \twin C_2 \epsilon^{m-1} \lambda^{m-1} + \epsilon^{m-2} (-1)^{m} \twin p(\lambda,\epsilon)
      \\ &=
      \twin C_2 \twin\kappa^m +  \epsilon^{m-2} (-1)^{m} \twin p(\twin\kappa \epsilon^{-1},\epsilon)
      \\ &=
      \twin C_2 \twin\kappa^m - \twin C_1 \twin\kappa^{m-1} + \epsilon \, \twin q(\twin\kappa,\epsilon)
      ,
    \end{split}
  \end{equation*}
  for some two-variable polynomial $\twin q(\twin\kappa,\epsilon)$.
  The Implicit Function Theorem gives an analytic function $\twin\kappa(\epsilon)$
  with $\twin\kappa(0) = \twin C_1 / \twin C_2$, and, as claimed,
  \begin{equation*}
    \mu_{j-1}(\epsilon) = \frac{\twin\kappa(\epsilon)}{\epsilon}
    = \frac{\twin C_1 / \twin C_2 + \order{\epsilon}}{\epsilon}
    .
  \end{equation*}
  The corresponding residue is
  \begin{equation*}
    b_{m-1}(\epsilon) = \res_{\lambda=\mu_{m-1}(\epsilon)} \twin W(\lambda,\epsilon)
    = - \frac{\twin S_{21}(\mu_{m-1}(\epsilon),\epsilon)}{\dfrac{\partial \twin S_{31}}{\partial \lambda}(\mu_{m-1}(\epsilon),\epsilon)}
    .
  \end{equation*}
  The leading term of $S_{31}$ determines the sign of
  the derivative $\partial S_{31}/\partial \lambda$ at the largest zero, namely
  $(-1)^{m}$.
  From \eqref{eq:Sij-twin-split-off-epsilon},
  \begin{equation*}
    \twin S_{21}(\lambda,\epsilon)
    = \twin S_{21}(\lambda,0)
    - \epsilon l_{2m-1} \lambda \twin s_{11}(\lambda)
    - \epsilon^2 l_{2m-1} \lambda \twin s_{21}(\lambda)
    + \epsilon \twin s_{31}(\lambda)
    ,
  \end{equation*}
  and when evaluating this at
  $\lambda = \mu_{m-1}(\epsilon) \sim \frac{\twin C_1}{\twin C_2}\epsilon^{-1} $,
  the last three terms on the right-hand side are of order $\epsilon^{2-m}$,
  so the contribution of order $\epsilon^{1-m}$ from the first term $\twin S_{21}(\lambda,0)$
  is the dominant one as $\epsilon \to 0^+$, and it has the sign $(-1)^{m-1}$.
  It follows that $b_{m-1}(\epsilon) > 0$, and the proof is complete.
\end{proof}

\subsection{Symmetry}
\label{sec:symmetry}

For solutions of the differential equation \eqref{eq:dual-cubic-I},
$\frac{\partial \Phi}{\partial y} = \coeffmatrix(\lambda) \Phi$,
the transition matrix $S(\lambda)$ propagates initial values at the left endpoint to final values
at the right endpoint:
\begin{equation*}
  \Phi(+1) = S(\lambda) \Phi(-1).
\end{equation*}
The transition matrix depends of course not only on $\lambda$ but also on $g(y)$ and $h(y)$,
which in our discrete setup means the point masses
$g_j$ and $h_j$ interlacingly positioned at the sites $y_k$ with $l_k = y_{k+1} - y_k$;
let us write
\begin{equation*}
  \begin{split}
    S(\lambda)
    &= L_{2K}(\lambda)
    \jumpmatrix{h_K}{0} L_{2K-1}(\lambda) \jumpmatrix{0}{g_K}  L_{2K-2}(\lambda) 
    \dotsm
    \jumpmatrix{h_1}{0} L_{1}(\lambda) \jumpmatrix{0}{g_1} L_{0}(\lambda)
    \\[1ex]
    &= S(\lambda; l_0,\dots,l_{2K}; g_1, h_1, \dots, g_K, h_K)
  \end{split}
\end{equation*}
to indicate this.

For the adjoint equation \eqref{eq:adjoint-ODE},
$\frac{\partial \Omega}{\partial y} = \twin\coeffmatrix(-\lambda) \Omega$,
we saw in \autoref{prop:adjoint-transition-matrix} that the matrix
$\twin S(-\lambda)^{-1} = \J S(\lambda)^T \J$
propagates values in the opposite direction, from initial values at the right endpoint to final values
at the left endpoint.
If we denote this matrix by $S^*(\lambda)$, we thus have
\begin{equation*}
  \Omega(-1) = S^*(\lambda) \Omega(+1).
\end{equation*}
When going from right to left, one encounters the point masses in the opposite order compared to when going
from left to right,
and the following theorem shows that the solution $\Omega(y)$ reacts just like $\Phi(y)$ does when encountering
a mass, except for a difference in sign.

\begin{theorem}
  \label{thm:symmetry}
  The adjoint transition matrix is given by
  \begin{equation}
    \label{eq:symmetry}
    S^*(\lambda) = S(\lambda; l_{2K}, \dots, l_0; -h_K, -g_K, \dots, -h_1, -g_1).
  \end{equation}
  (And similarly with tildes for the twin problems.)
\end{theorem}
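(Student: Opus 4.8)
The plan is to begin from the identity $S^*(\lambda) = \J S(\lambda)^T \J$ recorded in \autoref{prop:adjoint-transition-matrix} (recall that $S^*(\lambda) = \twin S(-\lambda)^{-1}$), and to push the operation $M \mapsto \J M^T \J$ through the explicit factorization \eqref{eq:S-discrete} of the transition matrix. The crucial structural observation is that this operation is an \emph{anti}-homomorphism of the matrix group: since $\J = \J^{-1}$ and transposition reverses the order of a product, one has $\J (MN)^T \J = (\J N^T \J)(\J M^T \J)$. Hence applying it to the product \eqref{eq:S-discrete} simply reverses the order of all $1+4K$ factors and replaces each factor $F$ by $\J F^T \J$.

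Next I would record the two elementary conjugation identities needed for the individual factors. A direct $3\times 3$ computation gives
\begin{equation*}
  \J L_k(\lambda)^T \J = L_k(\lambda),
  \qquad
  \J \jumpmatrix{x}{y}^T \J = \jumpmatrix{-y}{-x}.
\end{equation*}
Both are one-line verifications: conjugation by $\J$ amounts to swapping the indices $1 \leftrightarrow 3$ together with a sign change on index~$2$, under which the propagation matrix $L_k$ is invariant while the jump matrix $\jumpmatrix{x}{y}$ has its two off-diagonal parameters swapped and negated. (These are the transpose analogues of the relations $L_k(\lambda)^\sigma = L_k(\lambda)$ and $\jumpmatrix{x}{y}^\sigma = \jumpmatrix{y}{x}$ already used in the proof of \autoref{prop:degrees-Tj-twin}.)

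Combining these two ingredients, I would read off
\begin{equation*}
  S^*(\lambda) = L_0(\lambda) \jumpmatrix{-g_1}{0} L_1(\lambda) \jumpmatrix{0}{-h_1} L_2(\lambda) \dotsm L_{2K-2}(\lambda) \jumpmatrix{-g_K}{0} L_{2K-1}(\lambda) \jumpmatrix{0}{-h_K} L_{2K}(\lambda),
\end{equation*}
the order of the factors in \eqref{eq:S-discrete} having been reversed. The final step is to recognize the right-hand side as an instance of the standard product: one matches it factor-by-factor against the defining form of $S(\lambda; l_{2K}, \dots, l_0; -h_K, -g_K, \dots, -h_1, -g_1)$ under the reindexing $\tilde l_k = l_{2K-k}$ (so the propagation matrix of length $l_{2K-k}$ occupies slot~$k$) together with the mass relabelling $\tilde g_a = -h_{K+1-a}$, $\tilde h_a = -g_{K+1-a}$, which reproduces the mass list $(-h_K, -g_K, \dots, -h_1, -g_1)$. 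I expect the only genuinely error-prone point to be this index bookkeeping between the reversed product and the left-to-right convention of \eqref{eq:S-discrete}; the arithmetic is routine but easy to get off by one, so I would check the two ends ($L_0$ versus $\tilde L_{2K}$, and $L_{2K}$ versus $\tilde L_0$) explicitly. The twin statement follows from the identical computation applied to $\twin S^*(\lambda) = S(-\lambda)^{-1} = \J \twin S(\lambda)^T \J$, with the roles of $g$ and $h$ interchanged throughout.
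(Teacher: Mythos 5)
Your proof is correct and follows essentially the same route as the paper's: both start from $S^*(\lambda) = \J S(\lambda)^T \J$, use that conjugation-by-$\J$ combined with transposition reverses the product \eqref{eq:S-discrete}, and apply the factor identities $\J L_k(\lambda)^T \J = L_k(\lambda)$ and $\J \jumpmatrix{x}{y}^T \J = \jumpmatrix{-y}{-x}$. The index bookkeeping in your final matching step is also correct.
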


\begin{proof}
  Use $J = J^T = J^{-1}$ together with
  $\J L_k(\lambda)^T \J = L_k(\lambda)$ and $\J \jumpmatrix{x}{y}^T \J = \jumpmatrix{-y}{-x}$
  to obtain
  \begin{equation*}
    \begin{split}
      S^*(\lambda)
      &= \J S(\lambda)^T \J
      \\
      &= \J \biggl( L_{2K}(\lambda) \jumpmatrix{h_K}{0} \dotsm \jumpmatrix{0}{g_1} L_0(\lambda) \biggr)^T \J
      \\
      &= \biggl( \J L_0(\lambda)^T \J \biggr) \bigg( \J \jumpmatrix{0}{g_1}^T \J \biggr) \dotsm \bigg( \J \jumpmatrix{h_K}{0}^T \J \biggr) \bigg( \J L_{2K}(\lambda)^T \J \biggr)
      \\
      &= L_0(\lambda) \jumpmatrix{-g_1}{0} \dotsm \jumpmatrix{0}{-h_K} L_{2K}(\lambda).
    \end{split}
  \end{equation*}
\end{proof}

\begin{remark}
  \label{rem:S-star}
  The adjoint Weyl functions $W^*$ and $Z^*$ are defined from the first column in~$S^* = \J S^T \J$,
  \begin{equation*}
    (S^*_{11}, S^*_{11}, S^*_{11})^T = (S_{33},-S_{32},S_{31})^T,
  \end{equation*}
  in almost the same way as $W$ and $Z$ are defined from the first column in $S$,
  but there is a slight sign difference in $W^*$ since we have defined all Weyl functions
  so that they will have positive residues:
  \begin{equation*}
    W = -S_{21}/S_{31},
    \qquad
    Z = -S_{11}/S_{31},
  \end{equation*}
  but
  \begin{equation*}
    W^* = -S_{32}/S_{31} = +S^*_{21}/S^*_{31},
    \qquad
    Z^* = -S_{33}/S_{31} = -S^*_{11}/S^*_{31}.
  \end{equation*}
  As a consequence, we see for example that if
  \begin{equation*}
    a_k = F(l_0,\dots,l_{2K}; g_1, h_1, \dots, g_K, h_K)
  \end{equation*}
  indicates how the residue $a_k$ in $W$ depends on the configuration of the masses,
  then
  \begin{equation*}
    -a_k^* = F(l_{2K}, \dots, l_0; -h_K, -g_K, \dots, -h_1, -g_1),
  \end{equation*}
  with the same function $F$, will determine the corresponding residue in $W^*$.
\end{remark}

\begin{remark}
  \label{rem:epiphany}
  In \autoref{sec:recovery} we will use \autoref{thm:QPR-approximation} and \autoref{prop:recover-even}
  to derive formulas for recovering the weights $h_j$ and their positions $y_{2j}$ from the Weyl functions $W$ and $\twin W$.
  Because of the symmetry properties described here, the same formulas can then be used to recover
  the weights $g_j$ and their positions $y_{2j-1}$ from the adjoint Weyl functions $W^*$ and $\twin W^*$,
  by substituting
  \begin{equation}
    \label{eq:symmetry-substitution}
    \begin{aligned}
      a_i &\mapsto -a_i^*,
      \\
      b_j &\mapsto -b_j^*,
      \\
      b_{\infty} &\mapsto -b_{\infty}^*,
    \end{aligned}
    \qquad
    \begin{aligned}
      l_k &\mapsto l_{2K-k},
      \\
      g_j &\mapsto -h_{K+1-j},
      \\
      h_j &\mapsto -g_{K+1-j}.
    \end{aligned}
  \end{equation}
  Note that $1 - y_m = \sum_{k=m}^{2K} l_k$
  is to be replaced by
  $\sum_{k=m}^{2K} l_{2K-k} = \sum_{s=0}^{2K-m} l_{s} = 1 + y_{2K+1-m}$.
\end{remark}

\section{The inverse spectral problem}
\label{sec:inverse-spectral}

To summarize what we have seen so far, the Weyl functions
$W(\lambda)$, $Z(\lambda)$, $\twin W(\lambda)$, $\twin Z(\lambda)$
encode much of the information about our twin spectral problems.
In particular, in the discrete interlacing case with positive weights,
the Weyl functions are rational functions in the spectral variable~$\lambda$,
with poles at the (positive and simple) eigenvalues of the spectral problems,
and the functions $Z$ and~$\twin Z$ are completely determined by
$W$ and~$\twin W$
(which in turn are of course determined by the given discrete measures
$m$ and~$n$ that define the whole setup).

The measures depend on the $4K$ parameters
\begin{equation*}
  x_1,x_2,\dots,x_{2K-1},x_{2K},
  \qquad
  m_1,m_3,\dots,m_{2K-1},
  \qquad
  n_2,n_4,\dots,n_{2K}
\end{equation*}
(or equivalently $\{ y_k, g_{2a-1}, h_{2a} \}$),
while the Weyl function $W$ depends on the $2K$ parameters
\begin{equation*}
  \lambda_1,\dots,\lambda_{K},
  \qquad
  a_1,\dots,a_{K},
\end{equation*}
and its twin $\twin W$ on the $2K-1$ parameters
\begin{equation*}
  \mu_1,\dots,\mu_{K-1},
  \qquad
  b_1,\dots,b_{K-1},
  \qquad
  b_{\infty}.
\end{equation*}
To get an inverse spectral problem where the number of spectral data matches
the number of parameters to reconstruct, we therefore need to supplement
$W$ and $\twin W$ by one extra piece of information, and a suitable choice
turns out to be the coefficient $b_{\infty}^*$ defined by \eqref{eq:W-star-twin-parfrac}.
We will show in this section how to recover the discrete interlacing measures $m$ and~$n$
(or, equivalently, their counterparts $g$ and~$h$ on the finite interval)
from this set of spectral data $\{ \lambda_i, a_i, \mu_j, b_j, b_{\infty}, b_{\infty}^* \}$
that they give rise to.
Moreover, we will show that the \emph{necessary} constraints
($0 < \lambda_1 < \dots < \lambda_K$,
$0 < \mu_1 < \dots < \mu_{K-1}$,
and all $a_i$, $b_j$, $b_\infty$, $b^*_\infty$ positive)
are also \emph{sufficient} for such a set of numbers to
be the spectral data of a unique pair of interlacing discrete measures $m$ and~$n$.

\subsection{Approximation problem}
\label{sec:approx-solution}

As we mentioned in \autoref{sec:approx},
the properties in \autoref{thm:QPR-approximation}
are enough to determine the polynomials $Q$, $P$, $R$ uniquely,
and this fact will be proved here.

\begin{theorem}
  \label{thm:approximation-problem-solution}
  Let $b_{\infty}$ be a positive constant.
  Let $\alpha$ and $\beta$ be compactly supported measures on the positive real axis,
  with moments
  \begin{equation}
    \label{eq:alpha-beta-moments}
    \alpha_k = \int x^k \da
    , \qquad
    \beta_k = \int y^k \db
    ,
  \end{equation}
  and bimoments (with respect to the Cauchy kernel $\frac{1}{x+y}$)
  \begin{equation}
    \label{eq:alpha-beta-bimoments}
    I_{km} = \iint \frac{x^k y^m}{x+y} \, \da\db
    .
  \end{equation}
  Define $W$, $\twin W$, $Z$, $\twin Z$ by the formulas \eqref{eq:WZ-as-integrals}
  (repeated here for convenience):
  \begin{align*}
    W(\lambda) &= \int \frac{\da}{\lambda - x}
    ,
    \tag{\ref{eq:W-as-integral}}
    \\
    \twin W(\lambda) &= \int \frac{\db}{\lambda - y} - b_{\infty}
    ,
    \tag{\ref{eq:W-twin-as-integral}}
    \\
    Z(\lambda) &= \frac{1}{2\lambda} + \iint \frac{\da\db}{(\lambda - x)(x+y)} + b_{\infty} W(\lambda)
    ,
    \tag{\ref{eq:Z-as-integral}}
    \\
    \twin Z(\lambda) &= \frac{1}{2\lambda} + \iint \frac{\da\db}{(x+y)(\lambda - y)}
    .
    \tag{\ref{eq:Z-twin-as-integral}}
  \end{align*}
  Fix a positive integer~$j$.
  (If $\alpha$ and $\beta$ are supported at infinitely many points,
  then $j$ can be arbitary.
  In the discrete case with $\alpha = \sum_{i=1}^K a_i \delta_{\lambda_i}$ and
  $\beta = \sum_{i=1}^{K-1} b_i \delta_{\mu_i}$,
  we restrict $j$ to the interval $1 \le j \le K$.)

  Then there are unique polynomials $Q(\lambda) = Q_j(\lambda)$,
  $P(\lambda) = P_j(\lambda)$, $R(\lambda) = R_j(\lambda)$
  satisfying the conditions of \autoref{thm:QPR-approximation}
  (also repeated here for convenience):
  \begin{equation*}
    \deg Q = j,
    \qquad
    \deg P = j-1,
    \qquad
    \deg R = j-1,
    \tag{\ref{eq:QPR-degrees}}
  \end{equation*}
  \begin{equation*}
    Q(0) = 0,
    \qquad
    P(0) = 1,
    \tag{\ref{eq:QP-normalization}}
  \end{equation*}
  and, as $\lambda \to \infty$,
  \begin{gather*}
    W(\lambda) Q(\lambda) - P(\lambda) = \order{\frac{1}{\lambda}},
    \tag{\ref{eq:W-approx}}
    \\
    Z(\lambda) Q(\lambda) - R(\lambda) = \order{\frac{1}{\lambda}},
    \tag{\ref{eq:Z-approx}}
    \\
    R(\lambda) + P(\lambda) \twin W(-\lambda) + Q(\lambda) \twin Z(-\lambda) = \order{\frac{1}{\lambda^{j}}}.
    \tag{\ref{eq:ZW-relation-approx}}
  \end{gather*}
  These polynomials are given by
  \begin{subequations}
    \begin{align}
      Q(\lambda) &= \lambda \, p(\lambda),
      \label{eq:solution-Q}
      \\
      P(\lambda) &= \int \frac{Q(\lambda) - Q(x)}{\lambda - x} \, \da,
      \label{eq:solution-P}
      \\
      R(\lambda) &= \iint \frac{Q(\lambda) - Q(x)}{(\lambda - x)(x+y)} \, \da\db + \tfrac12 p(\lambda) + b_{\infty} P(\lambda),
      \label{eq:solution-R}
    \end{align}
    where
    \begin{equation}
      \label{eq:solution-p}
      p(\lambda) = \frac{\det
        \begin{pmatrix}
          1 & I_{10} & \dots & I_{1,j-2} \\
          \lambda & I_{20} & \dots & I_{2,j-2} \\
          \vdots & \vdots & & \vdots \\
          \lambda^{j-1} & I_{j0} & \dots & I_{j,j-2} \\
        \end{pmatrix}
      }{\det
        \begin{pmatrix}
          \alpha_0 & I_{10} & \dots & I_{1,j-2}  \\
          \alpha_1 & I_{20} & \dots & I_{2,j-2}  \\
          \vdots & \vdots & & \vdots \\
          \alpha_{j-1} & I_{j0} & \dots & I_{j,j-2} \\
        \end{pmatrix}
      }.
    \end{equation}
  \end{subequations}
  (If $j=1$, equation \eqref{eq:solution-p} should be read as $p(\lambda)=1/\alpha_0$.)

  In particular, we have (using notation from \autoref{sec:determinants})
  \begin{equation}
    \label{eq:Qprime-zero}
    Q'(0) = p(0)
    = \frac{\det
      \begin{pmatrix}
        I_{20} & \dots & I_{2,j-2} \\
        \vdots & & \vdots \\
        I_{j0} & \dots & I_{j,j-2} \\
      \end{pmatrix}
    }{\det
      \begin{pmatrix}
        \alpha_0 & I_{10} & \dots & I_{1,j-2}  \\
        \alpha_1 & I_{20} & \dots & I_{2,j-2}  \\
        \vdots & \vdots & & \vdots \\
        \alpha_{j-1} & I_{j0} & \dots & I_{j,j-2} \\
      \end{pmatrix}
    }
    = \frac{\heineintegral_{j-1,j-1}^{20}}{\heineintegral_{j,j-1}^{01}}
  \end{equation}
  (to be read as $Q'(0) = 1/\alpha_0$ if $j=1$), and
  \begin{equation}
    \label{eq:R-zero}
    \begin{split}
      R(0) &= \iint \frac{p(x)}{x+y} \da\db + \tfrac12 p(0) + b_{\infty}
      \\
      &=
      \frac{\det
        \begin{pmatrix}
          I_{00} + \tfrac12 & I_{10} & \dots & I_{1,j-2}  \\
          I_{10} & I_{20} & \dots & I_{2,j-2}  \\
          \vdots & \vdots & & \vdots \\
          I_{j-1,0} & I_{j0} & \dots & I_{j,j-2} \\
        \end{pmatrix}
      }{\det
        \begin{pmatrix}
          \alpha_0 & I_{10} & \dots & I_{1,j-2}  \\
          \alpha_1 & I_{20} & \dots & I_{2,j-2}  \\
          \vdots & \vdots & & \vdots \\
          \alpha_{j-1} & I_{j0} & \dots & I_{j,j-2} \\
        \end{pmatrix}
      }
      + b_{\infty}
      = \frac{\weirddeterminant_j}{\heineintegral_{j,j-1}^{01}} + b_{\infty}
    \end{split}
  \end{equation}
  (to be read as $R(0) = (I_{00} + \tfrac12)/\alpha_0 + b_{\infty}$ if $j=1$).
\end{theorem}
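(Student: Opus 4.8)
The plan is to notice that the first two approximation conditions \eqref{eq:W-approx} and \eqref{eq:Z-approx} are built into the \emph{Ansatz} \eqref{eq:solution-P}--\eqref{eq:solution-R}, so that the whole content of the theorem is concentrated in the third condition \eqref{eq:ZW-relation-approx} together with the normalization $P(0)=1$; these I would convert into a square linear system for the coefficients of the polynomial $p$ in \eqref{eq:solution-Q} and solve by Cramer's rule.

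First I would record the elementary fact that, for any polynomial $Q$ with $Q(0)=0$ and $\deg Q=j$, the divided difference $\bigl(Q(\lambda)-Q(x)\bigr)/(\lambda-x)$ is a polynomial of degree $j-1$ in $\lambda$; hence \eqref{eq:solution-P} and \eqref{eq:solution-R} genuinely define polynomials $P,R$ of degree $j-1$, and \eqref{eq:QPR-degrees} follows (while \eqref{eq:QP-normalization} is automatic, $Q(0)=0$ from $Q=\lambda p$ and $P(0)=1$ being imposed below). Splitting $W(\lambda)Q(\lambda)=\int\frac{Q(\lambda)-Q(x)}{\lambda-x}\da+\int\frac{Q(x)}{\lambda-x}\da$ shows $WQ-P=\int\frac{Q(x)}{\lambda-x}\da=\order{1/\lambda}$, which is \eqref{eq:W-approx}; the identical manipulation on $Z$, with the $\tfrac{1}{2\lambda}$ and $b_\infty W$ pieces of \eqref{eq:Z-as-integral} absorbed into the $\tfrac12 p+b_\infty P$ terms of \eqref{eq:solution-R}, gives \eqref{eq:Z-approx}. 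Thus these two conditions hold for \emph{every} admissible $Q$, independently of the choice of $p$.

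The crux is \eqref{eq:ZW-relation-approx}. Here I would substitute $\twin W(-\lambda)=-\int\frac{\db}{\lambda+y}-b_\infty$ and $\twin Z(-\lambda)=-\tfrac{1}{2\lambda}-\iint\frac{\da\db}{(x+y)(\lambda+y)}$ from \eqref{eq:WZ-as-integrals}; the terms $\tfrac12 p$ and $b_\infty P$ then cancel against their counterparts in $R$, and after rewriting $P(\lambda)\int\frac{\db}{\lambda+y}=\iint\frac{Q(\lambda)-Q(x)}{(\lambda-x)(\lambda+y)}\da\db$ the surviving three double integrals collapse, by means of the Cauchy-kernel identity $\frac{1}{x+y}-\frac{1}{\lambda+y}=\frac{\lambda-x}{(x+y)(\lambda+y)}$, into the single expression $-\iint\frac{Q(x)}{(x+y)(\lambda+y)}\,\da\db$. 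Expanding $\frac{1}{\lambda+y}=\sum_{n\ge0}(-1)^n y^n\lambda^{-n-1}$ shows that this quantity is $\order{1/\lambda^{j}}$ if and only if the bimoment conditions $\sum_{k=0}^{j-1} p_k\,I_{k+1,n}=0$ hold for $n=0,\dots,j-2$, where $I_{km}$ are the bimoments \eqref{eq:alpha-beta-bimoments} and $Q(x)=x\,p(x)=\sum_k p_k x^{k+1}$.

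Finally I would assemble these $j-1$ bimoment equations together with the normalization $P(0)=\int p\,\da=\sum_k p_k\alpha_k=1$ into a single $j\times j$ linear system for $(p_0,\dots,p_{j-1})$ whose coefficient matrix has determinant $\heineintegral^{01}_{j,j-1}$, and solve it by Cramer's rule; rearranging into determinantal form along the column carrying the powers of $\lambda$ yields \eqref{eq:solution-p}. The evaluations then come out by specialisation: $Q'(0)=p(0)=p_0$ by setting $\lambda=0$ in \eqref{eq:solution-p} (only the constant term of the first column survives), giving \eqref{eq:Qprime-zero}; and for $R(0)$, setting $\lambda=0$ in \eqref{eq:solution-R} and using $Q(0)=0$, $Q(x)=x\,p(x)$ turns the double integral into $\iint\frac{p(x)}{x+y}\da\db=\sum_k p_k I_{k0}$, so that $R(0)=\sum_k p_kI_{k0}+\tfrac12 p_0+b_\infty$ repackages, again by cofactor expansion, into \eqref{eq:R-zero} with $\weirddeterminant_j$ over the common denominator $\heineintegral^{01}_{j,j-1}$. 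The one genuine obstacle is existence and uniqueness, i.e.\ the nonvanishing of $\heineintegral^{01}_{j,j-1}$: this is exactly where positivity of $\alpha$ and $\beta$ enters, and I would deduce it (together with $p$ having full degree $j-1$, hence $\deg Q=j$) from the positive-definiteness of the Cauchy-biorthogonal Gram determinants established in \autoref{sec:biorth} and \autoref{sec:determinants}.
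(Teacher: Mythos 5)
Your proposal is correct and follows essentially the same route as the paper: both reduce the condition \eqref{eq:ZW-relation-approx} to the biorthogonality relations $\iint \frac{p(x)\,x\,y^k}{x+y}\,\da\db = 0$ for $0 \le k \le j-2$, append the normalization $P(0)=\int p\,\da = 1$, and solve the resulting square system by Cramer's rule, with invertibility of the bimoment matrix supplied by the determinant evaluations of \autoref{sec:determinants}. The only cosmetic difference is that the paper organizes the cancellation via the projection operators $\Pi_{\ge 0}$, $\Pi_{<0}$ and the exact identity $Z(\lambda)+W(\lambda)\twin W(-\lambda)+\twin Z(-\lambda)=0$, whereas you substitute the integral representations directly and invoke the Cauchy-kernel partial-fraction identity; these are the same computation in different notation.
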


\begin{proof}
  A bit of notation first: define projection operators acting on
  (formal or convergent) Laurent series $f(\lambda) = \sum_{k \in \Z} c_k \lambda^k$
  as follows:
  \begin{equation}
    \label{eq:projection-operators}
    \Pi_{\ge 0} f = \sum_{k \ge 0} c_k \lambda^k, \qquad
    \Pi_{> 0} f = \sum_{k > 0} c_k \lambda^k, \qquad
    \Pi_{< 0} f = \sum_{k < 0} c_k \lambda^k.
  \end{equation}
  Note that we can expand $W(\lambda)$ in a Laurent series with negative powers,
  \begin{equation*}
    W(\lambda) = \int \frac{\da}{\lambda - x}
    = \frac{1}{\lambda} \int \sum_{k \ge 0} \left( \frac{x}{\lambda} \right)^k \da
    = \sum_{k \ge 0} \frac{\alpha_k}{\lambda^{k+1}},
  \end{equation*}
  and similarly for the other Weyl functions.

  We see at once that the conditions \eqref{eq:W-approx} and \eqref{eq:Z-approx}
  determine the polynomials $P$ and $R$ uniquely, by projection on nonnegative powers,
  if the polynomial $Q$ is known:
  \begin{equation}
    \label{eq:PR-projections}
    P = \Pi_{\ge 0} [QW],
    \qquad
    R = \Pi_{\ge 0} [QZ].
  \end{equation}
  Inserting this into \eqref{eq:ZW-relation-approx} gives
  \begin{equation*}
    \Pi_{\ge 0} [QZ](\lambda) + \Pi_{\ge 0}[QW](\lambda) \, \twin W(-\lambda) + Q(\lambda) \twin Z(-\lambda) = \order{\frac{1}{\lambda^{j}}}.
  \end{equation*}
  Writing $\Pi_{\ge 0} = \id{} - \Pi_{< 0}$ produces
  \begin{multline*}
    Q(\lambda) Z(\lambda) + Q(\lambda) W(\lambda) \twin W(-\lambda) + Q(\lambda) \twin Z(-\lambda)
    \\
    - \Pi_{< 0} [QZ](\lambda) - \Pi_{< 0}[QW](\lambda) \, \twin W(-\lambda) = \order{\frac{1}{\lambda^{j}}}, 
  \end{multline*}
  where the first three terms cancel thanks to the identity
  $Z(\lambda) + W(\lambda) \twin W(-\lambda) + \twin Z(-\lambda) = 0$
  which follows from the definitions \eqref{eq:WZ-as-integrals} by a
  short calculation (cf.\ also \eqref{eq:Weyl-relation}).
  This leaves
  \begin{equation}
    \label{eq:Q-projection}
    \Pi_{< 0} [QZ](\lambda) + \Pi_{< 0}[QW](\lambda) \, \twin W(-\lambda) = \order{\frac{1}{\lambda^{j}}}.    
  \end{equation}
  Next, note that
  \begin{equation*}
    Q(\lambda) W(\lambda)
    = Q(\lambda) \int \frac{\da}{\lambda - x}
    = \int \frac{Q(\lambda) - Q(x)}{\lambda - x} \, \da + \int \frac{Q(x)}{\lambda - x} \, \da,
  \end{equation*}
  where the first term is a polynomial in $\lambda$ (since
  $Q(\lambda)-Q(x)$ vanishes when $\lambda=x$ and therefore contains
  $\lambda-x$ as a factor), and the second term is $\order{1/\lambda}$
  as $\lambda \to \infty$.
  Thus, the first and second term are $\Pi_{\ge 0} [QW]$ and $\Pi_{< 0} [QW]$, respectively,
  which gives on the one hand the claimed integral representation for~$P$,
  \begin{equation}
    \label{eq:P-projection-as-integral}
    P(\lambda) = \Pi_{\ge 0} [QW](\lambda) =
    \int \frac{Q(\lambda) - Q(x)}{\lambda - x} \, \da,
  \end{equation}
  and on the other hand, multiplying the negative projection by $\twin W(-\lambda)$,
  \begin{multline}
    \label{eq:first-contribution}
    \Pi_{< 0}[QW](\lambda) \, \twin W(-\lambda)
    = \left( \int \frac{Q(x)}{\lambda - x} \, \da \right) \left( \int \frac{\db}{-\lambda - y} - b_{\infty} \right)
    \\
    = - b_{\infty} \int \frac{Q(x)}{\lambda - x} \, \da - \iint \frac{Q(x)}{(\lambda - x)(\lambda + y)} \, \da\db.
  \end{multline}
  Similarly,
  \begin{multline*}
    Q(\lambda) Z(\lambda)
    = \frac{Q(\lambda)}{2\lambda}
    + \iint \frac{Q(\lambda) - Q(x)}{(\lambda - x)(x+y)} \, \da\db
    \\
    + \iint \frac{Q(x)}{(\lambda - x)(x+y)} \, \da\db
    + b_{\infty} Q(\lambda) W(\lambda),
  \end{multline*}
  where the first term is a polynomial in $\lambda$ since we require $Q(0)=0$,
  likewise the second term is a polynomial (by the same argument as above),
  and the third term is $\order{1/\lambda}$ as $\lambda \to \infty$.
  Thus, we obtain from the first two terms, together with the contribution to nonnegative powers from the fourth term,
  the claimed integral representation for~$R$,
  \begin{equation}
    \label{eq:R-projection-as-integral}
    R(\lambda) = \Pi_{\ge 0} [QZ](\lambda) =
    \frac{Q(\lambda)}{2\lambda}
    + \iint \frac{Q(\lambda) - Q(x)}{(\lambda - x)(x+y)} \, \da\db
    + b_{\infty} P(\lambda),
  \end{equation}
  and from the third term, together with the contribution to negative powers from the fourth term,
  \begin{equation}
    \label{eq:second-contribution}
    \Pi_{< 0}[QZ](\lambda) = 
    \iint \frac{Q(x)}{(\lambda - x)(x+y)} \, \da\db
    + b_{\infty} \int \frac{Q(x)}{\lambda - x} \, \da.
  \end{equation}
  Inserting \eqref{eq:first-contribution} and  \eqref{eq:second-contribution}
  into \eqref{eq:Q-projection} gives
  \begin{equation*}
    \iint \frac{Q(x)}{(\lambda - x)(x+y)} \, \da\db
    - \iint \frac{Q(x)}{(\lambda - x)(\lambda + y)} \, \da\db
    = \order{\frac{1}{\lambda^{j}}},
  \end{equation*}
  which simplifies to
  \begin{equation}
    \label{eq:Q-projection-simplified}
    \iint \frac{Q(x)}{(x+y)(\lambda + y)} \, \da\db
    = \order{\frac{1}{\lambda^{j}}}.
  \end{equation}
  Since $Q(0)=0$, we write $Q(x) = x \, p(x)$, where $p$ is a polynomial of degree $j-1$.
  Upon expanding $1/(\lambda+y) = \sum_{k \ge 0} y^k \lambda^{-(k+1)}$,
  the condition \eqref{eq:Q-projection-simplified} takes the form
  \begin{equation}
    \label{eq:p-biorthogonality-condition}
    \iint \frac{p(x) \, x y^k}{x+y} \ \da\db = 0,
    \qquad
    0 \le k \le j-2.
  \end{equation}
  This imposes $j-1$ linear equations for the $j$ coefficients in~$p(x)=p_0 + p_1 x + \dots + p_{j-1} x^{j-1}$:
  \begin{equation*}
    (p_0, \dots, p_{j-1})
    \begin{pmatrix}
      I_{10} & \dots & I_{1,j-2}  \\
      I_{20} & \dots & I_{2,j-2} \\
      \vdots & & \vdots \\
      I_{j0} & \dots & I_{j,j-2} \\
    \end{pmatrix}
    = (0,\dots,0).
  \end{equation*}
  Adding an extra column,
  \begin{equation*}
    (p_0, \dots, p_{j-1})
    \begin{pmatrix}
      I_{10} & \dots & I_{1,j-2} & I_{1,j-1}  \\
      I_{20} & \dots & I_{2,j-2} & I_{2,j-1} \\
      \vdots & & \vdots & \vdots \\
      I_{j0} & \dots & I_{j,j-2} & I_{j,j-1} \\
    \end{pmatrix}
    = (0,\dots,0,*),
  \end{equation*}
  we see that the row vector $(p_0, \dots, p_{j-1})$ is proportional to
  the last row of the inverse of the bimoment matrix in question
  (which is invertible by the assumption about infinitely many points of support, or
  by the restriction on~$j$ in the discrete case).
  Hence, by Cramer's rule,
  \begin{equation*}
    p(x) = C \det
    \begin{pmatrix}
      I_{10} & \dots & I_{1,j-2} & 1 \\
      I_{20} & \dots & I_{2,j-2} & x \\
      \vdots & & \vdots & \vdots \\
      I_{j0} & \dots & I_{j,j-2} & x^{j-1} \\
    \end{pmatrix}.
  \end{equation*}
  The constant $C$ is determined by the remaining normalization condition $P(0)=1$;
  from \eqref{eq:P-projection-as-integral} we get
  \begin{equation*}
    \begin{split}
      1 &= P(0) = \int \frac{Q(0) - Q(x)}{0 - x} \, \da
      = \int p(x) \, \da
      \\[1ex] &=
      C \det
      \begin{pmatrix}
        I_{10} & \dots & I_{1,j-2} & \int 1 \,\da \\
        I_{20} & \dots & I_{2,j-2} & \int x \,\da \\
        \vdots & & \vdots & \vdots \\
        I_{j0} & \dots & I_{j,j-2} & \int x^{j-1} \,\da \\
      \end{pmatrix}
      \\[1ex] &=
      C \det
      \begin{pmatrix}
        I_{10} & \dots & I_{1,j-2} & \alpha_0 \\
        I_{20} & \dots & I_{2,j-2} & \alpha_1 \\
        \vdots & & \vdots & \vdots \\
        I_{j0} & \dots & I_{j,j-2} & \alpha_{j-1} \\
      \end{pmatrix}
      .
    \end{split}
  \end{equation*}

  Finally, the expression \eqref{eq:Qprime-zero} for $Q'(0)$ follows at once upon setting
  $\lambda = 0$ in $Q'(\lambda) = p(\lambda) + \lambda p'(\lambda)$ and using the determinantal
  expression \eqref{eq:solution-p} for $p(\lambda)$;
  the last term in \eqref{eq:Qprime-zero} represents an evaluation of the determinants in terms of certain integrals
  (which will be sums when the measures $\alpha$ and $\beta$ are discrete).
  This is explained in the appendix; see \autoref{sec:determinants},
  in particular equations \eqref{eq:bimoment-det-shifted} and
  \eqref{eq:p-polynomial-det}.
  The expression \eqref{eq:R-zero} for $R(0)$ is also immediate from the formula \eqref{eq:solution-R} for $R(\lambda)$,
  since $(Q(0)-Q(x))/(0-x) = Q(x)/x = p(x)$.
  (The symbol $\weirddeterminant_j$ is just notation for the determinant in the numerator;
  it doesn't seem to have a simple direct integral representation, but we will mainly be interested
  in the difference between $R_j(0)$ and $R_{j+1}(0)$, which equation
  \eqref{eq:oompa-loompa} takes care of.)
\end{proof}

\begin{remark}
  The polynomial $p(x)$ in \autoref{thm:approximation-problem-solution}
  is proportional to $p_{j-1}(x)$,
  where $\{ p_n(x), q_n(y) \}_{n \ge 0}$ are the normalized Cauchy biorthogonal polynomials
  with respect to the measures $x \, \da$ and $\db$.
  This can be seen either
  directly from the biorthogonality condition \eqref{eq:p-biorthogonality-condition},
  or by comparing the numerators in the formula \eqref{eq:solution-p} for~$p$
  and the formula \eqref{eq:biorth-pn} (with $I_{a+1,b}$ instead of $I_{ab}$) for~$p_n$.
\end{remark}

\subsection{Recovery formulas for the weights and their positions}
\label{sec:recovery}

Most of the work is now done, and we can at last state the
solution to the inverse problem of recovering the weights $g_j$ and
$h_j$ and their positions $y_k$ from the spectral data encoded in the
Weyl functions. The answer will be given in terms of the integrals
$\heineintegral_{nm}^{rs}$ defined by equation
\eqref{eq:heine-integral} in \autoref{sec:determinants} in the appendix.
Since $\alpha$ and $\beta$ are discrete measures here,
these integrals are in fact sums; see \eqref{eq:heine-integral-as-sum}
in \autoref{sec:heine-integral-discrete-case}.

\begin{theorem}
  \label{thm:even-recovery}
  The weights and positions of the even-numbered point masses are given by the formulas
  \begin{align}
    h_{K} &= \frac{I_{00} + \tfrac12}{\alpha_0} + b_\infty,
    \label{eq:recover-last-h}
    \\
    (1 - y_{2K}) h_K &= \frac{1}{\alpha_0},
    \label{eq:recover-last-y}
  \end{align}
  and, for $j = 2, \dots, K$,
  \begin{align}
    h_{K+1-j} &= \frac{\heineintegral_{j-1,j-1}^{10} \bigl( \heineintegral_{j,j-1}^{00} + \tfrac12 \heineintegral_{j-1,j-2}^{11} \bigr)}{\heineintegral_{j-1,j-2}^{01} \heineintegral_{j,j-1}^{01}},
    \label{eq:recover-other-h}
    \\
    (1 - y_{2(K+1-j)}) h_{K+1-j} &= \frac{\heineintegral_{j-1,j-2}^{11} \heineintegral_{j-1,j-1}^{10}}{\heineintegral_{j-1,j-2}^{01} \heineintegral_{j,j-1}^{01}}.
    \label{eq:recover-other-even-y}
  \end{align}
\end{theorem}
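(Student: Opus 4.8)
The plan is to read off each weight and position as a \emph{difference of consecutive approximants}, using \autoref{prop:recover-even} to reduce everything to the boundary values $R_j(0)$ and $Q_j'(0)$, and then to insert the closed-form expressions \eqref{eq:R-zero} and \eqref{eq:Qprime-zero} supplied by \autoref{thm:approximation-problem-solution}. Concretely, replacing the index in \autoref{prop:recover-even} by $K+1-j$ converts its two formulas into
\begin{equation*}
  h_{K+1-j} = R_j(0) - R_{j-1}(0),
  \qquad
  (1-y_{2(K+1-j)})\, h_{K+1-j} = Q_j'(0) - Q_{j-1}'(0),
\end{equation*}
valid for $j = 1,\dots,K$. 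Everything then comes down to evaluating these differences.

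For the topmost mass ($j=1$) the subtracted term drops out, since $Q_0 \equiv R_0 \equiv 0$ by \eqref{eq:Q0P0R0}; feeding in the $j=1$ special cases $Q_1'(0) = 1/\alpha_0$ and $R_1(0) = (I_{00}+\tfrac12)/\alpha_0 + b_\infty$ from \autoref{thm:approximation-problem-solution} yields \eqref{eq:recover-last-h} and \eqref{eq:recover-last-y} at once. For $j \ge 2$ I would substitute the Heine-integral forms directly. In the weight difference the additive constant $b_\infty$ cancels, leaving
\begin{equation*}
  h_{K+1-j} = \frac{\weirddeterminant_j}{\heineintegral_{j,j-1}^{01}} - \frac{\weirddeterminant_{j-1}}{\heineintegral_{j-1,j-2}^{01}},
  \qquad
  (1-y_{2(K+1-j)})\, h_{K+1-j} = \frac{\heineintegral_{j-1,j-1}^{20}}{\heineintegral_{j,j-1}^{01}} - \frac{\heineintegral_{j-2,j-2}^{20}}{\heineintegral_{j-1,j-2}^{01}}.
\end{equation*}

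The remaining—and genuinely hard—step is to show that these two differences collapse onto the single-fraction product forms on the right-hand sides of \eqref{eq:recover-other-h} and \eqref{eq:recover-other-even-y}. Clearing the common denominator $\heineintegral_{j,j-1}^{01}\heineintegral_{j-1,j-2}^{01}$, each target identity becomes a \emph{bilinear relation} among the Heine integrals; for the positions this reads
\begin{equation*}
  \heineintegral_{j-1,j-1}^{20}\,\heineintegral_{j-1,j-2}^{01} - \heineintegral_{j-2,j-2}^{20}\,\heineintegral_{j,j-1}^{01} = \heineintegral_{j-1,j-2}^{11}\,\heineintegral_{j-1,j-1}^{10},
\end{equation*}
a Plücker/Desnanot--Jacobi-type identity among bordered bimoment determinants, which I would establish in \autoref{sec:determinants}. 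The weight formula is the harder of the two, because the numerators $\weirddeterminant_j$ carry the $\tfrac12$-shifted first column and have no direct integral representation; there the required cancellation is precisely the content of the auxiliary identity \eqref{eq:oompa-loompa}, which evaluates $\weirddeterminant_j\,\heineintegral_{j-1,j-2}^{01} - \weirddeterminant_{j-1}\,\heineintegral_{j,j-1}^{01}$ as $\heineintegral_{j-1,j-1}^{10}\bigl(\heineintegral_{j,j-1}^{00} + \tfrac12\,\heineintegral_{j-1,j-2}^{11}\bigr)$. Thus the main obstacle is confined to these determinant manipulations in the appendix; the body of the argument is just careful bookkeeping with the index shift $j \mapsto K+1-j$ and the cancellation of $b_\infty$.
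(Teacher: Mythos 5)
Your proposal is correct and follows the paper's own route exactly: reduce to the differences $R_j(0)-R_{j-1}(0)$ and $Q_j'(0)-Q_{j-1}'(0)$ via \autoref{prop:recover-even}, insert \eqref{eq:R-zero} and \eqref{eq:Qprime-zero}, and collapse the differences using \eqref{eq:oompa-loompa} and the Desnanot--Jacobi identity \eqref{eq:heine-lewis-carroll-b} (with $n=j-1$). The bilinear relations you isolate are precisely those two appendix identities, so nothing further is needed.
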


\begin{proof}
  We use \autoref{prop:recover-even} together with
  \eqref{eq:Qprime-zero} and \eqref{eq:R-zero} from \autoref{thm:approximation-problem-solution}.
  The rightmost mass is special; we have
  \begin{equation*}
    h_K = R_1(0) - R_0(0) = \left( \frac{I_{00} + \tfrac12}{\alpha_0} + b_\infty \right) - 0
  \end{equation*}
  and
  \begin{equation*}
    (1 - y_{2K}) h_K = Q_1'(0) - Q_0'(0) = \frac{1}{\alpha_0} - 0.
  \end{equation*}
  For the other masses we get
  \begin{equation*}
    h_{K+1-j} = R_{j}(0) - R_{j-1}(0)
    = \left( \frac{\weirddeterminant_{j}}{\heineintegral_{j,j-1}^{01}} + b_\infty \right)
    - \left( \frac{\weirddeterminant_{j-1}}{\heineintegral_{j-1,j-2}^{01}} + b_\infty \right),
  \end{equation*}
  which equals \eqref{eq:recover-other-h} according to \eqref{eq:oompa-loompa},
  and
  \begin{equation*}
    (1 - y_{2(K+1-j)}) h_{K+1-j}
    = Q_{j}'(0) - Q_{j-1}'(0)
    = \frac{\heineintegral_{j-1,j-1}^{20}}{\heineintegral_{j,j-1}^{01}}
    - \frac{\heineintegral_{j-2,j-2}^{20}}{\heineintegral_{j-1,j-2}^{01}},
  \end{equation*}
  which equals \eqref{eq:recover-other-even-y} according to \eqref{eq:heine-lewis-carroll-b}.
\end{proof}

The symmetry described in \autoref{rem:epiphany}
immediately provides formulas for the odd-numbered point masses.
We let $\starheineintegral_{nm}^{rs}$ denote the integral
$\heineintegral_{nm}^{rs}$ evaluated using the measures
\begin{equation}
  \label{eq:alpha-beta-star}
  \alpha^* = \sum_{i=1}^K a_i^* \delta_{\lambda_i}
  \qquad \text{and} \qquad
  \beta^* = \sum_{j=1}^{K-1} b_i^* \delta_{\mu_j}
\end{equation}
in place of $\alpha$ and~$\beta$, and similarly for the moments
$\alpha^*_r  = \starheineintegral_{10}^{rs}$
and~$\beta^*_s = \starheineintegral_{01}^{rs}$,
and the Cauchy bimoments~$I^*_{rs} = \starheineintegral_{11}^{rs}$.
Then the symmetry transformation \eqref{eq:symmetry-substitution}
also entails the substitution
\begin{equation}
  \label{eq:heineintegral-substitution}
  \heineintegral_{nm}^{rs} \mapsto (-1)^{n+m} \starheineintegral_{nm}^{rs}
\end{equation}
(including as special cases
$\alpha_k \mapsto -\alpha^*_k$,
$\beta_k \mapsto -\beta^*_k$,
and $I_{ab} \mapsto I^*_{ab}$).

\begin{corollary}
  \label{cor:odd-recovery}
  The weights and positions of the odd-numbered point masses are given by the formulas
  \begin{align}
    g_{1} &= \frac{I^*_{00} + \tfrac12}{\alpha^*_0} + b^*_\infty,
    \label{eq:recover-first-g}
    \\
    (1 + y_{1}) g_1 &= \frac{1}{\alpha^*_0},
    \label{eq:recover-first-y}
  \end{align}
  and, for $j = 2, \dots, K$,
  \begin{align}
    g_{j} &= \frac{\starheineintegral_{j-1,j-1}^{10} \bigl( \starheineintegral_{j,j-1}^{00} + \tfrac12 \starheineintegral_{j-1,j-2}^{11} \bigr)}{\starheineintegral_{j-1,j-2}^{01} \starheineintegral_{j,j-1}^{01}},
    \label{eq:recover-other-g}
    \\
    (1 + y_{2j-1}) g_{j} &= \frac{\starheineintegral_{j-1,j-2}^{11} \starheineintegral_{j-1,j-1}^{10}}{\starheineintegral_{j-1,j-2}^{01} \starheineintegral_{j,j-1}^{01}}.
    \label{eq:recover-other-odd-y}
  \end{align}
\end{corollary}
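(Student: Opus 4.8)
The plan is to derive \autoref{cor:odd-recovery} from \autoref{thm:even-recovery} purely by invoking the symmetry of the configuration, exactly as anticipated in \autoref{rem:epiphany}; no fresh computation with the transition matrix is required. The starting point is \autoref{thm:symmetry}, which identifies the adjoint transition matrix $S^*(\lambda) = \J S(\lambda)^T \J$ with an ordinary transition matrix in which the gaps are reversed ($l_k \mapsto l_{2K-k}$) and the masses are sign-flipped and relabelled, $S^*(\lambda) = S(\lambda; l_{2K},\dots,l_0; -h_K,-g_K,\dots,-h_1,-g_1)$. Read as a forward string, the adjoint problem thus has even-numbered masses $-g_K, -g_{K-1},\dots,-g_1$ and odd-numbered masses $-h_K,\dots,-h_1$, so that the adjoint Weyl functions $W^*$ and $\twin W^*$ stand in the same relation to the weights $g_j$ as $W$ and $\twin W$ do to the weights $h_j$. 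Applying the even-mass recovery formulas of \autoref{thm:even-recovery} to the starred spectral data will therefore produce the $g_j$ and the positions $y_{2j-1}$.

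First I would pin down how each ingredient transforms. By \autoref{rem:S-star} and \eqref{eq:symmetry-substitution}, the spectral data transform as $a_i \mapsto -a_i^*$, $b_j \mapsto -b_j^*$, $b_{\infty}\mapsto -b_{\infty}^*$, while the eigenvalues $\lambda_i,\mu_j$ are unchanged (the adjoint problem has the same spectrum). Hence $\alpha \mapsto -\alpha^*$ and $\beta \mapsto -\beta^*$, so the single moments pick up a sign, $\alpha_k \mapsto -\alpha_k^*$ and $\beta_k \mapsto -\beta_k^*$, whereas the Cauchy bimoments are sign-neutral, $I_{ab}\mapsto I_{ab}^*$ (the two flips cancel). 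Propagating this through the determinantal Heine integrals yields the rule \eqref{eq:heineintegral-substitution}, $\heineintegral_{nm}^{rs}\mapsto (-1)^{n+m}\starheineintegral_{nm}^{rs}$. On the geometric side, the identity $1-y_m = \sum_{k=m}^{2K} l_k$ together with $l_k \mapsto l_{2K-k}$ turns $1-y_m$ into $1+y_{2K+1-m}$, as recorded at the end of \autoref{rem:epiphany}.

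Next I would apply these substitutions term by term to the four formulas of \autoref{thm:even-recovery}. On the left-hand sides one has $h_K \mapsto -g_1$ and, for $j\ge 2$, $h_{K+1-j}\mapsto -g_j$ (since the starred forward problem assigns $-g_{K+1-j}$ to its $j$-th even mass), together with $1-y_{2K}\mapsto 1+y_1$ and $1-y_{2(K+1-j)}\mapsto 1+y_{2j-1}$. The real content is then the sign bookkeeping on the right-hand sides. For the leading mass, \eqref{eq:recover-last-h} becomes $-g_1 = (I_{00}^*+\tfrac12)/(-\alpha_0^*) - b_{\infty}^*$, whose overall minus cancels against that on the left to give \eqref{eq:recover-first-g}; the same cancellation applied to \eqref{eq:recover-last-y} gives \eqref{eq:recover-first-y}. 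For $j\ge 2$ one checks, using \eqref{eq:heineintegral-substitution}, that in \eqref{eq:recover-other-h} and \eqref{eq:recover-other-even-y} the denominators $\heineintegral_{j-1,j-2}^{01}\heineintegral_{j,j-1}^{01}$ are sign-neutral (both factors have odd weight $n+m$, so their signs cancel), while the numerators each acquire a single net sign; the right-hand sides are thus each multiplied overall by $-1$, which again matches the $-1$ from $h_{K+1-j}\mapsto -g_j$ and leaves the clean expressions \eqref{eq:recover-other-g} and \eqref{eq:recover-other-odd-y}.

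The step I expect to demand the most care is precisely this last one: verifying that every sign — the $(-1)^{n+m}$ of each Heine integral, the flips $\alpha_0\mapsto -\alpha_0^*$ and $b_{\infty}\mapsto -b_{\infty}^*$, and the global $h\mapsto -g$ on the left — conspires to cancel and leave formulas whose right-hand sides are manifestly positive, consistent with positive weights and real positions. There is no conceptual obstacle here, only the danger of an index or parity slip; I would guard against it by tabulating the parity of $n+m$ for each factor separately (for instance $\heineintegral_{j-1,j-1}^{10}$ is even, while $\heineintegral_{j,j-1}^{00}$, $\heineintegral_{j-1,j-2}^{11}$, $\heineintegral_{j-1,j-2}^{01}$ and $\heineintegral_{j,j-1}^{01}$ are odd) and by confirming the degenerate leading-mass case before asserting the general formula.
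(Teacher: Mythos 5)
Your proposal is correct and follows exactly the route the paper takes: the corollary is obtained by applying the symmetry substitution of \autoref{rem:epiphany} (via \autoref{thm:symmetry} and \autoref{rem:S-star}) to the formulas of \autoref{thm:even-recovery}, with the sign bookkeeping governed by \eqref{eq:heineintegral-substitution}. Your parity tabulation of the factors $\heineintegral_{nm}^{rs}$ and the cancellation against $h_{K+1-j}\mapsto -g_j$ is precisely the check the paper leaves implicit.
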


\begin{corollary}
  \label{cor:peakon-solution-formulas}
  The corresponding weights and positions on the real line are
  given by the formulas
  \begin{align}
    x_{2K} &= \tfrac12 \ln 2(I_{00} + b_{\infty} \alpha_0),
    \\
    n_{2K} &= \frac{1}{\alpha_0} \sqrt{\frac{I_{00} + b_{\infty} \alpha_0}{2}},
  \end{align}
  \begin{align}
    x_{1} &= -\tfrac12 \ln 2(I^*_{00} + b^*_{\infty} \alpha^*_0),
    \\
    m_{1} &= \frac{1}{\alpha^*_0} \sqrt{\frac{I^*_{00} + b^*_{\infty} \alpha^*_0}{2}},
  \end{align}
  and, for $j = 2, \dots, K$,
  \begin{align}
    x_{2(K+1-j)} &= \tfrac12 \ln \left( \frac{2 \, \heineintegral_{j,j-1}^{00}}{\heineintegral_{j-1,j-2}^{11}} \right),
    \\
    n_{2(K+1-j)} &=
    \frac{\heineintegral_{j-1,j-1}^{10}}{\heineintegral_{j-1,j-2}^{01} \heineintegral_{j,j-1}^{01}}
    \sqrt{\frac{\heineintegral_{j,j-1}^{00} \heineintegral_{j-1,j-2}^{11}}{2}}
    ,
  \end{align}
  \begin{align}
    x_{2j-1} &= -\tfrac12 \ln \left( \frac{2 \, \starheineintegral_{j,j-1}^{00}}{\starheineintegral_{j-1,j-2}^{11}} \right),
    \\
    m_{2j-1} &=
    \frac{\starheineintegral_{j-1,j-1}^{10}}{\starheineintegral_{j-1,j-2}^{01} \starheineintegral_{j,j-1}^{01}}
    \sqrt{\frac{\starheineintegral_{j,j-1}^{00} \starheineintegral_{j-1,j-2}^{11}}{2}}
    .
  \end{align}
  In terms of non-starred quantities (together with $b^*_{\infty}$),
  the odd-numbered variables take the form
  \begin{align}
    x_{2(K+1-j)-1} &= \tfrac12 \ln \left( \frac{2 \, \heineintegral_{jj}^{00}}{\heineintegral_{j-1,j-1}^{11}}\right),
    \\
    m_{2(K+1-j)-1} &=
    \frac{\heineintegral_{j,j-1}^{01}}{\heineintegral_{jj}^{10} \heineintegral_{j-1,j-1}^{10}}
    \sqrt{\frac{\heineintegral_{j-1,j-1}^{11} \heineintegral_{jj}^{00}}{2}}
    ,
  \end{align}
  for $j=1,\dots,K-1$, and
  \begin{align}
    x_1 &= \tfrac12 \ln \left( \frac{2 \, \heineintegral_{K,K-1}^{00}}{\heineintegral_{K-1,K-2}^{11} + \dfrac{\strut 2 b^*_{\infty} L}{M} \, \heineintegral_{K-1,K-1}^{10}} \right)
    \\
    m_1 &=
    \frac{M/L}{\heineintegral_{K-1,K-1}^{10}}
    \sqrt{\frac{\heineintegral_{K,K-1}^{00}}{2} \left( \heineintegral_{K-1,K-2}^{11} + \dfrac{\strut 2 b^*_{\infty} L}{M} \, \heineintegral_{K-1,K-1}^{10} \right)}
  ,
  \end{align}
  where $L = \prod_{i=1}^K \lambda_i$ and $M = \prod_{j=1}^{K-1} \mu_j$.
\end{corollary}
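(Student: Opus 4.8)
The plan is to read \autoref{cor:peakon-solution-formulas} as a translation of the interval-side recovery formulas of \autoref{thm:even-recovery} and \autoref{cor:odd-recovery} back to the real line via the Liouville transformation \eqref{eq:liouville-trf}. So the first step is to invert that change of variables. From $y_k=\tanh x_k$ one gets $e^{2x_k}=(1+y_k)/(1-y_k)$, hence $x_k=\tfrac12\ln\frac{1+y_k}{1-y_k}$ and $\cosh x_k=\bigl((1-y_k)(1+y_k)\bigr)^{-1/2}$; and from \eqref{eq:trf-discrete-measures} one gets $n_{2a}=h_a/(2\cosh x_{2a})$ and $m_{2a-1}=g_a/(2\cosh x_{2a-1})$. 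Thus the whole corollary reduces to knowing each $y_k$ and the corresponding interval weight, both of which are supplied by the earlier results.

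For the even-numbered sites I would feed in \autoref{thm:even-recovery}. Dividing the formula for $(1-y_{2a})h_a$ by the one for $h_a$ isolates $1-y_{2a}$; with the abbreviations $A=\heineintegral_{j,j-1}^{00}$ and $B=\heineintegral_{j-1,j-2}^{11}$ one finds $1-y_{2(K+1-j)}=2B/(2A+B)$, whence $1+y=4A/(2A+B)$ and $e^{2x}=2A/B$. Substituting into the formulas for $x$ and $\cosh x$ above and simplifying collapses everything to the stated expressions for $x_{2(K+1-j)}$ and $n_{2(K+1-j)}$. The rightmost mass (the case $j=1$, giving $x_{2K}$ and $n_{2K}$) is handled separately, since there the Heine integrals degenerate and $b_\infty$ enters: the same division gives $1-y_{2K}=1/(I_{00}+\tfrac12+b_\infty\alpha_0)$, so that $e^{2x_{2K}}=2(I_{00}+b_\infty\alpha_0)$, from which the claimed $x_{2K}$ and $n_{2K}$ follow.

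For the odd-numbered sites in starred form I would run the identical computation starting from \autoref{cor:odd-recovery}, which was itself obtained from the even-site formulas by the symmetry substitution of \autoref{rem:epiphany}. The only structural change is that the symmetry replaces $1-y_{2a}$ by $1+y_{2a-1}$ (the reflection $l_k\mapsto l_{2K-k}$ sends $1-y_m$ to $1+y_{2K+1-m}$), so here one extracts $1+y_{2j-1}$ directly and obtains a logarithm of the opposite sign; this accounts for the minus signs in $x_1$ and $x_{2j-1}$. Carrying the algebra through with the starred Heine integrals yields the starred expressions for $x_{2j-1}$ and $m_{2j-1}$.

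The main obstacle is the final block, which re-expresses the odd-numbered quantities using \emph{non-starred} Heine integrals $\heineintegral_{nm}^{rs}$ (with the index reflection $j\mapsto K+1-j$) together with the single extra datum $b^*_\infty$. Here lies the genuine content: one must convert the starred integrals $\starheineintegral_{nm}^{rs}$, built from the measures $\alpha^*,\beta^*$ of \eqref{eq:alpha-beta-star}, into non-starred ones. The tool is \autoref{thm:residue-relation-W-Wstar}, which pins down the products $a_ka_k^*$ and $b_kb_k^*$ purely in terms of $\{\lambda_i,\mu_j\}$ and so lets one trade each starred residue for a non-starred one, combined with a determinantal reflection identity for the Heine integrals from \autoref{sec:determinants}; together these should yield identities of the shape $\frac{2\heineintegral_{K+1-j,K+1-j}^{00}}{\heineintegral_{K-j,K-j}^{11}}=\frac{\starheineintegral_{j-1,j-2}^{11}}{2\starheineintegral_{j,j-1}^{00}}$ that turn the starred expressions into the stated non-starred ones. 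I expect the index bookkeeping in this duality to be the delicate point, and I expect $b^*_\infty$ to survive uneliminated in exactly the $x_1$ and $m_1$ formulas: since $b_1^*,\dots,b_{K-1}^*$ are recoverable from \eqref{eq:residue-relation-W-Wstar-twin} but $b^*_\infty$ is not, it is precisely the leftmost mass $g_1$ and its position $y_1$ that cannot be written without it, consistent with the remark following \autoref{thm:residue-relation-W-Wstar}.
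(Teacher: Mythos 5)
Your proposal is correct and follows essentially the same route as the paper: invert the transformation $y_k=\tanh x_k$, $h_a=2n_{2a}\cosh x_{2a}$, $g_a=2m_{2a-1}\cosh x_{2a-1}$, plug in \autoref{thm:even-recovery} and \autoref{cor:odd-recovery}, and then convert starred to non-starred Heine integrals. The ``reflection identity'' you anticipate is exactly \autoref{lem:heine-integral-symmetry} (stated under the hypotheses of \autoref{thm:residue-relation-W-Wstar}), and your guessed instance $\frac{2\heineintegral_{K+1-j,K+1-j}^{00}}{\heineintegral_{K-j,K-j}^{11}}=\frac{\starheineintegral_{j-1,j-2}^{11}}{2\starheineintegral_{j,j-1}^{00}}$ checks out, as does your prediction that $b^*_\infty$ survives only in the $x_1$, $m_1$ formulas.
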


\begin{proof}
  Since $y_k = \tanh x_k = (e^{2x_k} - 1) / (e^{2x_k} + 1)$, we have
  \begin{equation*}
    \exp(2 x_k) = \frac{1+y_k}{1-y_k} = \frac{2}{1-y_k} - 1.
  \end{equation*}
  Moreover,
  $h_j = 2 n_{2j} \cosh x_{2j} = n_{2j} (e^{2 x_{2j}} + 1) e^{-x_{2j}} = n_{2j} \left( \frac{2}{1 - y_{2j}} \right) e^{-x_{2j}}$
  implies that
  \begin{equation*}
    2 n_{2j} \exp(-x_{2j}) = (1 - y_{2j}) h_j.
  \end{equation*}
  Now it is just a matter of plugging in the formulas from \autoref{thm:even-recovery}
  and solving for even-numbered $x_{2j}$ and $n_{2j}$.
  For example:
  \begin{equation*}
    \begin{split}
      \tfrac12 \exp(2 x_{2(K+1-j)})
      &= \frac{1}{1-y_{2(K+1-j)}} - \frac12
      = \frac{h_{K+1-j}}{(1-y_{2(K+1-j)}) h_{K+1-j}} - \frac12
      \\
      &= \frac{\heineintegral_{j-1j-1}^{10} \bigl( \heineintegral_{j,j-1}^{00} + \tfrac12 \heineintegral_{j-1,j-2}^{11} \bigr)}{\heineintegral_{j-1,j-2}^{11} \heineintegral_{j-1,j-1}^{10}} - \frac12
      = \frac{\heineintegral_{j,j-1}^{00}}{\heineintegral_{j-1,j-2}^{11}}.
    \end{split}
  \end{equation*}
  The odd-numbered $x_{2j-1}$ and $m_{2j-1}$ are dealt with similarly,
  using the formulas from \autoref{cor:odd-recovery} together with
  \begin{equation*}
    \exp(-2 x_k) = \frac{1-y_k}{1+y_k} = \frac{2}{1+y_k} - 1
  \end{equation*}
  and
  \begin{equation*}
    2 m_{2j-1} \exp(x_{2j-1}) = (1 + y_{2j-1}) g_j.
  \end{equation*}

  In order to translate starred to non-starred, we use
  \autoref{lem:heine-integral-symmetry}
  (with $A=K$ and $B=K-1$):
  \begin{equation*}
    \begin{split}
      \starheineintegral_{j,j-1}^{00}
      &= \frac{L^{2j-(j-1)+0-1} M^{2(j-1)-j+0-1} \heineintegral_{K-j,(K-1)-(j-1)}^{1-0,1-0}}{2^{j+(j-1)} \heineintegral_{K,K-1}^{00}}
      \\
      &= \frac{L^j M^{j-3} \heineintegral_{K-j,K-j}^{11}}{2^{2j-1} \heineintegral_{K,K-1}^{00}}
      ,
    \end{split}
  \end{equation*}
  and similarly for the other $\starheineintegral_{nm}^{rs}$ occurring in
  the formulas for $x_{2j-1}$ and $m_{2j-1}$.
  All the factors $L$, $M$ and $\heineintegral_{K,K-1}^{00}$ cancel in the quotients,
  except in the formulas for $x_1$ and $m_1$ where we have
  \begin{equation*}
    I^*_{00} = \starheineintegral_{11}^{00} = \frac{\heineintegral_{K-1,K-2}^{11}}{4 \, \heineintegral_{K,K-1}^{00}}
  \end{equation*}
  and
  \begin{equation*}
    \alpha^*_0 = \starheineintegral_{10}^{0s}
    = \starheineintegral_{10}^{01}
    = \frac{L^1 M^{-1} \heineintegral_{K-1,K-1}^{10}}{2 \, \heineintegral_{K,K-1}^{00}}.
  \end{equation*}
\end{proof}

\begin{remark}
  \label{rem:solution-alternative-form}
  A more compact way of writing the solution is to state the formulas in terms
  of the following quantities (where $r=K+1-j$ throughout):
  \begin{equation}
    \label{eq:solution-alternative-position}
    \begin{split}
      \tfrac12 \exp 2 x_{2K} &= I_{00} + b_{\infty} \alpha_0,\\
      \tfrac12 \exp 2 x_{2r} &= \frac{\heineintegral_{j,j-1}^{00}}{\heineintegral_{j-1,j-2}^{11}}, \qquad j = 2,\dots,K,\\
      \tfrac12 \exp 2 x_{2r-1} &= \frac{\heineintegral_{jj}^{00}}{\heineintegral_{j-1,j-1}^{11}}, \qquad j=1,\dots,K-1,\\
      \tfrac12 \exp 2 x_{1} &= \frac{\heineintegral_{K,K-1}^{00}}{\heineintegral_{K-1,K-2}^{11} + \dfrac{\strut 2 b^*_{\infty} L}{M} \, \heineintegral_{K-1,K-1}^{10}}
    \end{split}
  \end{equation}
  and
  \begin{equation}
    \label{eq:solution-alternative-amplitude}
    \begin{split}
      2 n_{2K} \exp(-x_{2K}) &= \frac{1}{\alpha_0},\\
      2 n_{2r} \exp(-x_{2r}) &= \frac{\heineintegral_{j-1,j-2}^{11} \heineintegral_{j-1,j-1}^{10}}{\heineintegral_{j-1,j-2}^{01} \heineintegral_{j,j-1}^{01}}, \qquad j = 2,\dots, K,\\
      2 m_{2r-1} \exp(-x_{2r-1}) &= \frac{\heineintegral_{j-1,j-1}^{11} \heineintegral_{j,j-1}^{01}}{\heineintegral_{jj}^{10} \heineintegral_{j-1,j-1}^{10}}, \qquad j = 1,\dots, K-1,\\
      2 m_{1} \exp(-x_{1}) &= \frac{M \, \heineintegral_{K-1,K-2}^{11}}{L \, \heineintegral_{K-1,K-1}^{10}} + 2 b^*_{\infty}.
    \end{split}
  \end{equation}
\end{remark}

We now know that the set of spectral data computed from the interlacing discrete measures
$m$ and~$n$ allows us to reconstruct these measures uniquely,
and we also know (\autoref{thm:simple-spectra}, \autoref{thm:weyl-parfrac},
equation \eqref{eq:b-infty-b-infty-star-product})
that the eigenvalues are positive and simple and that the residues are positive
(provided that the point masses in $m$ and~$n$ are positive).
Next we will show that there are no further constraints on the spectral data,
i.e., any set of such numbers are the spectral data of a unique pair of
interlacing discrete measures.
It will be convenient to introduce a bit of terminology first.

\begin{definition}
  \label{def:spectral-map}
  Let $\mathcal{P} \subset \R^{4K}$ (the ``pure peakon sector'') be the set of tuples
  \begin{equation*}
    \mathbf{p} =
    (x_1,\dots,x_{2K}; m_1,n_2, \dots, m_{2K-1},n_{2K})
  \end{equation*}
  satisfying
  \begin{equation*}
    x_1 < \dots < x_{2K}
    , \quad
    \text{all $m_{2a-1}>0$}
    , \quad
    \text{all $n_{2a}>0$}
    ,
  \end{equation*}
  and let $\mathcal{R} \subset \R^{4K}$ (the ``set of admissible spectral data'')
  be the set of tuples
  \begin{equation*}
    \mathbf{r} =
    (\lambda_1,\dots,\lambda_{K}; \mu_1, \dots, \mu_{K-1}; a_1, \dots, a_K; b_1, \dots, b_{K-1}; b_\infty, b^*_\infty)
  \end{equation*}
  satisfying
  \begin{equation*}
    0 < \lambda_1 < \dots < \lambda_K
    , \quad
    0 < \mu_1 < \dots < \mu_{K-1}
    , \quad
    \text{all $a_i, b_j, b_\infty, b^*_\infty > 0$}
    .
  \end{equation*}
  The \textbf{forward spectral map} taking a point $\mathbf{p} \in \mathcal{P}$,
  representing a pair of interlacing discrete measures
  \begin{equation*}
    m = 2 \sum_{a=1}^K  m_{2a-1} \, \delta_{x_{2a-1}}
    \quad\text{and}\quad
    n = 2 \sum_{a=1}^K  n_{2a} \, \delta_{x_{2a}}
    ,
  \end{equation*}
  to the corresponding spectral data $\mathbf{r} \in \mathcal{R}$
  (as described in \autoref{sec:discrete-on-the-interval})
  will be denoted by
  \begin{equation}
    \label{eq:forward-map-notation}
    \mathcal{S} \colon \mathcal{P} \to \mathcal{R}
    .
  \end{equation}
  The formulas in \autoref{cor:peakon-solution-formulas}
  (or \autoref{rem:solution-alternative-form})
  define a function
  \begin{equation}
    \label{eq:inverse-map-notation}
    \mathcal{T} \colon \mathcal{R} \to R^{4K}
  \end{equation}
  which we will call the \textbf{inverse spectral map}.
\end{definition}

\begin{theorem}
  \label{thm:spectral-map-bijection}
  For $K \ge 2$,
  the function $\mathcal{S}$ maps $\mathcal{P}$ bijectively onto~$\mathcal{R}$,
  and $\mathcal{T} \colon \mathcal{R} \to \mathcal{P}$ is the inverse map.
  (See \autoref{sec:K1} for the case $K=1$.)
\end{theorem}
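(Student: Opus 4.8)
The plan is to realize $\mathcal{S}$ and $\mathcal{T}$ as mutually inverse continuous maps between the open sets $\mathcal{P}$ and $\mathcal{R}$, and to close the loop by a connectedness argument rather than by a direct verification that $\mathcal{S}\circ\mathcal{T}=\mathrm{id}$. First I would record that $\mathcal{S}$ really lands in $\mathcal{R}$, which is essentially already proved: \autoref{thm:simple-spectra} gives $0<\lambda_1<\dots<\lambda_K$ and $0<\mu_1<\dots<\mu_{K-1}$; \autoref{thm:weyl-parfrac} gives positivity of $a_i$, $b_j$, $b_\infty$; and the formula $b^*_\infty=g_1 l_1/(l_0+l_1)$ obtained in the proof of \autoref{thm:residue-relation-W-Wstar}, together with \eqref{eq:b-infty-b-infty-star-product}, gives $b^*_\infty>0$. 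Hence $\mathcal{S}(\mathcal{P})\subseteq\mathcal{R}$.

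Second, I would observe that the recovery results already yield $\mathcal{T}\circ\mathcal{S}=\mathrm{id}_{\mathcal{P}}$. Indeed, \autoref{thm:even-recovery}, \autoref{cor:odd-recovery} and \autoref{cor:peakon-solution-formulas} were derived precisely by feeding the spectral data of a given configuration $\mathbf{p}\in\mathcal{P}$ through the approximation problem of \autoref{thm:approximation-problem-solution} and reading off the original weights $g_j,h_j$ and positions $y_k$, hence the original $x_k,m_k,n_k$. This already makes $\mathcal{S}$ injective; what remains is surjectivity together with the well-definedness of $\mathcal{T}$ as a map into $\mathcal{P}$.

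The crux is the third step: showing that $\mathcal{T}$ maps \emph{all} of $\mathcal{R}$ into $\mathcal{P}$, not merely the image $\mathcal{S}(\mathcal{P})$. For arbitrary admissible data $\mathbf{r}\in\mathcal{R}$ one must check that the formulas of \autoref{cor:peakon-solution-formulas} produce strictly positive weights and strictly ordered positions $x_1<\dots<x_{2K}$. This reduces to positivity of the Heine integrals $\heineintegral_{nm}^{rs}$ and their starred analogues $\starheineintegral_{nm}^{rs}$, formed from the positive discrete measures $\alpha=\sum a_i\delta_{\lambda_i}$ and $\beta=\sum b_j\delta_{\mu_j}$ on the positive axis. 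The key input is that the Cauchy kernel $\frac{1}{x+y}$ is totally positive on $(0,\infty)\times(0,\infty)$, so by the Heine/Andr\'eief representation each such determinant equals a manifestly positive integral; this gives $\heineintegral>0$, hence positive recovered masses and real positions. The ordering (equivalently, all gap lengths $l_k>0$) then follows because the consecutive differences are ratios of these positive determinants, via the Dodgson-type identities already invoked in \eqref{eq:heine-lewis-carroll-b} and \eqref{eq:oompa-loompa}. I expect this positivity-and-ordering step to be the main obstacle, since it is exactly where the full strength of the Cauchy biorthogonal polynomial / oscillatory-kernel theory must be brought to bear.

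Finally, with $\mathcal{S}$ and $\mathcal{T}$ both continuous (the entries of $S(\lambda)$ depend analytically on the configuration, and $\mathcal{T}$ is a ratio of the now-nonvanishing Heine determinants from Step~3), I would close the argument topologically. By invariance of domain the continuous injection $\mathcal{S}$ sends the open set $\mathcal{P}\subseteq\R^{4K}$ to an open subset of $\mathcal{R}\subseteq\R^{4K}$. The image is also closed in $\mathcal{R}$: if $\mathbf{r}\in\mathcal{R}$ is a limit of points $\mathcal{S}(\mathbf{p}_n)$, then continuity of $\mathcal{T}$ gives $\mathbf{p}_n=\mathcal{T}(\mathcal{S}(\mathbf{p}_n))\to\mathcal{T}(\mathbf{r})=:\mathbf{p}_*\in\mathcal{P}$ by Step~3, and continuity of $\mathcal{S}$ then yields $\mathbf{r}=\lim\mathcal{S}(\mathbf{p}_n)=\mathcal{S}(\mathbf{p}_*)\in\mathcal{S}(\mathcal{P})$. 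Since $\mathcal{R}$ is convex, hence connected, $\mathcal{S}(\mathcal{P})=\mathcal{R}$, so $\mathcal{S}$ is a bijection; composing $\mathcal{T}\circ\mathcal{S}=\mathrm{id}_{\mathcal{P}}$ on the right with $\mathcal{S}^{-1}$ gives $\mathcal{T}=\mathcal{S}^{-1}$, and in particular $\mathcal{S}\circ\mathcal{T}=\mathrm{id}_{\mathcal{R}}$, which is the assertion of the theorem.
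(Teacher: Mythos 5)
Your overall skeleton matches the paper's: establish $\mathcal{S}(\mathcal{P})\subseteq\mathcal{R}$, get $\mathcal{T}\circ\mathcal{S}=\id_{\mathcal{P}}$ from the recovery formulas, show $\mathcal{T}(\mathcal{R})\subseteq\mathcal{P}$, and then upgrade to surjectivity. Your final step is a genuinely different (and valid) route: the paper proves $\mathcal{S}\circ\mathcal{T}=\id_{\mathcal{R}}$ directly, by observing that the coefficients of the polynomial $A(\lambda;\mathcal{T}(\mathbf{r}))$ are rational functions of $\mathbf{r}$ that agree with those of $\prod_i(1-\lambda/\lambda_i)$ on the open set $\mathcal{S}(\mathcal{P})$ and hence identically; your open--closed--connected argument with invariance of domain reaches the same conclusion with less algebraic machinery, at the cost of invoking a nontrivial topological theorem. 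Either works.

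However, there is a genuine gap in your third step. Positivity of the recovered masses is indeed immediate from positivity of the sums $\heineintegral_{nm}^{rs}$ (each term in \eqref{eq:heine-integral-as-sum} is positive), but the \emph{ordering} $x_1<\dots<x_{2K}$ does not follow from the Dodgson-type identities \eqref{eq:heine-lewis-carroll-b} and \eqref{eq:oompa-loompa}, as you assert. Writing $q_k=\tfrac12 e^{2x_k}$, the interior gaps are differences of ratios such as $q_{2r}-q_{2r-1}=\heineintegral_{j,j-1}^{00}/\heineintegral_{j-1,j-2}^{11}-\heineintegral_{jj}^{00}/\heineintegral_{j-1,j-1}^{11}$, so one must prove inequalities of the form $\heineintegral_{j,j-1}^{00}\heineintegral_{j-1,j-1}^{11}-\heineintegral_{jj}^{00}\heineintegral_{j-1,j-2}^{11}>0$ (and the companion \eqref{eq:distances2}). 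The index patterns here mix square and non-square Heine determinants and do not fit the Desnanot--Jacobi template $XY=X^{11}X^{nn}-X^{1n}X^{n1}$, which produces expressions like the right-hand side of \eqref{eq:heine-lewis-carroll-a} with matched upper indices; in particular the relevant difference is not exhibited as a product of two positive quantities by any of the identities you cite. The paper has to prove these inequalities separately in \autoref{lem:positive-distances}, via the subset expansion \eqref{eq:expand-JJ-JJ-3} and the combinatorial identity \eqref{eq:formidable-identity}, itself established by a two-parameter induction. You correctly identify this as the main obstacle, but the mechanism you propose to overcome it does not suffice; without a substitute for \autoref{lem:positive-distances}, your step~3 (and hence the closedness argument in step~4, which relies on $\mathcal{T}(\mathbf{r})\in\mathcal{P}$ for arbitrary $\mathbf{r}\in\mathcal{R}$) is incomplete.
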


\begin{proof}
  To begin with, $\mathcal{T}$ maps $\mathcal{R}$ into~$\mathcal{P}$;
  this is the content of \autoref{lem:maps-into-P} below.
  By \autoref{cor:peakon-solution-formulas}, $\mathcal{T} \circ \mathcal{S} = \id_\mathcal{P}$.
  Thus $\mathcal{S}$ is a homeomorphism onto its range.
  It remains to show that the range of~$\mathcal{S}$ is all of $\mathcal{R}$ and that
  $\mathcal{S} \circ \mathcal{T} = \id_\mathcal{R}$.
  For this, it is most convenient to use the alternative description of the forward spectral
  map given in \autoref{app:real-line},
  where the spectral data are defined directly in terms of $\{ x_k, m_{2a-1}, n_{2a} \}$
  without going via the transformation to the finite interval $[-1,1]$.
  (At the beginning of \autoref{app:real-line} there is a summary comparing the
  two descriptions.)
  The eigenvalues $\lambda_i$ and $\mu_j$,
  as well as the residues $a_i$, $b_j$ and $b_\infty$,
  are all uniquely determined by certain polynomials
  $A(\lambda)$, $\twin A(\lambda)$, $B(\lambda)$ and~$\twin B(\lambda)$
  with the property that their coefficients are polynomials in the variables
  $\{ m_{2a-1} \, e^{\pm x_{2a-1}}, n_{2a} \, e^{\pm x_{2a}} \}$;
  see \eqref{eq:jumpI-even-odd}, \eqref{eq:ABC-jump-product},
  \eqref{eq:jumpII-even-odd} and \eqref{eq:twin-ABC-jump-product}.
  Let us write $A(\lambda; \mathbf{p})$ (etc.) to indicate
  this dependence of the coefficients on the masses and positions.
  It is clear from the symmetry of the problem that $b^*_\infty$
  could also be definied similarly (although we have chosen not to name and
  write out the corresponding polynomials, instead using \eqref{eq:b-infty-star-real-line}
  as the definition).

  Now, since \autoref{rem:solution-alternative-form} exhibits the variables
  $\{ m_{2a-1} \, e^{\pm x_{2a-1}}, n_{2a} \, e^{\pm x_{2a}} \}$ as rational functions of the spectral
  variables~$\mathbf{r}$,
  the coefficients in the polynomial $A(\lambda; \mathcal{T}(\mathbf{r}))$ are also rational
  functions of~$\mathbf{r}$.
  Since $\mathcal{T} \circ \mathcal{S} = \id_\mathcal{P}$,
  we know that these coefficients agree with the coefficients of
  $\prod_{i=1}^K (1-\lambda/\lambda_k)$ (see \eqref{eq:A-twinA-factorization})
  for each $\mathbf{r}$ in the range of~$\mathcal{S}$
  (which is an open set in $\mathcal{R}$ since $\mathcal{S}$ is a homeomorphism).
  Hence $A(\lambda; \mathcal{T}(\mathbf{r})) = \prod_{i=1}^K (1-\lambda/\lambda_k)$
  identically as a rational function of~$\mathbf{r}$,
  and in particular this identity holds for any $\mathbf{r} \in \mathcal{R}$.
  The same argument works for the other polynomials involved in defining
  the spectral variables, and therefore 
  $\mathcal{S} \circ \mathcal{T} = \id_\mathcal{R}$,
  as desired.
\end{proof}

\begin{lemma}
  \label{lem:maps-into-P}
  The function $\mathcal{T}$ maps $\mathcal{R}$ into~$\mathcal{P}$, i.e.,
  the formulas in \autoref{cor:peakon-solution-formulas} give \textbf{positive} masses
  $m_{2a-1} > 0$ and~$n_{2a} > 0$, and \textbf{ordered} positions $x_1 < \dots < x_{2K}$, for any
  spectral data in the admissible set~$\mathcal{R}$.
\end{lemma}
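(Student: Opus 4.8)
The plan is to reduce everything to the strict positivity of the Heine-type integrals $\heineintegral_{nm}^{rs}$ (and their starred counterparts $\starheineintegral_{nm}^{rs}$) that appear in the recovery formulas, and then simply read off the signs of the masses and positions from the compact expressions in \autoref{rem:solution-alternative-form}. First I would establish this positivity. The integrals $\heineintegral_{nm}^{rs}$ are, up to positive normalizing powers, determinants of bimoment matrices formed from the Cauchy kernel $\frac{1}{x+y}$ against the measures $\alpha$ and $\beta$. Since the admissible data $\mathbf{r}\in\mathcal{R}$ has $0<\lambda_1<\dots<\lambda_K$, $0<\mu_1<\dots<\mu_{K-1}$ and positive residues $a_i,b_j$, the measures $\alpha=\sum a_i\delta_{\lambda_i}$ and $\beta=\sum b_j\delta_{\mu_j}$ are positive measures supported on the open positive half-axis. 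There the Cauchy kernel is totally positive, so every relevant minor of the associated bimoment matrices is strictly positive; this is exactly the total-positivity input from the theory of Cauchy biorthogonal polynomials developed in the appendix, and it yields $\heineintegral_{nm}^{rs}>0$ for all index combinations occurring in \autoref{cor:peakon-solution-formulas}. For the starred integrals I would first check that the starred residues are positive: given $\mathbf{r}$, define $a_i^*$ and $b_j^*$ through \eqref{eq:residue-relation-W-Wstar} and \eqref{eq:residue-relation-W-Wstar-twin} of \autoref{thm:residue-relation-W-Wstar}; the right-hand sides there are manifestly positive (products and quotients of the positive $\lambda_i,\mu_j$), and dividing by the positive $a_i,b_j$ gives $a_i^*,b_j^*>0$. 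Hence $\alpha^*,\beta^*$ are again positive measures on the positive axis, and the same argument yields $\starheineintegral_{nm}^{rs}>0$.

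Granting this, the positivity of the masses is essentially immediate. Inspecting the amplitude formulas in \autoref{rem:solution-alternative-form}, each quantity $2n_{2r}\exp(-x_{2r})$ and $2m_{2r-1}\exp(-x_{2r-1})$ is a ratio of products of the positive integrals $\heineintegral_{nm}^{rs}$ (or $\starheineintegral_{nm}^{rs}$), while the two extreme cases for $n_{2K}$ and $m_1$ acquire in addition the positive constants $b_\infty$, $b_\infty^*$ and the positive products $L=\prod\lambda_i$, $M=\prod\mu_j$ entering additively. Thus each right-hand side is strictly positive. Likewise, each position formula writes $\tfrac12\exp 2x_k$ as such a positive ratio (again with the boundary cases $x_{2K}$, $x_1$ receiving positive $b_\infty$, $b_\infty^*$ contributions), so the argument of every logarithm is positive and each $x_k$ is a well-defined real number. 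Combined with $\exp(\pm x_k)>0$, this gives $m_{2a-1}>0$ and $n_{2a}>0$.

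The remaining and genuinely substantial point is the ordering $x_1<x_2<\dots<x_{2K}$. Since $\tanh$ is increasing, this is equivalent to showing that $\tfrac12\exp 2x_k$ is strictly increasing in $k$, which by \autoref{rem:solution-alternative-form} amounts to a chain of inequalities between ratios of neighboring bimoment determinants, for instance $\heineintegral_{jj}^{00}/\heineintegral_{j-1,j-1}^{11}<\heineintegral_{j,j-1}^{00}/\heineintegral_{j-1,j-2}^{11}$ together with their starred and mixed analogues. I would prove these by combining the Jacobi/Dodgson (``Lewis--Carroll'') determinant identities recorded in the appendix with the total-positivity inequalities for the Cauchy bimoment matrices, which force the relevant ratios of adjacent minors to be monotone. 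The two boundary comparisons, where the extra constants $b_\infty$ and $b_\infty^*$ enter the denominators, must be treated separately, using their positivity to show they only strengthen the inequalities. This monotonicity step is the main obstacle: positivity of the masses follows mechanically once the integrals are known to be positive, but the strict interlacing of the positions is where the oscillatory, totally positive structure of the Cauchy kernel has to be exploited in full.
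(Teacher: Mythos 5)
Your overall skeleton matches the paper's: positivity of the masses is read off directly from the recovery formulas once one knows that all the quantities $\heineintegral_{nm}^{rs}$ (and their starred counterparts) are strictly positive, which is immediate from the explicit sum representation \eqref{eq:heine-integral-as-sum} with all-positive terms; the two outermost orderings $x_{2K-1}<x_{2K}$ and $x_1<x_2$ follow from the positivity of $b_\infty\alpha_0$ and of $b^*_\infty$; and the remaining orderings reduce to a chain of inequalities between ratios such as $\heineintegral_{jj}^{00}/\heineintegral_{j-1,j-1}^{11}$ and $\heineintegral_{j,j-1}^{00}/\heineintegral_{j-1,j-2}^{11}$. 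You have correctly isolated where the real work lies.

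However, your treatment of that crucial step is a gap, not a proof. You assert that the needed inequalities follow from the Desnanot--Jacobi identities combined with ``total-positivity inequalities for the Cauchy bimoment matrices, which force the relevant ratios of adjacent minors to be monotone.'' The quantities involved, e.g.\ in $\heineintegral_{j,j-1}^{00}\heineintegral_{j-1,j-1}^{11}-\heineintegral_{jj}^{00}\heineintegral_{j-1,j-2}^{11}>0$, mix determinants of different sizes, of different shapes (the $\heineintegral_{n,n-1}$ are bimoment determinants bordered by a column of moments $\alpha_k$, cf.\ \eqref{eq:pi-vector-det}), and with different superscript shifts; they are not adjacent minors of a single totally positive matrix, and neither the identities \eqref{eq:heine-lewis-carroll-a}--\eqref{eq:heine-lewis-carroll-b} nor a Koteljanskii-type inequality delivers their sign. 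The paper has to prove these inequalities separately (\autoref{lem:positive-distances}) by a substantial bespoke argument: expanding each product as a double sum over index sets via \eqref{eq:heine-integral-as-sum}, regrouping by the intersections $R=A\cap B$ and $S=C\cap D$, and reducing positivity to the symmetric-function identity \eqref{eq:formidable-identity}, which is then established by a two-parameter induction exploiting divisibility by $\Delta_U^2$ and $\twin\Delta_V^2$ together with degree counting. Until you supply an argument at that level of detail (or exhibit a genuine total-positivity framework in which these particular products really are comparable minors), the ordering of the interior positions remains unproven.
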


\begin{proof}
  Positivity of $m_{2j-1}$ and~$n_{2j}$ is obvious.
  To show that the positions $x_k$ are ordered, we will use the formulas 
  \eqref{eq:solution-alternative-position}
  for $q_k = \tfrac12 \exp 2 x_{k}$ and show that $q_1 < \dots < q_{2K}$.

  The outermost intervals present no problems, since
  \begin{equation*}
    q_{2K} - q_{2K-1} = (I_{00} + b_{\infty} \alpha_0) -
    \frac{\heineintegral_{11}^{00}}{\heineintegral_{00}^{11}}
    = b_{\infty} \alpha_0 > 0
  \end{equation*}
  and
  \begin{equation*}
    \frac{1}{q_1} - \frac{1}{q_2} =
    \frac{\heineintegral_{K-1,K-2}^{11} + \dfrac{\strut 2 b^*_{\infty} L}{M} \, \heineintegral_{K-1,K-1}^{10}}{\heineintegral_{K,K-1}^{00}} -
    \frac{\heineintegral_{K-1,K-2}^{11}}{\heineintegral_{K,K-1}^{00}}
    = 
    \dfrac{\strut 2 b^*_{\infty} L}{M} \, \frac{\heineintegral_{K-1,K-1}^{10}}{\heineintegral_{K,K-1}^{00}}
    > 0
    .
  \end{equation*}
  As for the other distances, the differences
  \begin{equation*}
    q_{2r} - q_{2r-1} =
    \frac{\heineintegral_{j,j-1}^{00}}{\heineintegral_{j-1,j-2}^{11}}
    - \frac{\heineintegral_{jj}^{00}}{\heineintegral_{j-1,j-1}^{11}}
    , \qquad j=2,\dots,K-1
    ,
  \end{equation*}
  and
  \begin{equation*}
    q_{2r-1} - q_{2r-2} =
    \frac{\heineintegral_{jj}^{00}}{\heineintegral_{j-1,j-1}^{11}}
    - \frac{\heineintegral_{j+1,j}^{00}}{\heineintegral_{j,j-1}^{11}}
    , \qquad j=1,\dots,K-1
    ,
  \end{equation*}
  are all positive according to
  (the fairly technical)
  \autoref{lem:positive-distances}
  in \autoref{sec:heine-integral-discrete-case}.
\end{proof}

\begin{example}
  \label{ex:solution-K2}
  Let us explicitly write out the solution formulas for the inverse
  problem in the case $K=2$, by expanding the sums $\heineintegral_{nm}^{rs}$
  (including $I_{00} = \heineintegral_{11}^{00}$)
  as explained in \autoref{sec:heine-integral-discrete-case};
  recall that the lower indices $n$ and~$m$ give the number of factors $a_i$ and~$b_j$
  in each term,
  and also determine the dimensions of the accompanying Vandermonde-like factors $\Psi_{IJ}$
  (see \eqref{eq:PsiIJ}),
  while the upper indices $r$ and~$s$ are the powers to which the additional factors
  $\lambda_i$ and~$\mu_j$ appear.
  The spectral data are
  \begin{equation*}
    \lambda_1, \lambda_2, \mu_1, a_1, a_2, b_1, b_{\infty}, b_{\infty}^*,
  \end{equation*}
  and we want to recover
  \begin{equation*}
    x_1, x_2, x_3, x_4, m_1, n_2, m_3, n_4.
  \end{equation*}
  In terms of the quantities from \autoref{rem:solution-alternative-form},
  we get
  \begin{equation}
    \label{eq:solution-K2-position}
    \begin{split}
      \tfrac12 e^{2 x_4} &= I_{00} + b_{\infty} \alpha_0 = \frac{a_1 b_1}{\lambda_1+\mu_1} + \frac{a_2 b_1}{\lambda_2+\mu_1} + b_{\infty} (a_1 + a_2),\\
      \tfrac12 e^{2 x_3} &= \frac{\heineintegral_{11}^{00}}{\heineintegral_{00}^{11}} = \frac{I_{00}}{1} = \frac{a_1 b_1}{\lambda_1+\mu_1} + \frac{a_2 b_1}{\lambda_2+\mu_1},\\
      \tfrac12 e^{2 x_2} &= \frac{\heineintegral_{21}^{00}}{\heineintegral_{10}^{11}} = \frac{\dfrac{\bigl( \lambda_1-\lambda_2 \bigr)^2}{(\lambda_1+\mu_1)(\lambda_2+\mu_1)} a_1 a_2 b_1}{\lambda_1 a_1 + \lambda_2 a_2},\\
      \tfrac12 e^{2 x_1} &= \frac{\heineintegral_{21}^{00}}{\heineintegral_{10}^{11} + \dfrac{\strut 2 b^*_{\infty} L}{M} \, \heineintegral_{11}^{10}} \\ &= \frac{\dfrac{\bigl( \lambda_1-\lambda_2 \bigr)^2}{(\lambda_1+\mu_1)(\lambda_2+\mu_1)} a_1 a_2 b_1}{\lambda_1 a_1 + \lambda_2 a_2 + \dfrac{2 b_{\infty}^* \lambda_1 \lambda_2}{\mu_1} \left( \dfrac{\lambda_1 a_1 b_1}{\lambda_1+\mu_1} + \dfrac{\lambda_2 a_2 b_1}{\lambda_2+\mu_1} \right)}
    \end{split}
  \end{equation}
  and
  \begin{equation}
    \label{eq:solution-K2-amplitude}
    \begin{split}
      2 n_4 e^{-x_4} &= \frac{1}{\alpha_0} = \frac{1}{a_1 + a_2}, \\
      2 m_3 e^{-x_3} &= \frac{\heineintegral_{00}^{11} \heineintegral_{10}^{01}}{\heineintegral_{11}^{10} \heineintegral_{00}^{10}} = \frac{1 \cdot \heineintegral_{10}^{01}}{\heineintegral_{11}^{10} \cdot 1} = \frac{a_1 + a_2}{\dfrac{\lambda_1 a_1 b_1}{\lambda_1+\mu_1} + \dfrac{\lambda_2 a_2 b_1}{\lambda_2+\mu_1}}, \\
      2 n_2 e^{-x_2} &= \frac{\heineintegral_{10}^{11} \heineintegral_{11}^{10}}{\heineintegral_{10}^{01} \heineintegral_{21}^{01}} = \frac{\left( \lambda_1 a_1 + \lambda_2 a_2 \right) \left( \dfrac{\lambda_1 a_1 b_1}{\lambda_1+\mu_1} + \dfrac{\lambda_2 a_2 b_1}{\lambda_2+\mu_1} \right)}{\left( a_1 + a_2 \right) \dfrac{\mu_1 \bigl( \lambda_1-\lambda_2 \bigr)^2}{(\lambda_1+\mu_1)(\lambda_2+\mu_1)} a_1 a_2 b_1}, \\
      2 m_1 e^{-x_1} &= \frac{M \, \heineintegral_{10}^{11}}{L \, \heineintegral_{11}^{10}} + 2 b^*_{\infty} = \frac{\mu_1 \bigl( \lambda_1 a_1 + \lambda_2 a_2 \bigr)}{\lambda_1 \lambda_2 \left( \dfrac{\lambda_1 a_1 b_1}{\lambda_1+\mu_1} + \dfrac{\lambda_2 a_2 b_1}{\lambda_2+\mu_1} \right)} + 2 b_{\infty}^*.
    \end{split}
  \end{equation}
\end{example}

\begin{example}
  \label{ex:solution-K3}
  Similarly, in the case $K=3$
  the spectral data are
  \begin{equation*}
    \lambda_1, \lambda_2, \lambda_3, \mu_1, \mu_2, a_1, a_2, a_3, b_1, b_2, b_{\infty}, b_{\infty}^*,
  \end{equation*}
  and we want to recover
  \begin{equation*}
    x_1, x_2, x_3, x_4, x_5, x_6, m_1, n_2, m_3, n_4, m_5, n_6.
  \end{equation*}
  The solution is
  \begin{equation}
    \label{eq:solution-K3-position}
    \begin{split}
      \tfrac12 e^{2 x_6} &= I_{00} + b_{\infty} \alpha_0 = \sum_{i=1}^3 \sum_{j=1}^2 \frac{a_i b_j}{\lambda_i + \mu_j} + b_{\infty} (a_1 + a_2 + a_3),\\
      \tfrac12 e^{2 x_5} &= \frac{\heineintegral_{11}^{00}}{\heineintegral_{00}^{11}} = I_{00} = \sum_{i=1}^3 \sum_{j=1}^2 \frac{a_i b_j}{\lambda_i + \mu_j},\\
      \tfrac12 e^{2 x_4} &= \frac{\heineintegral_{21}^{00}}{\heineintegral_{10}^{11}} = \frac{\displaystyle\sum_{I=12,13,23} \sum_{j=1}^2 \dfrac{(\lambda_{i_1} - \lambda_{i_2})^2}{(\lambda_{i_1} + \mu_j)(\lambda_{i_2} + \mu_j)} \, a_{i_1} a_{i_2} b_j}{\lambda_1 a_1 + \lambda_2 a_2 + \lambda_3 a_3},\\
      \tfrac12 e^{2 x_3} &= \frac{\heineintegral_{22}^{00}}{\heineintegral_{11}^{11}} = \frac{\displaystyle\sum_{I=12,13,23} \dfrac{(\lambda_{i_1} - \lambda_{i_2})^2 (\mu_1 - \mu_2)^2}{\prod_{i \in I} \prod_{j=1}^2 (\lambda_i + \mu_j)} \, a_{i_1} a_{i_2} b_1 b_2}{\displaystyle \sum_{i=1}^3 \sum_{j=1}^2 \frac{\lambda_i \mu_j}{\lambda_i + \mu_j} \, a_i b_j},\\
      \tfrac12 e^{2 x_2} &= \frac{\heineintegral_{32}^{00}}{\heineintegral_{21}^{11}} = \frac{\dfrac{(\lambda_1 - \lambda_2)^2 (\lambda_1 - \lambda_3)^2 (\lambda_2 - \lambda_3)^2 (\mu_1 - \mu_2)^2}{\prod_{i=1}^3 \prod_{j=1}^2 (\lambda_i + \mu_j)} \, a_1 a_2 a_3 b_1 b_2}{\displaystyle\sum_{I=12,13,23} \sum_{j=1}^2 \dfrac{(\lambda_{i_1} - \lambda_{i_2})^2 \lambda_{i_1} \lambda_{i_2} \mu_j}{(\lambda_{i_1} + \mu_j)(\lambda_{i_2} + \mu_j)} \, a_{i_1} a_{i_2} b_j},\\
      \tfrac12 e^{2 x_1} &= \frac{\heineintegral_{32}^{00}}{\heineintegral_{21}^{11} + \dfrac{\strut 2 b^*_{\infty} L}{M} \, \heineintegral_{22}^{10}} \\ &= \frac{\heineintegral_{32}^{00}}{\heineintegral_{21}^{11} + \dfrac{\strut 2 b^*_{\infty} \lambda_1 \lambda_2 \lambda_3}{\mu_1 \mu_2} \!\!\!\! \displaystyle\sum_{I=12,13,23} \!\!\! \dfrac{(\lambda_{i_1} - \lambda_{i_2})^2 (\mu_1 - \mu_2)^2 \lambda_{i_1} \lambda_{i_2}}{\prod_{i \in I} \prod_{j=1}^2 (\lambda_i + \mu_j)} \, a_{i_1} a_{i_2} b_1 b_2}
    \end{split}
  \end{equation}
  and
  \begin{equation}
    \label{eq:solution-K3-amplitude}
    \begin{split}
      2 n_6 e^{-x_6} &= \frac{1}{\alpha_0} = \frac{1}{a_1 + a_2 + a_3},\\
      2 m_5 e^{-x_5} &= \frac{\heineintegral_{00}^{11} \heineintegral_{10}^{01}}{\heineintegral_{11}^{10} \heineintegral_{00}^{10}} = \frac{\heineintegral_{10}^{01}}{\heineintegral_{11}^{10}} = \frac{a_1 + a_2 + a_3}{\displaystyle \sum_{i=1}^3 \sum_{j=1}^2 \frac{\lambda_i}{\lambda_i + \mu_j} \, a_i b_j},\\
      2 n_4 e^{-x_4} &= \frac{\heineintegral_{10}^{11} \heineintegral_{11}^{10}}{\heineintegral_{10}^{01} \heineintegral_{21}^{01}},\\
      2 m_3 e^{-x_3} &= \frac{\heineintegral_{11}^{11} \heineintegral_{21}^{01}}{\heineintegral_{22}^{10} \heineintegral_{11}^{10}},\\
      2 n_2 e^{-x_2} &= \frac{\heineintegral_{21}^{11} \heineintegral_{22}^{10}}{\heineintegral_{21}^{01} \heineintegral_{32}^{01}},\\
      2 m_1 e^{-x_1} &= \frac{M \, \heineintegral_{21}^{11}}{L \, \heineintegral_{22}^{10}} + 2 b^*_{\infty} = \frac{\mu_1 \mu_2 \, \heineintegral_{21}^{11}}{\lambda_1 \lambda_2 \lambda_3 \, \heineintegral_{22}^{10}} + 2 b^*_{\infty}.
    \end{split}
  \end{equation}
  (The last few right-hand sides are too large to write in expanded form here,
  but explicit expressions for the sums $\heineintegral_{nm}^{rs}$ are written out
  in \autoref{ex:nonzero-heineintegrals}.)
\end{example}

\begin{remark}
  \label{rem:peakon-solutions}
  Using the Lax pairs for the Geng--Xue equation, it is not difficult to show
  (details will be published elsewhere)
  that the peakon ODEs \eqref{eq:GX-peakon-ode} induce the following time dependence
  for the spectral variables:
  \begin{equation}
    \label{eq:spectral-variables-ode}
    \dot \lambda_i = 0
    ,\quad
    \dot a_i = \frac{a_i}{\lambda_i}
    ,\quad
    \dot \mu_j = 0
    ,\quad
    \dot b_j = \frac{b_j}{\mu_j}
    ,\quad
    \dot b_\infty = 0
    ,\quad
    \dot b^*_\infty = 0
    .
  \end{equation}
  This means that the formulas in \autoref{cor:peakon-solution-formulas}
  give the solution to the peakon ODEs \eqref{eq:GX-peakon-ode} in the interlacing case,
  if we let the variables $\{ \lambda_i, \mu_j, b_\infty, b^*_\infty \}$ be constant,
  and let $\{ a_i, b_j \}$ have the time dependence
  \begin{equation}
    \label{eq:residues-time-dependence}
    a_i(t) = a_i(0) \, e^{t/\lambda_i}
    ,\qquad
    b_j(t) = b_j(0) \, e^{t/\mu_j}
    ,
  \end{equation}
  and the coefficients derived in \autoref{app:coeffs-via-planar-networks}
  are constants of motion.

  In particular, \eqref{eq:solution-K2-position} and \eqref{eq:solution-K2-amplitude}
  give the solution to the $2+2$ interlacing peakon ODEs
  \begin{equation}
    \label{eq:GX-peakon-ode-K2}
    \begin{split}
      \dot x_1 &= (m_1 + m_3 E_{13}) (n_2 E_{12} + n_4 E_{14})
      , \\
      \dot x_2 &= (m_1 E_{12} + m_3 E_{23}) (n_2 + n_4 E_{24})
      , \\
      \dot x_3 &= (m_1 E_{13} + m_3) (n_2 E_{23} + n_4 E_{34})
      , \\
      \dot x_4 &= (m_1 E_{14} + m_3 E_{34}) (n_2 E_{24} + n_4)
      , \\
      \frac{\dot m_1}{m_1} &= (m_1 + m_3 E_{13}) (n_2 E_{12} + n_4 E_{14}) - 2 m_3 E_{13} (n_2 E_{12} + n_4 E_{14})
      , \\
      \frac{\dot n_2}{n_2} &= (-m_1 E_{12} + m_3 E_{23}) (n_2 + n_4 E_{24}) - 2 (m_1 E_{12} + m_3 E_{23}) n_4 E_{24}
      , \\
      \frac{\dot m_3}{m_3} &= (m_1 E_{13} + m_3) (-n_2 E_{23} + n_4 E_{34}) + 2 m_1 E_{13} (n_2 E_{23} + n_4 E_{34})
      , \\
      \frac{\dot n_4}{n_4} &= (-m_1 E_{14} - m_3 E_{34}) (n_2 E_{24} + n_4) + 2 (m_1 E_{14} + m_3 E_{34}) n_2 E_{24}
      ,
    \end{split}
  \end{equation}
  where $E_{ij} = e^{-\abs{x_i-x_j}} = e^{x_i-x_j}$ for $i<j$,
  and \eqref{eq:solution-K3-position} and \eqref{eq:solution-K3-amplitude}
  give the solution to the corresponding ODEs for the case $K=3$.
  Likewise, \eqref{eq:peakon-recovery-K1} below gives the solution to the $1+1$ peakon ODEs
  \begin{equation}
    \label{eq:GX-peakon-ode-K1}
      \dot x_1 = \dot x_2
      = \frac{\dot m_1}{m_1} = - \frac{\dot n_2}{n_2}
      = m_1 n_2 E_{12}
      .
  \end{equation}
  However, in this last case the equations are rather trivial,
  and all the heavy machinery is not really required.
  Indeed, $m_1 n_2 E_{12}$ is a constant of motion,
  so direct integration gives
  $x_1(t) = x_1(0) + ct$,
  $x_2(t) = x_2(0) + ct$,
  $m_1(t) = m_1(0) e^{ct}$,
  $n_2(t) = n_2(0) e^{-ct}$,
  where $c = m_1(0) \, n_2(0) \, e^{x_1(0) - x_2(0)}$.
\end{remark}

\subsection{The case $K=1$}
\label{sec:K1}

As already mentioned in \autoref{rem:K1},
the case $K=1$ is degenerate. We have
\begin{equation*}
  \begin{split}
    S(\lambda)
    &= L_{2}(\lambda) \jumpmatrix{h_1}{0} L_{1}(\lambda) \jumpmatrix{0}{g_1}  L_{0}(\lambda)
    \\
    &= 
    \begin{pmatrix}
      1 - \lambda g_1 h_1 l_0 & h_1 & g_1 h_1 \\
      -\lambda g_1 l_0 & 1 & g_1 \\
      \lambda^2 g_1 h_1 l_0 l_2 - 2 \lambda & -\lambda h_1 l_2 & 1 - \lambda g_1 h_1 l_2
    \end{pmatrix}
  \end{split}
\end{equation*}
and
\begin{equation*}
  \begin{split}
    \twin S(\lambda)
    &= L_{2}(\lambda) \jumpmatrix{0}{h_1} L_{1}(\lambda) \jumpmatrix{g_1}{0}  L_{0}(\lambda)
    \\
    &=
    \begin{pmatrix}
      1 & g_1 & 0 \\
      -\lambda h_1 (l_0 + l_1) & 1 - \lambda g_1 h_1 l_1 & h_1 \\
      -2 \lambda & -\lambda g_1 (l_1 + l_2) & 1
    \end{pmatrix}
    ,
  \end{split}
\end{equation*}
and it follows (cf. \eqref{eq:weyl-parfrac}) that
\begin{equation*}
  W(\lambda) = -\frac{S_{21}(\lambda)}{S_{31}(\lambda)}
  = \frac{g_1 l_0}{\lambda g_1 h_1 l_0 l_2 - 2}
  = \frac{a_1}{\lambda - \lambda_1},
\end{equation*}
where
\begin{equation}
  \label{eq:lambda1-a1-K1}
  \lambda_1 = \frac{2}{g_1 h_1 l_0 l_2}
  ,\qquad
  a_1 = \frac{1}{h_1 l_2}
  ,
\end{equation}
and
\begin{equation*}
  \twin W(\lambda) = -\frac{\twin S_{21}(\lambda)}{\twin S_{31}(\lambda)}
  = -\frac{h_1 (l_0 + l_1)}{2}
  = - b_{\infty},
\end{equation*}
where
\begin{equation}
  \label{eq:binfty-K1}
  b_{\infty} = \frac{h_1 (l_0 + l_1)}{2} = \frac{h_1 (2 - l_2)}{2}
  .
\end{equation}
Moreover (cf. \eqref{eq:weyl-star-parfrac}),
\begin{equation*}
  \twin W^*(\lambda) = -\frac{\twin S_{32}(\lambda)}{\twin S_{31}(\lambda)}
  = -\frac{g_1 (l_1 + l_2)}{2}
  = - b_{\infty}^*,
\end{equation*}
where
\begin{equation}
  \label{eq:binftystar-K1}
  b_{\infty}^* = \frac{g_1 (l_1 + l_2)}{2} = \frac{g_1 (2 - l_0)}{2}
  .
\end{equation}
We also have
\begin{equation*}
  W^*(\lambda) = -\frac{S_{32}(\lambda)}{S_{31}(\lambda)}
  = \frac{h_1 l_2}{\lambda g_1 h_1 l_0 l_2 - 2}
  = \frac{a^*_1}{\lambda - \lambda_1},
\end{equation*}
where
\begin{equation}
  a^*_1 = \frac{1}{g_1 l_0}
  ,
\end{equation}
so that (cf.~\autoref{thm:residue-relation-W-Wstar})
\begin{equation}
  \label{eq:residue-relation-W-Wstar-K1}
  a_1 a^*_1 = \frac{1}{h_1 l_2} \cdot \frac{1}{g_1 l_0} = \frac{\lambda_1}{2}
  .
\end{equation}
From these equations it follows that
\begin{equation}
  \label{eq:string-recovery-K1}
  \begin{split}
    \frac{1}{l_0} &= a^*_1 b^*_{\infty} + \frac12
    ,\\
    \frac{1}{l_2} &= a_1 b_{\infty} + \frac12
    ,\\
    g_1 &= b_{\infty}^* + \dfrac{1}{2 a^*_1}
    ,\\
    h_1 &= b_{\infty} + \dfrac{1}{2 a_1}
    .
  \end{split}
\end{equation}
Mapping back to the real line using
$-1 + l_0 = y_1 = \tanh x_1$,
$1 - l_2 = y_2 = \tanh x_2$,
$g_1 = 2 m_1 \cosh x_1$
and
$h_1 = 2 n_2 \cosh x_2$,
we get
\begin{equation}
  \label{eq:peakon-recovery-K1}
  \begin{split}
    \tfrac12 e^{2 x_2} &= a_1 b_{\infty},\\
    \tfrac12 e^{-2 x_1} &= a^*_1 b^*_{\infty},\\
    2 n_2 e^{-x_2} &= \frac{1}{a_1}, \\
    2 m_1 e^{x_1} &= \frac{1}{a_1^*}, \\
  \end{split}
\end{equation}
which recovers $x_1$, $x_2$, $m_1$, $n_2$ from the spectral data
$\lambda_1$, $a_1$, $b_{\infty}$, $b_{\infty}^*$ (and $a^*_1 = (2 a_1)^{-1} \lambda_1$).

It is clear that all the spectral variables are positive if $m_1$ and~$n_2$ are positive,
but there is an additional constraint (not present for $K \ge 2$):
the ordering requirement $x_1 < x_2$ is fulfilled if and only if
\begin{equation}
  \label{eq:constraint-K1}
  1 < e^{2(x_2-x_1)} = 4 a_1 a^*_1 b_\infty b^*_\infty = 2 \lambda_1 b_\infty b^*_\infty
  .
\end{equation}
In the terminology of \autoref{def:spectral-map},
the range of the forward spectral map~$\mathcal{S}$ for $K=1$
is not all of~$\mathcal{R}$, but only the subset where $\lambda_1 b_\infty b^*_\infty > \frac12$.

\section{Concluding remarks}
\label{sec:concluding}

In this paper we have studied a third order non-selfadjoint
boundary value problem coming from the Lax pair(s) of the nonlinear integrable PDE
\eqref{eq:GX} put forward by Geng and Xue \cite{geng-xue:cubic-nonlinearity}.
We have given a complete solution of the forward and
inverse spectral problems in the case of two positive interlacing discrete measures.
The main motivation for this is the explicit construction of peakons,
a special class of weak solutions to the PDE; more details about this will be given
in a separate paper.
This inverse problem is closely related to the inverse problems for the
\emph{discrete cubic string}
appearing in connection with peakon solutions to the
Degasperis--Procesi equation \eqref{eq:DP},
and for the \emph{discrete dual cubic string}
playing the corresponding role for Novikov's equation \eqref{eq:Novikov}
(see \cite{lundmark-szmigielski:DPlong}
and \cite{hone-lundmark-szmigielski:novikov}, respectively),
but it has the interesting new feature of involving two Lax pairs
and two independent spectral measures.

\appendix

\section{Cauchy biorthogonal polynomials}
\label{sec:biorth}

The theory of Cauchy biorthogonal polynomials,
developed by Bertola, Gekhtman and Szmigielski
\cite{bertola-gekhtman-szmigielski:cauchy,bertola-gekhtman-szmigielski:cubicstring,bertola-gekhtman-szmigielski:twomatrix,bertola-gekhtman-szmigielski:meijerG},
provides a conceptual framework for understanding the
approximation problems and determinants that appear
in this paper. 
In \hyperref[sec:biorth-def]{Sections \ref*{sec:biorth-def}} and~\ref{sec:four-term} below,
we recall a few of the basic definitions and properties,
just to give a flavour of the theory and put our results in a wider context.
\autoref{sec:determinants} is the crucial one for the purpose of this paper;
it contains determinant evaluations (and also defines notation) used in the main text.

\subsection{Definitions}
\label{sec:biorth-def}

Let $\alpha$ and $\beta$ be measures on the positive real axis,
with finite moments
\begin{equation}
  \label{eq:alpha-beta-moments-appendix}
  \alpha_k = \int x^k \da,
  \qquad
  \beta_k = \int y^k \db,
\end{equation}
and finite bimoments with respect to the Cauchy kernel $1/(x+y)$,
\begin{equation}
  \label{eq:alpha-beta-bimoments-appendix}
  I_{ab} = \iint \frac{x^a y^b}{x+y} \da\db.
\end{equation}
According to \eqref{eq:bimoment-det} below,
the matrix $(I_{ab})_{a,b=0}^{n-1}$ has positive determinant $D_n$ for every~$n$,
provided that $\alpha$ and $\beta$ have infinitely many points of support.
Then there are unique polynomials $(p_n(x))_{n=0}^{\infty}$
and $(q_n(y))_{n=0}^{\infty}$ such that
\begin{enumerate}
\item[(i)] $\deg p_n = \deg q_n = n$ for all~$n$,
\item[(ii)] the biorthogonality condition
  \begin{equation}
    \label{eq:biorth}
    \int \frac{p_i(x) \, q_j(y)}{x+y} \da \db = \delta_{ij}
  \end{equation}
  holds for all $i$ and~$j$ (where $\delta_{ij}$ is the Kronecker delta),
\item[(iii)] for each~$n$, the leading coefficient of~$p_n$ is positive and equal to the leading coefficient of~$q_n$.
\end{enumerate}
These polynomials are given by the determinantal formulas
\begin{equation}
  \label{eq:biorth-pn}
  p_n(x) =
  \frac{1}{\sqrt{D_n D_{n+1}}}
  \begin{vmatrix}
    I_{00} & I_{01} & \cdots & I_{0,n-1} & 1 \\
    I_{10} & I_{11} & \cdots & I_{1,n-1} & x \\
    \vdots & \vdots & & \vdots & \vdots \\
    I_{n-1,0} & I_{n-1,1} & \cdots & I_{n-1,n-1} & x^{n-1} \\
    I_{n0} & I_{n1} & \cdots & I_{n,n-1} & x^n
  \end{vmatrix},
\end{equation}
\begin{equation}
  \label{eq:biorth-qn}
  q_n(y) =
  \frac{1}{\sqrt{D_n D_{n+1}}}
  \begin{vmatrix}
    I_{00} & I_{01} & \cdots & I_{0,n-1} & I_{0n} \\
    I_{10} & I_{11} & \cdots & I_{1,n-1} & I_{1n} \\
    \vdots & \vdots & & \vdots & \vdots \\
    I_{n-1,0} & I_{n-1,1} & \cdots & I_{n-1,n-1} & I_{n-1,n} \\
    1 & y & \cdots & y^{n-1} & y^n
  \end{vmatrix}.
\end{equation}
If either $\alpha$ or $\beta$ (or both) is a discrete measure, the determinant $D_n$ will be zero for all sufficiently
large~$n$, and then there will only be finitely many biorthogonal polynomials;
cf. \autoref{sec:heine-integral-discrete-case}.

\subsection{Four-term recursion}
\label{sec:four-term}

One basic property of the Cauchy kernel which underlies much of the theory is the following:
\begin{multline}
  \label{eq:bimoments-shift-relation}
  I_{a+1,b} + I_{a,b+1} = \iint \frac{x^{a+1} y^{b}}{x+y} \da\db + \iint \frac{x^{a} y^{b+1}}{x+y} \da\db
  \\
  = \iint x^a y^b \da\db = \int x^a \da \int y^b \db
  = \alpha_a \beta_b.
\end{multline}
For example, if $X = (X_{ij})_{i,j \ge 0}$ and $Y = (Y_{ij})_{i,j \ge 0}$
are the semi-infinite Hessenberg matrices (lower triangular plus an extra diagonal
above the main one) defined by
\begin{equation*}
  x
  \begin{pmatrix}
    p_0(x) \\ p_1(x) \\ \vdots
  \end{pmatrix}
  =
  X
  \begin{pmatrix}
    p_0(x) \\ p_1(x) \\ \vdots
  \end{pmatrix},
  \qquad
  y
  \begin{pmatrix}
    q_0(y) \\ q_1(y) \\ \vdots
  \end{pmatrix}
  =
  Y
  \begin{pmatrix}
    q_0(y) \\ q_1(y) \\ \vdots
  \end{pmatrix},
\end{equation*}
then it is straightforward to show that \eqref{eq:bimoments-shift-relation} implies
\begin{equation}
  \label{eq:hessenberg-shift-relation}
  X_{nm} + Y_{mn} = \pi_n \eta_m,
\end{equation}
where the numbers
\begin{equation}
  \pi_n = \int p_n(x) \da =
    \frac{1}{\sqrt{D_n D_{n+1}}}
    \begin{vmatrix}
      I_{00} & I_{01} & \cdots & \alpha_0 \\
      I_{10} & I_{11} & \cdots & \alpha_1 \\
      \vdots & \vdots & & \vdots \\
      I_{n0} & I_{n1} & \cdots & \alpha_n
    \end{vmatrix}
\end{equation}
are positive by \eqref{eq:pi-vector-det} below,
and similarly for $\eta_m = \int q_m(y) \db$.
Since the matrix
\begin{equation}
  L =
  \begin{pmatrix}
    -\pi_0^{-1} & \pi_1^{-1} & 0 & 0 & \cdots \\
    0 & -\pi_1^{-1} & \pi_2^{-1} & 0 & \\
    0 & 0 & -\pi_2^{-1} & \pi_3^{-1} & \\
    \vdots &&&\ddots& \ddots
  \end{pmatrix}
\end{equation}
kills the vector $\boldsymbol{\pi} = (\pi_0,\pi_1,\dots)^T$,
we have $0 = L\boldsymbol{\pi}\boldsymbol{\eta}^T = L(X+Y^T) = LX + LY^T$.
If $M_{[a,b]}$ denotes the set of matrices which are zero outside of
the band of diagonals number $a$ to~$b$ inclusive
(with the main diagonal as number zero, and
subdiagonals labelled by negative numbers),
then $LX \in M_{[0,1]} \cdot M_{[-\infty,1]} = M_{[-\infty,2]}$
and $LY^T \in M_{[0,1]} \cdot M_{[-1,\infty]} = M_{[-1,\infty]}$.
But since their sum is zero, it follows that $LX$ and $LY^T$
are both in $M_{[-1,2]}$ (four-banded).
Thus
\begin{equation}
  x \, L
  \begin{pmatrix}
    p_0(x) \\ p_1(x) \\ \vdots
  \end{pmatrix}
  = LX 
  \begin{pmatrix}
    p_0(x) \\ p_1(x) \\ \vdots
  \end{pmatrix},
\end{equation}
with $ L \in M_{[0,1]}$ and $LX \in M_{[-1,2]}$,
is a four-term recurrence satisfied by the polynomials~$p_n$.

The same argument applied to
$X^T+Y = \boldsymbol{\eta} \boldsymbol{\pi}^T$
shows that the polynomials~$q_n$ satisfy a corresponding four-term recurrence
\begin{equation}
  y \, \widetilde{L}
  \begin{pmatrix}
    q_0(y) \\ q_1(y) \\ \vdots
  \end{pmatrix}
  = \widetilde{L} Y
  \begin{pmatrix}
    q_0(y) \\ q_1(y) \\ \vdots
  \end{pmatrix}
  ,
\end{equation}
where $\widetilde{L}$ is like $L$ except for $\boldsymbol{\pi}$ being replaced by~$\boldsymbol{\eta}$.

Further generalizations of familiar properties from the theory of classical orthogonal polynomials
include Christoffel--Darboux-like identities, interlacing of zeros, characterization by
Hermite--Pad{\'e} and Riemann--Hilbert problems,
and connections to random matrix models;
see \cite{bertola-gekhtman-szmigielski:cauchy,bertola-gekhtman-szmigielski:cubicstring,bertola-gekhtman-szmigielski:twomatrix,bertola-gekhtman-szmigielski:meijerG}
for more information.

\subsection{Determinant identities}
\label{sec:determinants}

For $x = (x_1,\dots,x_n) \in \R^n$, let
\begin{equation}
  \label{eq:def-Delta}
  \Delta(x) = \Delta(x_1,\dots,x_n) = \prod_{i < j} (x_i - x_j)
\end{equation}
and
\begin{equation}
  \label{eq:def-Gamma}
  \Gamma(x) = \Gamma(x_1,\dots,x_n) = \prod_{i < j} (x_i + x_j),
\end{equation}
where the right-hand sides are interpreted as $1$ (empty products) if $n=0$
or $n=1$.
Moreover, for $x \in \R^n$ and $y \in \R^m$, let
\begin{equation}
  \label{eq:def-Gamma-mixed}
  \Gamma(x;y) = \prod_{i=1}^{n} \prod_{j=1}^{m} (x_i + y_j)
  % \quad \left( = \frac{\Gamma(x \oplus y)}{\Gamma(x) \Gamma(y)} \right)
  .
\end{equation}
Finally, let $\sigma_n$ be the sector in $\R_+^n$ defined by the inequalities $0 < x_1 < \dots < x_n$.
With this notation in place, we define
\begin{equation}
  \label{eq:heine-integral}
  \heineintegral_{nm}^{rs} =
  \int_{\sigma_{n} \times \sigma_{m}}
  \frac{\Delta(x)^2 \Delta(y)^2 \Bigl( \prod_{i=1}^{n} x_i \Bigr)^r \Bigl( \prod_{j=1}^{m} y_j \Bigr)^s}{\Gamma(x;y)}  \dA{n} \dB{m},
\end{equation}
for $n$ and~$m$ positive. We also consider the degenerate cases
\begin{equation}
  \label{eq:heine-integral-one-index-zero}
  \begin{split}
    \heineintegral_{n0}^{rs} &= \int_{\sigma_{n}} \Delta(x)^2 \Bigl( \prod_{i=1}^{n} x_i \Bigr)^r \dA{n} \qquad (n > 0),
    \\
    \heineintegral_{0m}^{rs} &= \int_{\sigma_{m}} \Delta(y)^2 \Bigl( \prod_{j=1}^{m} y_j \Bigr)^s \dB{m} \qquad (m > 0),
    \\[1ex]
    \heineintegral_{00}^{rs} &= 1.
  \end{split}
\end{equation}
Note that $\heineintegral_{10}^{rs} = \alpha_r$ and $\heineintegral_{01}^{rs} = \beta_s$.

When $\alpha$ and~$\beta$ are discrete measures, the integrals
$\heineintegral_{nm}^{rs}$ reduce to sums; see
\autoref{sec:heine-integral-discrete-case} below.

Many types of determinants involving the bimoments
$I_{ab} = \iint \frac{x^a y^b}{x+y} \da \db$ can be evaluated in terms
of such integrals; these formulas are similar in spirit to Heine's
formula for Hankel determinants of moments $\alpha_k = \int x^k \da$,
encountered in the theory of orthogonal polynomials:
\begin{equation*}
  \det(\alpha_{i+j})_{i,j=0}^{n-1}
  = \int_{\sigma_n} \Delta(x)^2 \dA{n}
  = \heineintegral_{n0}^{00}.
\end{equation*}
Here we have collected a few such formulas
(all formulated for determinants of size $n \times n$).
To begin with, specializing Theorem~2.1 in \cite{bertola-gekhtman-szmigielski:cauchy}
to the case of the Cauchy kernel $K(x,y)=1/(x+y)$,
we get the most basic bimoment determinant identity,
\begin{equation}
  \label{eq:bimoment-det}
  D_n = \det(I_{ij})_{i,j=0}^{n-1}
  = \begin{vmatrix}
    I_{00} & \dots & I_{0,n-1}  \\
    \vdots & & \vdots \\
    I_{n-1,0} & \dots & I_{n-1,n-1} \\
  \end{vmatrix}
 = \heineintegral_{nn}^{00}.
\end{equation}
Applying \eqref{eq:bimoment-det} with the measure $x^r \, \da$ in place of $\da$
and $y^s \, \db$ in place of $\db$ gives
\begin{equation}
  \label{eq:bimoment-det-shifted}
  \det(I_{r+i,s+j})_{i,j=0}^{n-1} = \heineintegral_{nn}^{rs}.
\end{equation}
Proposition~3.1 in \cite{bertola-gekhtman-szmigielski:cauchy} says that
\begin{equation}
  \label{eq:pi-vector-det}
  \begin{vmatrix}
    I_{00} & \dots & I_{0,n-2} & \alpha_0 \\
    \vdots & & \vdots & \vdots \\
    I_{n-1,0} & \dots & I_{n-1,n-2} & \alpha_{n-1} \\
  \end{vmatrix}
  = \heineintegral_{n,n-1}^{00}.
\end{equation}
(For $n=1$, the left-hand side should be read as the $1 \times 1$ determinant with the single entry $\alpha_0$;
this agrees with $\heineintegral_{10}^{00} = \alpha_0$.)
By the same trick, we find from this that
\begin{equation}
  \label{eq:pi-vector-det-shifted}
  \begin{vmatrix}
    I_{rs} & \dots & I_{r,s+n-2} & \alpha_r \\
    \vdots & & \vdots & \vdots \\
    I_{r+n-1,s} & \dots & I_{r+n-1,s+n-2} & \alpha_{r+n-1} \\
  \end{vmatrix}
  = \heineintegral_{n,n-1}^{rs}.
\end{equation}
We also need the following identity:
\begin{equation}
  \label{eq:p-polynomial-det}
  \begin{vmatrix}
    \alpha_0 & I_{10} & \dots & I_{1,n-2} \\
    \vdots & \vdots & & \vdots \\
    \alpha_{n-1} & I_{n0} & \dots & I_{n,n-2} \\
  \end{vmatrix}
  = \heineintegral_{n,n-1}^{01}
\end{equation}

\begin{proof}[Proof of \eqref{eq:p-polynomial-det}]
  From \eqref{eq:pi-vector-det-shifted} we have
  \begin{equation*}
    \heineintegral_{n,n-1}^{01} =
    \begin{vmatrix}
      I_{01} & \dots & I_{0,n-1} & \alpha_0 \\
      \vdots & & \vdots & \vdots \\
      I_{n-1,1} & \dots & I_{n-1,n-1} & \alpha_{n-1} \\
    \end{vmatrix}.
  \end{equation*}
  We rewrite the bimoments as
  $I_{jk} = \alpha_j \beta_{k-1} - I_{j+1,k-1}$
  (using \eqref{eq:bimoments-shift-relation}),
  and then subtract $\beta_{k-1}$ times the last column from the other columns $k=1,\dots,n-1$.
  This transforms the determinant into
  \begin{equation*}
    \begin{vmatrix}
      -I_{10} & \dots & -I_{1,n-2} & \alpha_0 \\
      \vdots & & \vdots & \vdots \\
      -I_{n0} & \dots & -I_{n,n-2} & \alpha_{n-1} \\
    \end{vmatrix}
  \end{equation*}
  without changing its value.
  Now move the column of $\alpha$'s to the left; on its way, it passes
  each of the other columns, thereby cancelling all the minus signs.
\end{proof}

Other useful formulas follow from the Desnanot--Jacobi identity, also known as
Lewis Carroll's identity (or as a special case of Sylvester's identity \cite[Section II.3]{gantmacher:matrixtheoryI}):
if $X$ is an $n \times n$ determinant (with $n \ge 2$), then
\begin{equation}
  \label{eq:LewisCarroll}
  X Y = X^{nn} X^{11} - X^{1n} X^{n1},
\end{equation}
where $X^{ij}$ is the subdeterminant of~$X$ obtained by removing row~$i$ and column~$j$,
and where $Y=(X^{nn})^{11}$ is the ``central'' subdeterminant of~$X$ obtained by removing the first and last row
as well as the first and last column
(for $n=2$, we take $Y=1$ by definition).
For example, applying this identity to the bimoment determinant \eqref{eq:bimoment-det-shifted}
(of size $n+1$ instead of~$n$) gives
\begin{equation}
  \label{eq:heine-lewis-carroll-a}
  \heineintegral_{n+1,n+1}^{rs} \heineintegral_{n-1,n-1}^{r+1,s+1} = \heineintegral_{nn}^{rs} \heineintegral_{nn}^{r+1,s+1} - \heineintegral_{nn}^{r+1,s} \heineintegral_{nn}^{r,s+1},
\end{equation}
and from \eqref{eq:p-polynomial-det} we get
\begin{equation}
  \label{eq:heine-lewis-carroll-b}
  \heineintegral_{n+1,n}^{01} \heineintegral_{n-1,n-1}^{20} = \heineintegral_{n,n-1}^{01} \heineintegral_{nn}^{20} - \heineintegral_{n,n-1}^{11} \heineintegral_{nn}^{10}
\end{equation}
(for $n \ge 1$, in both cases).

Another identity, which holds for arbitrary $z_i$, $w_i$, $X_{ij}$, is
\begin{multline}
  \label{eq:LewisCarroll-on-steroids}
  \begin{vmatrix}
    z_1 & X_{11} & \dots & X_{1,n-1} \\
    z_2 & X_{21} & \dots & X_{2,n-1} \\
    \vdots & \vdots & & \vdots \\
    z_n & X_{n1} & \dots & X_{n,n-1} \\
  \end{vmatrix}
  \begin{vmatrix}
    w_1 & X_{11} & \dots & X_{1,n-1} & X_{1n} \\
    w_2 & X_{21} & \dots & X_{2,n-1} & X_{2n} \\
    \vdots & \vdots & & \vdots & \vdots \\
    w_n & X_{n1} & \dots & X_{n,n-1} & X_{nn} \\
    w_{n+1} & X_{n+1,1} & \dots & X_{n+1,n-1} & X_{n+1,n+1} \\
  \end{vmatrix}
  \\
  =
  \begin{vmatrix}
    w_1 & X_{11} & \dots & X_{1,n-1} \\
    w_2 & X_{21} & \dots & X_{2,n-1} \\
    \vdots & \vdots & & \vdots \\
    w_n & X_{n1} & \dots & X_{n,n-1} \\
  \end{vmatrix}
  \begin{vmatrix}
    z_1 & X_{11} & \dots & X_{1,n-1} & X_{1n} \\
    z_2 & X_{21} & \dots & X_{2,n-1} & X_{2n} \\
    \vdots & \vdots & & \vdots & \vdots \\
    z_n & X_{n1} & \dots & X_{n,n-1} & X_{nn} \\
    z_{n+1} & X_{n+1,1} & \dots & X_{n+1,n-1} & X_{n+1,n+1} \\
  \end{vmatrix}
  -
  \\
  \begin{vmatrix}
    X_{11} & \dots & X_{1,n-1} & X_{1n} \\
    X_{21} & \dots & X_{2,n-1} & X_{2n} \\
    \vdots & & \vdots & \vdots \\
    X_{n1} & \dots & X_{n,n-1} & X_{nn} \\
  \end{vmatrix}
  \begin{vmatrix}
    z_1 & w_1 & X_{11} & \dots & X_{1,n-1} \\
    z_2 & w_2 & X_{21} & \dots & X_{2,n-1} \\
    \vdots & \vdots & \vdots & & \vdots \\
    z_n & w_n & X_{n1} & \dots & X_{n,n-1} \\
    z_{n+1} & w_{n+1} & X_{n+1,1} & \dots & X_{n+1,n-1} \\
  \end{vmatrix}
  .
\end{multline}
Indeed, the coefficients of $z_{n+1}$ on the right-hand side cancel,
and the coefficients of the other variables $z_j$ on both sides agree, which can be seen
by applying the Desnanot--Jacobi identity to the second determinant on
the left-hand side with its $j$th row moved to the top.

In the text -- see equation \eqref{eq:R-zero} -- we encounter the $n \times n$ determinant
\begin{equation}
  \label{eq:weirddeterminant}
  \weirddeterminant_n =
  \begin{vmatrix}
    I_{00} + \tfrac12 & I_{10} & \dots & I_{1,n-2} \\
    I_{10} & I_{20} & \dots & I_{2,n-2} \\
    \vdots & \vdots & & \vdots \\
    I_{n-1,0} & I_{n0} & \dots & I_{n,n-2} \\
  \end{vmatrix},
\end{equation}
which satisfies the recurrence
\begin{equation}
  \label{eq:oompa-loompa}
  \frac{\weirddeterminant_{n+1}}{\heineintegral_{n+1,n}^{01}}
  = \frac{\weirddeterminant_{n} }{\heineintegral_{n,n-1}^{01}}
  + \frac{\heineintegral_{nn}^{10} ( \heineintegral_{n+1,n}^{00} + \tfrac12 \heineintegral_{n,n-1}^{11})}{\heineintegral_{n,n-1}^{01} \heineintegral_{n+1,n}^{01}}.
\end{equation}

\begin{proof}[Proof of \eqref{eq:oompa-loompa}]
  By taking $z_i = \alpha_{i-1}$, $w_i = I_{i0} + \tfrac12 \delta_{i0}$ and $X_{ij} = I_{i,j-1}$ in
  \eqref{eq:LewisCarroll-on-steroids},
  and using \eqref{eq:bimoment-det-shifted} and \eqref{eq:p-polynomial-det},
  we find that
  $\heineintegral_{n,n-1}^{01} \weirddeterminant_{n+1} = \weirddeterminant_{n} \heineintegral_{n+1,n}^{01} - \heineintegral_{nn}^{10} Z$,
  where
  \begin{equation*}
    Z =
    \begin{vmatrix}
      \alpha_0 & I_{00} + \tfrac12 & I_{10} & I_{11} & \dots & I_{1,n-2} \\
      \alpha_1 & I_{10}            & I_{20} & I_{21} & \dots & I_{2,n-2} \\
      \vdots & \vdots & \vdots & \vdots & & \vdots \\
      \alpha_n & I_{n0}            & I_{n+1,0} & I_{n+1,1} & \dots & I_{n+1,n-2} \\
    \end{vmatrix}.
  \end{equation*}
  Using again the trick of rewriting the bimoments (except in column~$2$) as
  $I_{j+1,k} = \alpha_j \beta_k - I_{j,k+1}$,
  subtracting $\beta_{k-3}$ times the first column from column~$k$ (for $k=3,\dots,n$),
  and moving the first column to the right,
  we see that
  \begin{equation*}
    \begin{split}
      Z &= -
      \begin{vmatrix}
        I_{00} + \tfrac12 & I_{01} & I_{02} & \dots & I_{0,n-1} & \alpha_0 \\
        I_{10}            & I_{11} & I_{12} & \dots & I_{1,n-1} & \alpha_1 \\
        \vdots & \vdots & \vdots & & \vdots & \vdots \\
        I_{n0}            & I_{n,1} & I_{n,2} & \dots & I_{n,n-1} & \alpha_n \\
      \end{vmatrix}
      \\[1ex]
      &= -( \heineintegral_{n+1,n}^{00} + \tfrac12 \heineintegral_{n,n-1}^{11}).
    \end{split}
  \end{equation*}
  (The last equality follows from \eqref{eq:pi-vector-det-shifted}.)
  Consequently,
  \begin{equation*}
    \heineintegral_{n,n-1}^{01} \weirddeterminant_{n+1}
    = \weirddeterminant_{n} \heineintegral_{n+1,n}^{01}
    + \heineintegral_{nn}^{10} ( \heineintegral_{n+1,n}^{00} + \tfrac12 \heineintegral_{n,n-1}^{11}),
  \end{equation*}
  which is equivalent to \eqref{eq:oompa-loompa}.
\end{proof}

\begin{remark}
  It was shown in \cite[Lemma~4.10]{lundmark-szmigielski:DPlong} that when $\alpha=\beta$,
  the factorization
  \begin{equation}
    \heineintegral_{nn}^{10}
    = \frac{1}{2^n} \left( \int_{\sigma_n} \frac{\Delta(x)^2}{\Gamma(x)} \, \dA{n} \right)^2
  \end{equation}
  holds. We are not aware of anything similar in the general case with $\alpha \neq \beta$.
\end{remark}

\subsection{The discrete case}
\label{sec:heine-integral-discrete-case}
Consider next the bimoments $I_{ab}$ and the
integrals $\heineintegral_{nm}^{rs}$ defined by \eqref{eq:heine-integral} in the
case when $\alpha$ and~$\beta$ are discrete measures, say
\begin{equation}
  \label{eq:discrete-measures-alpha-beta}
  \alpha = \sum_{i=1}^A a_i \delta_{\lambda_i},
  \qquad
  \beta = \sum_{j=1}^B b_j \delta_{\mu_j}.
\end{equation}
(In the setup in the main text, we have $A = K$ and $B = K-1$.)
Then the bimoments become
\begin{equation}
  \label{eq:bimoment-as-sum}
  I_{ab} = \iint \frac{x^a y^b}{x+y} \da\db
  = \sum_{i=1}^A \sum_{j=1}^B \frac{\lambda_i^a \mu_j^b}{\lambda_i+\mu_j} \, a_i b_j,
\end{equation}
and likewise the integrals $\heineintegral_{nm}^{rs}$ turn into sums:
\begin{equation}
  \label{eq:heine-integral-as-sum}
  \heineintegral_{nm}^{rs} =
  \sum_{I \in \binom{[A]}{n}} \sum_{J \in \binom{[B]}{m}}
  \Psi_{IJ} \, \lambda_I^r a_I \, \mu_J^s b_J.
\end{equation}
Here $\binom{[A]}{n}$ denotes the set of $n$-element subsets $I = \{ i_1 < i_2 < \dots < i_n \}$ of the integer interval
$[A] = \{ 1,2, \dots, A \}$, and similarly for $\binom{[B]}{m}$.
Morever,
\begin{equation}
  \label{eq:product-notation-explanation}
  \lambda_I^r a_I \, \mu_J^s b_J =
  \Bigl( \prod_{i \in I} \lambda_i^r a_i \Bigr)
  \Bigl( \prod_{j \in J} \mu_j^s b_j \Bigr)
\end{equation}
and
\begin{equation}
  \label{eq:PsiIJ}
  \Psi_{IJ} = \frac{\Delta_I^2 \twin\Delta_J^2}{\Gamma_{IJ}},
\end{equation}
where we use the shorthand notation
\begin{equation}
  \label{eq:DeltaI}
  \begin{split}
    \Delta_I^2 &= \Delta(\lambda_{i_1},\dots,\lambda_{i_n})^2 = \prod_{\substack{a,b \in I \\ a < b}} (\lambda_{a} - \lambda_{b})^2
    , \\
    \twin\Delta_J^2 &= \Delta(\mu_{j_1},\dots,\mu_{j_m})^2 = \prod_{\substack{a,b \in J \\ a < b}} (\mu_{a} - \mu_{b})^2
    , \\
    \Gamma_{IJ} &= \Gamma(\lambda_{i_1},\dots,\lambda_{i_n};\mu_{j_1},\dots,\mu_{j_m}) = \prod_{i \in I, \, j \in J} (\lambda_i + \mu_j)
    .
  \end{split}
\end{equation}
For later use, we also introduce the symbol
\begin{equation}
  \label{eq:DeltaI1I2}
  \Delta_{I_1 I_2}^2 = \prod_{i_1 \in I_1, \, i_2 \in I_2} (\lambda_{i_1} - \lambda_{i_2})^2
  ,
\end{equation}
and similarly for $\twin \Delta_{J_1 J_2}$.
Empty products (as in $\Delta_{I}^2$ when $I$ is a singleton or the empty set)
are taken to be~$1$ by definition.
When needed for the sake of clarity,
we will write $\Psi_{I,J}$ instead of $\Psi_{IJ}$, etc.

For (positive) measures on the positive real line ($a_i$, $\lambda_i$, $b_j$, $\mu_j$ positive),
we thus have $\heineintegral_{nm}^{rs} > 0$ for $0 \le n \le A$ and $0 \le m \le B$, otherwise
$\heineintegral_{nm}^{rs} = 0$.

\begin{example}
  \label{ex:nonzero-heineintegrals}
  Below we have listed the nonzero $\heineintegral_{nm}^{00}$ in the case $A=3$ and $B=2$
  (the more general sum $\heineintegral_{nm}^{rs}$ is obtained by replacing each $a_i$
  and~$b_j$ in $\heineintegral_{nm}^{00}$ by $\lambda_i^r a_i$ and $\mu_j^s b_j$,
  respectively):
  \begin{equation*}
    \begin{split}
      \heineintegral_{00}^{00} &= 1, \\
      \heineintegral_{10}^{00} &= a_1 + a_2 + a_3, \\
      \heineintegral_{20}^{00} &=
      (\lambda_1 - \lambda_2)^2 a_1 a_2 + (\lambda_1 - \lambda_3)^2 a_1 a_3 + (\lambda_2 - \lambda_3)^2 a_2 a_3, \\
      \heineintegral_{30}^{00} &= (\lambda_1 - \lambda_2)^2 (\lambda_1 - \lambda_3)^2 (\lambda_2 - \lambda_3)^2 a_1 a_2 a_3, \\
    \end{split}
  \end{equation*}
  \begin{equation*}
    \begin{split}
      \heineintegral_{01}^{00} &= b_1 + b_2, \\
      \heineintegral_{11}^{00} &= I_{00} \\ &= 
      \frac{1}{\lambda_1 + \mu_1} a_1 b_1 
      + \frac{1}{\lambda_2 + \mu_1} a_2 b_1
      + \frac{1}{\lambda_3 + \mu_1} a_3 b_1
      \\ & \quad
      + \frac{1}{\lambda_1 + \mu_2} a_1 b_2
      + \frac{1}{\lambda_2 + \mu_2} a_2 b_2
      + \frac{1}{\lambda_3 + \mu_2} a_3 b_2
      ,\\
      \heineintegral_{21}^{00} &= 
        \frac{(\lambda_1 - \lambda_2)^2}{(\lambda_1 + \mu_1)(\lambda_2 + \mu_1)} a_1 a_2 b_1 
      + \frac{(\lambda_1 - \lambda_2)^2}{(\lambda_1 + \mu_2)(\lambda_2 + \mu_2)} a_1 a_2 b_2 
      \\ & \quad
      + \frac{(\lambda_1 - \lambda_3)^2}{(\lambda_1 + \mu_1)(\lambda_3 + \mu_1)} a_1 a_3 b_1 
      + \frac{(\lambda_1 - \lambda_3)^2}{(\lambda_1 + \mu_2)(\lambda_3 + \mu_2)} a_1 a_3 b_2 
      \\ & \quad
      + \frac{(\lambda_2 - \lambda_3)^2}{(\lambda_2 + \mu_1)(\lambda_2 + \mu_1)} a_2 a_3 b_1 
      + \frac{(\lambda_2 - \lambda_3)^2}{(\lambda_2 + \mu_2)(\lambda_2 + \mu_2)} a_2 a_3 b_2 
      ,\\
      \heineintegral_{31}^{00} &=
      \frac{(\lambda_1 - \lambda_2)^2 (\lambda_1 - \lambda_3)^2 (\lambda_2 - \lambda_3)^2}{(\lambda_1 + \mu_1)(\lambda_2 + \mu_1)(\lambda_3 + \mu_1)} a_1 a_2 a_3 b_1
      \\ & \quad
      + \frac{(\lambda_1 - \lambda_2)^2 (\lambda_1 - \lambda_3)^2 (\lambda_2 - \lambda_3)^2}{(\lambda_1 + \mu_2)(\lambda_2 + \mu_2)(\lambda_3 + \mu_2)} a_1 a_2 a_3 b_2
      ,\\
    \end{split}
  \end{equation*}
  \begin{equation*}
    \begin{split}
      \heineintegral_{02}^{00} &= (\mu_1 - \mu_2)^2 b_1 b_2
      ,\\
      \heineintegral_{12}^{00} &=
      \frac{(\mu_1 - \mu_2)^2}{(\lambda_1 + \mu_1)(\lambda_1 + \mu_2)} a_1 b_1 b_2
      + \frac{(\mu_1 - \mu_2)^2}{(\lambda_2 + \mu_1)(\lambda_2 + \mu_2)} a_2 b_1 b_2
      \\ & \quad
      + \frac{(\mu_1 - \mu_2)^2}{(\lambda_3 + \mu_1)(\lambda_3 + \mu_2)} a_3 b_1 b_2
      ,\\
      \heineintegral_{22}^{00} &=
      \frac{(\lambda_1 - \lambda_2)^2 (\mu_1 - \mu_2)^2}{(\lambda_1 + \mu_1)(\lambda_2 + \mu_1) (\lambda_1 + \mu_2)(\lambda_2 + \mu_2)} a_1 a_2 b_1 b_2
      \\ & \quad
      + \frac{(\lambda_1 - \lambda_3)^2 (\mu_1 - \mu_2)^2}{(\lambda_1 + \mu_1)(\lambda_3 + \mu_1) (\lambda_1 + \mu_2)(\lambda_3 + \mu_2)} a_1 a_3 b_1 b_2
      \\ & \quad
      + \frac{(\lambda_2 - \lambda_3)^2 (\mu_1 - \mu_2)^2}{(\lambda_2 + \mu_1)(\lambda_3 + \mu_1) (\lambda_2 + \mu_2)(\lambda_3 + \mu_2)} a_2 a_3 b_1 b_2
      ,\\
      \heineintegral_{32}^{00} &=
      \frac{(\lambda_1 - \lambda_2)^2 (\lambda_1 - \lambda_3)^2 (\lambda_2 - \lambda_3)^2 (\mu_1 - \mu_2)^2}{(\lambda_1 + \mu_1)(\lambda_2 + \mu_1)(\lambda_3 + \mu_1) (\lambda_1 + \mu_2)(\lambda_2 + \mu_2)(\lambda_3 + \mu_2)} a_1 a_2 a_3 b_1 b_2
      .
    \end{split}
  \end{equation*}
\end{example}

\begin{lemma}
  \label{lem:heine-integral-symmetry}
  Let $\starheineintegral_{nm}^{rs}$ denote the integral
  $\heineintegral_{nm}^{rs}$ evaluated using the measures
  \begin{equation}
    \label{eq:alpha-beta-star-appendix}
    \alpha^* = \sum_{i=1}^A a_i^* \delta_{\lambda_i}
    \qquad \text{and} \qquad
    \beta^* = \sum_{j=1}^B b_i^* \delta_{\mu_j}
  \end{equation}
  in place of $\alpha$ and~$\beta$,
  and suppose that the four measures are related as in
  \autoref{thm:residue-relation-W-Wstar}:
  \begin{equation}
    \label{eq:a-astar-b-bstar}
    a_k a_k^*
    = \frac{\displaystyle \lambda_k \prod_{j=1}^{B} \left( 1 + \frac{\lambda_k}{\mu_j} \right) }{\displaystyle 2 \prod_{\substack{i=1 \\ i \neq k}}^{A} \left( 1 - \frac{\lambda_k}{\lambda_i} \right)^2},
    \qquad
    b_k b_k^*
    = \frac{\displaystyle \mu_k \prod_{i=1}^{A} \left( 1 + \frac{\mu_k}{\lambda_i} \right) }{\displaystyle 2 \prod_{\substack{j=1 \\ j \neq k}}^{B} \left( 1 - \frac{\mu_k}{\mu_j} \right)^2}.
  \end{equation}
  Then
  \begin{equation}
    \label{eq:heine-integral-symmetry}
    \begin{split}
      \starheineintegral_{nm}^{rs}
      &= 
      \frac{1}{2^{n+m}}
      \left( \prod_{i=1}^A \lambda_i \right)^{2n-m+r-1} \left( \prod_{j=1}^B \mu_j \right)^{2m-n+s-1}
      \frac{\heineintegral_{A-n,B-m}^{1-r,1-s}}{\heineintegral_{AB}^{00}}
      \\
      &= \frac{\heineintegral_{A-n,B-m}^{1-r,1-s}}{2^{n+m} \heineintegral_{AB}^{m-2n+1-r,n-2m+1-s}}
      .
    \end{split}
  \end{equation}
\end{lemma}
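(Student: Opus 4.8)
The plan is to prove the identity term-by-term at the level of the discrete sum \eqref{eq:heine-integral-as-sum}, exploiting a complementation (duality) between a subset and its complement. Writing $I^c = [A]\setminus I$ and $J^c = [B]\setminus J$, I would show that each summand of $\starheineintegral_{nm}^{rs}$ indexed by $(I,J)$ equals a fixed, $(I,J)$-independent global factor times the summand of $\heineintegral_{A-n,B-m}^{1-r,1-s}$ indexed by the complementary pair $(I^c,J^c)$. Since $I \mapsto I^c$ is a bijection from $\binom{[A]}{n}$ onto $\binom{[A]}{A-n}$, and likewise for $J$, summing over all $(I,J)$ then yields the asserted formula at once.

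The first step is to insert the residue relations \eqref{eq:a-astar-b-bstar} so as to eliminate the starred weights: $a_I^* = \prod_{i\in I}a_i^*$ and $b_J^* = \prod_{j\in J}b_j^*$ are rewritten through $1/a_I$, $1/b_J$ and explicit products over the $\lambda$'s and $\mu$'s. The crucial point is that every such product splits according to the partitions $[A] = I \sqcup I^c$ and $[B] = J\sqcup J^c$, after which three factorization identities do all the work:
\begin{equation*}
  \Delta_{[A]}^2 = \Delta_I^2\,\Delta_{I^c}^2\,\Delta_{II^c}^2,
  \quad
  \twin\Delta_{[B]}^2 = \twin\Delta_J^2\,\twin\Delta_{J^c}^2\,\twin\Delta_{JJ^c}^2,
  \quad
  \Gamma_{[A][B]} = \Gamma_{IJ}\,\Gamma_{IJ^c}\,\Gamma_{I^cJ}\,\Gamma_{I^cJ^c},
\end{equation*}
in the notation of \eqref{eq:DeltaI1I2}. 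Concretely, the denominator $\prod_{l\neq i}(1-\lambda_i/\lambda_l)^2$ in $a_i^*$ produces $\Delta_I^4\,\Delta_{II^c}^2$ (up to powers of the $\lambda_i$), which combines with the factor $\Delta_I^2$ from $\Psi_{IJ}$ via the first identity into $\Delta_{I^c}^2/\Delta_{[A]}^2$ — exactly the Vandermonde factor needed for $\Psi_{I^cJ^c}$, together with a global $1/\Delta_{[A]}^2$. Similarly, the mixed factors $\prod_j(1+\lambda_i/\mu_j)$ and $\prod_i(1+\mu_j/\lambda_i)$ contribute $\Gamma_{IJ}\Gamma_{IJ^c}$ and $\Gamma_{IJ}\Gamma_{I^cJ}$; against the $1/\Gamma_{IJ}$ inside $\Psi_{IJ}$ these collapse, via the third identity, to $\Gamma_{[A][B]}/\Gamma_{I^cJ^c}$, supplying the $1/\Gamma_{I^cJ^c}$ of $\Psi_{I^cJ^c}$ plus a global $\Gamma_{[A][B]}$.

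Once the Vandermonde and Cauchy factors are in place, what remains is bookkeeping of the scalar powers. Using $\prod_{i\in I}\lambda_i = \bigl(\prod_{i=1}^A\lambda_i\bigr)/\bigl(\prod_{i\in I^c}\lambda_i\bigr)$ and $a_I = a_{[A]}/a_{I^c}$ (and their $\mu$-twins), I would check that the $\lambda_i$-exponents collapse to $\lambda_{I^c}^{1-r}$ and the $\mu_j$-exponents to $\mu_{J^c}^{1-s}$ on the complementary side, leaving a global prefactor $2^{-(n+m)}\bigl(\prod_{i=1}^A\lambda_i\bigr)^{2n-m+r-1}\bigl(\prod_{j=1}^B\mu_j\bigr)^{2m-n+s-1}$ times $\Gamma_{[A][B]}/\bigl(\Delta_{[A]}^2\,\twin\Delta_{[B]}^2\,a_{[A]}\,b_{[B]}\bigr)$. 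The last quotient is precisely $1/\heineintegral_{AB}^{00}$, since $\binom{[A]}{A}$ and $\binom{[B]}{B}$ are singletons and hence $\heineintegral_{AB}^{00} = \Psi_{[A][B]}\,a_{[A]}\,b_{[B]}$. Summing the complementary terms then gives $\heineintegral_{A-n,B-m}^{1-r,1-s}$ and establishes the first displayed equality; the second follows immediately from $\heineintegral_{AB}^{pq} = \bigl(\prod_{i=1}^A\lambda_i\bigr)^{p}\bigl(\prod_{j=1}^B\mu_j\bigr)^{q}\,\heineintegral_{AB}^{00}$.

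The conceptual content is carried entirely by the three factorization identities and the bijection $I\leftrightarrow I^c$; the only real obstacle is the exponent arithmetic. The delicate point is to verify that, after the substitution, the combined powers of each of $\prod_{i\in I}\lambda_i$, $\prod_{i\in I^c}\lambda_i$, $\prod_{j\in J}\mu_j$, $\prod_{j\in J^c}\mu_j$ depend on $(I,J)$ only through the intended factors $\lambda_{I^c}^{1-r}$, $\mu_{J^c}^{1-s}$, $a_{I^c}$, $b_{J^c}$ — that is, that the leftover prefactor is genuinely $(I,J)$-independent. I expect this to require care but no new ideas; it amounts to tracking the exponents ($2n-1$ and $2n$ in the $\lambda$-case, and their $\mu$-analogues) produced by the $\prod_{l\neq i}(1-\lambda_i/\lambda_l)^2$ term and confirming the cancellations.
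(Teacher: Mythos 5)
Your proposal is correct and follows essentially the same route as the paper's proof: the paper likewise substitutes the residue relations to express $a_I a_I^*$ and $b_J b_J^*$ via the splittings $\Delta_{[A]}^2 = \Delta_I^2\Delta_{I^c}^2\Delta_{II^c}^2$ and $\Gamma_{I,[B]}\Gamma_{[A],J}/\Gamma_{IJ} = \Gamma_{[A],[B]}/\Gamma_{I^cJ^c}$, identifies each summand with $\Psi_{I^cJ^c}\,\lambda_{I^c}^{1-r}a_{I^c}\,\mu_{J^c}^{1-s}b_{J^c}$ times the same global prefactor, and sums over the complementation bijection. The identification of the leftover factor with $1/\heineintegral_{AB}^{00}$ and the derivation of the second displayed equality also match the paper exactly.
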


\begin{proof}
  This is a fairly straightforward computation.
  To begin with, let $L=\prod_{i=1}^A \lambda_i$ and $M=\prod_{j=1}^B \mu_j$,
  and write \eqref{eq:a-astar-b-bstar} as
  \begin{equation*}
    a_k a_k^* = \frac{L^2 \, \Gamma_{\{ k \},[B]}}{2 \lambda_k M \, \displaystyle\prod_{\substack{i=1 \\ i \neq k}}^{A} (\lambda_i - \lambda_k)^2},
    \qquad
    b_k b_k^* = \frac{M^2 \, \Gamma_{[A],\{ k \}}}{2 \mu_k L \, \displaystyle\prod_{\substack{j=1 \\ j \neq k}}^{B} (\mu_j - \mu_k)^2}.
  \end{equation*}
  If $I \in \binom{[A]}{n}$, we therefore have
  \begin{equation*}
    a_I a^*_I = \prod_{i \in I} a_i a_i^*
    = \frac{1}{\lambda_I} \left( \frac{L^2}{2M}\right)^n
    \frac{\Gamma_{I,[B]}}{\displaystyle \prod_{i \in I} \biggl( \, \prod_{\substack{t=1 \\ t \neq i}}^{A} (\lambda_t - \lambda_i)^2 \biggr)}
    .
  \end{equation*}
  (The factor in front is $\lambda_I = \prod_{i \in I} \lambda_i$.)
  In the denominator, the factor $(\lambda_p - \lambda_q)^2$ will appear twice if $p$ and $q$ both belong to~$I$,
  once if one of them does, and not at all if both belong to $[A] \setminus I$.
  Thus we can write
  \begin{equation*}
    a_I a^*_I =
    \frac{1}{\lambda_I} \left( \frac{L^2}{2M} \right)^n
    \frac{\Gamma_{I,[B]} \, \Delta_{[A] \setminus I}^2}{\Delta_I^2 \, \Delta_{[A]}^2}.
  \end{equation*}
  Similarly,
  \begin{equation*}
    b_J b^*_J =
    \frac{1}{\mu_J} \left( \frac{M^2}{2L}\right)^m
    \frac{\Gamma_{[A],J} \, \twin\Delta_{[B] \setminus J}^2}{\twin\Delta_J^2 \, \twin\Delta_{[B]}^2}.
  \end{equation*}
  Putting this into the definition of $\starheineintegral_{nm}^{rs}$ gives
  \begin{equation*}
    \begin{split}
      \starheineintegral_{nm}^{rs}
      &=
      \sum_{I \in \binom{[A]}{n}} \sum_{J \in \binom{[B]}{m}}
      \Psi_{IJ} \, \lambda_I^r a^*_I \, \mu_J^s b^*_J
      \\
      &= \sum_I \sum_J \biggl(
      \frac{\Delta_I^2 \twin\Delta_J^2}{\Gamma_{I,J}} \,
      \lambda_I^r \,
      \frac{1}{\lambda_I a_I} \left( \frac{L^2}{2M}\right)^n
      \frac{\Gamma_{I,[B]} \, \Delta_{[A] \setminus I}^2}{\Delta_I^2 \, \Delta_{[A]}^2} \times
      \\ &\qquad\qquad
      \mu_J^s \,
      \frac{1}{\mu_J b_J} \left( \frac{M^2}{2L}\right)^m
      \frac{\Gamma_{[A],J} \, \twin\Delta_{[B] \setminus J}^2}{\twin\Delta_J^2 \, \twin\Delta_{[B]}^2}
      \biggr)
    \end{split}
  \end{equation*}
  Now note that
  \begin{equation*}
    \begin{split}
      \Gamma_{I,[B]} &= \Gamma_{I,J} \, \Gamma_{I,[B] \setminus J}, \\
      \Gamma_{[A],J} &= \Gamma_{I,J} \, \Gamma_{[A] \setminus I,J}, \\
      \Gamma_{[A],[B]} &= \Gamma_{I,J} \, \Gamma_{[A] \setminus I,J} \, \Gamma_{I,[B] \setminus J} \, \Gamma_{[A] \setminus I,[B] \setminus J},
    \end{split}
  \end{equation*}
  which implies
  \begin{equation*}
    \frac{\Gamma_{I,[B]} \, \Gamma_{[A],J}}{\Gamma_{I,J}}
    = \frac{\Gamma_{[A],[B]}}{\Gamma_{[A] \setminus I,[B] \setminus J}}.
  \end{equation*}
  Thus,
  \begin{equation*}
    \begin{split}
      \starheineintegral_{nm}^{rs}
      &= \sum_I \sum_J \biggl(
      \frac{\lambda_I^{r-1} \mu_J^{s-1} L^{2n-m} M^{2m-n}}{2^{n+m} a_I b_J}
      \times
      \\
      & \qquad \qquad
      \frac{\Gamma_{[A],[B]}}{\Delta_{[A]}^2 \twin\Delta_{[B]}^2}
      \times
      \frac{\Delta_{[A] \setminus I}^2 \twin\Delta_{[B] \setminus J}^2}{\Gamma_{[A] \setminus I,[B] \setminus J}}
      \biggr)
      \\
      &= \sum_I \sum_J \biggl(
      \left( \frac{L}{\lambda_{[A] \setminus I}} \right)^{r-1}
      \left( \frac{M}{\mu_{[B] \setminus J}} \right)^{s-1}
      \frac{L^{2n-m} M^{2m-n}}{2^{n+m}}
      \times
      \\
      & \qquad \qquad
      \frac{a_{[A] \setminus I} b_{[B] \setminus J}}{a_{[A]} b_{[B]}}
      \times
      \frac{\Psi_{[A] \setminus I, [B] \setminus J}}{\Psi_{[A],[B]}}
      \biggr)
      \\
      &=
      \frac{L^{2n-m+r-1} M^{2m-n+s-1}}{2^{n+m} \Psi_{[A],[B]} a_{[A]} b_{[B]}}
      \times
      \\
      & \qquad
      \sum_I \sum_J \Psi_{[A] \setminus I, [B] \setminus J} \, (\lambda_{[A] \setminus I})^{1-r} \, a_{[A] \setminus I} \, (\mu_{[B] \setminus J})^{1-s} \, b_{[B] \setminus J}
      \\
      &=
      \frac{L^{2n-m+r-1} M^{2m-n+s-1}}{2^{n+m} \heineintegral_{AB}^{00}}
      \sum_{U \in \binom{[A]}{A-n}} \sum_{V \in \binom{[B]}{B-m}}
      \Psi_{UV} \, \lambda_{U}^{1-r} a_{U} \, \mu_{V}^{1-s} b_{V}
      \\
      &=
      \frac{L^{2n-m+r-1} M^{2m-n+s-1}}{2^{n+m} \heineintegral_{AB}^{00}} \, \heineintegral_{A-n,B-m}^{1-r,1-s}
      ,
    \end{split}
  \end{equation*}
  as claimed.
\end{proof}

The following lemma is needed in the proof of \autoref{lem:positive-distances}.
\begin{lemma}
  \label{lem:positive-distances}
  Suppose (as in the main text) that the number of point masses in $\alpha$ and $\beta$
  are $K$ and~$K-1$:
  \begin{equation*}
    \alpha = \sum_{i=1}^K a_i \delta_{\lambda_i},
    \qquad
    \beta = \sum_{j=1}^{K-1} b_j \delta_{\mu_j}.
  \end{equation*}
  Then the quantities $\heineintegral_{nm}^{rs}$ satisfy
  \begin{equation}
    \label{eq:distances1}
    \heineintegral_{j,j-1}^{00} \heineintegral_{j-1,j-1}^{11} - \heineintegral_{jj}^{00} \heineintegral_{j-1,j-2}^{11} > 0
    , \qquad j=2,\dots,K-1
    ,
  \end{equation}
  and
  \begin{equation}
    \label{eq:distances2}
    \heineintegral_{jj}^{00} \heineintegral_{j,j-1}^{11} - \heineintegral_{j+1,j}^{00} \heineintegral_{j-1,j-1}^{11} > 0
    , \qquad j=1,\dots,K-1
    .
  \end{equation}
\end{lemma}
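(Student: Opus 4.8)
The plan is to reduce each of the two displayed differences to a \emph{single product} of Heine-type integrals $\heineintegral_{nm}^{rs}$, all of which are manifestly positive for positive measures (recall from the discussion after \eqref{eq:PsiIJ} that $\heineintegral_{nm}^{rs}>0$ whenever $0\le n\le K$ and $0\le m\le K-1$). Concretely, I expect to prove the two determinantal identities
\begin{align}
  \heineintegral_{j,j-1}^{00}\heineintegral_{j-1,j-1}^{11} - \heineintegral_{jj}^{00}\heineintegral_{j-1,j-2}^{11} &= \heineintegral_{j-1,j-1}^{10}\,\heineintegral_{j,j-1}^{01}, \label{eq:plan-first}\\
  \heineintegral_{jj}^{00}\heineintegral_{j,j-1}^{11} - \heineintegral_{j+1,j}^{00}\heineintegral_{j-1,j-1}^{11} &= \heineintegral_{jj}^{10}\,\heineintegral_{j,j-1}^{01}, \label{eq:plan-second}
\end{align}
from which \eqref{eq:distances1} and \eqref{eq:distances2} follow at once, since every factor on the right is a positive quantity in the stated range of~$j$. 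The whole difficulty is thus shifted onto these two algebraic identities, which I would establish by feeding the determinantal evaluations of \autoref{sec:determinants} into the two Sylvester-type identities already available in the text.

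For \eqref{eq:plan-second} I would apply the Desnanot--Jacobi identity \eqref{eq:LewisCarroll} to the $(j+1)\times(j+1)$ determinant that represents $\heineintegral_{j+1,j}^{00}$ via \eqref{eq:pi-vector-det} (bimoment columns $I_{\cdot,0},\dots,I_{\cdot,j-1}$ followed by the $\alpha$-column, rows $0,\dots,j$). Deleting the last row and last column leaves $\heineintegral_{jj}^{00}$; deleting the first row and first column leaves $\heineintegral_{j,j-1}^{11}$; deleting both the first and last rows and columns leaves the central minor $\heineintegral_{j-1,j-1}^{11}$; while the two mixed corner minors are, by \eqref{eq:bimoment-det-shifted} and \eqref{eq:pi-vector-det-shifted}, exactly $\heineintegral_{jj}^{10}$ (delete first row, last column) and $\heineintegral_{j,j-1}^{01}$ (delete last row, first column). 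The identity \eqref{eq:LewisCarroll} then reads $\heineintegral_{j+1,j}^{00}\heineintegral_{j-1,j-1}^{11}=\heineintegral_{jj}^{00}\heineintegral_{j,j-1}^{11}-\heineintegral_{jj}^{10}\heineintegral_{j,j-1}^{01}$, which is \eqref{eq:plan-second}.

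For \eqref{eq:plan-first} the two products involve determinants whose sizes differ by one, so plain Desnanot--Jacobi does not apply; instead I would use the three-term identity \eqref{eq:LewisCarroll-on-steroids}, taking $z$ to be the column of moments $\alpha_\cdot$, $X$ a block of bimoment columns, and the extra column another bimoment column, with the rows listed in reverse order (so that deletion of the trailing row in \eqref{eq:LewisCarroll-on-steroids} corresponds to deletion of the leading row $0$ that distinguishes the shifted determinants $\heineintegral_{j-1,\cdot}^{11}$ from the unshifted $\heineintegral_{j,\cdot}^{00}$). With this bookkeeping the three $j\times j$ determinants in \eqref{eq:LewisCarroll-on-steroids} become $\heineintegral_{j,j-1}^{00}$, $\heineintegral_{jj}^{00}$ and $\heineintegral_{j,j-1}^{01}$, the three $(j-1)\times(j-1)$ determinants become $\heineintegral_{j-1,j-1}^{11}$, $\heineintegral_{j-1,j-2}^{11}$ and $\heineintegral_{j-1,j-1}^{10}$, and the identity collapses to \eqref{eq:plan-first}.

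The main obstacle I anticipate is organizational rather than conceptual: matching the abstract minors in \eqref{eq:LewisCarroll}, and especially in \eqref{eq:LewisCarroll-on-steroids}, to the correct $\heineintegral_{nm}^{rs}$, while keeping track of the row/column shifts, the placement of the $\alpha$-column, and the signs produced by the reversal of row order and by moving the $\alpha$-column into last position (these signs must cancel between the two sides). A useful sanity check before committing to the general index pattern is the case $j=2$ of \eqref{eq:plan-first}: direct expansion gives $\heineintegral_{11}^{10}\,(\alpha_1 I_{01}-\alpha_0 I_{11})$, and the bracket equals $\heineintegral_{21}^{01}$ by the determinant evaluation \eqref{eq:pi-vector-det-shifted}, which confirms both the shape of the right-hand side and its positivity.
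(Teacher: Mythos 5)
Your proposal is correct, and it takes a genuinely different route from the paper. The paper proves the lemma by direct combinatorial expansion: both products are expanded via \eqref{eq:heine-integral-as-sum}, the double sums are reorganized according to the intersections of the index sets, and everything reduces to the identity \eqref{eq:formidable-identity}, which is then established by an induction on $(k,l)$ using symmetry, divisibility by $\Delta_U^2$, and degree counting. You instead upgrade the two inequalities to exact product evaluations,
\begin{align*}
\heineintegral_{j,j-1}^{00}\heineintegral_{j-1,j-1}^{11} - \heineintegral_{jj}^{00}\heineintegral_{j-1,j-2}^{11} &= \heineintegral_{j-1,j-1}^{10}\,\heineintegral_{j,j-1}^{01},\\
\heineintegral_{jj}^{00}\heineintegral_{j,j-1}^{11} - \heineintegral_{j+1,j}^{00}\heineintegral_{j-1,j-1}^{11} &= \heineintegral_{jj}^{10}\,\heineintegral_{j,j-1}^{01},
\end{align*}
from which positivity is immediate, since $\heineintegral_{nm}^{rs}>0$ whenever $0\le n\le K$ and $0\le m\le K-1$ (as noted in \autoref{sec:heine-integral-discrete-case}) and all indices on the right stay in that range for the stated values of~$j$. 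Your derivations check out: the second identity is exactly Desnanot--Jacobi \eqref{eq:LewisCarroll} applied to the $(j+1)\times(j+1)$ determinant \eqref{eq:pi-vector-det} representing $\heineintegral_{j+1,j}^{00}$, with the corner and central minors identified via \eqref{eq:bimoment-det-shifted} and \eqref{eq:pi-vector-det-shifted} --- the same move the paper itself makes to obtain \eqref{eq:heine-lewis-carroll-a} and \eqref{eq:heine-lewis-carroll-b}. The first is \eqref{eq:LewisCarroll-on-steroids} with $n=j-1$, special columns $I_{\cdot,0}$, $I_{\cdot,j-1}$, $\alpha_\cdot$ around the core $I_{\cdot,1},\dots,I_{\cdot,j-2}$, and with bimoment row $0$ in the role of the appended row $n+1$; the necessary column and row reorderings contribute factors $(-1)^{j-2}$ and $(-1)^{j-1}$ to exactly one determinant in each of the three products, so the signs cancel globally, as you predicted. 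I verified both identities numerically, and your $j=2$ sanity check is right. What you give up relative to the paper is the manifestly positive term-by-term expansion implicit in \eqref{eq:formidable-identity}; what you gain is a much shorter proof that reuses machinery already present in \autoref{sec:determinants} and, as a bonus, closed-form product expressions for the gaps $q_{k+1}-q_k$ appearing in the proof of \autoref{lem:maps-into-P}.
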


\begin{proof}
  In \eqref{eq:distances1} we let $j=m+1$ for convenience ($1 \le m \le K-2$),
  and expand the left-hand side using \eqref{eq:heine-integral-as-sum}:
  \begin{equation}
    \label{eq:expand-JJ-JJ}
    \begin{split}
      & \heineintegral_{m+1,m}^{00} \heineintegral_{mm}^{11} - \heineintegral_{m+1,m+1}^{00} \heineintegral_{m,m-1}^{11}
      \\
      &=
      \Biggl(
      \sum_{A \in \binom{[K]}{m+1}} \sum_{C \in \binom{[K-1]}{m}}
      \Psi_{AC}^{} \, a_A^{} \, b_C^{}
      \Biggr)
      \Biggl(
      \sum_{B \in \binom{[K]}{m}} \sum_{D \in \binom{[K-1]}{m}}
      \Psi_{BD}^{} \, \lambda_B^{} \, \mu_D^{} \, a_B^{} \, b_D^{}
      \Biggr)
      \\
      &\quad -
      \Biggl(
      \sum_{A \in \binom{[K]}{m+1}} \sum_{C \in \binom{[K-1]}{m+1}}
      \Psi_{AC}^{} \, a_A^{} \, b_C^{}
      \Biggr)
      \Biggl(
      \sum_{B \in \binom{[K]}{m}} \sum_{D \in \binom{[K-1]}{m-1}}
      \Psi_{BD}^{} \, \lambda_B^{} \, \mu_D^{} \, a_B^{} \, b_D^{}
      \Biggr)
      \\
      &=
      \sum_{A \in \binom{[K]}{m+1}} \sum_{B \in \binom{[K]}{m}} \sum_{C \in \binom{[K-1]}{m}} \sum_{D \in \binom{[K-1]}{m}}
      \Psi_{AC}^{} \Psi_{BD}^{} \, \lambda_B^{} \, \mu_D^{} \, a_A^{} \, a_B^{} \, b_C^{} \, b_D^{}
      \\
      &\quad -
      \sum_{A \in \binom{[K]}{m+1}} \sum_{B \in \binom{[K]}{m}} \sum_{C \in \binom{[K-1]}{m+1}} \sum_{D \in \binom{[K-1]}{m-1}}
      \Psi_{AC}^{} \Psi_{BD}^{} \, \lambda_B^{} \, \mu_D^{} \, a_A^{} \, a_B^{} \, b_C^{} \, b_D^{}
      .
    \end{split}
  \end{equation}
  Let us denote the summand by
  \begin{equation*}
    f(A,B,C,D) = \Psi_{AC}^{} \Psi_{BD}^{} \, \lambda_B^{} \, \mu_D^{} \, a_A^{} \, a_B^{} \, b_C^{} \, b_D^{}
  \end{equation*}
  for simplicity.

  Choosing two subsets $A$ and~$B$ of a set is equivalent to first choosing $R = A \cap B$ and then
  choosing two disjoint sets 
  $X = A \setminus (A \cap B)$ and $Y = B \setminus (A \cap B)$
  among the remaining elements.
  If $\abs{A}=m+1$ and $\abs{B}=m$, then
  \begin{equation*}
    \abs{A \cap B} = m-k
    ,\qquad
    \abs{A \setminus (A \cap B)}=k+1
    ,\qquad
    \abs{B \setminus (A \cap B)}=k
    ,
  \end{equation*}
  for some $k \in \{ 0,1,\dots,m \}$.
  Thus
  \begin{equation*}
    \sum_{A \in \binom{[K]}{m+1}} \sum_{B \in \binom{[K]}{m}} f(A,B,C,D)
    = \sum_{k=0}^m \sum_{R \in \binom{[K]}{m-k}} \sum_{\substack{X \in \binom{[K] \setminus R}{k+1} \\ Y \in \binom{[K] \setminus R}{k} \\ X \cap Y = \emptyset}} f(R+X, R+Y, C, D),
  \end{equation*}
  where we write $R+X$ rather than $R \cup X$, in order to indicate that it is a union of
  \emph{disjoint} sets.
  (If $K - (m-k) < 2k+1$, then the innermost sum is empty.)
  With similar rewriting for $C$ and~$D$, \eqref{eq:expand-JJ-JJ} becomes
  \begin{multline}
    \label{eq:expand-JJ-JJ-2}
    \sum_{k=0}^m
    \sum_{l=0}^{m-1}
    \sum_{R \in \binom{[K]}{m-k}}
    \sum_{S \in \binom{[K-1]}{m-l}}
    \sum_{\substack{X \in \binom{[K] \setminus R}{k+1} \\ Y \in \binom{[K] \setminus R}{k} \\ X \cap Y = \emptyset}}
    \sum_{\substack{Z \in \binom{[K-1] \setminus S}{l} \\ W \in \binom{[K-1] \setminus S}{l} \\ Z \cap W = \emptyset}}
    f(R+X,R+Y,S+Z,S+W)
    \\
    -
    \sum_{k=0}^m
    \sum_{l=0}^{m-1}
    \sum_{R \in \binom{[K]}{m-k}}
    \sum_{S \in \binom{[K-1]}{m-l}}
    \sum_{\substack{X \in \binom{[K] \setminus R}{k+1} \\ Y \in \binom{[K] \setminus R}{k} \\ X \cap Y = \emptyset}}
    \sum_{\substack{Z \in \binom{[K-1] \setminus S}{l+1} \\ W \in \binom{[K-1] \setminus S}{l-1} \\ Z \cap W = \emptyset}}
    f(R+X,R+Y,S+Z,S+W)
    .
  \end{multline}
  (The innermost sum on the second line is empty when $l=0$.)
  Now, since
  \begin{equation*}
    \begin{split}
      \Psi_{R+X,S+Z}^{} \Psi_{R+Y,S+W}^{}
      &=
      \frac{\Delta_{R+X}^2 \twin\Delta_{S+Z}^2}{\Gamma_{R+X,S+Z}^{}} \, \frac{\Delta_{R+Y}^2 \twin\Delta_{S+W}^2}{\Gamma_{R+Y,S+W}^{}}
      \\
      &=
      \frac{\Delta_{R}^2 \Delta_{X}^2 \Delta_{RX}^2 \twin\Delta_{S}^2 \twin\Delta_{Z}^2 \twin\Delta_{SZ}^2}{\Gamma_{RS}^{} \Gamma_{RZ}^{} \Gamma_{XS}^{} \Gamma_{XZ}^{}}
      \, \frac{\Delta_{R}^2 \Delta_{Y}^2 \Delta_{RY}^2 \twin\Delta_{S}^2 \twin\Delta_{W}^2 \twin\Delta_{SW}^2}{\Gamma_{RS}^{} \Gamma_{RW}^{} \Gamma_{YS}^{} \Gamma_{YW}^{}}
      \\
      &=
      \Delta_{X}^2 \Delta_{Y}^2 \twin\Delta_{Z}^2 \twin\Delta_{W}^2 \Gamma_{XW}^{} \Gamma_{YZ}^{}
      \, \frac{\Delta_{R}^4 \Delta_{R,X+Y}^2 \twin\Delta_{S}^4 \twin\Delta_{S,Z+W}^2}{\Gamma_{R+X+Y,S+Z+W}^{} \Gamma_{RS}^{}}
      ,
    \end{split}
  \end{equation*}
  we have
  \begin{equation*}
    \begin{split}
      & f(R+X,R+Y,S+Z,S+W)
      \\
      &=
      \Psi_{R+X,S+Z}^{} \Psi_{R+Y,S+W}^{} \, \lambda_{R+Y}^{} \, \mu_{S+W}^{} \, a_{R+X}^{} \, a_{R+Y}^{} \, b_{S+Z}^{} \, b_{S+W}^{}
      \\
      &=
      \Bigl(
      \Delta_{X}^2 \Delta_{Y}^2 \twin\Delta_{Z}^2 \twin\Delta_{W}^2 \Gamma_{XW}^{} \Gamma_{YZ}^{} \, \lambda_{Y}^{} \, \mu_{W}^{}
      \Bigr)
      \times
      \\
      & \quad
      \left(
        \frac{\Delta_{R}^4 \Delta_{R,X+Y}^2 \twin\Delta_{S}^4 \twin\Delta_{S,Z+W}^2}{\Gamma_{R+X+Y,S+Z+W}^{} \Gamma_{RS}^{}}
        \, \lambda_{R}^{} \, \mu_{S}^{}  \, a_{R}^{2} \, a_{X+Y}^{} \, b_{S}^{2} \, b_{Z+W}^{}
    \right)
    ,
    \end{split}
  \end{equation*}
  where the first factor depends on the sets $X$, $Y$, $Z$ and $W$ individually,
  while the second factor involves only their unions $U=X+Y$ and $V=Z+W$.
  Therefore we can write \eqref{eq:expand-JJ-JJ-2} as
  \begin{equation}
    \label{eq:expand-JJ-JJ-3}
    \begin{split}
      &
      \sum_{k=0}^m
      \sum_{l=0}^{m-1}
      \sum_{R \in \binom{[K]}{m-k}}
      \sum_{S \in \binom{[K-1]}{m-l}}
      \sum_{U \in \binom{[K] \setminus R}{2k+1}}
      \sum_{V \in \binom{[K-1] \setminus S}{2l}}
      \Biggl(
      \frac{\Delta_{R}^4 \Delta_{RU}^2 \twin\Delta_{S}^4 \twin\Delta_{SV}^2}{\Gamma_{R+U,S+V}^{} \Gamma_{RS}^{}}
      \times
      \\
      & \quad
      \lambda_{R}^{} \, \mu_{S}^{}  \, a_{R}^{2} \, a_{U}^{} \, b_{S}^{2} \, b_{V}^{}
      \Biggl(
      \sum_{\substack{X + Y = U \\ \abs{X}=k+1 \\ \abs{Y}=k}} \sum_{\substack{Z + W = V \\ \abs{Z}=l \\ \abs{W}=l}} \Delta_X^2 \Delta_Y^2 \twin\Delta_Z^2 \twin\Delta_W^2 \Gamma_{XW}^{} \Gamma_{YZ}^{} \lambda_Y^{} \mu_W^{}
      \\ & \qquad\qquad\qquad -
      \sum_{\substack{X + Y = U \\ \abs{X}=k+1 \\ \abs{Y}=k}} \sum_{\substack{Z + W = V \\ \abs{Z}=l+1 \\ \abs{W}=l-1}} \Delta_X^2 \Delta_Y^2 \twin\Delta_Z^2 \twin\Delta_W^2 \Gamma_{XW}^{} \Gamma_{YZ}^{} \lambda_Y^{} \mu_W^{}
      \Biggr)
      \Biggr)
    \end{split}
  \end{equation}
  This is in fact positive (which is what we wanted to prove),
  because of the following identity for the expression in brackets:
  if $\abs{U} = 2k+1$ and $\abs{V} = 2l$, then
  \begin{equation}
    \label{eq:formidable-identity}
    \begin{split}
      &
      \sum_{\substack{X + Y = U \\ \abs{X}=k+1 \\ \abs{Y}=k}} \sum_{\substack{Z + W = V \\ \abs{Z}=l \\ \abs{W}=l}} \Delta_X^2 \Delta_Y^2 \twin\Delta_Z^2 \twin\Delta_W^2 \Gamma_{XW}^{} \Gamma_{YZ}^{} \lambda_Y^{} \mu_W^{}
      \\ & \qquad -
      \sum_{\substack{X + Y = U \\ \abs{X}=k+1 \\ \abs{Y}=k}} \sum_{\substack{Z + W = V \\ \abs{Z}=l+1 \\ \abs{W}=l-1}} \Delta_X^2 \Delta_Y^2 \twin\Delta_Z^2 \twin\Delta_W^2 \Gamma_{XW}^{} \Gamma_{YZ}^{} \lambda_Y^{} \mu_W^{}
      \\ & =
      \sum_{\substack{X + Y = U \\ \abs{X}=k+1 \\ \abs{Y}=k}} \sum_{\substack{Z + W = V \\ \abs{Z}=l \\ \abs{W}=l}} \Delta_X^2 \Delta_Y^2 \twin\Delta_Z^2 \twin\Delta_W^2 \Gamma_{XZ}^{} \Gamma_{YW}^{} \lambda_Y^{} \mu_W^{}
      .
    \end{split}
  \end{equation}
  (Note the change from $\Gamma_{XW}^{} \Gamma_{YZ}^{}$ on the left to $\Gamma_{XZ}^{} \Gamma_{YW}^{}$
  on the right.)

  To prove \eqref{eq:formidable-identity},
  we can take $U=[2k+1]$ and $V=[2l]$ without loss of generality.
  If $l=0$, the identity is trivial, since both sides reduce to
  $\sum_{X+Y=U} \Delta_X^2 \Delta_Y^2 \lambda_Y$.
  The rest of the proof concerns the case $l \ge 1$.

  When $k=0$ and $l=1$, both sides reduce to $\lambda_1 (\mu_1+\mu_2) + 2 \mu_1 \mu_2$.
  For fixed $(k,l)$ with $k \ge 1$, evaluation at $\lambda_{2k} = \lambda_{2k+1} = c$
  gives on both sides $2c \prod_{i=1}^{2k-1} (\lambda_i - c)^2 \prod_{j=1}^{2l} (c + \mu_j)$
  times the corresponding expression with $k-1$ instead of~$k$.
  Provided that the identity for $(k-1,l)$ is true,
  our $(k,l)$ identity therefore holds when $\lambda_{2k} = \lambda_{2k+1}$,
  and in fact (because of the symmetry) whenever any two $\lambda_i$ are equal.
  This implies that the difference between the left-hand side
  and the right-hand side is divisible by $\Delta_U^2$.
  (Any polynomial $p$ in the variables $\lambda_i$ which vanishes whenever two
  $\lambda_i$ are equal is divisible by $\Delta_U$. If in addition $p$ is a \emph{symmetric}
  polynomial,
  then $p/\Delta_U$ is antisymmetric and therefore again vanishes whenever two
  $\lambda_i$ are equal; hence $p/\Delta_U$ is divisible by $\Delta_U$.)
  Considered as polynomials in $\lambda_1$, the difference has degree $2k+l$
  and $\Delta_U^2$ has degree~$4k$.
  If $l < 2k$, then $2k+l < 4k$, and in this case we can conclude that the difference
  must be identically zero. To summarize: if the $(k-1,l)$ identity is true and $l < 2k$,
  then the $(k,l)$ identity is also true.
  
  Similarly, for fixed $(k,l)$ with $l \ge 2$, evaluation at $\mu_{2l-1} = \mu_{2l} = c$
  gives $2c \prod_{i=1}^{2k+1} (\lambda_i + c) \prod_{j=1}^{2l-2} (\mu_j - c)^2$
  times the corresponding identity with $l-1$ instead of~$l$.
  As polynomials in~$\mu_1$, the difference between the left-hand side
  and the right-hand side has degree $2l+k$ and $\twin\Delta_V^2$ has degree $4l-2$.
  The same argument as above shows that
  if the $(k,l-1)$ identity is true and $k < 2l-2$, then the $(k,l)$ identity is also true.
  
  Since any integer pair $(k,l)$ with $k \ge 0$ and $l \ge 1$, except $(k,l)=(0,1)$,
  satisfies at least one of the inequalities $l < 2k$ or $k < 2l-2$, we can work our
  way down to the already proved base case $(0,1)$ from any other $(k,l)$ by decreasing either $k$
  or~$l$ by one in each step. This concludes the proof of \eqref{eq:formidable-identity},
  and thereby \eqref{eq:distances1} is also proved.
  
  The proof of \eqref{eq:distances2} is similar: the left-hand side expands to
  \begin{multline}
    \label{eq:replay-expand-JJ-JJ-2}
    \sum_{k=0}^j
    \sum_{l=1}^{j}
    \sum_{R \in \binom{[K]}{j-k}}
    \sum_{S \in \binom{[K-1]}{j-l}}
    \sum_{\substack{X \in \binom{[K] \setminus R}{k} \\ Y \in \binom{[K] \setminus R}{k} \\ X \cap Y = \emptyset}}
    \sum_{\substack{Z \in \binom{[K-1] \setminus S}{l} \\ W \in \binom{[K-1] \setminus S}{l-1} \\ Z \cap W = \emptyset}}
    f(R+X,R+Y,S+Z,S+W)
    \\
    -
    \sum_{k=0}^j
    \sum_{l=1}^{j}
    \sum_{R \in \binom{[K]}{j-k}}
    \sum_{S \in \binom{[K-1]}{j-l}}
    \sum_{\substack{X \in \binom{[K] \setminus R}{k+1} \\ Y \in \binom{[K] \setminus R}{k-1} \\ X \cap Y = \emptyset}}
    \sum_{\substack{Z \in \binom{[K-1] \setminus S}{l} \\ W \in \binom{[K-1] \setminus S}{l-1} \\ Z \cap W = \emptyset}}
    f(R+X,R+Y,S+Z,S+W)
    ,
  \end{multline}
  which equals
  \begin{equation}
    \label{eq:replay-expand-JJ-JJ-3}
    \begin{split}
      &
      \sum_{k=0}^j
      \sum_{l=1}^{j}
      \sum_{R \in \binom{[K]}{j-k}}
      \sum_{S \in \binom{[K-1]}{j-l}}
      \sum_{U \in \binom{[K] \setminus R}{2k}}
      \sum_{V \in \binom{[K-1] \setminus S}{2l-1}}
      \Biggl(
      \frac{\Delta_{R}^4 \Delta_{RU}^2 \twin\Delta_{S}^4 \twin\Delta_{SV}^2}{\Gamma_{R+U,S+V}^{} \Gamma_{RS}^{}}
      \times
      \\
      & \quad
      \lambda_{R}^{} \, \mu_{S}^{}  \, a_{R}^{2} \, a_{U}^{} \, b_{S}^{2} \, b_{V}^{}
      \Biggl(
      \sum_{\substack{X + Y = U \\ \abs{X}=k \\ \abs{Y}=k}} \sum_{\substack{Z + W = V \\ \abs{Z}=l \\ \abs{W}=l-1}} \Delta_X^2 \Delta_Y^2 \twin\Delta_Z^2 \twin\Delta_W^2 \Gamma_{XW}^{} \Gamma_{YZ}^{} \lambda_Y^{} \mu_W^{}
      \\ & \qquad\qquad\qquad -
      \sum_{\substack{X + Y = U \\ \abs{X}=k+1 \\ \abs{Y}=k-1}} \sum_{\substack{Z + W = V \\ \abs{Z}=l \\ \abs{W}=l-1}} \Delta_X^2 \Delta_Y^2 \twin\Delta_Z^2 \twin\Delta_W^2 \Gamma_{XW}^{} \Gamma_{YZ}^{} \lambda_Y^{} \mu_W^{}
      \Biggr)
      \Biggr)
      ,
    \end{split}
  \end{equation}
  which is positive, since for $\abs{U} = 2k$ and $\abs{V} = 2l-1$ the identity
  \begin{equation}
    \label{eq:replay-formidable-identity}
    \begin{split}
      &
      \sum_{\substack{X + Y = U \\ \abs{X}=k \\ \abs{Y}=k}} \sum_{\substack{Z + W = V \\ \abs{Z}=l \\ \abs{W}=l-1}} \Delta_X^2 \Delta_Y^2 \twin\Delta_Z^2 \twin\Delta_W^2 \Gamma_{XW}^{} \Gamma_{YZ}^{} \lambda_Y^{} \mu_W^{}
      \\ & \qquad -
      \sum_{\substack{X + Y = U \\ \abs{X}=k+1 \\ \abs{Y}=k-1}} \sum_{\substack{Z + W = V \\ \abs{Z}=l \\ \abs{W}=l-1}} \Delta_X^2 \Delta_Y^2 \twin\Delta_Z^2 \twin\Delta_W^2 \Gamma_{XW}^{} \Gamma_{YZ}^{} \lambda_Y^{} \mu_W^{}
      \\ & =
      \sum_{\substack{X + Y = U \\ \abs{X}=k \\ \abs{Y}=k}} \sum_{\substack{Z + W = V \\ \abs{Z}=l \\ \abs{W}=l-1}} \Delta_X^2 \Delta_Y^2 \twin\Delta_Z^2 \twin\Delta_W^2 \Gamma_{XZ}^{} \Gamma_{YW}^{} \lambda_Y^{} \mu_W^{}
    \end{split}
  \end{equation}
  holds; it is proved using the same technique as above.
\end{proof}

\section{The forward spectral problem on the real line}
\label{app:real-line}

Consider the Lax equations \eqref{eq:laxI-x} and \eqref{eq:laxII-x}
in the interlacing discrete case \eqref{eq:mn-interlacing}.
In what follows, we will start from scratch 
and analyze these equations directly on real line,
without passing to the finite interval $(-1,1)$
via the transformation \eqref{eq:liouville-trf}.
As we will see, this leads in a natural way to the definition of certain polynomials
$\{ A_k(\lambda), B_k(\lambda), C_k(\lambda) \}_{k=0}^N$.
Since the two approaches are equivalent (cf.\ \autoref{rem:transformation}),
these polynomials are of course related to the quantitites defined
in \autoref{sec:discrete-on-the-interval}.
Before delving into the details, let us just state these relations,
for the sake of comparison.

In the interval $y_k < y < y_{k+1}$,
the wave function $\Phi(y;\lambda)$ is given by
\begin{equation}
  \label{eq:phi-inbetween}
  %\Phi(y;\lambda) =
  \begin{pmatrix} \phi_1(y;\lambda) \\ \phi_2(y;\lambda) \\ \phi_3(y;\lambda) \end{pmatrix} =
  \begin{pmatrix}
    A_k(\lambda) - \lambda C_k(\lambda) \\ -2 \lambda B_k(\lambda) \\ -\lambda (1+y) A_k(\lambda) + \lambda^2 (1-y) C_k(\lambda)
  \end{pmatrix}
  .
\end{equation}
Hence, letting $y \to 1^-$ we obtain (with $(A,B,C)$ as synonyms for $(A_N,B_N,C_N)$)
\begin{equation}
  \label{eq:Phi-at-1}
  %\Phi(1;\lambda) =
  \begin{pmatrix} \phi_1(1;\lambda) \\ \phi_2(1;\lambda) \\ \phi_3(1;\lambda) \end{pmatrix} =
  \begin{pmatrix}
     A(\lambda) - \lambda C(\lambda) \\
     -2 \lambda B(\lambda) \\
     -2 \lambda A(\lambda)
  \end{pmatrix}
  .
\end{equation}
As for the spectra and the Weyl functions, we have
\begin{equation}
  \label{eq:A-twinA-factorization}
  A(\lambda) = \prod_{k=1}^{K} \left( 1 - \frac{\lambda}{\lambda_k}\right)
  ,
  \qquad
  \twin A(\lambda) = \prod_{k=1}^{K-1} \left( 1 - \frac{\lambda}{\mu_k}\right)
  ,
\end{equation}
\begin{equation}
  \label{eq:WZ-ABC}
  W(\lambda) = -\frac{\phi_2(1;\lambda)}{\phi_3(1;\lambda)}
  = - \frac{B(\lambda)}{A(\lambda)}
  ,
  \qquad
  Z(\lambda) = -\frac{\phi_1(1;\lambda)}{\phi_3(1;\lambda)}
  = \frac{1}{2\lambda} - \frac{C(\lambda)}{A(\lambda)}
  ,
\end{equation}
\begin{equation}
  \label{eq:WZ-ABC-twin}
  \twin W(\lambda) =
  -\frac{\twin \phi_2(1;\lambda)}{\twin \phi_3(1;\lambda)} =
  - \frac{\twin B(\lambda)}{\twin A(\lambda)}
  ,
  \qquad
  \twin Z(\lambda) =
  -\frac{\twin \phi_1(1;\lambda)}{\twin \phi_3(1;\lambda)} =
  \frac{1}{2\lambda} - \frac{\twin C(\lambda)}{\twin A(\lambda)}
  .
\end{equation}
The residues $a_i$, $b_j$ and $b_\infty$ are defined from the Weyl functions
as before; see \autoref{thm:weyl-parfrac}.
One can also define similar polynomials corresponding to the adjoint Weyl functions,
in order to define $b^*_\infty$ as in \eqref{eq:weyl-star-parfrac},
but it is perhaps more convenient to define $b^*_\infty$ using the relations
\eqref{eq:b-infty-b-infty-star-product} and
\eqref{eq:b-infty-b-infty-star-product-K1},
which in this setting take the form
\begin{equation}
  \label{eq:b-infty-star-real-line}
  b_{\infty} b^*_{\infty}
  =
  \begin{cases}
    \displaystyle
    \frac12 \biggl( \prod_{i=1}^K \frac{(1-E_{2i-1,2i}^2)}{\lambda_i \, E_{2i-1,2i}^2} \biggr)
    \biggl( \prod_{j=1}^{K-1} \frac{\mu_j}{(1-E_{2j,2j+1}^2)} \biggr)
    , &
    K \ge 2
    ,\\[1.5em]
    \displaystyle
    \frac{1}{2 \lambda_1 E_{12}^2}
    , &
    K = 1
    ,
  \end{cases}
\end{equation}
where $E_{ij} = e^{-\abs{x_i-x_j}} = e^{x_i-x_j}$ for $i<j$.

\subsection{Setup}
\label{app:setup}

To begin with, recall equation \eqref{eq:laxI-x},
which determines $\Psi(x;z)$:
\begin{equation}
  \label{eq:laxI-x-components}
  \begin{split}
    \partial_x \psi_1(x;z) &= z n(x) \psi_2(x;z) + \psi_3(x;z), \\
    \partial_x \psi_2(x;z) &= z m(x) \psi_3(x;z), \\
    \partial_x \psi_3(x;z) &= \psi_1(x;z),
  \end{split}
\end{equation}
Away from the points $x_k$ where the distributions $m$ and~$n$ are supported,
this reduces to
\begin{equation*}
  \partial_x \psi_1 = \psi_3,
  \qquad
  \partial_x \psi_2 = 0,
  \qquad
  \partial_x \psi_3 = \psi_1,
\end{equation*}
so $\psi_2(x;z)$ is piecewise constant, and
$\psi_1(x;z)$ and $\psi_3(x;z)$ are piecewise linear combinations
of $e^{x}$ and~$e^{-x}$.
It is convenient to write this as
\begin{equation}
  \label{eq:psi-inbetween}
    \begin{pmatrix} \psi_1(x;z) \\ \psi_2(x;z) \\ \psi_3(x;z) \end{pmatrix} =
    \begin{pmatrix}
      A_k e^x + z^2 C_k e^{-x} \\ 2z B_k \\ A_k e^x - z^2 C_k e^{-x}
    \end{pmatrix},
    \qquad
    x_k < x < x_{k+1},
\end{equation}
where the coefficients $\{ A_k, B_k, C_k \}_{k=0}^{N}$ may depend on~$z$ but not on~$x$.
(Here we set $x_0 = -\infty$ and $x_{N+1} = +\infty$,
so that the $x$ axis splits into $N+1$ intervals
$x_k < x < x_{k+1}$ numbered by $k=0, 1, \dots, N$.)
Then the conditions \eqref{eq:boundary-conditions-x} and \eqref{eq:initial-condition-x}
for $\Psi(x;z)$ at $\pm\infty$ translate into
\begin{equation}
  \label{eq:A0B0C0}
  B_0 = C_0 = 0 = A_N
  , \qquad
  A_0 = 1,
\end{equation}
respectively.
So we impose $(A_0,B_0,C_0) = (1,0,0)$ (i.e., $\Psi(x;z) = (e^x,0,e^x)^T$ for $x < x_1$)
and investigate for which $z$ the condition $A_N(z) = 0$ is satisfied;
the corresponding values $\lambda = -z^2$ will be the eigenvalues
considered in the main text (cf.\ \autoref{rem:lambda-vs-z}).

The pieces \eqref{eq:psi-inbetween} are stitched together by
evaluating equations \eqref{eq:laxI-x-components} at the sites $x=x_{k}$.
Since the Dirac delta is the distributional derivative of the Heaviside
step function,
a jump in $\psi_i$ at $x_k$ will give rise to a Dirac delta term $\delta_{x_k}$
in~$\partial_x \psi_i$,
whose coefficient must match that of the corresponding Dirac delta coming from
$m$ or~$n$ on the right-hand side of \eqref{eq:laxI-x-components}.
Denoting jumps by $\jump{f(x_k)} = f(x_k^+) - f(x_k^-)$,
we find at the odd-numbered sites $x=x_k=x_{2a-1}$ (where $m$ is supported)
the jump conditions
\begin{equation*}
  \begin{split}
    \jump{\psi_1(x_k;z)} &= 0, \\
    \jump{\psi_2(x_k;z)} &= 2z m_k \psi_3(x_k), \\
    \jump{\psi_3(x_k;z)} &= 0,
  \end{split}
\end{equation*}
while at the even-numbered sites $x=x_k=x_{2a}$ (where $n$ is supported) we get
\begin{equation*}
  \begin{split}
    \jump{\psi_1(x_k;z)} &= 2z n_k \psi_2(x_k), \\
    \jump{\psi_2(x_k;z)} &= 0, \\
    \jump{\psi_3(x_k;z)} &= 0.
  \end{split}
\end{equation*}
Upon expressing the left and right limits $\psi_i(x_k^{\pm})$ using \eqref{eq:psi-inbetween},
these jump conditions translate into linear equations relating $(A_k,B_k,C_k)$
to $(A_{k-1},B_{k-1},C_{k-1})$. Solving for $(A_k,B_k,C_k)$ yields
\begin{equation}
  \label{eq:jumpI-ABC}
  \begin{pmatrix} A_k \\ B_k \\ C_k \end{pmatrix} =
  S_k(-z^2)
  \begin{pmatrix} A_{k-1} \\ B_{k-1} \\ C_{k-1} \end{pmatrix},
\end{equation}
with the jump matrix $S_k$ (not to be confused with the transition matrix
$S(\lambda)$ defined by \eqref{eq:transition-matrices} and used in the main text)
defined by
\begin{equation}
  \label{eq:jumpI-even-odd}
  S_{k}(\lambda) =
  \begin{cases}
    \begin{pmatrix}
      1 & 0 & 0 \\
      m_{k} e^{x_{k}} & 1 & \lambda m_{k} e^{-x_{k}} \\
      0 & 0 & 1
    \end{pmatrix},
    & k = 2a-1
    ,
    \\[5ex]
    \begin{pmatrix}
      1 & -2\lambda n_{k} e^{-x_{k}} & 0 \\
      0 & 1 & 0 \\
      0 &  2 n_{k} e^{x_{k}} & 1
    \end{pmatrix},
    & k = 2a
    ,
  \end{cases}
\end{equation}
for $a=1,\dots,K$.
Starting with $(A_0,B_0,C_0)=(1,0,0)$ we obtain in the rightmost
interval $x > x_N$ polynomials $(A_N,B_N,C_N)=(A(\lambda),B(\lambda),C(\lambda))$
in the variable $\lambda = -z^2$:
\begin{equation}
  \label{eq:ABC-jump-product}
  \begin{smallpmatrix} A(\lambda) \\ B(\lambda) \\ C(\lambda) \end{smallpmatrix}
  = S_{2K}(\lambda) S_{2K-1}(\lambda) \dotsm S_{2}(\lambda) S_{1}(\lambda)
  \begin{smallpmatrix} 1 \\ 0 \\ 0 \end{smallpmatrix}.
\end{equation}
Between the factors $S_{2K}(\lambda)$ and $S_{1}(\lambda)$ in the matrix
product there are $K-1$ pairs of factors of the form
\begin{multline}
  S_{2a+1}(\lambda) S_{2a}(\lambda) =
  \begin{pmatrix}
    1 & 0 & 0 \\
    m_{2a+1} e^{x_{2a+1}} & 1 & 0 \\
    0 & 2 n_{2a} e^{x_{2a}} & 1
  \end{pmatrix}
  +
  \\
  \lambda
  \begin{pmatrix}
    0 & -2 n_{2a} e^{-x_{2a}} & 0 \\
    0 & 2 m_{2a+1} n_{2a} (e^{x_{2a}-x_{2a+1}} - e^{x_{2a+1}-x_{2a}}) & m_{2a+1} e^{-x_{2a+1}} \\
    0 & 0 & 0
  \end{pmatrix}
  ,
\end{multline}
each such pair depending linearly on~$\lambda$.
The factor $S_1(\lambda) \, (1,0,0)^T$
does not depend on~$\lambda$, and therefore the vector
$\bigl( A(\lambda),B(\lambda),C(\lambda) \bigr)^T$
equals $S_{2K}(\lambda)$ times a vector whose entries have degree $K-1$ in~$\lambda$.
Since $\lambda$ only appears in the top row of $S_{2K}(\lambda)$,
we see that $B(\lambda)$ and $C(\lambda)$ are polynomials of degree $K-1$,
while $A(\lambda)$ is of degree~$K$.
We will name the coefficients in these polynomials as follows:
\begin{equation}
  \label{eq:ABC}
  \begin{split}
    A(\lambda) &= 1 - 2\lambda [A]_1 + \dots + (-2\lambda)^{K} [A]_{K},\\
    B(\lambda) &= [B]_0 - 2\lambda [B]_1 + \dots + (-2\lambda)^{K-1} [B]_{K-1},\\
    C(\lambda) &= [C]_0 - 2\lambda [C]_1 + \dots + (-2\lambda)^{K-1} [C]_{K-1}.\\
  \end{split}
\end{equation}
These coefficients can be computed explicitly in terms of the
positions $x_k$ and the weights $m_k$ and~$n_k$ by carefully studying
what happens when multiplying out the matrix product
$S_{2K}(\lambda) S_{2K-1}(\lambda) \dotsm S_{2}(\lambda) S_{1}(\lambda) \, (1,0,0)^T$.
For example, using the abbreviation
\begin{equation}
  \label{eq:Eab}
  E_{ab}=e^{-\abs{x_a-x_b}}
  \qquad\text{($= e^{x_a-x_b}$ when $a<b$)}
\end{equation}
we have
\begin{equation}
  \label{eq:ABC-somecoeffs}
  \begin{split}
    [A]_1 &= \sum_{1 \le i<j \le N} m_i n_j E_{ij}
    ,\\
    [A]_{K} &= m_1 n_2 E_{12} \, (1-E_{23}^2) \, m_3 n_4 E_{34} \, (1-E_{45}^2) \, m_5 n_6 E_{56} \dotsm \\
    & \qquad \dotsm (1-E_{N-2,N-1}^2) \, m_{N-1} n_{N} E_{N-1,N},\\
    [B]_0 &= B(0) = \sum_{1 \le i < N} m_i e^{x_i}
    % = M_+
    .
  \end{split}
\end{equation}
(Recall that $N=2K$. Note also that since $m_{2a}$ and $n_{2a-1}$ are zero,
only the terms with $i$~odd and $j$~even contribute to the sums.)
Later we will show a simpler way to read off all the coefficients
in~$A(\lambda)$; see \eqref{eq:Ak-as-sum-of-minors-I} in
\autoref{app:coeffs-via-planar-networks}.

For the second Lax equation \eqref{eq:laxII-x} things are similar,
except that the roles of $m$ and~$n$ are swapped. This leads to
\begin{equation}
  \label{eq:psi-twin-inbetween}
    \begin{pmatrix} \twin\psi_1(x;z) \\ \twin\psi_2(x;z) \\ \twin\psi_3(x;z) \end{pmatrix} =
    \begin{pmatrix}
      \twin A_k e^x + z^2 \twin C_k e^{-x} \\ 2z \twin B_k \\ \twin A_k e^x - z^2 \twin C_k e^{-x}
    \end{pmatrix},
    \qquad
    x_k < x < x_{k+1},
\end{equation}
and
\begin{equation}
  \label{eq:jumpII-ABC}
  \begin{pmatrix} \twin A_k \\ \twin B_k \\ \twin C_k \end{pmatrix} =
  \twin S_k(-z^2)
  \begin{pmatrix} \twin A_{k-1} \\ \twin B_{k-1} \\ \twin C_{k-1} \end{pmatrix},
\end{equation}
where
\begin{equation}
  \label{eq:jumpII-even-odd}
  \twin S_{k}(\lambda) =
  \begin{cases}
    \begin{pmatrix}
      1 & -2\lambda m_{k} e^{-x_{k}} & 0 \\
      0 & 1 & 0 \\
      0 &  2 m_{k} e^{x_{k}} & 1
    \end{pmatrix},
    & k = 2a-1,
    \\
    & \\
    \begin{pmatrix}
      1 & 0 & 0 \\
      n_{k} e^{x_{k}} & 1 & \lambda n_{k} e^{-x_{k}} \\
      0 & 0 & 1
    \end{pmatrix},
    & k = 2a.
  \end{cases}
\end{equation}
Starting again with $(\twin A_0, \twin B_0, \twin C_0)=(1,0,0)$,
we have in the rightmost interval
$(\twin A_N, \twin B_N, \twin C_N)=(\twin A(\lambda), \twin B(\lambda), \twin C(\lambda))$, where
\begin{equation}
  \label{eq:twin-ABC-jump-product}
  \begin{smallpmatrix} \twin A(\lambda) \\ \twin B(\lambda) \\ \twin C(\lambda) \end{smallpmatrix}
  = \twin S_{2K}(\lambda) \twin S_{2K-1}(\lambda) \dotsm \twin S_{2}(\lambda) \twin S_{1}(\lambda)
  \begin{smallpmatrix} 1 \\ 0 \\ 0 \end{smallpmatrix}.
\end{equation}
Because of the asymmetry between $m$ and~$n$ coming from the interlacing,
the variable $\lambda$ appears in a slightly different way here;
in this case we have $K$ pairs of factors $\twin S_{2a} \twin S_{2a-1}$,
each of a similar form as the pairs $S_{2a+1} S_{2a}$ that we computed earlier:
\begin{multline}
  \twin S_{2a}(\lambda) \twin S_{2a-1}(\lambda) =
  \begin{pmatrix}
    1 & 0 & 0 \\
    n_{2a} e^{x_{2a}} & 1 & 0 \\
    0 & 2 m_{2a-1} e^{x_{2a-1}} & 1
  \end{pmatrix}
  +
  \\
  \lambda
  \begin{pmatrix}
    0 & -2 m_{2a-1} e^{-x_{2a-1}} & 0 \\
    0 & 2 n_{2a} m_{2a-1} (e^{x_{2a-1}-x_{2a}} - e^{x_{2a}-x_{2a-1}}) & n_{2a} e^{-x_{2a}} \\
    0 & 0 & 0
  \end{pmatrix}
  .
\end{multline}
From this we see that
$\twin S_{2} \twin S_{1} \, (1,0,0)^T$
is independent of~$\lambda$,
so that the degrees of $\twin A(\lambda)$, $\twin B(\lambda)$ and $\twin C(\lambda)$
are at most $K-1$.
The leftmost pair $\twin S_{2K} \twin S_{2K-1}$ has no $\lambda$ in its
bottom row, so $\twin C(\lambda)$ is in fact only of degree $K-2$.
Naming the coefficients as
\begin{equation}
  \label{eq:ABC-twin}
  \begin{split}
    \twin A(\lambda) &= 1 - 2\lambda [\twin A]_1 + \dots + (-2\lambda)^{K-1} [\twin A]_{K-1},\\
    \twin B(\lambda) &= [B]_0 - 2\lambda [\twin B]_1 + \dots + (-2\lambda)^{K-1} [\twin B]_{K-1},\\
    \twin C(\lambda) &= [C]_0 - 2\lambda [\twin C]_1 + \dots + (-2\lambda)^{K-2} [\twin C]_{K-2},\\
  \end{split}
\end{equation}
we have for example
\begin{equation}
  \label{eq:ABC-twin-somecoeffs}
  \begin{split}
    [\twin A]_1 &= \sum_{1 < j < i < N} n_j m_i E_{ji},\\
    [\twin A]_{K-1} &= n_2 m_3 E_{23} \, (1-E_{34}^2) \, n_4 m_5 E_{45} \, (1-E_{56}^2) \, n_6 m_7 E_{67} \dotsm \\
    & \qquad \dotsm (1-E_{N-3,N-2}^2) \, n_{N-2} m_{N-1} E_{N-2,N-1},\\
    [\twin B]_0 &= \twin B(0) = \sum_{1 < j \le N} n_j e^{x_j},\\
    [\twin B]_{K-1} &= [\twin A]_{K-1} \, n_N e^{x_N} (1-E_{N-1,N}^2)
    ,
  \end{split}
\end{equation}
where (as in \eqref{eq:ABC-somecoeffs})
only terms with $i$~odd and $j$~even contribute to the sums.
See \eqref{eq:Ak-as-sum-of-minors-II} in \autoref{app:coeffs-via-planar-networks} for
an easy way to read off all the coefficients of~$\twin A(\lambda)$.

\subsection{Positivity and simplicity of the spectra}
\label{app:spectrum}

By construction, the zeros of $A(\lambda)$ and $\twin A(\lambda)$
are exactly the nonzero eigenvalues $\lambda_1, \dots, \lambda_K$ and~$\mu_1, \dots, \mu_{K-1}$
treated in the main text,
so the following theorem implies \autoref{thm:simple-spectra}:

\begin{theorem}
  \label{thm:positive-simple-spectrum}
  If all nonzero weights $m_{2a-1}$ and $n_{2a}$ are positive,
  then the polynomials $A(\lambda)$ and $\twin A(\lambda)$
  have positive simple zeros
  $\lambda_1,\dots,\lambda_K$ and $\mu_1,\dots,\mu_{K-1}$,
  respectively.
\end{theorem}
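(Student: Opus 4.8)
The plan is to reveal the oscillatory structure hidden in this non-selfadjoint third-order problem and then invoke the Gantmacher--Krein spectral theorem for oscillatory matrices \cite[Ch.~II]{gantmacher-krein}. By construction the nonzero eigenvalues are the zeros of the polynomials $A(\lambda)$ and $\twin A(\lambda)$ from \eqref{eq:ABC-jump-product} and \eqref{eq:twin-ABC-jump-product}, so it suffices to show that $A$ has exactly $K$, and $\twin A$ exactly $K-1$, real positive simple roots. First I would recast the boundary value problem as a finite matrix eigenvalue problem whose eigenvalues are the reciprocals $1/\lambda_i$. Concretely, integrating the system $\phi_1' = h\phi_2$, $\phi_2' = g\phi_3$, $\phi_3' = -\lambda\phi_1$ against the left-endpoint data $\phi_2(-1)=\phi_3(-1)=0$ yields a Volterra (triple-integration) representation, and imposing the remaining condition $\phi_3(1)=0$ turns the problem into $\mathbf{v} = \lambda M \mathbf{v}$. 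In the discrete interlacing case $\phi_1$ is piecewise constant, so $M$ is a genuine $K\times K$ matrix whose entries are Green's-function values at the mass sites weighted by the positive masses.

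The crux is proving that $M$ is \emph{oscillatory}. Here I would exploit the fact that each jump factor $S_k(\lambda)$ in \eqref{eq:jumpI-even-odd} is totally nonnegative and, more usefully, that the whole product $S_{2K}(\lambda)\dotsm S_1(\lambda)$ admits a planar-network encoding with edge weights built from the manifestly positive quantities $m_{2a-1}e^{\pm x_{2a-1}}$ and $n_{2a}e^{\pm x_{2a}}$; this is precisely the computation carried out in \autoref{app:coeffs-via-planar-networks}. By the Lindström--Gessel--Viennot lemma the minors that assemble the coefficients of $A(\lambda)$, and hence the minors of $M$, become sums over families of vertex-disjoint paths. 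These sums are automatically nonnegative, and they are strictly positive exactly when such path families exist, which the interlacing support together with the positivity of every weight guarantees. This establishes total nonnegativity of $M$; to upgrade to the oscillatory property I would verify the Gantmacher--Krein criterion, namely that $M$ is nonsingular (its determinant is a nonempty positive sum of path families) and that its entries immediately off the main diagonal are strictly positive.

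With $M$ shown to be oscillatory, the Gantmacher--Krein theorem gives $K$ distinct positive eigenvalues $1/\lambda_1 > \dots > 1/\lambda_K > 0$, so that $0 < \lambda_1 < \dots < \lambda_K$ are positive and simple, as claimed in \autoref{thm:simple-spectra}. The twin statement for $\twin A(\lambda)$ then follows by running the identical argument with $g$ and $h$ interchanged, or equivalently by appealing to the involution $\sigma$ and the relation \eqref{eq:Tj-involution} linking the two transition matrices.

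The main obstacle is the oscillatory verification itself. Because the operator is non-selfadjoint, reality (let alone simplicity and positivity) of the spectrum is in no way automatic: it is \emph{only} the total positivity that rescues it. The delicate part is therefore the sign bookkeeping needed to present the jump-matrix product as a planar network with positive weights, and to confirm both nonsingularity and the off-diagonal positivity of $M$ in the interlacing configuration; once those positivity facts are in hand, the conclusion is a direct quotation of Gantmacher--Krein.
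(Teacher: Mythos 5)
Your treatment of the first spectrum is essentially the paper's own argument: reduce the boundary value problem to a $K\times K$ eigenvalue problem, identify the matrix as $(I+\mathcal{L})\mathcal{M}\mathcal{E}\mathcal{N}$ with $I+\mathcal{L}$ nonsingular and totally nonnegative (the path matrix of a planar network) and $\mathcal{M}\mathcal{E}\mathcal{N}$ totally positive (because $\mathcal{E}$ is a submatrix of the totally positive kernel $\bigl(e^{-\abs{x_i-x_j}}\bigr)$), conclude that the product is oscillatory, and quote Gantmacher--Krein. That part is sound and matches \eqref{eq:A-as-determinant-I}.

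The gap is in your final step, where you dispose of $\twin A(\lambda)$ by ``running the identical argument with $g$ and $h$ interchanged.'' It is precisely here that the identical argument fails. Because of the interlacing asymmetry, the reduced matrix for the twin problem is $\mathcal{L}\mathcal{N}\mathcal{E}^T\mathcal{M}$ as in \eqref{eq:A-as-determinant-II}, where $\mathcal{L}$ is the \emph{strictly} lower triangular matrix of ones rather than $I+\mathcal{L}$. This factor is singular, so the product is only totally nonnegative, not oscillatory, and total nonnegativity alone does not yield simplicity of the eigenvalues. The degree count already signals the obstruction: $\twin A$ has degree $K-1$, not $K$, because the first mass $g_1$ has no influence on the twin spectrum. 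Your alternative appeal to the involution $\sigma$ does not rescue this either: by \eqref{eq:S-cofactors}, $\twin S_{31}(-\lambda)$ is a $2\times 2$ minor of $S(\lambda)$ rather than a single entry, so simplicity of the zeros of $S_{31}$ says nothing direct about the zeros of $\twin S_{31}$. The paper's fix is to observe that the $2K\times 2K$ twin system \eqref{eq:big-matrix-equation} has zero first row and zero last column, so one may delete the masses $m_1$ and $n_{2K}$ and pass to the truncated system \eqref{eq:truncated-big-matrix-equation}; the resulting $(K-1)\times(K-1)$ matrix is $(I'+\mathcal{L}')\mathcal{N}'(\mathcal{E}')^T\mathcal{M}'$, which \emph{is} oscillatory by the same reasoning as before. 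Some such extra step is indispensable for the $\mu_j$ half of the theorem.
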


\begin{proof}
  We will rewrite the two spectral problems,
  \eqref{eq:laxI-x} with boundary conditions
  $B_0 = C_0 = 0 = A_N$,
  and its twin 
  \eqref{eq:laxII-x} with boundary conditions
  $\twin B_0 = \twin C_0 = 0 = \twin A_N$,
  as matrix eigenvalue problems.
  (Recall that $A$ and $\twin A$ are just aliases for $A_N$ and $\twin A_N$,
  respectively.)

  For the first problem,
  elimination of $\psi_1$ from \eqref{eq:laxI-x} gives
  $\partial_x \psi_2 = zm\psi_3$ and $(\partial_x^2 - 1)\psi_3 = zn\psi_2$,
  which, considering the boundary conditions in the form \eqref{eq:boundary-conditions-x},
  we can write as
  \begin{equation}
    \label{eq:integral-eqn-I}
    \psi_2(x) = z \int_{-\infty}^x \psi_3(y) \, dm(y),
    \quad
    \psi_3(x) = -z \int_{-\infty}^{\infty} \frac12 e^{-\abs{x-y}} \psi_2(y) \, dn(y).
  \end{equation}
  Evaluating the first equation at the even-numbered~$x_k$
  and the second equation at the odd-numbered~$x_k$,
  we find the $2K \times 2K$ eigenvalue problem
  \begin{equation}
    \label{eq:eigenvalue-problem}
    \begin{pmatrix} \psi_{2,\text{even}} \\ \psi_{3,\text{odd}} \end{pmatrix}
    = z
    \begin{pmatrix}
      0 & 2 (I+\mathcal{L}) \mathcal{M} \\
      -\mathcal{E} \mathcal{N} & 0
    \end{pmatrix}
    \begin{pmatrix} \psi_{2,\text{even}} \\ \psi_{3,\text{odd}} \end{pmatrix},
  \end{equation}
  where
  \begin{equation*}
    \begin{split}
      \psi_{2,\text{even}} &= \bigl( \psi_2(x_2), \psi_2(x_4), \dots, \psi_2(x_{2K}) \bigr)^T,\\
      \psi_{3,\text{odd}} &= \bigl( \psi_3(x_1), \psi_3(x_3), \dots, \psi_3(x_{2K-1}) \bigr)^T,\\
      I &= \text{$K \times K$ identity matrix}, \\
      \mathcal{L} &= \text{strictly lower triangular $K \times K$ matrix with $\mathcal{L}_{ij}=1$ for $i>j$}, \\
      \mathcal{E} &= (e^{-\abs{x_{2i-1}-x_{2j}}})_{i,j=1}^K = (E_{2i-1,2j})_{i,j=1}^K \quad \text{(using the notation of \eqref{eq:Eab})},\\
      \mathcal{M} &= \diag(m_1,m_3,\dots,m_{2K-1}),\\
      \mathcal{N} &= \diag(n_2,n_4,\dots,n_{2K}).
    \end{split}
  \end{equation*}
  Eliminating $\psi_{3,\text{odd}}$ we can write this as a $K \times K$ eigenvalue problem
  in terms of $\psi_{2,\text{even}}$ alone:
  \begin{equation}
    \label{eq:matrixI}
    \psi_{2,\text{even}} =
    2 \lambda \, (I+\mathcal{L}) \mathcal{M} \mathcal{E} \mathcal{N} \, \psi_{2,\text{even}}
    \qquad (\lambda = -z^2).
  \end{equation}
  As we've seen earlier, the eigenvalues are given precisely by the zeros
  of $A(\lambda)$, and since $A(0)=1$ we must therefore have
  \begin{equation}
    \label{eq:A-as-determinant-I}
    A(\lambda) = \det(I - 2 \lambda \, (I+\mathcal{L}) \mathcal{M} \mathcal{E} \mathcal{N} ).
  \end{equation}
  Now, for positive numbers $\{ m_{2k-1}, n_{2k} \}_{k=1}^K$, the matrix
  $(I+\mathcal{L}) \mathcal{M} \mathcal{E} \mathcal{N}$
  is oscillatory, since $I+\mathcal{L}$ is nonsingular and
  totally nonnegative (being the path matrix for the planar network
  illustrated in \autoref{fig:network-for-I+L}),
  and since $\mathcal{M} \mathcal{E} \mathcal{N}$
  is totally positive ($\mathcal{E}$ being a submatrix of the totally
  positive matrix $(E_{ij})_{i,j=1}^{2K}$).
  This implies that its eigenvalues, which up to an unimportant
  factor of~$2$ are the zeros of~$A$, are positive and simple.
  (See, for example, our earlier papers
  \cite{lundmark-szmigielski:DPlong,hone-lundmark-szmigielski:novikov}
  for a summary of the relevant results from the theory of total
  positivity used here, and for further references.)

  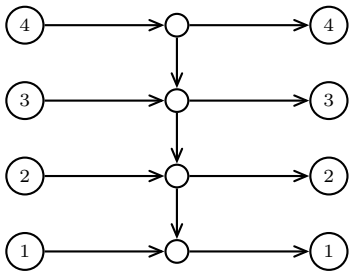
\begin{figure}
    \centering
    \ifthenelse{\isundefined{\draft}}{%
      \begin{tikzpicture}
        [
        source/.style={circle,draw=black,thick,inner sep=1mm},
        sink/.style={circle,draw=black,thick,inner sep=1mm},
        middle/.style={circle,draw=black,thick,inner sep=0mm, minimum size=3mm},
        arrow/.style={->,thick,>=angle 45}
        ]
        \foreach \y in {1,2,3,4} {
          \node(a\y) at (0,\y) [source] {\scriptsize \y};
          \node(d\y) at (4,\y) [sink] {\scriptsize \y};
          \node(b\y) at (2,\y) [middle] {};
        }
        % \foreach \y in {1,2} {
        %   \node(c\y) at (3,\y) [middle] {};
        % }
        \foreach \y in {1,2,3,4} {
          \draw[arrow] (a\y) -- (b\y);
          \draw[arrow] (b\y) -- (d\y);
          %\draw[arrow] (c\y) -- (d\y);
        }
        % \draw[arrow] (a3) -- (b3);
        % \draw[arrow] (b3) -- (d3);
        
        \foreach \y/\z in {b2/b1,b3/b2,b4/b3}
        \draw[arrow] (\y.south) -- (\z);
        % \foreach \y/\z in {b2/c1,b3/c2}
        % \draw[arrow] (\y.south east) -- (\z);
        \pgfsetbaseline{12mm}
      \end{tikzpicture}
    }{\textbf{[Draft mode; no picture]}}
    \caption{A planar network (illustrated in the case $K=4$)
      for which $I + \mathcal{L}$ is the path matrix.
      What this means is that matrix entry $(i,j)$ equals the number of paths from
      source node $i$ on the left to sink node $j$ on the right;
      in this case there is one path if $i \ge j$ and none if $i<j$.
    }
    \label{fig:network-for-I+L}
  \end{figure}

  For the second spectral problem we swap $m$ and~$n$ and obtain
  \begin{equation}
    \label{eq:integral-eqn-II}
    \twin\psi_2(x) = z \int_{-\infty}^x \twin\psi_3(y) \, dn(y),
    \quad
    \twin\psi_3(x) = -z \int_{-\infty}^{\infty} \frac12 e^{-\abs{x-y}} \twin\psi_2(y) \, dm(y).
  \end{equation}
  These integral equations are evaluated the other way around (at
  odd-numbered and even-numbered $x_k$, respectively); this yields
  \begin{equation}
    \label{eq:big-matrix-equation}
    \begin{pmatrix} \twin\psi_{2,\text{odd}} \\ \twin\psi_{3,\text{even}} \end{pmatrix}
    = z
    \begin{pmatrix}
      0 & 2 \mathcal{L} \mathcal{N} \\
      - \mathcal{E}^T \mathcal{M} & 0
    \end{pmatrix}
    \begin{pmatrix} \twin\psi_{2,\text{odd}} \\ \twin\psi_{3,\text{even}} \end{pmatrix},
  \end{equation}
  which in terms of $\twin\psi_{2,\text{odd}}$ alone becomes
  \begin{equation}
    \label{eq:matrixII}
    \twin\psi_{2,\text{odd}} =
    2 \lambda \, \mathcal{L} \mathcal{N} \mathcal{E}^T \mathcal{M} \, \twin\psi_{2,\text{odd}}
    \qquad (\lambda = -z^2).
  \end{equation}
  Thus,
  \begin{equation}
    \label{eq:A-as-determinant-II}
    \twin A(\lambda) = \det(I - 2 \lambda \, \mathcal{L} \mathcal{N} \mathcal{E}^T \mathcal{M} ).
  \end{equation}
  The previous argument doesn't quite work for dealing with the zeros of $\twin A(\lambda)$,
  since $\mathcal{L}$ is singular and one therefore cannot draw the conclusion that
  the matrix $\mathcal{L} \mathcal{N} \mathcal{E}^T \mathcal{M}$ appearing in
  \eqref{eq:A-as-determinant-II} is oscillatory
  (only totally nonnegative, which is not enough to show simplicity of the zeros).
  However, a slightly modified argument does the trick.
  Note that the first row and the last column of the $2K \times 2K$ matrix
  in \eqref{eq:big-matrix-equation} are zero.
  Thus $\twin\psi_2(x_1)=0$ if \eqref{eq:big-matrix-equation} is satisfied,
  and the value of $\twin\psi_3(x_{2K})$ doesn't really enter into the problem either
  (it appears only in the left-hand side, and is automatically determined by all
  the other quantities in the equation).
  Therefore \eqref{eq:big-matrix-equation} has nontrivial solutions
  if and only if there are nontrivial solutions to
  the following truncated $(2K-2) \times (2K-2)$ problem
  obtained by removing the masses $m_1$ and~$n_{2K}$
  (i.e., by deleting the first and last row and the first and last column):
  \begin{equation}
    \label{eq:truncated-big-matrix-equation}
    \begin{pmatrix} \twin\psi'_{2,\text{odd}} \\ \twin\psi'_{3,\text{even}} \end{pmatrix}
    = z
    \begin{pmatrix}
      0 & 2 (I'+\mathcal{L}') \mathcal{N}' \\
      - (\mathcal{E}')^T \mathcal{M}' & 0
    \end{pmatrix}
    \begin{pmatrix} \twin\psi'_{2,\text{odd}} \\ \twin\psi'_{3,\text{even}} \end{pmatrix},
  \end{equation}
  where
  \begin{equation*}
    \begin{split}
      \twin\psi'_{2,\text{odd}} &= \bigl( \twin\psi_2(x_3), \twin\psi_2(x_5), \dots, \twin\psi_2(x_{2K-1}) \bigr)^T,\\
      \twin\psi'_{3,\text{even}} &= \bigl( \twin\psi_3(x_2), \twin\psi_3(x_4), \dots, \twin\psi_3(x_{2K-2}) \bigr)^T,\\
      I' &= \text{$(K-1) \times (K-1)$ identity matrix}, \\
      \mathcal{L}' &= \text{strictly lower triangular $(K-1) \times (K-1)$ with $\mathcal{L}'_{ij}=1$ for $i>j$}, \\
      \mathcal{E}' &= \text{$\mathcal{E}$ with its first row and last column removed}, \\
      \mathcal{M}' &= \diag(m_3,m_5,\dots,m_{2K-1}),\\
      \mathcal{N}' &= \diag(n_2,n_4,\dots,n_{2K-2}).
    \end{split}
  \end{equation*}
  In terms of $\twin\psi'_{2,\text{odd}}$ alone, this becomes
  \begin{equation*}
    \twin\psi'_{2,\text{odd}} = 2\lambda \, (I'+\mathcal{L}') \mathcal{N}' (\mathcal{E}')^T \mathcal{M}' \twin\psi'_{2,\text{odd}},
  \end{equation*}
  and the conclusion is that
  \begin{equation}
    \label{eq:A-as-determinant-II-but-smaller}
    A(\lambda) = \det(I' - 2\lambda \, (I+\mathcal{L}') \mathcal{N}' (\mathcal{E}')^T \mathcal{M}' \twin\psi'_{2,\text{odd}}),
  \end{equation}
  where $(I'+\mathcal{L}') \mathcal{N}' (\mathcal{E}')^T \mathcal{M}'$ is an oscillatory
  $(K-1) \times (K-1)$ matrix (by the previous argument).
  This shows that $\twin A(\lambda)$ has positive simple zeros too.
\end{proof}

\subsection{Expressions for the coefficients of $A$ and $\twin A$}
\label{app:coeffs-via-planar-networks}

From equations \eqref{eq:A-as-determinant-I} and \eqref{eq:A-as-determinant-II}
we can extract nice and fairly explicit representations of the coefficients
of the polynomials $A(\lambda)$ and $\twin A(\lambda)$.
These coefficients are of particular interest, since they turn out to be
constants of motion for the peakon solutions to the Geng--Xue equation.
(It is not hard to show, using the Lax pairs, that
$A(\lambda)$ and $\twin A(\lambda)$ are independent of time;
the details will be published in a separate paper about peakons.)

First a bit of notation: $\binom{S}{k}$ will denote the set of $k$-element subsets of a set~$S$,
and $[K]$ is the set $\{ 1,2,\dots,K \}$.
For a matrix~$X$ and index sets $I = \{ i_1 < \dots < i_m \}$ and
$J = \{ j_1 < \dots < j_n \}$,
we write $X_{IJ}$ for the submatrix obtained from~$X$ by taking elements from
the rows indexed by~$I$ and the columns indexed by~$J$;
in other words, $X_{IJ} = \bigl( X_{i_a j_b} \bigl)_{\substack{a=1,\dots,m \\ b=1,\dots,n}}$.

To begin with, \eqref{eq:A-as-determinant-I} says that
$A(\lambda) = \det(I - 2 \lambda \, (I + \mathcal{L}) \mathcal{M} \mathcal{E} \mathcal{N})$,
which shows that the quantity $[A]_k$ from \eqref{eq:ABC}
(the coefficient of $(-2\lambda)^k$ in $A(\lambda)$) equals
the sum of the principal $k \times k$ minors in
$(I + \mathcal{L}) \mathcal{M} \mathcal{E} \mathcal{N}$:
\begin{equation}
  \label{eq:Ak-as-preliminary-sum-of-minors}
  [A]_k = \sum_{J \in \binom{[K]}{k}} \det \bigl( (I + \mathcal{L}) \mathcal{M} \mathcal{E} \mathcal{N} \bigr)_{JJ}.
\end{equation}
A general fact is that for any $K \times K$ matrix~$X$
and for any fixed $J \in \binom{[K]}{k}$,
we have the identity
\begin{equation}
  \label{eq:minorsum-I}
  \det\bigl( (I+\mathcal{L}) X \bigr)_{JJ} =
  \sum_{\substack{I \in \binom{[K]}{k} \\ I \lel J }} \det X_{IJ},
\end{equation}
with summation over all index sets~$I$ of size~$k$
that are ``half-strictly interlacing'' with~$J$:
\begin{equation}
  \label{eq:half-interlacing-I}
  I \lel J
  \quad
  \Longleftrightarrow
  \quad
  i_1 \le j_1 < i_2 \le j_2 < \dots < i_k \le j_k.
\end{equation}
(This is similar to, but much simpler than, the ``Canada Day
Theorem'' about certain sums of minors of \emph{symmetric} matrices,
which appeared in the context of Novikov peakons
\cite{hone-lundmark-szmigielski:novikov,gomez-lundmark-szmigielski:CDT}.)
Equation \eqref{eq:minorsum-I}
can be proved by expanding $\det((I + \mathcal{L}) X)_{JJ}$ with the Cauchy--Binet formula
and computing the minors of $I + \mathcal{L}$ by applying the Lindström--Gessel--Viennot Lemma
to the planar network for $I + \mathcal{L}$  in \autoref{fig:network-for-I+L}.
(We briefly recall the statement of this lemma: if $X$ is the (weighted) path matrix of a planar
network $G$, then the minor $\det X_{IJ}$ equals the number of vertex-disjoint path families
(or the weighted sum over such families) connecting the sources indexed by~$I$
to the sinks indexed by~$J$.)
Alternatively, one can do row operations directly, as follows:
\begin{equation*}
  \begin{split}
    \det((I + \mathcal{L}) X)_{JJ}
    &= \det\Bigl( ((I + \mathcal{L}) X)_{j_r j_s} \Bigr)_{r,s=1}^k
    \\
    &= \det\Bigl( \sum_{m=1}^{j_r} X_{mj_s} \Bigr)_{r,s=1}^k
    \\
    &= \det\Bigl( \sum_{m=j_{r-1} + 1}^{j_r} X_{mj_s} \Bigr)_{r,s=1}^k
    \\
    &= \sum_{i_1=1}^{j_1} \sum_{i_2=j_1+1}^{j_2} \dots \sum_{i_k=j_{k-1} + 1}^{j_k} \det\Bigl( X_{i_r j_s} \Bigr)_{r,s=1}^k \\
    &= \sum_{\substack{\phantom{j_0 <} i_1 \le j_1 \\ j_1 < i_2 \le j_2 \\ j_2 < i_3 \le j_3 \\[1ex] \cdots \\[1ex] j_{k-1} < i_k \le j_k}} \det X_{IJ}.
  \end{split}
\end{equation*}
(In the second line, we used the definition of $\mathcal{L}$.
In the third line, we have subtracted from each row the row above it;
$j_0=0$ by definition.
Next, the summation index is renamed from $m$ to $i_r$ in row~$r$;
this lets us use multilinearity to bring the sums outside of the determinant.)
Applying this fact to \eqref{eq:Ak-as-preliminary-sum-of-minors},
we obtain the desired representation
\begin{equation}
  \label{eq:Ak-as-sum-of-minors-I}
  [A]_k = \sum_{\substack{I,J \in \binom{[K]}{k} \\ I \lel J }} \det (\mathcal{N} \mathcal{E} \mathcal{M})_{IJ},
\end{equation}
This is useful, since these determinants can be evaluated using the Lindström--Gessel--Viennot Lemma
on the planar network shown in \autoref{fig:network-for-A-coeffs};
see \autoref{ex:integrals-of-motion} below.

\begin{figure}
  \centering
  \ifthenelse{\isundefined{\draft}}{%
    \begin{tikzpicture}
      [
      source/.style={circle,draw=black,thick,inner sep=1mm},
      sink/.style={circle,draw=black,thick,inner sep=1mm},
      middle/.style={circle,draw=black,thick,inner sep=0mm, minimum size=3mm},
      arrow/.style={->,thick,>=angle 45}
      ]
      \foreach \y in {1,2,3,4} {
        \node(a\y) at (0,2*\y-1) [source] {\small $\y$};
        \node(d\y) at (8,2*\y) [sink] {\small $\y$};
      }
      \foreach \y in {1,2,...,8} {
        \node(b\y) at (3,\y) [middle] {};
        \node(c\y) at (5,\y) [middle] {};
      }
      \small
      \foreach \y in {1,2,3,4} {
        \pgfmathparse{int(2*\y-1)};
        \let\z\pgfmathresult;
        \draw[arrow] (a\y) -- node[above] {$m_{\z}$} (b\z);
        \pgfmathparse{int(2*\y)};
        \let\z\pgfmathresult;
        \draw[arrow] (c\z) -- node[above] {$n_{\z}$} (d\y);
      }
      \foreach \y in {1,2,...,7} {
        \pgfmathparse{int(\y+1)};
        \let\z\pgfmathresult;
        \draw[arrow] (b\z) -- node[left] {$E_{\y\z}$} (b\y);
        \draw[arrow] (c\y) -- node[right] {$E_{\y\z}$} (c\z);
        \draw[arrow] (b\z) -- node[above] {$1-E_{\y\z}^2$} (c\z);
      }
      \draw[arrow] (b1) -- node[above] {$1$} (c1);
    \end{tikzpicture}
  }{\textbf{[Draft mode; no picture]}}
  \caption{A weighted planar network (illustrated in the case $K=4$)
    for which $\mathcal{M} \mathcal{E} \mathcal{N}$ is the weighted
    path matrix; this means that the $(i,j)$ entry is the weighted sum
    of all paths from source~$i$ to sink~$j$, each path being counted
    with a weight equal to the product of its edge weights. }
  \label{fig:network-for-A-coeffs}
\end{figure}
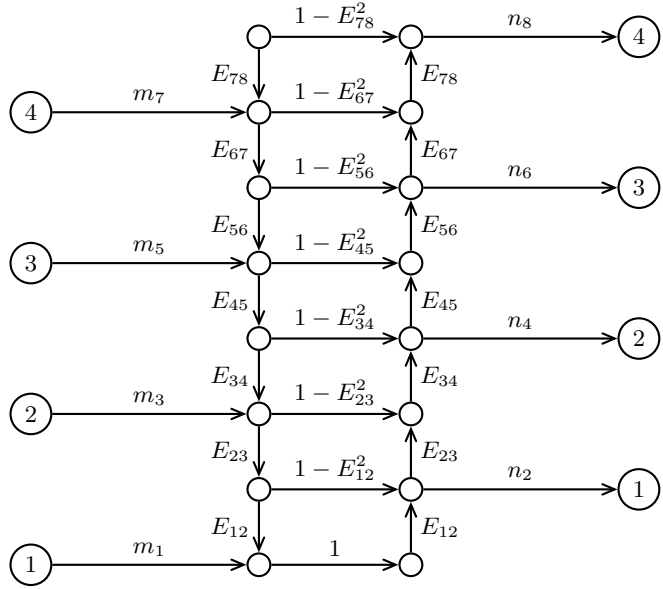

In an entirely similar way one derives the identity
\begin{equation}
  \label{eq:minorsum-II}
  \det(\mathcal{L}X)_{JJ} =
  \sum_{\substack{I \in \binom{[K]}{k} \\ I \lle J }} \det X_{IJ},
\end{equation}
with the other type of ``half-strictly interlacing'' relation
\begin{equation}
  \label{eq:half-interlacing-II}
  I \lle J
  \quad
  \Longleftrightarrow
  \quad
  i_1 < j_1 \le i_2 < j_2 \le \dots \le i_k < j_k.
\end{equation}
Indeed, we can use the network for $\mathcal{L}$ in
\autoref{fig:network-for-L}, or do row operations:
\begin{equation*}
  \begin{split}
    \det(\mathcal{L}X)_{JJ}
    &= \det\Bigl( (\mathcal{L}X)_{j_r j_s} \Bigr)_{r,s=1}^k
    \\
    &= \det\Bigl( \sum_{m=1}^{j_r-1} X_{mj_s} \Bigr)_{r,s=1}^k 
    \\
    &= \det\Bigl( \sum_{m=j_{r-1}}^{j_r-1} X_{mj_s} \Bigr)_{r,s=1}^k
    \\
    &= \sum_{i_1=1}^{j_1-1} \sum_{i_2=j_1}^{j_2-1} \dots \sum_{i_k=j_{k-1}}^{j_k-1} \det\Bigl( X_{i_r j_s} \Bigr)_{r,s=1}^k \\
    &= \sum_{\substack{\phantom{j_0 <} i_1 < j_1 \\ j_1 \le i_2 < j_2 \\ j_2 \le i_3 < j_3 \\[1ex] \cdots \\[1ex] j_{k-1} \le i_k < j_k}} \det X_{IJ}.
  \end{split}
\end{equation*}
From \eqref{eq:A-as-determinant-II} we then see that the
coefficients defined by \eqref{eq:ABC-twin} are given by
\begin{equation*}
  [\twin A]_k = \sum_{J \in \binom{[K]}{k}} \det \bigl( \mathcal{L} \mathcal{N} \mathcal{E}^T \mathcal{M} \bigr)_{JJ},
\end{equation*}
which by the identity above amounts to
\begin{equation}
  \label{eq:Ak-as-sum-of-minors-II}
  [\twin A]_k
  = \sum_{\substack{I,J \in \binom{[K]}{k} \\ I \lle J }} \det ( \mathcal{N} \mathcal{E}^T \mathcal{M} )_{IJ}
  = \sum_{\substack{I,J \in \binom{[K]}{k} \\ I \lle J }} \det ( \mathcal{M} \mathcal{E} \mathcal{N} )_{JI}
  .
\end{equation}
Again, we can use the network for
$\mathcal{M} \mathcal{E} \mathcal{N}$ in \autoref{fig:network-for-A-coeffs}
to read off these determinants
(but note that the transposition in the last step of \eqref{eq:Ak-as-sum-of-minors-II}
has the effect that now the sources are index by~$J$ and the sinks by~$I$).

\begin{figure}
  \centering
  \ifthenelse{\isundefined{\draft}}{%
    \begin{tikzpicture}
      [
      source/.style={circle,draw=black,thick,inner sep=1mm},
      sink/.style={circle,draw=black,thick,inner sep=1mm},
      middle/.style={circle,draw=black,thick,inner sep=0mm, minimum size=3mm},
      arrow/.style={->,thick,>=angle 45}
      ]
      \foreach \y in {1,2,3,4} {
        \node(a\y) at (0,\y) [source] {\scriptsize \y};
        \node(d\y) at (4,\y) [sink] {\scriptsize \y};
       }
      \foreach \y in {2,3,4} {
        \node(b\y) at (2,\y-0.5) [middle] {};
       }
      \foreach \y in {2,3,4} {
        \draw[arrow] (a\y) -- (b\y);
      }
      \foreach \y/\z in {b3/b2,b4/b3}
      \draw[arrow] (\y.south) -- (\z);
      \foreach \y/\z in {b2/d1,b3/d2,b4/d3}
      \draw[arrow] (\y) -- (\z);
    \end{tikzpicture}
  }{\textbf{[Draft mode; no picture]}}
  \caption{A planar network (illustrated in the case $K=4$)
    for which $\mathcal{L}$ is the path matrix.
    There is one path from source node~$i$ to sink node~$j$ if $i>j$
    and none otherwise.
  }
  \label{fig:network-for-L}
\end{figure}
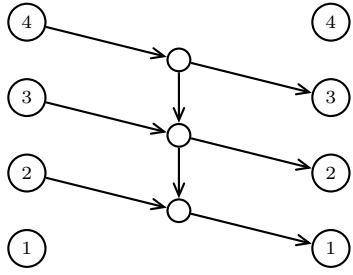

\begin{example}
  \label{ex:integrals-of-motion}
  Consider the case $K=4$.
  We compute the coefficients $[A]_k$ using \eqref{eq:Ak-as-sum-of-minors-I}
  and the planar network in \autoref{fig:network-for-A-coeffs}.

  Note identities such as
  $E_{12} E_{23} E_{34} = E_{14}$
  and
  $(1-E_{23}^2) + E_{23} \cdot (1-E_{12}^2) \cdot E_{23} = 1-E_{13}^2$,
  which are used repeatedly when computing path weights.
  For example, the determinant corresponding to $I = \{ 1,3 \}$ and $J = \{ 1,4 \}$
  is found by locating all pairs of path connecting source~1 to sink~1
  and source~3 to sink~4 in \autoref{fig:network-for-A-coeffs},
  and having no vertices in common.
  There is only one path $1 \to 1$,
  and its weight is
  $m_1 \cdot 1 \cdot E_{12} \cdot n_2$.
  Then there are three paths $3 \to 4$ not touching this first path,
  and their weights are
  \begin{gather*}
    m_5 \cdot (1-E_{45}^2) \cdot E_{56} \cdot E_{67} \cdot E_{78} \cdot n_8
    , \\
    m_5 \cdot E_{45} \cdot (1-E_{34}^2) \cdot E_{45} \cdot E_{56} \cdot E_{67} \cdot E_{78} \cdot n_8
    , \\
    m_5 \cdot E_{45} \cdot E_{34} \cdot (1-E_{23}^2) \cdot E_{34} \cdot E_{45} \cdot E_{56} \cdot E_{67} \cdot E_{78} \cdot n_8
    ,
  \end{gather*}
  or, in other words,
  \begin{gather*}
    m_5 (1-E_{45}^2) E_{58} n_8
    , \\
    m_5 (E_{45}^2-E_{35}^2) E_{58} n_8
    , \\
    m_5 (E_{35}^2-E_{25}^2) E_{58} n_8
    .
  \end{gather*}
  Multiplying each of these by the first weight $m_1 E_{12} n_2$
  gives the weights of the three vertex-disjoint path pairs $13 \to 14$,
  which we add up to obtain the determinant (according to the Lindström--Gessel--Viennot Lemma):
  \begin{equation*}
    \det (\mathcal{M} \mathcal{E} \mathcal{N})_{13,14} =
    m_1 E_{12} n_2 \cdot m_5 (1-E_{25}^2) E_{58} n_8
    .
  \end{equation*}

  The formulas for the coefficients $[A]_k$ found in this way are
  \begin{subequations}
    \begin{equation}
      \begin{split}
        [A]_1 &= \sum_{i \le j} \det (\mathcal{M} \mathcal{E} \mathcal{N})_{ij}
        = \sum_{i \le j} (\mathcal{M} \mathcal{E} \mathcal{N})_{ij}
        \\ &=
        m_1 E_{12} n_2 + m_1 E_{14} n_4 + m_1 E_{16} n_6 + m_1 E_{18} n_8 + m_3 E_{34} n_4
        \\ & \quad
        + m_3 E_{36} n_6 + m_3 E_{38} n_8 + m_5 E_{56} n_6
        + m_5 E_{58} n_8 + m_7 E_{78} n_8
        ,
      \end{split}
    \end{equation}
    \begin{equation}
      \begin{split}
        [A]_2 &= \sum_{i_1 \le j_1 < i_2 \le j_2}  \det (\mathcal{M} \mathcal{E} \mathcal{N})_{i_1 i_2, j_1 j_2}
        \\ &=
        \det (\mathcal{M} \mathcal{E} \mathcal{N})_{12,12}
        + \det (\mathcal{M} \mathcal{E} \mathcal{N})_{12,13}
        + \det (\mathcal{M} \mathcal{E} \mathcal{N})_{12,14}
        \\ & \quad
        + \det (\mathcal{M} \mathcal{E} \mathcal{N})_{13,13}
        + \det (\mathcal{M} \mathcal{E} \mathcal{N})_{13,14}
        + \det (\mathcal{M} \mathcal{E} \mathcal{N})_{14,14}
        \\ & \quad
        + \det (\mathcal{M} \mathcal{E} \mathcal{N})_{13,23}
        + \det (\mathcal{M} \mathcal{E} \mathcal{N})_{13,24}
        + \det (\mathcal{M} \mathcal{E} \mathcal{N})_{14,24}
        \\ & \quad
        + \det (\mathcal{M} \mathcal{E} \mathcal{N})_{14,34}
        + \det (\mathcal{M} \mathcal{E} \mathcal{N})_{23,23}
        + \det (\mathcal{M} \mathcal{E} \mathcal{N})_{23,24}
        \\ & \quad
        + \det (\mathcal{M} \mathcal{E} \mathcal{N})_{24,24}
        + \det (\mathcal{M} \mathcal{E} \mathcal{N})_{24,34}
        + \det (\mathcal{M} \mathcal{E} \mathcal{N})_{34,34}
        \\ &=
        m_1 E_{12} n_2 \cdot m_3 (1-E_{23}^2) E_{34} n_4  % 12,12
        % \\ & \quad
        + m_1 E_{12} n_2 \cdot m_3 (1-E_{23}^2) E_{36} n_6  % 12,13
        \\ & \quad
        + m_1 E_{12} n_2 \cdot m_3 (1-E_{23}^2) E_{38} n_8  % 12,14
        % \\ & \quad
        + m_1 E_{12} n_2 \cdot m_5 (1-E_{25}^2) E_{56} n_6  % 13,13
        \\ & \quad
        + m_1 E_{12} n_2 \cdot m_5 (1-E_{25}^2) E_{58} n_8  % 13,14
        % \\ & \quad
        + m_1 E_{12} n_2 \cdot m_7 (1-E_{27}^2) E_{78} n_8  % 14,14
        \\ & \quad
        + m_1 E_{14} n_4 \cdot m_5 (1-E_{45}^2) E_{56} n_6  % 13,23
        % \\ & \quad
        + m_1 E_{14} n_4 \cdot m_5 (1-E_{45}^2) E_{58} n_8  % 13,24
        \\ & \quad
        + m_1 E_{14} n_4 \cdot m_7 (1-E_{47}^2) E_{78} n_8  % 14,24
        % \\ & \quad
        + m_1 E_{16} n_6 \cdot m_7 (1-E_{67}^2) E_{78} n_8  % 14,34
        \\ & \quad
        + m_3 E_{34} n_4 \cdot m_5 (1-E_{45}^2) E_{56} n_6  % 23,23
        % \\ & \quad
        + m_3 E_{34} n_4 \cdot m_5 (1-E_{45}^2) E_{58} n_8  % 23,24
        \\ & \quad
        + m_3 E_{34} n_4 \cdot m_7 (1-E_{47}^2) E_{78} n_8  % 24,24
        % \\ & \quad
        + m_3 E_{36} n_6 \cdot m_7 (1-E_{67}^2) E_{78} n_8  % 24,34
        \\ & \quad
        + m_5 E_{56} n_6 \cdot m_7 (1-E_{67}^2) E_{78} n_8  % 34,34
        ,
      \end{split}
    \end{equation}
    \begin{equation}
      \begin{split}
        [A]_3 &= \sum_{i_1 \le j_1 < i_2 \le j_2 < i_3 \le j_3}  \det (\mathcal{M} \mathcal{E} \mathcal{N})_{i_1 i_2 i_3, j_1 j_2 j_3}
        \\ &=
        \det (\mathcal{M} \mathcal{E} \mathcal{N})_{123,123}
        + \det (\mathcal{M} \mathcal{E} \mathcal{N})_{123,124}
        + \det (\mathcal{M} \mathcal{E} \mathcal{N})_{124,124}
        \\ &\quad
        + \det (\mathcal{M} \mathcal{E} \mathcal{N})_{124,134}
        + \det (\mathcal{M} \mathcal{E} \mathcal{N})_{134,134}
        + \det (\mathcal{M} \mathcal{E} \mathcal{N})_{134,234}
        \\ &\quad
        + \det (\mathcal{M} \mathcal{E} \mathcal{N})_{234,234}
        \\ &=
        m_1 E_{12} n_2 \cdot m_3 (1-E_{23}^2) E_{34} n_4  \cdot m_5 (1-E_{45}^2) E_{56} n_6  % 123,123
        \\ &\quad
        + m_1 E_{12} n_2 \cdot m_3 (1-E_{23}^2) E_{34} n_4  \cdot m_5 (1-E_{45}^2) E_{58} n_8  % 123,124
        \\ &\quad
        + m_1 E_{12} n_2 \cdot m_3 (1-E_{23}^2) E_{34} n_4  \cdot m_7 (1-E_{47}^2) E_{78} n_8  % 124,124
        \\ &\quad
        + m_1 E_{12} n_2 \cdot m_3 (1-E_{23}^2) E_{36} n_6  \cdot m_7 (1-E_{67}^2) E_{78} n_8  % 124,134
        \\ &\quad
        + m_1 E_{12} n_2 \cdot m_5 (1-E_{25}^2) E_{56} n_6  \cdot m_7 (1-E_{67}^2) E_{78} n_8  % 134,134
        \\ &\quad
        + m_1 E_{14} n_4 \cdot m_5 (1-E_{45}^2) E_{56} n_6  \cdot m_7 (1-E_{67}^2) E_{78} n_8  % 134,234
        \\ &\quad
        + m_3 E_{34} n_4 \cdot m_5 (1-E_{45}^2) E_{56} n_6  \cdot m_7 (1-E_{67}^2) E_{78} n_8  % 234,234
        ,
      \end{split}
    \end{equation}
    and
    \begin{equation}
      \begin{split}
        [A]_4 &= \sum_{i_1 \le j_1 < i_2 \le j_2 < i_3 \le j_3 < i_4 \le j_4}  \det (\mathcal{M} \mathcal{E} \mathcal{N})_{i_1 i_2 i_3 i_4, j_1 j_2 j_3 j_4}
        \\ &=
        \det (\mathcal{M} \mathcal{E} \mathcal{N})_{1234,1234}
        \\ &=
        m_1 E_{12} n_2 \cdot m_3 (1-E_{23}^2) E_{34} n_4  \\ &\quad \cdot m_5 (1-E_{45}^2) E_{56} n_6 \cdot m_7 (1-E_{67}^2) E_{78} n_8
        .
      \end{split}
    \end{equation}
  \end{subequations}
  (Cf. the expressions \eqref{eq:ABC-somecoeffs} for the lowest and
  highest coefficients $[A]_1$ and $[A]_K$ in general.)

  % \label{ex:integrals-of-motion-twin}
  Similarly, we compute the coefficients $[\twin A]_k$
  using \eqref{eq:Ak-as-sum-of-minors-II}:
  \begin{subequations}
    \begin{equation}
      \begin{split}
        [\twin A]_1 &= \sum_{i < j} \det(\mathcal{M} \mathcal{E} \mathcal{N})_{ji}
        = \sum_{i < j} (\mathcal{M} \mathcal{E} \mathcal{N})_{ji}
        \\ &=
        m_3 E_{23} n_2 + m_5 E_{25} n_2 + m_7 E_{27} n_2 \\ & \quad + m_5 E_{45} n_4 + m_7 E_{47} n_4 + m_7 E_{67} n_6
        ,
      \end{split}
    \end{equation}
    \begin{equation}
      \begin{split}
        [\twin A]_2 &= \sum_{i_1 < j_1 \le i_2 < j_2}  \det (\mathcal{M} \mathcal{E} \mathcal{N})_{j_1 j_2, i_1 i_2}
        \\ &=
        \det (\mathcal{M} \mathcal{E} \mathcal{N})_{23,12}
        + \det (\mathcal{M} \mathcal{E} \mathcal{N})_{24,12}
        + \det (\mathcal{M} \mathcal{E} \mathcal{N})_{24,13}
        \\ &\quad
        + \det (\mathcal{M} \mathcal{E} \mathcal{N})_{34,13}
        + \det (\mathcal{M} \mathcal{E} \mathcal{N})_{34,23}
        \\ &=
        m_3 E_{23} n_2 \cdot m_5  E_{45} (1-E_{34}^2) n_4
        \\ &\quad
        + m_3 E_{23} n_2 \cdot m_7  E_{47} (1-E_{34}^2) n_4
        \\ &\quad
        + m_3 E_{23} n_2 \cdot m_7  E_{67} (1-E_{36}^2) n_6
        \\ &\quad
        + m_5 E_{25} n_2 \cdot m_7  E_{67} (1-E_{56}^2) n_6
        \\ &\quad
        + m_5 E_{45} n_4 \cdot m_7  E_{67} (1-E_{56}^2) n_6
      \end{split}
    \end{equation}
    and
    \begin{equation}
      \begin{split}
        [\twin A]_3 &= \sum_{i_1 < j_1 \le i_2 < j_2 \le i_3 < j_3}  \det (\mathcal{M} \mathcal{E} \mathcal{N})_{j_1 j_2 j_3, i_1 i_2 i_3}
        \\ &=
        \det (\mathcal{M} \mathcal{E} \mathcal{N})_{234,123}
        \\ &=
        m_3 E_{23} n_2 \cdot m_5  E_{45} (1-E_{34}^2) n_4 \cdot m_7 E_{67} (1-E_{56}^2) n_6
        .
      \end{split}
    \end{equation}
  \end{subequations}
  (Cf. the expressions \eqref{eq:ABC-twin-somecoeffs} for $[\twin A]_1$ and $[\twin A]_{K-1}$ in
  general.)
\end{example}

\section{Guide to notation}
\label{app:notation}

For the convenience of the reader, here is an index of the notation used in this article.
\begin{center}
  \begin{longtable}{l|l}
    $m(x)$, $n(x)$, $z$, $\Psi(x;z) = (\psi_1,\psi_2,\psi_3)^T$ & \autoref{sec:intro} \\
    Spectral problems for $\Psi$ & \eqref{eq:laxI-x} + \eqref{eq:boundary-conditions-x} \\ & \eqref{eq:laxII-x} + \eqref{eq:boundary-conditions-x} \\
    $g(y)$, $h(y)$, $\lambda=-z^2$, $\Phi(y;\lambda) = (\phi_1,\phi_2,\phi_3)^T$ & \eqref{eq:liouville-trf} \\
    Spectral problems for $\Phi$ & \eqref{eq:spectral-problem-y}, \eqref{eq:spectral-problem-y-twin} \\
    Coefficient matrices $\coeffmatrix(y;\lambda)$, $\twin \coeffmatrix(y;\lambda)$ & \eqref{eq:coeffmatrices} \\
    $\J = \begin{smallpmatrix} 0 & 0 & 1 \\ 0 & -1 & 0 \\ 1 & 0 & 0 \end{smallpmatrix}$ & \eqref{eq:A-conjugation} \\
    Involution $X(\lambda)^\sigma = \J X(-\lambda)^{-T} \J$ & \eqref{eq:involution-sigma} \\
    Fundamental matrices $U(y;\lambda)$, $\twin U(y;\lambda)$ & \eqref{eq:fundsol}, \eqref{eq:fundsol-twin} \\
    Transition matrices & \\ \qquad $S(\lambda) = U(1;\lambda)$, $\twin S(\lambda) = \twin U(1;\lambda)$ & \eqref{eq:transition-matrices} \\
    Weyl functions & \\ \qquad $W = -S_{21}/S_{31}$, $Z = -S_{11}/S_{31}$ & \eqref{eq:WZ} \\
    Twin Weyl functions & \\ \qquad $\twin W = -\twin S_{21}/\twin S_{31}$, $Z = -\twin S_{11}/\twin S_{31}$ & \eqref{eq:WZ} \\
    Bilinear form $\bilinearRthree{\Phi}{\Omega} = \int_{-1}^1 \Phi(y)^T \J \, \Omega(y) \, dy$ & \eqref{eq:bilinear-R3} \\
    Adjoint spectral problems for & \\ \qquad $\Omega = \Omega(y;\lambda) = (\omega_1,\omega_2,\omega_3)$ & \eqref{eq:adjoint}, \eqref{eq:adjoint-twin} \\
    Adjoint Weyl functions & \\ \qquad $W^* = -S_{32}/S_{31}$, $Z^* = -S_{33}/S_{31}$ & \eqref{eq:WZ-adjoint} \\
    Twin adjoint Weyl functions & \\ \qquad $\twin W^* = -\twin S_{32}/\twin S_{31}$, $\twin Z^* = -\twin S_{33}/\twin S_{31}$ & \eqref{eq:WZ-adjoint} \\
    Discrete interlacing measures & \\ \qquad $m = m_1 \delta_{x_1} + m_3 \delta_{x_3} + \dots + m_{N-1} \delta_{x_{N-1}}$ & \\ \qquad $n = n_2 \delta_{x_2} + n_4 \delta_{x_4} + \dots + n_N \delta_{x_N}$ \\ \qquad with $x_1 < x_2 < \dots < x_N$, $N=2K$ & \eqref{eq:mn-interlacing} \\
    Transformed measures & \\ \qquad $g = g_1 \delta_{y_1} + g_2 \delta_{y_3} + \dots + g_K \delta_{y_{2K-1}}$ & \\ \qquad $h = h_1 \delta_{y_2} + h_2 \delta_{y_4} + \dots + h_K \delta_{y_{2K}}$ & \eqref{eq:gh} \\
    \qquad with $y_k = \tanh x_k$, & \eqref{eq:yk} \\ \qquad $g_a = 2 m_{2a-1} \cosh x_{2a-1}$, $h_a = 2 n_{2a} \cosh x_{2a}$ &
    \eqref{eq:trf-discrete-measures} \\
    Interval lengths $l_k = y_{k+1} - y_k$ & \eqref{eq:lk} \\
    Propagation matrices & \\ \qquad $L_k(\lambda) = \begin{smallpmatrix} 1 & 0 & 0 \\ 0 & 1 & 0 \\ -\lambda l_k & 0 & 1 \end{smallpmatrix}$,   $\displaystyle\jumpmatrix{x}{y} = \begin{smallpmatrix} 1 & x & \frac12 xy \\ 0 & 1 & y \\ 0 & 0 & 1 \end{smallpmatrix}$ & \eqref{eq:jump-matrix-L}, \eqref{eq:jump-matrix} \\
    Transition matrix in the discrete case & \\ \qquad $S(\lambda) = L_{2K}(\lambda) \jumpmatrix{h_K}{0} L_{2K-1}(\lambda) \jumpmatrix{0}{g_K} \dotsm$ & \eqref{eq:S-discrete} \\
    \qquad and its partial products $T_j(\lambda)$ & \eqref{eq:Tj} \\
    Twin transition matrix & \\ \qquad $\twin S(\lambda) = L_{2K}(\lambda) \jumpmatrix{0}{h_K} L_{2K-1}(\lambda) \jumpmatrix{g_K}{0} \dotsm$ & \eqref{eq:S-twin-discrete} \\
    \qquad and its partial products $\twin T_j(\lambda)$ & \eqref{eq:Tj-twin} \\
    Eigenvalues & \\ \qquad $0 = \lambda_0 < \lambda_1 < \dots < \lambda_K$ & \\ \qquad $0 = \mu_0 < \mu_1 < \dots < \mu_{K-1}$ & \autoref{thm:simple-spectra} \\
    Residues of Weyl functions & \\ \qquad $a_i$, $b_j$, $b_{\infty}$, $c_i$, $d_j$ ($1 \le i \le K$, $1 \le j \le K-1$)& \autoref{thm:weyl-parfrac} \\
    Spectral measures & \\ \qquad $\alpha = \sum_{i=1}^K a_i \delta_{\lambda_i}$, $\beta = \sum_{j=1}^{K-1} b_j \delta_{\mu_j}$ & \eqref{eq:spectral-measures} \\
    Weyl functions as integrals & \\ \qquad $W(\lambda) = \int \frac{\da}{\lambda - x}$, etc. & \eqref{eq:WZ-as-integrals} \\
    Entries of $T(\lambda)=T_j(\lambda)$ (for some fixed $j$) & \\ \qquad $Q = -T_{32}$, $P = T_{22}$, $R = T_{12}$ \\ \qquad ($W \approx P/Q$, $Z \approx R/Q$) & \eqref{eq:QPR} \\
    Residues of adjoint Weyl functions & \\ \qquad $a_i^*$, $b_j^*$, $b_{\infty}^*$, $c_i^*$, $d_j^*$ & \eqref{eq:weyl-star-parfrac} \\
%    Temporary notation & \\ \qquad $S_{ij}(\lambda,\epsilon)$, $s_{ij}(\epsilon)$, $\lambda_i(\epsilon)$, $a_i(\epsilon)$, $f(\lambda,\epsilon)$, & Proof of \autoref{thm:weyl-parfrac} \\ \qquad $C_1$, $C_2$, $p(\lambda,\epsilon)$, $q(\lambda,\epsilon)$, $\kappa = \lambda \epsilon^2$ & in \autoref{sec:adjoint-Weyl-discrete} \\
    Adjoint transition matrix & \\ \qquad $S^*(\lambda) = \twin S(-\lambda)^{-1} = \J S(\lambda)^T \J$ & \autoref{thm:symmetry} \\
    Adjoint Weyl functions in terms of $S^*$ & \\ \qquad $W^* = +S^*_{21}/S^*_{31}$, $Z^* = -S^*_{11}/S^*_{31}$ & \autoref{rem:S-star} \\
    Moments and bimoments of spectral measures & \\ \qquad $\alpha_k = \int x^k \da$, $\beta_k = \int y^k \db$, & \eqref{eq:alpha-beta-moments}; see also \eqref{eq:alpha-beta-moments-appendix} \\ \qquad $I_{km} = \iint \frac{x^k y^m}{x+y} \da\db$ & \eqref{eq:alpha-beta-bimoments}; see also \eqref{eq:alpha-beta-bimoments-appendix} \\
    Determinant $\weirddeterminant_n$ involving bimoments & \eqref{eq:R-zero}; see also \eqref{eq:weirddeterminant} \\
    Spectral map & \\
    \qquad Pure peakon sector $\mathcal{P} \subset \R^{4K}$ & \\
    \qquad Admissible spectral data $\mathcal{R} \subset \R^{4K}$ & \\
    \qquad Forward map $\mathcal{S} \colon \mathcal{P} \to \mathcal{R}$ & \\
    \qquad Inverse map $\mathcal{T} \colon \mathcal{R} \to \mathcal{P}$ & \autoref{def:spectral-map} \\
    Cauchy biorthogonal polynomials $p_n(x)$, $q_n(y)$ & \eqref{eq:biorth-pn}, \eqref{eq:biorth-qn} \\
    Vandermonde-type expression & \\ \qquad $\Delta(x) = \Delta(x_1,\dots,x_n) = \prod_{i < j} (x_i - x_j)$ & \eqref{eq:def-Delta} \\ \qquad $\Gamma(x) = \Gamma(x_1,\dots,x_n) = \prod_{i < j} (x_i + x_j)$ & \eqref{eq:def-Gamma} \\ \qquad $\Gamma(x;y) = \Gamma(x_1,\dots,x_n;y_1,\dots,y_m)$ & \\ \qquad $\phantom{\Gamma(x;y)} = \prod_{i=1}^{n} \prod_{j=1}^{m} (x_i + y_j)$ & \eqref{eq:def-Gamma-mixed} \\
%    Generalized Heine-type integrals & \\ \qquad $\heineintegral_{nm}^{rs} = \int_{\sigma_{n} \times \sigma_{m}} \frac{\Delta(x)^2 \Delta(y)^2 (x_1 \dotsm x_n)^r (y_1 \dotsm y_m)^s \dA{n} \dB{m}}{\Gamma(x;y)}$ & \\ \qquad where $\sigma_n = \{ x \in \R^n : 0 < x_1 < \dots < x_n \}$ & \eqref{eq:heine-integral} \\
    Generalized Heine-type integrals & \\ \qquad $\heineintegral_{nm}^{rs} = \int\limits_{\sigma_{n} \times \sigma_{m}} \!\!\! \frac{\Delta(x)^2 \Delta(y)^2 \bigl( \prod x_i \bigr)^r \bigl( \prod y_j \bigr)^s \dA{n} \dB{m}}{\Gamma(x;y)}$ & \\ \qquad where $\sigma_n = \{ x \in \R^n : 0 < x_1 < \dots < x_n \}$ & \eqref{eq:heine-integral} \\
    Degenerate cases $\heineintegral_{0m}^{rs}$, $\heineintegral_{n0}^{rs}$, $\heineintegral_{00}^{rs}$ & \eqref{eq:heine-integral-one-index-zero} \\
    The basic bimoment determinant & \\ \qquad $D_n = \det(I_{ij})_{i,j=0}^{n-1} = \heineintegral_{nn}^{00}$ & \eqref{eq:bimoment-det} \\
    General discrete setup & \\ \qquad $\alpha = \sum_{i=1}^A a_i \delta_{\lambda_i}$, $\beta = \sum_{j=1}^B b_j \delta_{\mu_j}$ & \autoref{sec:heine-integral-discrete-case} \\ \qquad ($A=K$, $B=K-1$ in the main text) & \\
    Heine-type integrals as sums in the discrete case & \\ \qquad $\heineintegral_{nm}^{rs} = \sum_{I \in \binom{[A]}{n}} \sum_{J \in \binom{[B]}{m}} \Psi_{IJ} \, \lambda_I^r a_I \, \mu_J^s b_J$ & \eqref{eq:heine-integral-as-sum} \\
    where & \\
    \qquad $[A] = \{1,2,\dots,A \}$ & \\[0.7ex]
    \qquad $\binom{[A]}{n} = \text{set of $n$-element subsets of $[A]$}$ & \\
    \qquad $\lambda_I^r a_I \, \mu_J^s b_J = \Bigl( \prod_{i \in I} \lambda_i^r a_i \Bigr) \Bigl( \prod_{j \in J} \mu_j^s b_j \Bigr)$ & \eqref{eq:product-notation-explanation} \\
    \qquad $\Psi_{IJ} = \frac{\Delta_I^2 \twin\Delta_J^2}{\Gamma_{IJ}}$ & \eqref{eq:PsiIJ} \\
    \qquad\qquad $\Delta_I^2 = \Delta(\lambda_{i_1},\dots,\lambda_{i_n})^2$, & \\ \qquad\qquad $\twin\Delta_J^2 = \Delta(\mu_{j_1},\dots,\mu_{j_m})^2$, & \\ \qquad\qquad $\Gamma_{IJ} = \Gamma(\lambda_{i_1},\dots,\lambda_{i_n};\mu_{j_1},\dots,\mu_{j_m})$ & \eqref{eq:DeltaI} \\
    $\Delta_{I_1 I_2}^2 = \prod_{i_1 \in I_1, \, i_2 \in I_2} (\lambda_{i_1} - \lambda_{i_2})^2$ & \eqref{eq:DeltaI1I2} \\
    $\heineintegral_{nm}^{00}$ written out in the case $A=3$, $B=2$ & \autoref{ex:nonzero-heineintegrals} \\
    $\starheineintegral_{nm}^{rs}$ & \autoref{lem:heine-integral-symmetry} \\
    Polynomials $A_k(\lambda)$, $B_k(\lambda)$, $C_k(\lambda)$ & \eqref{eq:psi-inbetween}, \eqref{eq:jumpI-ABC} \\
    Jump matrix $S_k(\lambda)$ & \eqref{eq:jumpI-even-odd} \\
    $\bigl( A(\lambda), B(\lambda), C(\lambda) \bigr) = \bigl( A_N(\lambda), B_N(\lambda), C_N(\lambda) \bigr)$ & \eqref{eq:ABC-jump-product} \\
    Coefficients $[A]_i$, $[B]_i$, $[C]_i$ in $A$, $B$, $C$ & \eqref{eq:ABC} \\
    $E_{ab}=e^{-\abs{x_a-x_b}}$ & \eqref{eq:Eab} \\
    Polynomials $\twin A_k(\lambda)$, $\twin B_k(\lambda)$, $\twin C_k(\lambda)$ & \eqref{eq:psi-twin-inbetween}, \eqref{eq:jumpII-ABC} \\
    Jump matrix $\twin S_k(\lambda)$ & \eqref{eq:jumpII-even-odd} \\
    $\bigl( \twin A(\lambda), \twin B(\lambda), \twin C(\lambda) \bigr) = \bigl( \twin A_N(\lambda), \twin B_N(\lambda), \twin C_N(\lambda) \bigr)$ & \eqref{eq:twin-ABC-jump-product} \\
    Coefficients $[\twin A]_i$, $[\twin B]_i$, $[\twin C]_i$ in $\twin A$, $\twin B$, $\twin C$ & \eqref{eq:ABC-twin} \\
    $\psi_{2,\text{even}}$, $\psi_{3,\text{odd}}$, $\mathcal{L}$, $\mathcal{E}$, $\mathcal{M}$, $\mathcal{N}$ & \eqref{eq:eigenvalue-problem} \\
    $\twin\psi_{2,\text{odd}}$, $\twin\psi_{3,\text{even}}$ & \eqref{eq:big-matrix-equation} \\
    $\twin\psi'_{2,\text{even}}$, $\twin\psi'_{3,\text{odd}}$, $\mathcal{L}'$, $\mathcal{E}'$, $\mathcal{M}'$, $\mathcal{N}'$ & \eqref{eq:truncated-big-matrix-equation} \\
    $[K] = \{ 1,2,\dots,K \}$ & \\
    $\binom{S}{k}$, the set of $k$-element subsets of a set $S$ & \\
    Index sets $I = \{ i_1 < \dots < i_m \}$, $J = \{ j_1 < \dots < j_n \}$ & \\
    Submatrix $X_{IJ} = \bigl( X_{i_a j_b} \bigl)_{\substack{a=1,\dots,m \\ b=1,\dots,n}}$ & \autoref{app:coeffs-via-planar-networks} \\
    ``Half-strictly interlacing'' relations: & \\
    \qquad $I \lel J \quad \Longleftrightarrow \quad i_1 \le j_1 < i_2 \le j_2 < \dots < i_k \le j_k$ & \eqref{eq:half-interlacing-I} \\
    \qquad $I \lle J \quad \Longleftrightarrow \quad i_1 < j_1 \le i_2 < j_2 \le \dots \le i_k < j_k$ & \eqref{eq:half-interlacing-II} \\
  \end{longtable}
\end{center}

\section*{Acknowledgements}

Hans Lundmark is supported by the Swedish Research Council (Veten\-skaps\-r{\aa}det),
and Jacek Szmigielski by the National Sciences and Engineering Research Council of Canada (NSERC).
We thank Marcus Kardell for useful comments.

\bibliographystyle{abbrv}
\bibliography{GX-inverse-problem}

\begin{thebibliography}{10}

\bibitem{beals-sattinger-szmigielski:stieltjes}
R.~Beals, D.~H. Sattinger, and J.~Szmigielski.
\newblock Multi-peakons and a theorem of {S}tieltjes.
\newblock {\em Inverse Problems}, 15(1):L1--L4, 1999.

\bibitem{beals-sattinger-szmigielski:moment}
R.~Beals, D.~H. Sattinger, and J.~Szmigielski.
\newblock Multipeakons and the classical moment problem.
\newblock {\em Adv. Math.}, 154(2):229--257, 2000.

\bibitem{beals-sattinger-szmigielski:string-density}
R.~Beals, D.~H. Sattinger, and J.~Szmigielski.
\newblock The string density problem and the {C}amassa-{H}olm equation.
\newblock {\em Philos. Trans. R. Soc. Lond. Ser. A Math. Phys. Eng. Sci.},
  365(1858):2299--2312, 2007.

\bibitem{bertola-gekhtman-szmigielski:twomatrix}
M.~Bertola, M.~Gekhtman, and J.~Szmigielski.
\newblock The {C}auchy two-matrix model.
\newblock {\em Commun. Math. Phys.}, 287(3):983--1014, 2009.

\bibitem{bertola-gekhtman-szmigielski:cubicstring}
M.~Bertola, M.~Gekhtman, and J.~Szmigielski.
\newblock Cubic string boundary value problems and {C}auchy biorthogonal
  polynomials.
\newblock {\em J. Phys. A: Math. Theor.}, 42(45):454006 (13pp), 2009.

\bibitem{bertola-gekhtman-szmigielski:cauchy}
M.~Bertola, M.~Gekhtman, and J.~Szmigielski.
\newblock Cauchy biorthogonal polynomials.
\newblock {\em J. Approx. Theory}, 162(4):832--867, 2010.

\bibitem{bertola-gekhtman-szmigielski:meijerG}
M.~Bertola, M.~Gekhtman, and J.~Szmigielski.
\newblock {C}auchy--{L}aguerre two-matrix model and the {M}eijer-{G} random
  point field.
\newblock {\em Commun. Math. Phys.}, 2013.
\newblock To appear. Preprint arxiv.org/abs/1211.5369v2.

\bibitem{camassa-holm}
R.~Camassa and D.~D. Holm.
\newblock An integrable shallow water equation with peaked solitons.
\newblock {\em Phys. Rev. Lett.}, 71(11):1661--1664, 1993.

\bibitem{degasperis-holm-hone}
A.~Degasperis, D.~D. Holm, and A.~N.~W. Hone.
\newblock A new integrable equation with peakon solutions.
\newblock {\em Theoret. and Math. Phys.}, 133(2):1463--1474, 2002.

\bibitem{degasperis-procesi}
A.~Degasperis and M.~Procesi.
\newblock Asymptotic integrability.
\newblock In A.~Degasperis and G.~Gaeta, editors, {\em Symmetry and
  perturbation theory (Rome, 1998)}, pages 23--37. World Scientific Publishing,
  River Edge, NJ, 1999.

\bibitem{dym-mckean:gaussian}
H.~Dym and H.~P. McKean.
\newblock {\em Gaussian processes, function theory, and the inverse spectral
  problem}.
\newblock Academic Press [Harcourt Brace Jovanovich Publishers], New York,
  1976.
\newblock Probability and Mathematical Statistics, Vol. 31.

\bibitem{gantmacher:matrixtheoryI}
F.~R. Gantmacher.
\newblock {\em The theory of matrices. {V}ol. 1}.
\newblock AMS Chelsea Publishing, Providence, RI, 1998.
\newblock Translated from the Russian by K. A. Hirsch, Reprint of the 1959
  translation.

\bibitem{gantmacher-krein}
F.~R. Gantmacher and M.~G. Krein.
\newblock {\em Oscillation matrices and kernels and small vibrations of
  mechanical systems}.
\newblock AMS Chelsea Publishing, Providence, RI, revised edition, 2002.
\newblock Translation based on the 1941 Russian original, edited and with a
  preface by Alex Eremenko.

\bibitem{geng-xue:cubic-nonlinearity}
X.~Geng and B.~Xue.
\newblock An extension of integrable peakon equations with cubic nonlinearity.
\newblock {\em Nonlinearity}, 22(8):1847--1856, 2009.

\bibitem{gomez-lundmark-szmigielski:CDT}
D.~Gomez, H.~Lundmark, and J.~Szmigielski.
\newblock The {C}anada {D}ay {T}heorem.
\newblock {\em Electron. J. Combin.}, 20(1), 2013.
\newblock \#P20.

\bibitem{himonas-holliman:novikov-cauchy-problem}
A.~A. Himonas and C.~Holliman.
\newblock The {C}auchy problem for the {N}ovikov equation.
\newblock {\em Nonlinearity}, 25(2):449--479, 2012.

\bibitem{hone-lundmark-szmigielski:novikov}
A.~N.~W. Hone, H.~Lundmark, and J.~Szmigielski.
\newblock Explicit multipeakon solutions of {N}ovikov's cubically nonlinear
  integrable {C}amassa--{H}olm type equation.
\newblock {\em Dyn. Partial Differ. Equ.}, 6(3):253--289, 2009.

\bibitem{hone-wang:cubic-nonlinearity}
A.~N.~W. Hone and J.~P. Wang.
\newblock Integrable peakon equations with cubic nonlinearity.
\newblock {\em J. Phys. A: Math. Theor.}, 41(37):372002 (10pp), 2008.

\bibitem{jiang-ni:novikov-blowup}
Z.~Jiang and L.~Ni.
\newblock Blow-up phenomenon for the integrable {N}ovikov equation.
\newblock {\em J. Math. Anal. Appl.}, 385(1):551--558, 2012.

\bibitem{kohlenberg-lundmark-szmigielski}
J.~Kohlenberg, H.~Lundmark, and J.~Szmigielski.
\newblock The inverse spectral problem for the discrete cubic string.
\newblock {\em Inverse Problems}, 23(1):99--121, 2007.

\bibitem{lai-li-wu:novikov-global-solutions}
S.~Lai, N.~Li, and Y.~Wu.
\newblock The existence of global strong and weak solutions for the {N}ovikov
  equation.
\newblock {\em J. Math. Anal. Appl.}, 399(2):682--691, 2013.

\bibitem{lundmark-szmigielski:DPshort}
H.~Lundmark and J.~Szmigielski.
\newblock Multi-peakon solutions of the {D}egasperis--{P}rocesi equation.
\newblock {\em Inverse Problems}, 19(6):1241--1245, December 2003.

\bibitem{lundmark-szmigielski:DPlong}
H.~Lundmark and J.~Szmigielski.
\newblock Degasperis--{P}rocesi peakons and the discrete cubic string.
\newblock {\em IMRP Int. Math. Res. Pap.}, 2005(2):53--116, 2005.

\bibitem{lundmark-szmigielski:forced-burgers}
H.~Lundmark and J.~Szmigielski.
\newblock Continuous and discontinuous piecewise linear solutions of the
  linearly forced inviscid {B}urgers equation.
\newblock {\em J. Nonlinear Math. Phys.}, 15(suppl. 3):264--276, 2008.

\bibitem{mi-mu:modified-novikov-cauchy-problem}
Y.~Mi and C.~Mu.
\newblock On the {C}auchy problem for the modified {N}ovikov equation with
  peakon solutions.
\newblock {\em J. Differential Equations}, 254(3):961--982, 2013.

\bibitem{moser:three-integrable}
J.~Moser.
\newblock Three integrable {H}amiltonian systems connected with isospectral
  deformations.
\newblock {\em Advances in Math.}, 16:197--220, 1975.

\bibitem{ni-zhou:novikov}
L.~Ni and Y.~Zhou.
\newblock Well-posedness and persistence properties for the {N}ovikov equation.
\newblock {\em J. Differential Equations}, 250(7):3002--3021, 2011.

\bibitem{novikov:generalizations-of-CH}
V.~Novikov.
\newblock Generalizations of the {C}amassa--{H}olm equation.
\newblock {\em J. Phys. A: Math. Theor.}, 42(34):342002, 14, 2009.

\bibitem{stieltjes}
T.~J. Stieltjes.
\newblock Recherches sur les fractions continues.
\newblock {\em Ann. Fac. Sci. Toulouse Sci. Math. Sci. Phys.}, 8(4):J1--J122,
  1894.

\bibitem{yan-li-zhang:novikov-cauchy-problem}
W.~Yan, Y.~Li, and Y.~Zhang.
\newblock The {C}auchy problem for the integrable {N}ovikov equation.
\newblock {\em J. Differential Equations}, 253(1):298--318, 2012.

\end{thebibliography}

\end{document}